\numberwithin{equation}{section}
\newcommand{\iid}{\stackrel{\text{iid}}{\sim}}
\newcommand{\mc}[1]{\mathcal{#1}}
\newcommand{\wh}[1]{\widehat{#1}}
\renewcommand{\epsilon}{\varepsilon}
\newcommand{\one}{\mathbbm{1}}
\DeclareMathOperator*{\argmax}{argmax}
\newcommand{\beq}{\begin{equation}}
  \newcommand{\eeq}{\end{equation}}
\newcommand{\beqa}{\begin{equation} \begin{aligned}}
    \newcommand{\eeqa}{\end{aligned} \end{equation}}
\newcommand{\beqas}{\begin{equation*} \begin{aligned}}
    \newcommand{\eeqas}{\end{aligned} \end{equation*}}
\newcommand{\bit}{\begin{itemize}}
  \newcommand{\eit}{\end{itemize}}
\newcommand{\bmat}{\begin{bmatrix}}
  \newcommand{\emat}{\end{bmatrix}}
\newenvironment{longform}{\color{blue}}{}
\newenvironment{notes}{\color{blue} {\bf Note:}}{}
\theoremstyle{definition}\newtheorem{problem}{Problem}[section]
\theoremstyle{definition}\newtheorem{definition}[problem]{Definition}
\theoremstyle{definition}
\theoremstyle{remark}
\theoremstyle{remark}
\theoremstyle{remark}\newtheorem{remark}[problem]{Remark}
\theoremstyle{definition}
\theoremstyle{plain}\newtheorem{theorem}[problem]{Theorem}
\theoremstyle{plain}\newtheorem{lemma}[problem]{Lemma}
\theoremstyle{plain}\newtheorem{proposition}[problem]{Proposition}
\theoremstyle{plain}\newtheorem{corollary}[problem]{Corollary}
\theoremstyle{plain}
\global\long\def\UUn{\mathbb{U}_n}
\newcommand{\AAA}{{\mathbb A}}
\newcommand{\BB}{{\mathbb B}}
\newcommand{\DD}{{\mathbb D}}
\newcommand{\FF}{ {\mathbb F}}
\newcommand{\GG}{{\mathbb G}}
\newcommand{\PP}{{\mathbb P}}
\newcommand{\RR}{{\mathbb R}}
\newcommand{\UU}{{\mathbb U}}
\newcommand{\YY}{{\mathbb Y}}
\newcommand{\XX}{{\mathbb X}}
\DeclareMathOperator{\interior}{int}
\newcommand{\lv}{\left\vert} %
  \newcommand{\rv}{\right\vert}
\newcommand{\lp}{\left(} %
  \newcommand{\rp}{\right)}
\newcommand{\lb}{\left\{} %
  \newcommand{\rb}{\right\}}
\global\long\def\inv#1{\frac{1}{#1}}
\global\long\def\tauplusa{\tau_{+}^{0}}
\global\long\def\tauminusa{\tau_{-}^{0}}
\global\long\def\FFLct{F_{L}}
\global\long\def\FFRct{F_{R}}
\global\long\def\FFLt{F_{L}} %
\global\long\def\FFRt{F_{R}}
\global\long\def\FFLti#1{F_{L,#1}} %
\global\long\def\FFRti#1{F_{R,#1}}
\global\long\def\XXLct{X_{L}}
\global\long\def\XXRct{X_{R}}
\global\long\def\XXRt{X_{R}}
\global\long\def\XXLt{X_{L}}
\global\long\def\HHLt{H_{L}}
\global\long\def\HHRt{H_{R}}
\global\long\def\HHsp{H^{+}}
\global\long\def\HHst{\HHsp}
\global\long\def\HHLti#1{H_{L,#1}}
\global\long\def\HHRti#1{H_{R,#1}}
\global\long\def\YYLt{Y_{L}}
\global\long\def\YYRt{Y_{R}}
\global\long\def\YYsp{Y^+}
\global\long\def\TauL{\tau_{L}} %
\global\long\def\TauR{\tau_{R}} %
\global\long\def\TausR{\tau_{R}^+} %
\global\long\def\Tausp{\tau_{+}^+} %
\global\long\def\Sa{\widehat{S}^0}
\global\long\def\vva{\widehat{\varphi}^0}
\global\long\def\psia{\widehat{\psi}^0}
\global\long\def\vv{\widehat{\varphi}}
\global\long\def\ffi#1{\varphi_{#1}} %
\global\long\def\FFi#1{F_{#1}} %
\global\long\def\ani#1{a_n^{#1}}
\global\long\def\bni#1{b_n^{#1}}
\global\long\def\ffol{h_0} %
\global\long\def\vp{\varphi}
\global\long\def\vvo{\vp_{0}}
\global\long\def\ffo{f_{0}} %
\global\long\def\FFo{F_{0}}
\global\long\def\vvna{\widehat{\vp}_{\xn}^{0}}
\global\long\def\psina{\widehat{\psi}_{\xn}^{0}}
\global\long\def\xn{n}
\global\long\def\xn{n}
\global\long\def\mm{m}
\global\long\def\An{A_{\xn}}
\global\long\def\Bn{B_{\xn}}
\global\long\def\AnL{A_{\xn,L}}
\global\long\def\AnR{A_{\xn,R}}
\global\long\def\snR{s_{\xn,R}} %
\global\long\def\snL{s_{\xn,L}} %
\global\long\def\RRna{R_{\xn}^0}
\global\long\def\RRnaR{R_{\xn}^0}
\global\long\def\RRnaL{R_{\xn}^0}
\global\long\def\RRno{R_{\xn}}
\global\long\def\HHnpsisR{\widehat{H}_{\xn,R}^{\psi}}
\global\long\def\YYnpsisR{\mathbb{Y}_{\xn,R}^{\psi}}
\global\long\def\YYnffo{\mathbb{Y}_{\xn}^{f}}
\global\long\def\HHnffo{\widehat{H}_{\xn}^{f}}
\global\long\def\YYnvvo{\mathbb{Y}_{\xn}^{\varphi}}
\global\long\def\HHnffaL{\widehat{H}_{\xn,L}^{f}}
\global\long\def\HHnffaR{\widehat{H}_{\xn,R}^{f}}
\global\long\def\HHnvvo{\widehat{H}_{\xn}^{\varphi}}
\global\long\def\HHnvvaL{\widehat{H}_{\xn,L}^{\varphi}}
\global\long\def\HHnvvaR{\widehat{H}_{\xn,R}^{\varphi}}
\global\long\def\YYnffaR{\mathbb{Y}_{\xn,R}^{f}}
\global\long\def\YYnffaL{\mathbb{Y}_{\xn,L}^{f}}
\global\long\def\YYnvvaR{\mathbb{Y}_{\xn,R}^{\varphi}}
\global\long\def\YYnvvaL{\mathbb{Y}_{\xn,L}^{\varphi}}
\global\long\def\XXnffo{\mathbb{X}_{\xn}^{f}}
\global\long\def\XXnvvo{\mathbb{X}_{\xn}^{\varphi}}
\global\long\def\XXnffaL{\mathbb{X}_{\xn,L}^{f}}
\global\long\def\XXnffaR{\mathbb{X}_{\xn,R}^{f}}
\global\long\def\XXnvvaL{\mathbb{X}_{\xn,L}^{\varphi}}
\global\long\def\XXnvvaR{\mathbb{X}_{\xn,R}^{\varphi}}
\global\long\def\IntUCo{\int_{\mm}^{\tnb}} %
\global\long\def\IntUCi{\int_{\mm}^{v}} %
\global\long\def\IntLo{\int^{\snL}_{\tnb}} %
\global\long\def\IntLi{\int^{\snL}_{v}} %
\global\long\def\IntRo{\int_{\snR}^{\tnb}} %
\global\long\def\IntRi{\int_{\snR}^{v}} %
\global\long\def\Fn{\mathbb{F}_{\xn}}
\global\long\def\FnL{\mathbb{F}_{\xn,L}}
\global\long\def\FnR{\mathbb{F}_{\xn,R}}
\global\long\def\ffna{\widehat{f}_{n}^{0}}
\global\long\def\ggns{\widehat{g}_{n}^{+}}
\global\long\def\ggna{\widehat{g}_{n}^{0}}
\global\long\def\psins{\widehat{\psi}_{n}^{+}}
\global\long\def\FFna{\widehat{F}_{n}^{0}}
\global\long\def\GGna{\widehat{G}_{n}^{0}}
\global\long\def\FFnaL{\widehat{F}_{\xn,L}^{0}}
\global\long\def\FFnaR{\widehat{F}_{\xn,R}^{0}}
\global\long\def\GGnsR{\widehat{G}_{\xn,R}^{+}}
\global\long\def\FnsR{\mathbb{F}_{\xn,R}^{+}}
\global\long\def\vvna{\widehat{\vp}_{\xn}^{0}}
\global\long\def\ffn{\widehat{f}_{\xn}}
\global\long\def\FFn{\widehat{F}_{\xn}}
\global\long\def\ffn{\widehat{f}_{\xn}}
\global\long\def\vvn{\widehat{\vp}_{\xn}}
\global\long\def\HHnaR{\widehat{H}_{\xn,R}^{0}}
\global\long\def\HHnaL{\widehat{H}_{\xn,L}^{0}}
\global\long\def\YYnR{\mathbb{Y}_{\xn,R}}
\global\long\def\YYnL{\mathbb{Y}_{\xn,L}}
\global\long\def\HHn{\widehat{H}_{\xn}}
\global\long\def\YYn{\mathbb{Y}_{\xn}}
\newcommand{\SLC}{{\cal SLC}}
\newcommand{\LC}{{\cal LC}}
\NewDocumentCommand\tnb{O{}}{
  \ifstrempty{#1}{
    t_{n,b}
  }{
    t_{#1}
  }
}
\begin{document}

\begin{frontmatter}
  \title{Univariate log-concave density estimation with symmetry or modal constraints}
  \runtitle{Mode- or symmetry-constrained LC MLE}

  \begin{aug}
    \author{\fnms{Charles R.}
      \snm{Doss}\thanksref{t1}\ead[label=e1]{cdoss@stat.umn.edu}\ead[label=u2,url]{http://users.stat.umn.edu/\textasciitilde cdoss/}}
    \and
    \author{\fnms{Jon A.} \snm{Wellner}\thanksref{t2}\ead[label=e2]{jaw@stat.washington.edu}}
    \ead[label=u1,url]{http://www.stat.washington.edu/jaw/}
    \thankstext{t1}{Supported in part by NSF Grants DMS-1104832 and a University of Minnesota Grant-In-Aid grant.} %
    \thankstext{t2}{Supported in part by NSF Grants DMS-1104832 and DMS-1566514,  and NI-AID grant 2R01 AI291968-04}
    \runauthor{Doss and Wellner}
    \address{School of Statistics \\University of Minnesota\\Minneapolis,  MN 55455\\
      \printead{e1}\\
      \printead{u2}}
    \address{Department of Statistics, Box 354322\\University of Washington\\Seattle, WA  98195-4322\\
      \printead{e2}\\
      \printead{u1}}
  \end{aug}

\begin{abstract}
  We study nonparametric maximum likelihood estimation of a log-concave density function $f_0$ which is known to satisfy further constraints, where either (a) the mode $m$ of $f_0$ is known, or (b) $f_0$ is known to be symmetric about a fixed point $m$.  We develop asymptotic theory for both constrained log-concave maximum likelihood estimators (MLE's), including consistency, global rates of convergence, and local limit distribution theory.  In both cases, we find the MLE's pointwise limit distribution at $m$ (either the known mode or the known center of symmetry) and at a point $x_0 \ne m$.  Software to compute the constrained estimators is available in the R package \verb+logcondens.mode+.

  The symmetry-constrained MLE is particularly useful in contexts of location estimation.  The mode-constrained MLE is useful for mode-regression.  The mode-constrained MLE can also be used to form a likelihood ratio test for the location of the mode of $f_0$.  These problems are studied in separate papers.  In particular, in a separate paper we show that, under a curvature assumption, the likelihood ratio statistic for the location of the mode can be used for hypothesis tests or confidence intervals that do not depend on either tuning parameters or nuisance parameters.
\end{abstract}

\begin{keyword}[class=AMS]
  \kwd[Primary ]{62G07} %
  \kwd[; secondary ]{62G05} %
  \kwd{62G20} %
\end{keyword}

\begin{keyword}
  \kwd{mode}
  \kwd{consistency}
  \kwd{convergence rate}
  \kwd{empirical processes}
  \kwd{convex optimization}
  \kwd{log-concave}
  \kwd{shape constraints}
  \kwd{symmetric}
\end{keyword}

\end{frontmatter}
\newpage

\tableofcontents

\newpage

\section{Introduction and overview}

\label{sec:Intro}
The classes of log-concave densities on $\RR$ (and on $\RR^d$) have great importance in statistics
for a variety of reasons including their many natural closure properties, including closure under convolution,
affine transformations, convergence in distribution, and marginalization.
These classes are also unimodal and serve as important nonparametric generalizations
of the class of Gaussian distributions.

Nonparametric estimation in the {\sl unconstrained classes} of log-concave densities
has developed rapidly in the past 10--15 years.
Existence
of maximum likelihood estimators for log-concave densities on $\RR$ was provided by
\cite{MR1941467},  %
while
\cite{MR2459192} %
established consistency.
\cite{DR2009LC} %
gave rates of convergence in certain uniform metrics,
 and provided efficient algorithms based on
``active set'' methods (see also \cite{Dumbgen:2010ux}).
\cite{BRW2007LCasymp}  %
established pointwise limit distribution theory for the MLE's,  %
while \cite{DossWellner:2016a} established rates of convergence of the MLE in the Hellinger metric.
There has also been rapid progress in estimation of log-concave densities on $\RR^d$;
see e.g.\
\cite{MR2758237}, %
\cite{MR2645484},  
\cite{MR2816336}, %
\cite{MR2766867}, %
and \cite{MR3485962}.

Interesting uses of the unconstrained log-concave MLE's in more complicated
models, mostly in mixture modeling and clustering,  have been considered by
\cite{MR2408591},   %
\cite{MR2370883},   %
\cite{MR2757433},  %
and
\cite{MR2645484}.  

On the other hand, for a number of important statistical problems it is of great
interest to understand
estimation in several important {\sl sub-classes} of the class of all log-concave densities on $\RR$. %
\begin{itemize}
\item
For testing that a log-concave density on $\RR$ %
is symmetric about a known point, for example $0$, %
we need know how to estimate the log-concave density both {\sl with} and {\sl without} the constraint of symmetry.
\item
For the basic problem of estimation of location with a symmetric error density, it is important to
know how to estimate a {\sl symmetric} log-concave density with mode (and median and mean) equal to $0$.
\item
For inference about the mode of a log-concave density it is necessary to understand how to
estimate a log-concave density with a known mode $m$ (but without the constraint of symmetry).
\end{itemize}

Once the properties of nonparametric estimators within these sub-classes is understood,
then the estimators can be used to develop statistical methods with known properties
for other more complex statistical problems.
For example:  the basic procedures we study here can be viewed as building blocks to be used for, among others:
\begin{description}
\item[(a)]  Testing the hypothesis of symmetry of a log-concave density.
\item[(b)]  Estimation of the location of a symmetric log-concave density.
\item[(c)]  Inference about the mode of a log-concave density.
\item[(d)]  Nonparametric modal regression (as in \cite{MR3476607}, but using log-concavity). %
\item[(e)]  Semiparametric estimation in mixture models based on symmetric log-concave distributions; see e.g.
        \cite{BalabdaouiDoss:2017}, \cite{pu2017semiparametric}, and \cite{MR2370883}. %
\item[(f)]  Modal clustering (as in \cite{1609.04721}, but using log-concavity). %
\item[(g)] Estimation of a spherically symmetric multivariate log-concave density, which is pursued in \cite{xu2017high}.
\item[(h)]  Inference about the center of an elliptical multivariate distribution based on the assumption of
a log-concave underlying shape.
\end{description}
\smallskip

Thus our focus here is on estimation of a log-concave density in two important sub-classes:
Let $\mathcal{L C}$ denote the class of all log-concave densities on the real line $\RR$.
The two subclasses we study here are: \\ %
(1)  The class $\mathcal{LC} (m) = \mathcal{LC}_m$  of all log-concave densities with mode a \\
$\phantom{blab}$fixed number $m$.\\
(2)  The class $\mathcal{S L C}(0) = \mathcal{SLC}_0$ of all log-concave densities symmetric at $0$.

We let $\hat{f}_n^0$ denote the maximum likelihood estimator of $f_0 \in {\cal L C}_m$ 
 based on an i.i.d. sample $X_1, \ldots , X_n$ from $f_0$; and we let $\hat{g}_n^0$ denote the maximum likelihood estimator
 of $g_0 \in {S L C}_0$, based on an i.i.d. sample from $g_0$.\\

We rely on the methods and properties developed here for the subclass $\mathcal{L C}(m)$
to derive new inference procedures for the mode in
\cite{Doss-Wellner:2016ModeInference}.
The sub-class $\mathcal{S L C} (0)$ has already been used in \cite{BalabdaouiDoss:2017}  %
to study semiparametric mixture models.
The methods developed here for $\mathcal{ S L C} (0)$  are also being used in an on-going study
by \cite{Laha:2017}  of efficient estimation of a location parameter in the classical semiparametric symmetric location
model with the (very natural) assumption of a symmetric log-concave error distribution.
Methodology based on modes or local maxima of nonparametrically estimated functions has seen a resurgence in recent years;
see, e.g., \cite{MR3476607}, 
\cite{MR3375871}, 
and \cite{MR3485960}. 
A recent survey on estimation and inference for the mode and on mode-based methodology
is given by
\cite{Chacon:2018}.
%

%
%
%
%
%
%
%
%
%
%
%
%
%
%
%
%
%
%
%
%
%
%
%
%

%
%
%
%
%
%
%
%
%
%
%
%
%
%
%
%
%
%
%
%
%

%
%
%
%
%
%
%
%
%
%
%
%
%
%
%

Thus our main goals here are the following:
\begin{enumerate}[label=(\alph*)]
\item To show that the mode-constrained MLE's $\hat{f}_n^0 \in \mathcal{L C}(m)$ and $\hat{g}_n^0 \in \mathcal{S L C}(0)$
exist and to provide useful characterizations thereof.
\item  Establish useful finite-sample properties of $\hat{f}_n^0$  and $\hat{g}_n^0$.
\item Establish consistency of the mode-constrained and symmetric mode-constrained MLE's with respect to %
the Hellinger metric.
\item Establish local rates of convergence of the constrained estimators $\hat{f}_n^0$  and $\hat{g}_n^0$
 and establish the (pointwise) asymptotic distributions of the constrained estimators.
\item Establish global rates of convergence of the constrained estimators.
\end{enumerate}

Here is a brief summary of the paper:
In Section 2 we show that the constrained estimators exist and satisfy useful characterizations.
Section 3 provides plots of the constrained estimators and provides comparisons to each other and to the unconstrained
maximum likelihood estimators $\hat{f}_n \in \mathcal{L C}$.
In Section 4 we summarize results concerning consistency and global rates of convergence,
while Section 5 addresses local rates of convergence and limiting distributions at fixed points.
Section 6 summarizes some problems and difficulties concerning extensions to higher dimensions.
All the proofs are given in the Appendix.  %

Many of our theorems have parts labeled ``A'', ``B'', and ``C.''  In general the ``A parts'' of results here have been proved by other authors (as noted in the theorem statements), the ``B parts'' were proved (for the most part) in the University of Washington Ph.D. dissertation of the first author, Doss (2013b). The ``C parts'' are new findings by the present authors, whose proofs are in some cases (as noted in text near the corresponding results) related to proofs developed by  \cite{BalabdaouiDoss:2017}.
  

%
%
%
%
%
%
%
%
%
%
%

%
%

\section{Maximum likelihood estimator finite sample properties: unconstrained and mode-constrained}
\label{sec:MLEs}

\subsection{Notation and terminology}

Several classes of concave functions will play a central role in this paper.
In particular, we let
\begin{eqnarray}
  {\cal C} := \{ \varphi : \ \RR \rightarrow [-\infty, \infty) \ | \ \varphi \ \ \mbox{is concave, closed, and proper} \}
  \label{UnconstrainedConcaveClass}
\end{eqnarray}
and, for any fixed $ m \in \RR$,
\begin{eqnarray}
  {\cal C}_m := \{ \varphi \in {\cal C} \ | \ \varphi(m) \ge \varphi (x) \ \ \mbox{for all} \ \ x \in \RR \}
  \label{ConstrainedConcaveClass}
\end{eqnarray}
is the class of concave functions on $\RR$ with mode at $m$.
We also let
\begin{eqnarray}
{\cal S C}_0 := \{ \varphi \in {\cal C}_0 \ | \ \varphi (-x) = \varphi (x) \ \ \mbox{for all} \ \ x \in \RR \}.
\label{SymmetricConstrainedConcaveClass}
\end{eqnarray}
Here proper and closed concave functions are as defined in
\cite{MR0274683}, %
pages 24 and 50.  We will follow  the convention that all concave functions $\varphi$
are defined on all of $\RR$ and take the value $-\infty$ off of their effective domains where
$\mbox{dom} (\varphi ) := \{ x \ : \ \varphi (x) > -\infty \}$
(\cite{MR0274683}, %
page 40).
The classes of unconstrained and constrained log-concave densities are then
\begin{eqnarray*}
  && {\cal LC } := \left \{ e^{\varphi}  \ : \ \int e^{\varphi} d \lambda = 1, \ \ \varphi  \in {\cal C} \right \} , \\
  && {\cal LC}_m := \left \{ e^{\varphi} \ : \ \int e^{\varphi} d \lambda =1, \ \ \varphi \in {\cal C}_m \right \}, \ \ \ \mbox{and}\\
  && {\cal SLC}_0 := \left \{ e^{\varphi} \ : \int e^{\varphi} d \lambda =1, \ \ \varphi \in {\cal SC}_0 \right \},
\end{eqnarray*}
where $\lambda$ is Lebesgue measure on $\RR$.
We let $X_1, \ldots , X_n$ be the observations, independent and identically distributed with density
$f_0$ with respect to Lebesgue measure.
Here we assume throughout that $f_0 \in {\cal LC}$ and frequently that $f_0 = e^{\varphi_0} \in {\cal LC}_m$
for some $m\in \RR$ or $f_0 = e^{\varphi_0} \in {\cal S L C }_0$.
We let $X_{(1)} < \cdots < X_{(n)}$ denote the order statistics of the $X_i$'s, and write
$|X|_{(1)} < \cdots < |X|_{(n)}$ for the order statistics of $|X_1|, \ldots , |X_n|$.
We let $\PP_n = n^{-1} \sum_{i=1}^n \delta_{X_i}$ denote the empirical measure, let
$\FF_n (x) = n^{-1} \sum_{i=1}^n 1_{(-\infty,x]} (X_i )$ denote the empirical distribution function,
and let $\GG_n (x) = n^{-1} \sum_{i=1}^n 1_{[0,x]} (|X_i |)$ denote  the empirical distribution function
of $|X_1| , \ldots , |X_n |$.

We define the log-likelihood criterion function $\Psi_n : {\cal C} \rightarrow \RR$ by
\begin{eqnarray}
  \Psi_n (\varphi ) = \frac{1}{n} \sum_{i=1}^n \varphi (X_i)  - \int_{\RR} e^{\varphi (x)} dx
  = \PP_n \varphi  - \int_{\RR} e^{\varphi} d\lambda
  \label{eqn:AdjLogLikCriterion}
\end{eqnarray}
where we have used the standard device of including the Lagrange term $\int_{\RR} e^{\vp(x)} dx$ in $\Psi_n$ to avoid the normalization constraints involved in the classes $\LC_m$ and $\SLC_0$.  This is as in
\cite{MR663433},  %
\cite{DR2009LC}, %
and other current literature.

We will denote the unconstrained MLE's of $\varphi_0$,  $f_0$, and $F_0$ by
$\widehat{\varphi}_n$, $\widehat{f}_n$, and $\widehat{F}_n$ respectively.  The corresponding
constrained estimators with mode $m$ and symmetric estimators with mode at $0$ will be denoted by
$\widehat{\varphi}_n^0$, $\widehat{f}_n^0$, $\widehat{F}_n^0$, and
$\widehat{\psi}_n^0$, $\widehat{g}_n^0$, $\widehat{G}_n^0$ respectively.
Thus
\begin{eqnarray*}
  \widehat{\varphi}_n \equiv \argmax_{\varphi \in {\cal C}} \Psi_n (\varphi),\ \
  \widehat{\varphi}_n^0  \equiv \argmax_{\varphi \in {\cal C}_m } \Psi_n (\varphi) , \ \ \mbox{and} \ \
  \widehat{\psi}_n^0  \equiv \argmax_{\psi \in {\cal SC}_0 } \Psi_n (\psi) .
\end{eqnarray*}
Before proceeding to results concerning existence and uniqueness of the constrained estimators
$\vvna$ and $\psina$, we first explain some undesirable properties of ``naive'' constrained estimators based on the unconstrained MLE's $\ffn$ and $\vvn$.

\subsection{Naive Estimators}
\label{subsec:naive-estimators}

We can easily construct ``naive'' estimators under our two classes of constraints.  For instance, a naive mode-constrained estimator based on the unconstrained log-concave MLE is $\tilde{f}_n^0(x) = \ffn(x - (m-\hat{m}_n))$,
where $\hat{m}_n$ is the mode of $\ffn$.
Then $\tilde{f}_n^0$ indeed  has mode $m$.
Let $\tilde{\vp}_n^0 = \log \tilde{f}_n^0$.
Unfortunately, these estimators have quite undesirable properties.  For example,
when $\varphi_0^{\prime} (x_0) \not= 0$, we can see that
\begin{align*}
  n^{2/5}\left ( \tilde{\vp}_n^0(x_0)  - \vvo(x_0) \right )
  & =  n^{2/5} \left ( \vvn(x_0 + (\hat{m}_n - m))
    - \vvo(x_0) - \varphi_0^{\prime} (x_0) ( \hat{m} - m) \right ) \\
  & \qquad + \ n^{2/5} \varphi_0^{\prime} (x_0)  ( \hat{m} - m) \\
  & =  O_p(1)  + n^{1/5} O_p(1)
\end{align*}
since $\hat{m}_n - m = O_p(n^{-1/5}) $
by Theorem~3.6 of \cite{BRW2007LCasymp}, so the first summand is $O_p(1)$ by Corollary~2.2 of
\cite{BRW2007LCasymp} (and its proof, see  their (4.34)).
Thus, away from the mode, this naive estimator in fact converges at a slower rate than $n^{-2/5}$.

Similarly, a naive $0$-symmetric estimator can be constructed.  Let $\tilde{g}_n^s(y) = ( \ffn(\hat{m}_n + y) \ffn(\hat{m}_n - y))^{1/2}$.  Then $\tilde{g}_n^s$ is symmetric about its mode $0$.  (It is not necessarily a bona fide density that integrates to $1$,  but its integral converges to $1$.)   Again, unfortunately, a similar analysis as above shows that if $x_0 \ne m$, then
\begin{align*}
  n^{2/5} ( \log \tilde{g}_n^s(x_0) - \vvo(x_0))
  = O_p(1) +  n^{2/5} \hat{m}_n \vvo'(x_0)
\end{align*}
since $\vvo'(x_0) = - \vvo'(-x_0)$.  Since $n^{1/5} \hat{m}_n = O_p(1)$ and $\vvo'(x_0) \ne 0$, we again see that the naive estimator converges at a slower rate than $n^{-2/5}$.

In summary, naive plug-in estimation for the mode and symmetry constraints does not work.
The poor performance of these and other ``naive'' or ``plug-in'' estimators motivates  study of the constrained MLE's, which we now pursue.

\subsection{The  unconstrained and the constrained MLE's}
\label{ssec:UnconstrainedAndConstrainedEstimators}

To develop theory for the mode-constrained estimators $\widehat{\varphi}_n^0$,  $\widehat{f}_n^0$, and $\widehat{F}_n^0$ it will
be helpful to consider {\sl mode-augmented data} $Z_1, \ldots , Z_N$ with $N= n$ or $n+1$ as follows:
\begin{description}
\item[(1)]
  If $m = X_{(k)} $ for some $k \in \{1, \ldots , n \}$ then $Z_j \equiv X_{(j)}$ for $j\in \{ 1, \ldots , n\}$ and $N=n$.
\item[(2)]
  If $m \in (X_{(k-1)}, X_k)$ for some $k \in \{ 1, \ldots , n+1 \}$ (where $X_{(0)} \equiv - \infty $ and $X_{(n+1)} \equiv + \infty$),
  then we define $Z_i \equiv X_{(i)}$ for $i \in \{ 1, \ldots , k-1 \}$, $Z_k \equiv m$,  and $Z_i \equiv X_{(i-1)}$ for $i \in \{ k+1, \ldots , n+1\}$.
  In this case \\
  $\underline{Z} = (X_{(1)}, \ldots , X_{(k-1)}, m , X_{(k)} , \dots , X_{(n)}) \in \RR^{n+1} $ and $N = n+1$.
\end{description}

\begin{theorem}
\label{thm:Existence}
The following statements hold almost surely when $X_1, \ldots, X_n$ are i.i.d.\ from a density on $\RR$.
\begin{enumerate}[label=\Alph*.,ref=\Alph*]
\item \label{thm:existence:A-unconstrained} (\cite{MR2459192}, \cite{RufibachThesis}) %
  For $n\ge 2$ the (unconstrained) nonparametric MLE $\widehat{\varphi}_n$ exists and is unique.  It is linear on all intervals $[X_{(j)}, X_{(j+1)}]$, $j=1, \ldots , n$.  Moreover, $\widehat{\varphi}_n = - \infty$ and $\widehat{f}_n = 0$ on $\RR\setminus [X_{(1)}, X_{(n)} ]$.

\item \label{thm:existence:B-mode-constrained} (\cite{Doss:2013}) For $N\ge 2$ the mode-constrained MLE $\widehat{\varphi}_n^0$ exists and is unique.  It is piecewise linear with knots at the $Z_i$'s and domain $[Z_1, Z_N]$.  If $m$ is not a data point, then at least one of $(\widehat{\varphi}_n^0 )^{\prime} (m+)$ or $(\widehat{\varphi}_n^0 )^{\prime} (m-)$ is $0$.

\item \label{thm:existence:C-symmetric} The constrained MLE $\widehat{\psi}_n^0 \in {\cal SC}_0$ exists for $n\ge1$ and is unique.  It is piecewise linear with knots contained in the set
  of  $2n+1$ points \\
  $-|X|_{(n)}, \ldots , - | X|_{(1)} , 0, |X|_{(1)}, \ldots , | X |_{(n)}$, and is $-\infty$ for $x \notin [-|X|_{(n)}, |X|_{(n)} ]$.
%
Furthermore, $(\widehat{\psi}^0_{n})^{\prime} (0\pm)= 0$.
\end{enumerate}
\end{theorem}

\medskip

\par\noindent
The previous result shows that the MLE's exist.  Unfortunately, there is no closed form expression for the MLE's.  However, since they are solutions to optimization problems, they satisfy certain optimality conditions.  Thus, the next two theorems we present provide   systems of inequalities and equalities that characterize the MLE's.
\medskip

\begin{theorem}
  \label{thm:CharThmOne}
  $\phantom{blab}$ %
  \begin{enumerate}[label=\Alph*.,ref=\Alph*,leftmargin=*]
  \item  \label{thm:CharThmOne:A-UC}
  (\cite{RufibachThesis}, \cite{DR2009LC} )  %
  Let $\widehat{\varphi}_n $ be a concave function such that $ \{ x : \ \widehat{\varphi}_n (x) > -\infty \} = [X_{(1)}, X_{(n)}]$.
  Then $\widehat{f}_n = e^{\widehat{\varphi}_n} \in {\cal LC} $ is the unconstrained MLE if and only if
  \begin{eqnarray*}
    \int \Delta (x) d \FF_n (x) \le \int \Delta (x)  \exp ( \widehat{\varphi}_n (x)) dx = \int \Delta (x) d \widehat{F}_n (x)
  \end{eqnarray*}
  for any function $\Delta: \RR \rightarrow \RR$ such that $\widehat{\varphi}_n + \lambda \Delta$ is concave for some $\lambda >0$.

\item  \label{thm:CharThmOne:B-MC}
  (\cite{Doss:2013}) %
  Suppose that $\widehat{f}_n^0 = e^{\widehat{\varphi}_n^0 }\in {\cal LC}_m$.
  Then $\widehat{f}_n^0 $
  is the MLE over ${\cal LC}_m$ if and only if
  \begin{eqnarray}
    \int \Delta d \FF_n \le \int \Delta d \widehat{F}_n^0
    \label{eq:char0}
  \end{eqnarray}
  for all $\Delta $ such that   $\widehat{\varphi}_n^0 + t \Delta \in {\cal C}_m$ for some $t>0$.

\item \label{thm:CharThmOne:C-symm}
  Suppose that
  $\widehat{g}_n^0 = e^{\widehat{\psi}_n^0 }\in {\cal SLC}_0$ and
  $\widehat{G}_n^0 (x) \equiv \int_{-\infty}^x \widehat{g}_n (y)dy$.  Then
  $\widehat{g}_n^0 $
  is the MLE over ${\cal SLC}_0$ if and only if
  \begin{eqnarray}
    \int \Delta d \FF_n \le \int \Delta d \widehat{G}_n^0
    \label{eq:char0s}
  \end{eqnarray}
  for all $\Delta $ such that $\widehat{\psi}_n^0 + t \Delta \in {\cal SC}_0$
  for some $t>0$.
\end{enumerate}
\end{theorem}

\medskip

\par\noindent
To state the second characterization theorem for the MLE's, we first introduce some further notation and definitions.
For a continuous and piecewise linear function $h: [A,B]  \rightarrow \RR$ we define its {\sl knots} to be
\begin{eqnarray*}
  {\cal S}_n (h) := \{ t \in (A , B ) : \ h' (t-) \not= h' (t+) \} \cup \{ A ,  B \} .
\end{eqnarray*}
Note that $\widehat{\varphi}_n$, $\widehat{\varphi}_n^0$, and $\widehat{\psi}_n^0$  are
all continuous and piecewise linear functions
(with $A = X_{(1)}$, $B = X_{(n)}$ in the case of $\widehat{\varphi}_n$ and $\widehat{\varphi}_n^0$,
and with $A = -|X|_{(n)}$, $B=|X|_{(n)}$ in the case of $\widehat{\psi}_n$), and we have
\begin{eqnarray*}
  && {\cal S}_n ( \widehat{\varphi}_n ) \subset \{ X_{(1)}, X_{(2)}, \ldots , X_{(n)} \}, \\
  && {\cal S}_n ( \widehat{\varphi}_n^0) \subset \{ X_{(1)}, X_{(2)}, \ldots , X_{(n)} \}, \\
  && {\cal S}_n ( \widehat{\psi}_n^0 ) \subset \{ - | X|_{(n)}, \ldots , | X |_{(n)} \}.
\end{eqnarray*}
Now suppose that $\widehat{\varphi}_n^0$ is piecewise linear with knots at the
(mode-augmented) data, let $m\in \RR$, and assume that %
$\widehat{f}_n^0 \equiv \exp ( \widehat{\varphi}_n^0) \in \mathcal{L C}_m$.
For $t \in \RR$ define
\begin{eqnarray}
  \label{eq:def:finite-sample-L-R-processes}
  \begin{array}{l l}
    \FF_{n,L} (t) \equiv \int_{(-\infty, t]} d \FF_n (y), & \FF_{n,R} (t) \equiv \int_{[t, \infty)} d \FF_n (y) , \\
    \YY_{n,L} (t) \equiv \int_{X_{(1)}}^t \FF_{n,L} (x) dx , & \YY_{n,R} (t) \equiv \int_{t}^{X_{(n)}} \FF_{n,R} (x) dx , \\
    \widehat{F}_{n,L}^0 (t) \equiv \int_{-\infty}^t \widehat{f}_n^0 (y) dy, & \widehat{F}^0_{n,R} (t) \equiv \int_t^{X_{(n)}} \widehat{f}_n^0 (y) dy, \\
    \widehat{H}_{n,L}^0  (t) \equiv \int_{X_{(1)}}^t \widehat{F}_{n,L}^0 (x) dx , &  \widehat{H}_{n,R}^0 (t) \equiv \int_t^{X_{(n)}} \widehat{F}_{n,R}^0 (x) dx .
  \end{array}
\end{eqnarray}
\begin{definition}
  With $m$ considered as a possible knot of $\widehat{\varphi}_n^0$ we say that $m$ is a {\sl left knot} (or LK) if
  $(\widehat{\varphi}_n^0 )^{\prime} (m-) > 0$ and that $m$ is a {\sl right knot} (or RK) if
  $(\widehat{\varphi}_n^0 )^{\prime} (m+) < 0$.  We say that $m$ is {\sl not a knot} (or NK) if $(\widehat{\varphi}_n^0)^{\prime} (m) = 0$.
  All other knots are considered to be left knots (LKs) or right knots (RKs) depending on whether they are strictly smaller or strictly
  larger than $m$.
\end{definition}

%

\begin{theorem}
  \label{thm:CharThmTwo}
  $\phantom{blab}$ %
  \begin{enumerate}[label=\Alph*.,ref=\Alph*,leftmargin=*]
  \item \label{thm:CharThmTwo:A-UC}
    (\cite{RufibachThesis}, %
    \cite{DR2009LC})
    Let $\widehat{F}_n (x) \equiv \int_{-\infty} ^ x e^{ \widehat{\varphi}_n (y) } dy$, and assume further that
    $\widehat{f}_n = e^{\widehat{\varphi}_n} \in {\cal LC}$.
    Then $\widehat{f}_n$ is the MLE in ${\cal LC}$ if and only if
    \begin{eqnarray*}
      \widehat{H}_n (t) \equiv \int_{X_{(1)}}^t \widehat{F}_n (y)dy \le \int_{X_{(1)}}^t \FF_n (y) dy \equiv \YY_n (t) \ \ \
      \mbox{for all} \ \ t \in \RR
    \end{eqnarray*}
    with equality if $t \in {\cal S}_n ( \widehat{\varphi}_n )$.

  \item \label{thm:CharThmTwo:B-MC}
    \citep{Doss:2013}
    With the notation in \eqref{eq:def:finite-sample-L-R-processes},  $\widehat{f}_n^0 = e^{\widehat{\varphi}_n^0}$ is the MLE of $f_0 \in {\cal LC}_m$ if and only if
    \begin{eqnarray}
      \widehat{H}_{n,L}^0 (t) \le \YY_{n,L} (t)\ \ \ \mbox{for} \ \ X_{(1)} \le t \le m
      \label{eq:LeftFenchelInequalitiesConstrained}
    \end{eqnarray}
    \begin{eqnarray}
      \widehat{H}_{n,R}^0 (t) \le \YY_{n,R} (t)\ \ \
      \mbox{for} \ \ m \le t \le X_{(n)}
      \label{eq:RightFenchelInequalitiesConstrained}
    \end{eqnarray}
    with equality in (\ref{eq:LeftFenchelInequalitiesConstrained}) if $t$ is a {\sl left knot} of $\widehat{\varphi}_n^0$
    and equality in (\ref{eq:RightFenchelInequalitiesConstrained}) if $t $ is a {\sl right knot} of $\widehat{\varphi}_n^0$.

  \item \label{thm:CharThmTwo:C-symm}
    $\widehat{g}_n^0 = e^{\widehat{\psi}_n^0} \in {\cal SLC}_0$ is the MLE  if and only if
    $\widehat{g}_n^+ \equiv 2 \widehat{g}_n^0$ satisfies,
    with $\widehat{G}_{n,R}^+(x) \equiv \int_x^{|X|_{(n)}} \widehat{g}_n^+ (y) dy$
    and $\FF_{n,R}^+ (x)\equiv n^{-1} \sum_{i=1}^n 1 \{ |X|_{(i)} \ge x\}$,
    \begin{eqnarray*}
      \int_{t}^{|X|_{(n)}}
      \widehat{G}_{n,R}^+ (x)dx
      \left \{
      \begin{array}{ l l}
        \le \int_{t}^{|X|_{(n)}}
        \FF_{n,R}^+ (x)dx,  & \mbox{if} \ \ t \in [0, |X|_{(n)}], \\
        =  \int_{t}^{|X|_{(n)}}
        \FF_{n,R}^+ (x)dx,
                            & \mbox{if} \ \ t \in
                              {\cal S}_n (\widehat{\psi}_n^0) \cap [0, |X|_{(n)} ].
      \end{array}
                              \right .
    \end{eqnarray*}
  \end{enumerate}
\end{theorem}

\medskip

\begin{remark}
  \label{rm:GlobalConstraintViaDFequalsOne}
  The conditions (\ref{eq:LeftFenchelInequalitiesConstrained}) and
  (\ref{eq:RightFenchelInequalitiesConstrained}) only involve data from the left and right sides of $m$, and hence are separate
  characterizations in a sense.  But they are coupled by way of the (global) constraint $\widehat{F}_n^0 (X_{(n)} ) = 1$
  (or, equivalently, $\widehat{\varphi}_n^0 \in {\cal C}_m$) which involves the data on {\sl both} sides of $m$.
\end{remark}

\begin{remark}
\label{rm:ConnectionWithBalabdaouiDoss17}
The ``C parts'' of Theorems~\ref{thm:Existence}, \ref{thm:CharThmOne}, and \ref{thm:CharThmTwo}
will be proved here in detail via methods similar to those introduced briefly in 
\cite{BalabdaouiDoss:2017} in the course of a study of two-component mixture models based on symmetric log-concave components.
\end{remark}

These characterization theorems have two important corollaries.  (Recall that $\GG_n$ denotes the empirical distribution function
of the $|X_i|$'s.)
\smallskip

\begin{corollary}
  \label{cor:EstimatedFNearlyTouchesEDFatKnotsUnconstrained}
  (MLE's related to $\FF_n$ at knot points)
  Each of the following holds almost surely.
  \begin{enumerate}[label=\Alph*.,ref=\Alph*,leftmargin=*]
  \item   $\FF_n - n^{-1} \le \widehat{F}_n \le \FF_n $  on  ${\cal S}_n ( \widehat{\varphi}_n )$.

  \item  $\FF_n - n^{-1} \le \widehat{F}_n^0 \le \FF_n $ on ${\cal S}_n ( \widehat{\varphi}_n^0 ) \setminus \{ m \}$.

  \item  $\GG_n - n^{-1} \le \widehat{G}_n^+ \le \GG_n$ on ${\cal S}_n ( \widehat{\psi}_n^0 ) \cap [0, |X|_{(n)}]$.

  \end{enumerate}
\end{corollary}

\smallskip

Now for any distribution function $F$ on $\RR$ let $\mu(F) \equiv \int x dF(x)$
and $\mbox{Var}(F) = \int (x - \mu(F))^2 dF(x)$.

\begin{corollary}
  \label{cor:MeanVarianceInequalities}
  (Mean and variance inequalities)\\
A.  $\mu (\widehat{F}_n ) = \mu ( \FF_n )$   and $\mbox{Var}( \widehat{F}_n ) \le \mbox{Var} ( \FF_n ) $.\\
B. $\mu(\GGna) = 0$ and $\mbox{Var}(\GGna) \le \int x^2 d\FF_n(x)$.
\end{corollary}
\par\noindent

Because $\Delta_{\pm}(x) = \pm x$ does not have mode $m$, and because $-(x-\mu)^2$ only has mode $m$ if $\mu = m$, we cannot make comparisons between the mean and variances of $\FF_n$ and $\FFna$.

\section{Hellinger consistency and rates}
\label{sec:Consistency}

\cite{MR2459192}  %
showed that the unconstrained MLE's $\{ \widehat{f}_n \}$ are a.s. consistent in the
Hellinger metric $H$ where $H^2 (p,q) \equiv (1/2) \int \{ \sqrt{p} - \sqrt{q} \}^2 d \lambda$,
and their methods also yield consistency for the MLE's over any sub-class ${\cal S} \subset {\cal LC}$ for which
the MLE's $\{ \widehat{g}_n \}$ exist and satisfy
\begin{eqnarray*}
  \sup_n \sup_x \log \widehat{g}_n (x) < \infty \ \ \ \mbox{a.s.} .
\end{eqnarray*}
This nicely includes the subclass ${\cal S} = {\cal LC}_m$ when $f_0 \in {\cal LC}_m$; i.e. the mode $m$ has been
correctly specified.
Further consistency results are due to
\cite{MR2645484},  %
\cite{RufibachThesis}, %
and \cite{DR2009LC}.
To the best of our knowledge, this is the first treatment
of the consistency and global rate properties
of the constrained estimators.

\begin{theorem}
\label{thm:HellingerConsistencyRates}
(Hellinger consistency and rates of convergence)\\
A. \  (\cite{DossWellner:2016a}) If $f_0 \in {\cal LC }$,
then $H ( \widehat{f}_n , f_0 ) = O_p (n^{-2/5})$.    \\
B. \ (\cite{Doss:2013}) If $f_0 \in {\cal LC}_m$,
then $H ( \widehat{f}_n^0 , f_0 ) = O_p (n^{-2/5})$. \\
C. \ (\cite{BalabdaouiDoss:2017})  If $f_0 \in {\cal SLC}_0$,
       then $H ( \widehat{g}_n^0 , f_0) = O_p (n^{-2/5})$.
\end{theorem}

\begin{remark}
  \cite{MR3576560} %
  extend Part A of Theorem~\ref{thm:HellingerConsistencyRates} by upper
  bounding the maximal risk of $\ffn$: their Theorem 5 implies that $\sup_{f \in \LC} E_f H^2( \ffn, f)$ is $O(n^{-4/5})$ (considering squared Hellinger rather than Hellinger distance).
  They also provide a matching lower bound: their Theorem~1 implies
 $$\inf_{\tilde{f}_n} \sup_{f \in \LC} E_f H^2(\tilde{f}_n , f ) \ge c n^{-4/5},$$
 for some $c>0$, where the infimum is over all (measurable) estimators $\tilde{f}_n$ of $f$.
  Neither upper nor lower bounds for the (Hellinger) minimax risk  are known for either of the  constrained density classes we consider in the present paper, although we conjecture that $n^{-4/5}$ is the minimax rate of convergence in both cases.
\end{remark}

\begin{remark}
When $f_0 \in {\cal LC} \setminus {\cal LC}_m$, then we can show that
$H^2 ( \widehat{f}_n^0 , f_0^*) \rightarrow_{a.s.} 0$ where $f_0^*$ satisfies
$$
K(f_0 , f_0^*) = \inf_{g \in {\cal LC}_m} K( f_0 , g).
$$
%
Similarly, when $f_0 \in {\cal LC} \setminus {\cal SLC}_0$, then we can show that
$H^2 ( \widehat{g}_n^0 , g_0^*) \rightarrow_{a.s.} 0$ where $g_0^*$ satisfies
$$
K(f_0 , g_0^*) = \inf_{g \in {\cal SLC}_0} K( f_0 , g) ;
$$
but we will not pursue this here since our  goal
in this paper is to understand the null hypothesis (or correctly specified) behavior of
the constrained estimators $\widehat{\varphi}_n^0$, $\widehat{f}_n^0$, and $\widehat{F}_n^0$.
See
\cite{Doss-Wellner:2016ModeInference} %
for some initial steps concerning the power of a likelihood ratio test based on
$2 \log \lambda_n$ when $\ffo \in \mc{LC} \setminus \mc{LC}_m$.
\end{remark}

In addition to considering Hellinger distance, one can consider the sup norm (on compact sets) as a metric for global convergence.  It turns out that the proofs in \cite{Doss-Wellner:2016ModeInference} rely crucially on knowing the rate of sup-norm convergence for $\vvna$
(as well as for $\vvn$).  Thus we study the sup-norm rate of convergence for $\vvna$ in that paper.
In Theorem~4.1 of that paper we find, when the true log-density satisfies a H\"older condition of order $2$ and $\vvo^{(2)}(m) < 0$, that
the rate of convergence is $(\log n / n)^{2/5}$ on compact sets interior to the support of $f_0$.

\section{Local limit processes and limiting distributions at fixed points}
\label{sec:LimitDistributions}

Our goal in this section is to describe the limiting distributions of our estimators, both
unconstrained and constrained, at fixed points $x_0$ (and $m$ and $0$) at which the
true density $f_0$ satisfies a curvature condition.  We also want to compare and contrast
the behavior of the three different estimators.

\subsection{The limit processes, unconstrained and constrained}
\label{subsec:limit-processes}
We first need to introduce the local limit proceses which are needed to treat the local
(at a single point or in a neighorhood of a point) limiting distributions of the estimators,
unconstrained and constrained.
For all of our estimators (including the unconstrained and the two different mode-constrained estimators),
the limit distributions are not Gaussian.  Rather, they are defined in terms of
 so-called {\em invelope} processes of integrated Brownian motion.
We first recall  the invelope process related to the limit
distribution for the unconstrained estimators; this process was first
presented and studied in \cite{MR1891741} (and shown to yield the limit
distribution in several  convex function estimation problems in
\cite{MR1891742}).  %
Let $W$ be a two-sided standard Brownian motion starting at $0$ and for any
$t \in \RR$ let
\begin{equation}
  \label{eq:defn_Xs}
    X(t)  =  W(t)  - 4t^{3} ,  \ \ \ \
    Y(t)   =  \int_0^t X(s) ds = \int_0^t W(s)\, ds -t^4.
\end{equation}

\begin{theorem}[\cite{MR1891741}] %
  \label{thm:charzn_uniqueness_full_UC}
  Let $W$, $X$, and $Y$ be as in \eqref{eq:defn_Xs}.
  Then there
  exists an almost surely uniquely defined random continuous function $H$
  satisfying the following conditions:\\
  (i) \ \  The function $H$ is everywhere below $Y$:
    \begin{equation*}
      H(t) \le Y(t) \;\;\; \mbox{ for all } t \in \RR.
    \end{equation*}
 (ii) \ $H$ has a concave second derivative.\\
(iii)  $H$ satisfies
\begin{equation*}
      \int_{-\infty}^{\infty} (H(t)-Y(t)) d(H^{(3)})(t)=0.
    \end{equation*}
\end{theorem}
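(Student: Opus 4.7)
The plan is to establish existence via a compact-approximation scheme and uniqueness via an integration-by-parts identity derived from conditions (1)--(3) together. I would work pathwise, fixing a Brownian sample path for which $W(t) = o(t^2)$ at $\pm\infty$ (a full-measure event by the law of the iterated logarithm), so that $Y(t) \sim -t^4$ at infinity.

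\textbf{Existence.} On each compact interval $[-c,c]$, construct a candidate invelope $H_c$ as the pointwise supremum of functions $G \in C^1([-c,c])$ satisfying $G \le Y$, $G^{\prime\prime}$ concave (so that $dG^{(3)}$ is a nonpositive signed measure), with $G$ and $G^{\prime}$ matching $Y$ and $Y^{\prime}$ at $\pm c$. The class is nonempty and stable under suitable maxima, and compactness arguments for functions whose third derivatives are signed measures of bounded total variation show that the supremum $H_c$ itself lies in the class. The complementarity condition $\int(H_c - Y)\,dH_c^{(3)} = 0$ follows from extremality: on any interval where $H_c < Y$ and $dH_c^{(3)}$ places mass, a small local perturbation raises $H_c$ without violating constraints, contradicting maximality. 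To send $c \to \infty$, use that $Y(t) \lesssim -t^4/2$ forces $H_c(t) \lesssim -t^4/2$ for $|t|$ large, which combined with concavity of $H_c^{\prime\prime}$ yields uniform $C^2$-bounds on any fixed compact $K \subset \RR$ independent of $c$. An Arzel\`a--Ascoli and diagonal argument extracts a subsequence $H_{c_k} \to H$ locally uniformly in $C^2$, and the three defining conditions persist in the limit.

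\textbf{Uniqueness.} Suppose $H_1, H_2$ both satisfy conditions (1)--(3). Since $dH_i^{(3)}$ is nonpositive and $H_i - Y \le 0$, condition (3) forces $\operatorname{supp}(dH_i^{(3)}) \subseteq \{H_i = Y\}$. On this set, $H_1 - H_2$ has a definite sign; concretely, $\int (H_1 - H_2)\,dH_1^{(3)} \le 0$ and $\int (H_1 - H_2)\,dH_2^{(3)} \ge 0$, so
\[
\int (H_1 - H_2)\, d(H_1^{(3)} - H_2^{(3)}) \le 0.
\]
On the other hand, two integrations by parts (provided boundary terms vanish) give
\[
\int (H_1 - H_2)\, d(H_1^{(3)} - H_2^{(3)}) = \int (H_1^{\prime\prime} - H_2^{\prime\prime})^{2}\, dt \ge 0.
\]
Hence $H_1^{\prime\prime} \equiv H_2^{\prime\prime}$, so $H_1 - H_2 = at + b$ for constants $a,b$. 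The requirement $H_i \le Y$ together with $Y(t) \to -\infty$ at both tails forces $a = 0$, and the fact that both $H_1$ and $H_2$ must touch $Y$ on sequences going to each of $\pm\infty$ (otherwise (3) would be empty on one side) pins down $b = 0$.

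\textbf{Main obstacle.} The technically delicate step is justifying the vanishing of boundary terms in the integration-by-parts identity: although $H_i$ and its derivatives individually grow like powers of $t$, one needs the \emph{difference} $H_1 - H_2$ and $H_1^{\prime} - H_2^{\prime}$ to decay fast enough at $\pm\infty$ that the bracketed boundary terms drop. This decay must be extracted from a preliminary structural analysis of the touching sets $\{H_i = Y\}$: condition (3) combined with the quartic growth of $Y$ forces both $H_1$ and $H_2$ to coincide with $Y$ arbitrarily far out on each side, which squeezes $H_1 - H_2$ to zero through infinitely many points in each tail, and iterating with Rolle-type arguments propagates this decay to the first and second derivatives. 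Once this decay is established, the rest of the argument is an essentially algebraic manipulation of (1)--(3).
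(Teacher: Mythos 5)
This statement is cited from \cite{MR1891741} (Groeneboom--Jongbloed--Wellner) and is not proved in the present paper; however the paper \emph{does} prove the directly analogous mode-constrained result, Theorem~\ref{thm:MC-process-uniqueness-theorem}, in Section~\ref{subsec:proofs-MC-process}, and that proof is closely modelled on the argument in \cite{MR1891741}. The comparison below is against that template.

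Your existence sketch has roughly the right shape (construct on $[-c,c]$ then extract a locally uniform $C^2$-limit), but the phrase ``combined with concavity of $H_c''$ yields uniform $C^2$-bounds'' conceals the actual engine: one must prove that the gaps between points of touch of $H_c$ and $Y$ are $O_P(1)$ uniformly in $c$ (the analogue of the paper's Lemma~\ref{lem:unq_knotsTight} and Lemma~2.4 of \cite{MR1891741}), and only from this knot-gap tightness, together with the quartic drift, does one deduce $c$-uniform bounds on $H_c''$ and $H_c^{(3)}$ on compacta. Without it, $H_c \le Y$ gives only a one-sided bound and the limit could degenerate.

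The genuine gap is in your uniqueness step, and it is not the one you flag. After two integrations by parts on $[a,b]$ the boundary terms are
\begin{equation*}
\Bigl[(H_1-H_2)(H_1^{(3)}-H_2^{(3)})\Bigr]_a^b
\;-\;
\Bigl[(H_1'-H_2')(H_1''-H_2'')\Bigr]_a^b ,
\end{equation*}
and knowing that $H_1 - H_2$ vanishes at infinitely many points going to $\pm\infty$ kills only the first bracket at those points. The second bracket involves $H_1'-H_2'$ and $H_1''-H_2''$, which do \emph{not} tend to zero at the touch points of $H_1-H_2$; knot-gap tightness gives only that they stay \emph{bounded}, not that they decay, and a Rolle argument does not produce decay of the derivatives from zeros of $H_1-H_2$ when the second derivatives are merely bounded. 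So the boundary terms need not vanish along the chosen sequence, and the inequality $\int(H_1''-H_2'')^2\,dt \le 0$ does not follow. The way \cite{MR1891741} (and the paper, for the mode-constrained version) avoid this is to never integrate over $\RR$ directly. Instead one introduces the objective $\phi_{a,b}(g) = \tfrac12\int_a^b g^2 - \int_a^b g\,dX$ and, taking $a,b$ to be knots of one of the two processes, uses the characterization (conditions (2)--(3), which give $\int \Delta (\vv\,dt - dX) \ge 0$ for admissible perturbations $\Delta$) to obtain $\phi_{a,b}(\varphi_1) - \phi_{a,b}(\varphi_2) \ge \tfrac12\int_a^b(\varphi_1-\varphi_2)^2$, with all potentially divergent boundary contributions cancelling \emph{exactly} at the knots. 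Summing the two one-sided inequalities (with knots of $\varphi_2$ and of $\varphi_1$ respectively) leaves only a residual over the thin sets $A_n$ between nested knot pairs, which is then shown to be $O_P(1)$ in a first pass (giving $\int(\varphi_1-\varphi_2)^2 < \infty$), and then $o_P(1)$ in a second pass using $\|\varphi_1-\varphi_2\|_{[n,\infty)} \to 0$. That two-stage truncated-variational argument is what replaces your direct boundary-term computation, and it cannot be skipped.
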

\medskip

The random variables $H^{(2)}(0)$ and $H^{(3)}(0)$ give the universal
component of the limit distribution of $\widehat{f}_n(x_0)$ and
$(\widehat{f}_n)'(x_0)$; see Theorem~\ref{thm:BRW2009-UMLE-limit}, below.

Theorem~\ref{thm:charzn_uniqueness_full_UC} concerns a process $H$, related
to the unconstrained concave estimation problem. In the mode constrained
estimation problem, $f_0 \in {\cal LC}_m$, instead of having one process we have two, one for the
left-hand side of $0$ (negative axis) and one for the right-hand side of
$0$ (positive axis).
(Here, $0$ corresponds to the mode $m$, by a translation.)
The definitions of the left- and right-hand processes depend on a
random starting point for the corresponding integrals involved, which we
will eventually denote $\TauL$ and $\TauR$
(this is made clear in \eqref{eq:defn:tau0-}--\eqref{eq:defn:tauL-tauR}, below).
To define $\TauL$ and $\TauR$,
we must define rigorously the possible `bend points' of $\vva$.
To describe the situation exactly, we also
will define `bend points' $\tau_+^0$ and $\tau_-^0$, satisfying
$\tau_+^0 \le \TauR$ and $\tau_-^0 \ge \TauL$,
where the inequality may or may not be
strict; these bend points arise in \eqref{eq:CharznEquality2_full} below.
For a concave function $g$, we let $g'(\cdot -)$ and $g'(\cdot +)$ be the left and right derivatives, respectively (which are always well defined).

\begin{theorem}
\label{thm:MC-process-uniqueness-theorem}
Assume that $\{ H_L (t) : \ t \le 0 \} $ and $\{ H_R (t) : \ t \ge 0 \}$ 
are random processes with concave second derivatives so that 
$\widehat{\varphi}^0 (t) \equiv H_L^{(2)} (t) 1_{(-\infty, 0)} (t) + H_R^{(2)} (t) 1_{[0,\infty)} (t)$ satisfies
$\widehat{\varphi}^0 \in {\cal C}_0$.
 Define the  `bend points' $\Sa$ by
 \begin{equation}
    \label{eq:defnSa}
    (\Sa(\vva))^c \equiv (\Sa)^c := \lb t \in \RR :  (\vva)^{(2)}(t \pm )  = 0 \rb.
\end{equation}
Next, define
  \begin{align}
    \tau_-^0(\vva) \equiv
    \tau_-^0 = \sup \lb t \in \Sa : (\vva)'((t-\epsilon)-) > 0 \mbox{ for all }
    \epsilon > 0\rb,     \label{eq:defn:tau0-} \\
    \tau_+^0(\vva) \equiv
    \tau_+^0 = \inf \lb t \in \Sa : (\vva)'((t+ \epsilon)+) < 0 \mbox{ for all
    } \epsilon > 0 \rb,  \label{eq:defn:tau0+} \\
    \TauL = \sup  \lp \Sa \cap (-\infty, 0) \rp
    \mbox{ and }
    \TauR = \inf \lp \Sa \cap (0, \infty) \rp. \label{eq:defn:tauL-tauR}
  \end{align}
  Let $W$ be a standard two-sided Brownian motion with $W(0)=0$, and for $t \in \RR$ let
\begin{equation*}
X(t) = W(t)  - 4t^3,
\end{equation*}
  \begin{equation}
    \label{eq:defYLR}
    \YYLt(t)  =  \int_t^{\TauL} \int_u^{\TauL} dX(v) du
    \quad \mbox{ and } \quad
    \YYRt(t) =  \int_{\TauR}^t \int^u_{\TauR} dX(v) du.
  \end{equation}
With these definitions,  we assume that:\\
(i) $-\infty < \TauL \le 0$ and $0 \le \TauR < \infty$ and
    \begin{equation}
      \label{eq:CharznFequalsX_M_full}
      \int_{\TauL}^{\TauR} (\vva(v)dv - dX(v)) = 0.
    \end{equation}
(ii)
    \begin{align}
      & \HHLt(t)-\YYLt(t) \le 0 \mbox{ for } t \le 0, \label{eq:CharznInequalityL2_full} \\
      & \HHRt(t) - \YYRt(t) \le 0 \mbox{ for } t \ge 0,  \label{eq:CharznInequalityR2_full}
    \end{align}
(iii)
    \begin{equation}
      \label{eq:CharznEquality2_full}
      \int_{(-\infty, \tau_-^0]} (\HHLt(u) - \YYLt(u)) d(\vva)'(u)
      = 0
      = \int_{[\tau_+^0, \infty)} (\HHRt(u)-\YYRt(u)) d(\vva)'(u).
    \end{equation}
Then, $\HHLt$ and $\HHRt$ are unique, as are $\TauL$ and $\TauR$.
\end{theorem}

Theorem~\ref{thm:MC-process-uniqueness-theorem} shows that processes with the
given properties are unique; that they exist follows from
the proofs of
Theorem~\ref{thm:MC-MLE-limit-A-process-version}
and
Theorem~\ref{thm:process-asymptotics-symmetry},
which show that $\HHLt$ and $\HHRt$ exist since they are limit versions of certain finite sample processes ($\HHnvvaL, \HHnvvaR$).



If $\vva$ is, in fact, piecewise linear, then $\tauminusa$ is just the last
knot point $\tau$ of $\vva$ with $(\vva)'(\tau-) > 0$.  By Theorem 23.1 of
\cite{MR0274683}, a finite, concave function on $\RR$ such as $\vva$
has well-defined right- and left-derivatives at all of $\RR$; the
specification of left- and right- derivatives in the definitions of
$\tauminusa$ and $\tauplusa$ are for concreteness but not necessary since
we consider all $\epsilon > 0$.

The distinction between $\TauR$ and $\tau_+^0$ depends only on the behavior of $\vva$ at $0$, and can be understood by considering the case where the infima
in \eqref{eq:defn:tau0+} and in  \eqref{eq:defn:tauL-tauR} are actually minima (the infima are attained).   In that case,
we see that $\tau_+^0$ can be thought of as the smallest
``right-knot'' in the sense that $\vva$ has a strictly negative slope to the right of $\tau_+^0$.
And $\TauR$ can be thought of as the smallest positive knot.  Note that (by concavity) all positive knots are right-knots, so that $\TauR \ge \tau_+^0$.
Note that
 the infimum
defining $\TauR$ in  \eqref{eq:defn:tauL-tauR} is taken over knots that are strictly larger than $0$, so that (when the infima are attained)  we have $\TauR > 0$.
On the other hand,
if $0$ is a right-knot then $\tau_+^0 = 0$, so that then $\TauR$ and $\tau_+^0$ are distinct.  If $0$ is not a right-knot, then we will have $\tau_+^0 = \TauR$.
These statements are slightly complicated by the fact that $\tau_+^0$ and $\TauR$ are defined as infima rather than minima, but the intuitive differences are captured by the previous description.
Corresponding statements hold for $\TauL$ and $\tau_-^0$.

The distinction between
the two sets of knots pairs is important
because
many of our arguments depend on constructing ``perturbations'' of $\vva$, and we can use different types of perturbations at each pair.  This means that the different knot pairs have different properties:
if we replace $\TauL,
\TauR$ by $\tau_-^0, \tau_+^0$ in \eqref{eq:CharznFequalsX_M_full}, then
that display may not hold, and similarly, if we replace $\tau_-^0, \tau_+^0$
by $\TauL, \TauR$ in \eqref{eq:CharznEquality2_full}, then that display
may not hold.
The following lemma holds for $\tau_-^0, \tau_+^0$ but not
necessarily for $\TauL, \TauR$.

\begin{lemma}
  \label{lem:process-uniqueness-theorem-constant-interval}
  With the definitions and assumptions as in
  Theorem~\ref{thm:MC-process-uniqueness-theorem},
  \begin{equation}
    \label{eq:vva-modal-interval}
    (\vva)'(t) = 0 \mbox{ for } t \in  (\tau^0_-, \tau^0_+).
  \end{equation}
\end{lemma}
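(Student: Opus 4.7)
The plan is to show that $(\tau_-^0, \tau_+^0) \subseteq \argmax \vva$, on which $\vva$ is constant and hence $(\vva)' \equiv 0$. Since $\vva$ is concave with $0 \in \argmax \vva$, the convex set $\argmax \vva$ is an interval $[c_-, c_+]$ with $c_- \le 0 \le c_+$ (allowing $c_- = -\infty$ or $c_+ = \infty$), on which $\vva$ equals its maximum. It therefore suffices to prove the two bounds $\tau_-^0 \ge c_-$ and $\tau_+^0 \le c_+$: given these, any $t \in (\tau_-^0, \tau_+^0)$ satisfies $c_- \le \tau_-^0 < t < \tau_+^0 \le c_+$, hence $t \in (c_-, c_+)$, on which $\vva$ is constant and so $(\vva)'(t) = 0$. (When $c_- = c_+$ the statement is vacuous, since then $\tau_-^0 \ge c_- = c_+ \ge \tau_+^0$.)

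I will prove $\tau_-^0 \ge c_-$; the bound $\tau_+^0 \le c_+$ is symmetric. The case $c_- = -\infty$ is trivial, so assume $c_- \in \RR$. By the definition of $c_-$ and a standard interpolation argument using concavity, $\vva$ is strictly increasing on $(-\infty, c_-)$; hence the one-sided derivatives $(\vva)'(\cdot \pm)$ are strictly positive on $(-\infty, c_-)$, since otherwise monotonicity of $(\vva)'$ would force $\vva$ to stop increasing before $c_-$. At $c_-$ itself, $(\vva)'(c_-+) \le 0$, because $\vva$ is constant on $[c_-, c_+]$ when $c_- < c_+$, and because $c_-$ is the argmax when $c_- = c_+$.

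The argument now splits on the value of $(\vva)'(c_--)$. If $(\vva)'(c_--) > 0$, then $(\vva)'$ has a jump down at $c_-$, so $c_- \in \Sa$; monotonicity of $(\vva)'$ gives $(\vva)'(c_- - \epsilon -) \ge (\vva)'(c_--) > 0$ for all $\epsilon > 0$, so $c_-$ itself lies in the set defining $\tau_-^0$, yielding $\tau_-^0 \ge c_-$. If instead $(\vva)'(c_--) = 0$, I claim $\Sa$ has elements accumulating to $c_-$ from below: otherwise $\Sa \cap (c_- - \delta, c_-) = \emptyset$ for some $\delta > 0$, which would force $\vva$ to be affine on $(c_- - \delta, c_-)$ with constant positive slope, contradicting $(\vva)'(c_--) = 0$. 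For any approximating sequence $t_n \in \Sa \cap (c_- - \delta, c_-)$, strict positivity of $(\vva)'$ below $c_-$ combined with monotonicity yields $(\vva)'(t_n - \epsilon -) > 0$ for every $\epsilon > 0$, so $t_n$ belongs to the set defining $\tau_-^0$ and $\tau_-^0 \ge t_n \to c_-$.

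The main subtlety is this second sub-case: $c_-$ itself need not be a bend point, so one cannot simply exhibit $c_- \in \Sa$; instead, one must produce bend points accumulating to $c_-$ from the left. The mechanism is that any left-affine piece abutting $c_-$ would have positive slope (by strict increase on $(-\infty, c_-)$) and would therefore contradict $(\vva)'(c_--) = 0$. Once $\tau_-^0 \ge c_-$ is established, the symmetric statement for $\tau_+^0 \le c_+$ completes the proof.
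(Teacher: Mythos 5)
Your proof is correct, and it takes a genuinely different route from the paper's. The paper does a case analysis on whether $0 \in \widehat{S}^0$: if so, it shows at least one of $\tau_\pm^0$ equals $0$; if not, it identifies $\tau_-^0$ and $\tau_+^0$ with the nearest bend points on either side of $0$. From either case the paper deduces $(\widehat{\varphi}^0)^{(2)}(t\pm) = 0$ throughout $(\tau_-^0, \tau_+^0)$, so $(\widehat{\varphi}^0)'$ is constant there, and the constant is $0$ because the interval contains or abuts the argmax at $0$. You instead work directly with the modal interval $[c_-, c_+] = \argmax \widehat{\varphi}^0$ and establish the two one-sided bounds $\tau_-^0 \ge c_-$, $\tau_+^0 \le c_+$, which immediately gives $(\tau_-^0, \tau_+^0) \subseteq (c_-, c_+)$. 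Your casework is instead on whether $(\widehat{\varphi}^0)'(c_- -)$ is strictly positive (so $c_-$ is a corner and hence lies in $\widehat{S}^0$) or zero (in which case bend points must accumulate to $c_-$ from the left). Both arguments ultimately rest on the same structural principle — absence of bend points on an open interval forces $\widehat{\varphi}^0$ to be affine there — but your organization isolates the role of the modal set more transparently and avoids having to verify the paper's claim that $\tau_-^0 = \sup\big(\widehat{S}^0 \cap (-\infty, 0]\big)$ when $0 \notin \widehat{S}^0$, which the paper asserts somewhat tersely. The second sub-case of your argument (producing a sequence $t_n \in \widehat{S}^0$ with $t_n \uparrow c_-$ when $(\widehat{\varphi}^0)'(c_--)=0$) is exactly the subtlety that makes $\sup$ rather than $\max$ necessary in the definition of $\tau_-^0$, and you have handled it correctly.
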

\medskip

Now we introduce the appropriate limit processes for the symmetric about $0$
mode-constrained estimators.  The characterization is similar to that for the mode-constrained (but not symmetric) processes, but since it is defined only on $[0,\infty)$ the processes are not the same.

\begin{mynotes}
  \begin{notes}
    We decide to always let $0 \in \widehat{S}^+$.  Explicitly excluding $0$ in the definition of $(\widehat{S}^+)^c$ (including $0$ in the set of knots) is somewhat redundant since by definition $(\psia)'(0-)$ is not defined.  But we make it explicit.

    This definition seems like the appropriate analog for the two-sided case (in the two-sided mode-constrained case we may have $0$ as a ``knot'' even though it is not a right-knot.  This entails only that $(F_L-X_L)(0)=0$, which is effectively tautologically true in this one-sided setting).  Thus, just as in the two-sided case, we do not generally have that $\tau_+^+ = \inf \widehat{S}^+ \cap [0,\infty)$.
  \end{notes}
\end{mynotes}
\begin{theorem}
  \label{thm:symm-process-uniqueness-theorem}
  Assume $\HHst$ is a random process
  on $[0,\infty)$,
  and assume that $\psia\equiv
  (\HHst)^{(2)}  \in {\cal C}_0$. Define
  \begin{equation*}
    (\widehat{S}^+)^c \equiv S(\psia)^c
    = \lb t \ge 0 :  (\psia)^{(2)}(t\pm) = 0\rb \setminus \lb 0 \rb,
  \end{equation*}
  \begin{equation*}
    \Tausp = \inf \lb t \in \widehat{S}^+ : (\psia)^\prime (t+\epsilon+) < 0 \text{ for all } \epsilon > 0 \rb, \qquad \text{ and }
  \end{equation*}
  \begin{equation*}
    \TausR = \inf \widehat{S}^+ \cap (0, \infty).
  \end{equation*}
For $t \ge 0$, let $W(t)$ be a one-sided standard Brownian motion with $W(0)=0$,
let $X(t) = W(t) - 4t^3$, and let
\begin{equation*}
  \YYsp(t) = \int_{\TausR}^t \int_{\TausR}^u dX(v) du.
\end{equation*}
Suppose that $\TausR < \infty$ and \\
(i)
\begin{equation}
  \label{eq:12}
  \int_0^{\TausR} (\psia(u)du - dX(u)) = 0,
\end{equation}
(ii)
\begin{equation}
  \label{eq:15}
  \HHsp(t) - \YYsp(t) \le 0 \quad \text{ for all } t \ge 0,
\end{equation}
(iii)
\begin{equation}
  \label{eq:16}
          \int_{[\Tausp, \infty)} ( \HHsp - \YYsp ) d(\psia)' = 0.
\end{equation}
Then  $\HHsp$ is unique.
\end{theorem}
\begin{mynotes}
  \begin{notes}
    I am suspecting that $\Tausp$ and $\TausR$ are equal in the symmetric case.  In the non-symmetric case, the discrepancy is caused by having the mode be a knot which is a left-knot but not a right-knot, or vice versa.  This cannot happen in the symmetric case, because of symmetry.  However, proving this requires proving that $\psia$ almost surely has a flat modal region of positive length.  So it is simpler to just continue using $\Tausp$ and $\TausR$.
  \end{notes}
\end{mynotes}

\subsection{Unconstrained and constrained pointwise limit theory at $x_0 \ne m$}
\label{subsec:limit-theory}

The two main limit theorems below will concern the limiting distributions of our estimators and their derivatives.
Recall, we assume that $X_1, \ldots, X_n \iid f_0 = e^{\vp_0}$, where $f_0$ is a non-degenerate density on $\RR$.
The three sets of estimators of $f_0$, $\vp_0$, $f_0'$, and $\vp_0'$ to be considered are:\\
A  $\widehat{f}_n$, $\widehat \varphi_n$, $(\widehat{f}_n)'$, and $(\widehat{\varphi}_n)'$.\\
B  $\widehat{f}_n^0$, $\widehat \varphi_n^0$, $(\widehat{f}_n^0)'$, and $(\widehat{\varphi}_n^0)'$.\\
C  $\widehat{g}_n^0$, $\widehat \psi_n^0$, $(\widehat{g}_n^0)'$, and $(\widehat{\psi}_n^0)'$.\\
Then the  corresponding curvature assumptions are: \\
Curvature Assumption 1.  $\vp_0''(x_0) < 0$, where $x_0 \in \interior \lb x: f_0(x) > 0 \rb$. \\  %
Curvature Assumption 2a. $\vp_0''(x_0) < 0$ with $x_0 \ne m$ and
$x_0 \in \interior \lb x: f_0(x) > 0 \rb$. \\  %
Curvature Assumption 2b: $\vp_0''(m) < 0$. \\
Note that
\cite{MR771431} %
shows that Curvature Assumption 2b holds for the class of
symmetric $\alpha$-stable densities on $\RR$ for all $0 < \alpha < 2$.
Assumption 1 will be used for the estimators in A, whereas for the estimators in B and C we will use Assumptions 2a and 2b.
In all three cases we assume $X_1, \ldots, X_n \iid f_0$.

To state our theorem we first define some constants as follows:
\begin{align*}
    c(x_0,\varphi_0) = \left(
      \frac{f_0(x_0)^3 |\varphi_0^{(2)}(x_0)|}{4!}
    \right)^{1/5},
    \qquad
    d(x_0,\varphi_0) =  \left(
      \frac{f_0(x_0)^4 |\varphi_0^{(2)}(x_0)|^3}{(4!)^3}
    \right)^{1/5}
  \end{align*}
  \begin{align*}
    C(x_0,\varphi_0) = \left(
      \frac{ |\varphi_0^{(2)}(x_0)|}{f_0(x_0)^2 4!}
    \right)^{1/5},
    \qquad
    D(x_0,\varphi_0) = \left(
      \frac{ |\varphi_0^{(2)}(x_0)|^3}{f_0(x_0) (4!)^3}
    \right)^{1/5}.
\end{align*}

\begin{theorem} (Limiting distributions at a fixed point $x_0 \ne m$)
  \label{thm:BRW2009-UMLE-limit} \\
  A.  \cite{BRW2007LCasymp}.  Suppose that $f_0 \in \mathcal{LC}$ and that
  the Curvature Assumption 1 holds at $x_0$.  Then with $H$ as in
  Theorem~\ref{thm:charzn_uniqueness_full_UC} and $\vv \equiv H''$,
  \begin{equation}
    \label{eq:19}
    \begin{split}
      \left(\begin{array}{c}
              n^{2/5} (\widehat{f}_n(x_0) - f_0(x_0)) \\
              n^{1/5} (\widehat{f}_n^{\prime}(x_0) - f_0^{\prime}(x_0)) \\
              n^{2/5} (\widehat \varphi_n(x_0) - \varphi_0(x_0)) \\
              n^{1/5} (\widehat \varphi_n'(x_0) - \varphi_0'(x_0)) \\
            \end{array}\right)
          \overset{d}{\rightarrow}
          \left(\begin{array}{c}
                  c(x_0,\varphi_0) \vv(0) \\
                  d(x_0,\varphi_0) \vv'(0) \\
                  C(x_0,\varphi_0) \vv(0) \\
                  D(x_0,\varphi_0) \vv'(0) \\
                \end{array}\right),
    \end{split}
  \end{equation}
  B.  Suppose that $f_0 \in \mathcal{LC}_m$ and that the Curvature Assumption 2a holds at \\
  $\phantom{blab}$$x_0$ with $x_0 \not= m$.  Then \eqref{eq:19} continues to hold with\\
  $\phantom{blab}$$\widehat{f}_n$, $\widehat \varphi_n$, $(\widehat{f}_n)'$, and $(\widehat{\varphi}_n)'$ replaced by
  $\widehat{f}_n^0$, $\widehat \varphi_n^0$, $(\widehat{f}_n^0)'$, and $(\widehat{\varphi}_n^0)'$. \\
  C.  Suppose that $f_0 \in \mathcal{SLC}_0$ and that the Curvature Assumption 2a  holds at \\
  $\phantom{blab}$$x_0$ with $x_0 \not= 0$.
  Then \eqref{eq:19} continues to hold with  \\
  $\phantom{blab}$$\widehat{f}_n$, $\widehat \varphi_n$, $(\widehat{f}_n)'$, and $(\widehat{\varphi}_n)'$ replaced by
  $\widehat{g}_n^0$, $\widehat{\psi}_n^0$, $(\widehat{g}_n^0)'$, and $(\widehat{\psi}_n^0)'$; \\
  $\phantom{blab}$ and $c(x_0, \varphi_0)$, $d(x_0, \varphi_0)$, $C(x_0, \varphi_0)$, and $D(x_0, \varphi_0)$ replaced by \\
  $\phantom{blab}$$2^{-2/5} c(x_0, \vp_0)$, $2^{-1/5} d(x_0, \vp_0)$, $2^{-2/5} C(x_0, \vp_0)$, and $2^{-1/5} D(x_0 , \vp_0 )$.
\end{theorem}

\begin{remark}
\label{ImprovementAtInteriorPoint}
(i) Comparing the MLE's for ${\cal LC}$ and ${\cal
  LC}_m$: Note that the limiting distributions in A and B at a point $x_0
\not= m$ are the same.  At a fixed point $x_0 \not=
m$, the constraint that the mode is known does not help in estimating
the function at $x_0$.  As we will see below, this picture changes when $x_0 = m$.  \\
(ii) Note that the rate of convergence of $\ffna$ and $\ggna$ as $x_0 \ne m$ (or $x_0 \ne 0$ in the case of $\ggna$) is $n^{-2/5}$  in contrast to the $n^{-1/5}$ rate achieved by the naive estimators discussed in Subsection~\ref{subsec:naive-estimators}.\\
(iii) Comparing the MLE's for ${\cal LC}$ and ${\cal
  LC}_m$ with the MLE for ${\cal
  SLC}_m$: The limiting distributions for the symmetric log-concave class
${\cal
  SLC}$ in C are smaller than the limiting distributions of the MLE's for the
possibly asymmetric log-concave classes ${\cal LC}$ and ${\cal
  LC}_m$ by a factor of $2^{-2/5} \approx
.757858\ldots$ for the functions themselves and by a factor of $2^{-1/5}
\approx .870551\ldots
$ for the derivatives of the functions.  Thus the symmetry constraint substantially reduces
the variability of the estimators (see also Figure~\ref{fig:plots:empirical-processes}).
\end{remark}

\subsection{Mode-constrained and symmetry-constrained pointwise limit theory at $x_0 = m$}
\label{subsec:limit-theory-MC}

The limit distribution of the mode-constrained estimators at a point $x_0$
depends on whether $x_0 = m$ or $x_0 \ne m$.  In the latter case the
asymptotics are the same as the unconstrained estimator, but in the former
case they depend on the mode-constrained limit process.
\begin{theorem}(Limiting distributions at $m$)  \label{thm:MC-MLE-limit}
  \begin{enumerate}[label=\Alph*.,ref=\Alph*,leftmargin=*]
    \setcounter{enumi}{1}
  \item \label{thm:MC-MLE-limit:item-A}
    Let Curvature Assumption 2b
    hold.
    Let constants $c(x_0, \varphi_0), d(x_0, \varphi_0), C(x_0,\varphi_0),$ and
    $ D(x_0, \varphi_0)$ be as given in Theorem~\ref{thm:BRW2009-UMLE-limit}.
    Let $\vva$ be as in  Theorem~\ref{thm:MC-process-uniqueness-theorem}.  Then
    \begin{equation}
      \label{thm:mode-ff-asymptotics}
    \begin{split}
      \left(
        \begin{array}{c}
          n^{2/5} (\widehat{f}^0_n(m) - f_0(m)) \\
          n^{1/5} ((\widehat{f}^0_n)^{\prime}(m) - f_0^{\prime}(m)) \\
          n^{2/5} (\widehat \varphi_n^0(m) - \varphi_0(m)) \\
          n^{1/5} ((\widehat \varphi_n^0)'(m) - \varphi_0'(m))
        \end{array}
      \right)
      \overset{d}{\rightarrow}
      \left(
        \begin{array}{c}
          c(m,\varphi_0) \vva(0) \\
          d(m,\varphi_0) (\vva)'(0) \\
          C(m,\varphi_0) \vva(0) \\
          D(m,\varphi_0) (\vva)'(0)
        \end{array}
      \right).
    \end{split}
  \end{equation}

\item \label{thm:MC-MLE-limit:item-B} Suppose that $f_0 \in \SLC_0$ and that
  the Curvature Assumption 2b holds at $m=0$. Then
  \begin{equation}
    \label{thm:mode-ff-asymptotics}
    \begin{split}
      \left(
        \begin{array}{c}
          n^{2/5} (\widehat{g}^0_n(0) - f_0(0)) \\
          n^{2/5} (\widehat \psi_n^0(0) - \psi_0(0)) \\
        \end{array}
      \right)
      \overset{d}{\rightarrow}
      \left(
        \begin{array}{c}
          2^{-2/5} c(0,\varphi_0) \psia(0) \\
          2^{-2/5} C(0,\varphi_0) \psia(0) \\
        \end{array}
      \right)
    \end{split}
  \end{equation}  %
  where $\psia$ is given by Theorem~\ref{thm:symm-process-uniqueness-theorem}.
  \end{enumerate}
\end{theorem}

\medskip

We label the parts of Theorem~\ref{thm:MC-MLE-limit} as ``B'' and ``C''
 to be in parallel with the labeling in Theorem~\ref{thm:BRW2009-UMLE-limit}.
Notice that for the symmetric MLE, $((\widehat{g}^0_n)^{\prime}(0)$
and $((\widehat \psi_n^0)'(0)$ are always both equal to $0$, the value of $f_0'(0)$ and of 
$\vp_0'(0)$, so we do not state a limit theorem for these estimators.
Theorems~\ref{thm:BRW2009-UMLE-limit} and \ref{thm:MC-MLE-limit} follow from more 
general theorems about the estimators not just at $x_0$ but in local $n^{-1/5}$ neighborhoods of $x_0$,
stated below as
Theorems~\ref{thm:MC-MLE-limit-A-process-version}
and \ref{thm:process-asymptotics-symmetry}.
The ``local neighborhood'' 
Theorem~\ref{thm:MC-MLE-limit-A-process-version}
is the version from which we can derive the limit distribution of the mode likelihood 
ratio statistic $2 \log \lambda_n$ studied in \cite{Doss-Wellner:2016ModeInference}.
Monte Carlo estimates of the distribution functions of $\vv'(0)$ and of $(\vva)'(0)$ are 
presented in  Figure~\ref{fig:limitdistns} (left plot).  %
Note that $(\widehat{\varphi}^0)^{\prime} (0)$ is stochastically
smaller than $\widehat{\varphi}^{\prime} (0)$.

\subsection{Local process limit  theory, mode- and symmetry-constrained}
\label{subsec:process-limit-theory-MC}

Here we state the local process limit theorems behind Theorem~\ref{thm:MC-MLE-limit} \ref{thm:MC-MLE-limit:item-A},
where $x_0 = m$.  In this subsection we will formulate a more general version of %
that theorem
which applies to our estimators in $n^{-1/5}$ neighborhoods of $m$.

Recall the definition of $Y$ in \eqref{eq:defn_Xs}. Now, for positive numbers $a$ and $\sigma$, Let
\begin{eqnarray}
&& Y_{a,\sigma}(t) \equiv \sigma \int_0^t W(s) ds - a t^4 \stackrel{d}{=}  \sigma (\sigma/a)^{3/5} Y( (a/\sigma)^{2/5} t) , \label{eq:defn:Y-a-sigma} \\
&& Y_{a,\sigma}^{(1)} (t) = \sigma W(t) - 4a t^3 \stackrel{d}{=}  \sigma (a/\sigma)^{1/5} Y^{(1)} ( (a/\sigma)^{2/5} t).\label{eq:defn:X-a-sigma}
\end{eqnarray}
Let $H_{a,\sigma}, H_{L, a, \sigma}$, and $H_{R, a, \sigma}$ denote the unconstrained and mode-constrained left- and right-processes for $Y_{a ,\sigma}$.  Then
\begin{eqnarray*}
&& H_{a,\sigma}(t)   \stackrel{d}{=}  \sigma (\sigma/a)^{3/5} H( (a/\sigma)^{2/5} t) , \\
&& H_{a,\sigma}^{(1)} (t)  \stackrel{d}{=}  \sigma (\sigma / a)^{1/5} H^{(1)} ( (a/\sigma)^{2/5} t),
\end{eqnarray*}
and
\begin{eqnarray}
  \widehat{\varphi}_{a,\sigma}
  & = & H_{a,\sigma}^{(2)}  \stackrel{d}{=}  \sigma^{4/5} a^{1/5} H^{(2)} ((a/\sigma)^{2/5} \cdot )
  \label{eq:H2-ScalingRelationUnConstrained-1}
\end{eqnarray}
Identical scaling relationships hold for
$H_{L,a, \sigma}, H_{R, a, \sigma}$, and the corresponding derivatives, including $\vva_{a, \sigma} \equiv H^{(2)}_{R,a , \sigma}$:
\begin{eqnarray}
  \widehat{\varphi}^0_{a,\sigma}
  & = & H_{R, a,\sigma}^{(2)}  \stackrel{d}{=}
  \sigma^{4/5} a^{1/5} H_R^{(2)} ((a/\sigma)^{2/5} \cdot ) .
  \label{eq:H2-ScalingRelationMC-1}
\end{eqnarray}

\begin{theorem}
  \label{thm:MC-MLE-limit-A-process-version}
  Let Curvature Assumption~2b hold.
  Let $H$ be as in Theorem~\ref{thm:charzn_uniqueness_full_UC} and let $\vv \equiv H''$ 
  and let $\vva$ be as in Theorem~\ref{thm:MC-process-uniqueness-theorem}.  Let
$\sigma \equiv 1/\sqrt{f_0 (m)}$ and  $a = | \varphi_0^{(2)} (m) | /4!$. Then
  \begin{equation*}
    \left(\begin{array}{c}
        n^{2/5} (\widehat \varphi_n(m + n^{-1/5} t) - \varphi_0(m)) \\
        n^{1/5} (\widehat \varphi_n'(m + n^{-1/5} t) - \varphi_0'(m)) \\
        n^{2/5} (\widehat \varphi_n^0(m + n^{-1/5} t) - \varphi_0(m)) \\
        n^{1/5} ((\widehat \varphi_n^0)'(m + n^{-1/5} t) - \varphi_0'(m)) \\
      \end{array}\right)
    \overset{d}{\rightarrow}
    \left(\begin{array}{c}
        \vv_{a,\sigma}(t) \\
        \vv_{a,\sigma}'(t) \\
        \vva_{a,\sigma}(t) \\
        (\vva_{a,\sigma})'(t) \\
      \end{array}\right)
  \end{equation*}
  as processes in $(\mc{C}_\infty \times \mc{D}_\infty)^2$, where
  $\mc{C}_\infty$ is the set of continuous functions on $(-\infty,\infty)$
  with the topology of uniform convergence on compact sets and
  $\mc{D}_\infty$ is the set of right-continuous with limits from the left
  (``cadlag'') functions on $(-\infty,\infty)$ with the topology of $M_1$
  convergence on compacta.  (The $M_1$ topology is discussed in detail in
  Subsection~\ref{subsec:proofs-pointwise-limit-theory}).
\end{theorem}

A similar useful result for the symmetry constrained problem which generalizes or extends
Theorem~\ref{thm:MC-MLE-limit} part~\ref{thm:MC-MLE-limit:item-B} is as follows.

\begin{theorem}
  \label{thm:process-asymptotics-symmetry}
  Suppose that  the Curvature Assumption~2b holds and  $f_0 \in \SLC_0$.
  Let $\sigma \equiv 1 / \sqrt{2 f_0(0)}$, and $a \equiv | \vvo^{(2)}(0)| / 4!$.  
  We let $Y_{a,\sigma}$ and $Y_{a,\sigma}^{(1)}$ be as in \eqref{eq:defn:Y-a-sigma} 
  and \eqref{eq:defn:X-a-sigma}, and $H^+_{a,\sigma}(|t|)$ be the corresponding symmetry-constrained 
  process for $t \in \RR$ as in Theorem~\ref{thm:symm-process-uniqueness-theorem}, and let 
  $\psia_{a,\sigma}(|t|) \equiv (H^+_{a,\sigma})''(|t|)$.
 Then
 \begin{equation}
   \label{eq:17}
   \begin{split}
        \begin{pmatrix}
     n^{2/5} ( \psina(n^{-1/5} t  ) - \vvo(0)) \\
     n^{1/5} (  (\psina)'(n^{-1/5} t) - \vvo'(0) )
   \end{pmatrix}
   \to_d
   \begin{pmatrix}
     \psia_{a,\sigma}(|t|) \\
     (\psia_{a,\sigma})^\prime(|t|)
   \end{pmatrix}
   \end{split}
 \end{equation}
 as processes in $ \mc{C}_\infty \times \mc{D}_\infty $ with the topology of uniform convergence on
 compacts on $\mc{C}_\infty$ and the $M_1$ topology on $\mc{D}_\infty$.
\end{theorem}

\begin{remark}
  To this point we have focused on the case in which the point of symmetry 
  $m$ is known (and equal to $0$).  If $m$ is unknown (and possibly different 
  from $0$) and $f_0 \in {\cal SLC}_m$, then it is well-known that $m$ is 
  also the mean and median of $f_0$ and hence it can be estimated in several 
  different ways by estimators $\hat{m}$ satisfying $\sqrt{n} (\hat{m} - m) = O_p (1)$.  
  For example, we could take $\hat{m} = \overline{X}_n$ or $\hat{m} = \FF_n^{-1} (1/2)$, 
  the sample median.  
  Then we can proceed by assuming that $f_0 \in {\cal SLC}_{\hat{m}}$ and carrying 
  out the estimation as described above with the $X_i$'s shifted by $\hat{m}$.  
  Denote the resulting estimators of $g$ and $\psi$ by $\hat{\hat{g}}_n^0$ 
  and $\hat{\hat{\psi}}_n^0$.   Then since  $n^{-1/2} = o(n^{-2/5})$
  it is easily seen that that Theorems
  \ref{thm:BRW2009-UMLE-limit} C,
  \ref{thm:MC-MLE-limit} \ref{thm:MC-MLE-limit:item-B} and
  \ref{thm:process-asymptotics-symmetry} continue to hold with $\hat{g}_n^0$ 
  replaced by $\hat{\hat{g}}_n^0$ and $\hat{\psi}_n^0$ replaced by $\hat{\hat{\psi}}_n^0$.
\end{remark}

\subsubsection{Asymptotics for the maximum}

We now consider the asymptotic distribution of estimators of
the maximum functional $N(f) \equiv \sup_{x \in \RR} f(x)$.   The maximum functional is 
of interest, for instance, in \cite{MR1345204} (see page 872).  An estimate of $N(f_0)$ is 
needed for estimation of functionals of the form $\int f_0^{k}(x) dx$.
In the mode constrained case where $f_0 \in \LC_0$, $n^{2/5} (N(\ffna) - N(f_0)) =
n^{2/5}( \ffna(m) - f_0(m))$.  In the symmetry-constrained case with $f_0 \in \SLC_0$,
$n^{2/5} ( N(\wh{g}_n^0) - N(f_0)) = n^{2/5} ( \wh{g}_n^0(0) - f_0(0))$. Thus the asymptotic
distribution in those two cases is given by Theorem~\ref{thm:MC-MLE-limit}.  
We present the asymptotic distribution in the unconstrained case here.
In fact, in the unconstrained case, we can present a somewhat stronger result, 
where we allow the possibility of increasingly flat modal regions.
\nocite{romano1987bootstrapping}

\begin{theorem}
  \label{thm:max-functional}
  Let $W$ denote two-sided Brownian motion starting at $0$, and define 
  $Y_k(t) = \int_0^t W(s) ds - t^{k+2}$ for $k \ge 2$ an even integer.  Let $H_k$ be the lower invelope of 
  $Y_k$, as defined in Theorem~2.1 of \cite{BRW2007LCasymp}, and let $\vv_k \equiv H_k^{(2)}$.  
  Suppose $f_0 \in \LC$ and that $\varphi_0^{(j)}(m)=0$ for $j=2, \ldots, k-1$, $\vp_0^{(k)}(m) < 0$ and 
  $\vp_0^{(k)}$ is continuous in a neighborhood of $m$.  Then
  \begin{equation*}
    \left (
      \begin{array}{c}
        n^{1/(2k+1)} (\widehat{M}_n - M(f_0)) \\
        n^{k/(2k+1)} (\widehat{N}_n - N(f_0))
      \end{array} \right )
    \rightarrow_d
    \left (
      \begin{array}{c} e_k(\vp_0)  M( \vv_k ) \\
        c_k (\varphi_0 ) N(\vv_k)
      \end{array}
    \right )
  \end{equation*}
  where
  \begin{align*}
    e_k (\varphi_0)^{2k+1}
    = \left ( \frac{(k+2)!^2}{f_0 (m) | \varphi_0^{(k)} (m ) |^2 } \right ),
    \quad
    c_k (\varphi_0 )^{2k+1}
    = \left (\frac{f_0 (m)^{k+1} | \varphi_0^{(k)} (m ) | }{(k+2)!}  \right ).
  \end{align*}
\end{theorem}

\medskip

\noindent
We compared, by Monte Carlo, the densities of  $N(\vv)$  (note $\vv \equiv \vv_2$) and of $N(\vva)$.  
See estimates in Figure~\ref{fig:limitdistns} (right plot); those estimates are log-concave 
MLE's (based on Monte Carlo simulations, as described in the caption).  
Additionally, simulations not presented here indicate that $P( |N(\vva)| \le t) \ge P( |N(\vv)| \le t)$ 
for all $ t \ge 0$ (i.e., $|N(\vva)|$ is stochastically smaller than $|N(\vv)|$).
The estimated density of $N(\vv)$ in
Figure~\ref{fig:limitdistns}
should be compared with the estimated density of $\vv(0)$ given in Figure~1 of
\cite{MR3178372},
noting that their $\mathbb{C}(0)$ is our $\vv(0)$.

\begin{remark}
  As is well known, one can see from Theorem~\ref{thm:max-functional} that the rate of convergence 
  of the mode decreases as $k$ increases, while the rate of convergence of the maximum increases 
  (and gets closer to $n^{1/2}$).  One can also check that $e_k(\log f) \searrow 1$ and 
  $c_k(\log f) \nearrow 1$ when $f(x) = C_k \exp (- |x|^{k}/k)$ where $k$ is an even integer.
\end{remark}
\begin{figure}
  \caption{\protect\includegraphics[scale=.55]{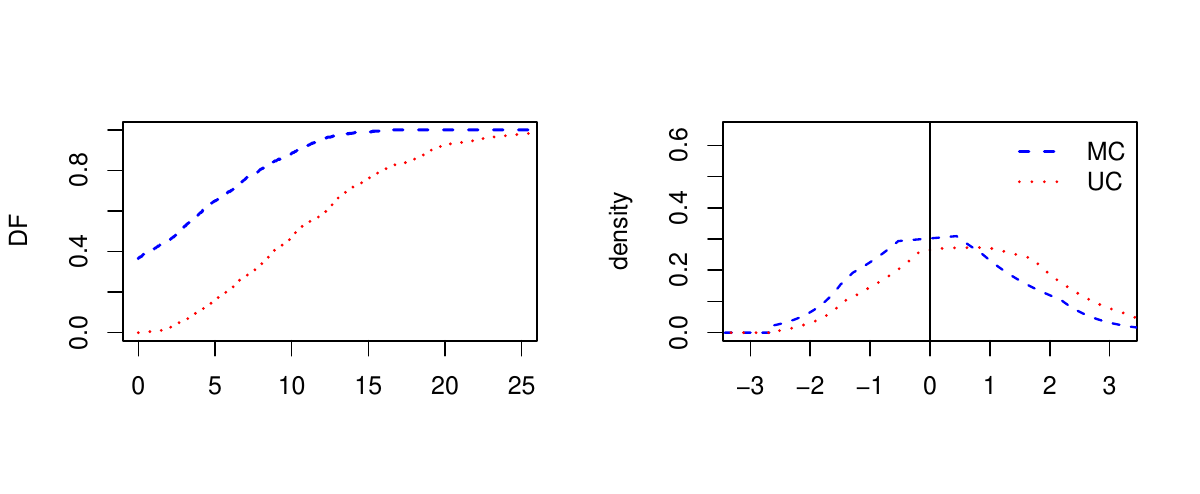}
    Monte Carlo estimates of limit distributions for estimation of the derivative and the maximum.  
    The left plot gives Monte Carlo estimates of distribution functions of $|\vv'(0)|$  and $|(\vva)'(0)|$.   
    The right plot gives density estimates of $N(\vv)$ and $N(\vva)$.
    The number of Monte Carlos used is $10^3$, each with sample size $10^4$.  
    Here, we sampled from $N(0,1)$ (although this is inconsequential).
 \label{fig:limitdistns}
}
\end{figure}

\section{Simulation results}
\label{sec:FiniteSample}

Software to compute the mode-constrained estimator, and also to implement the
likelihood ratio test and corresponding confidence intervals studied in
\cite{Doss-Wellner:2016ModeInference}, %
is available in the package \verb+logcondens.mode+
\citep{doss:logcondens.mode} in R \citep{R-core}.  Here we illustrate the
existence and characterization results on simulated data in
Figure~\ref{fig:plots:gamma-norm}.  There are two columns of four plots.  The
left column includes the mode-constrained log-concave MLE.  The right column
includes the $0$-symmetric log-concave MLE.  The data points are represented
by vertical hash lines along the bottom of each plot.  The density, log
density, and distribution function are plotted in the top three rows, with
the unconstrained log-concave MLE in red
and the true (unknown) function in black.
On the left the mode-constrained MLE is in blue, and on the right the $0$-symmetric MLE is in blue.
The empirical df $\Fn$ is plotted in green in the third row.
In the last row,
we plot $\YYnL-\HHnaL $ (blue) and $\YYnR-\HHnaR$ (purple) to illustrate Theorem~\ref{thm:CharThmTwo}~\ref{thm:CharThmTwo:B-MC} (left plot), the corresponding symmetry-constrained process (blue) to illustrate 
Theorem~\ref{thm:CharThmTwo}~\ref{thm:CharThmTwo:C-symm} (right plot),
and $\YY_n- \widehat{H}_n$ in red (both plots)
to illustrate Theorem~\ref{thm:CharThmTwo}~\ref{thm:CharThmTwo:A-UC}.
In all the plots, dashed vertical red lines give $S_n(\vvn)$
and dashed vertical blue lines give %
knots of the constrained estimator %
(which frequently overlap).
The solid blue line is the specified mode value for the mode-constrained MLE.

Figure~\ref{fig:plots:empirical-processes} gives plots of $\sqrt{n}( \GGna - F_0)$  (``SC''), $\sqrt{n} (\FFna - F_0)$ (``MC''), $\sqrt{n} (\FFn - F_0)$ (``UC''), and $\sqrt{n}(\Fn - F_0)$ (``E'').  The left and right plot are each one simulation with sample size
$n=200$ and $n=2000$, respectively, from a $N(0,1)$ distribution.
The plots show improvements by $\GGna$ and $\FFna$ over $\FFn$.
The MC and UC lines are indistinguishable when $n=2000$ since one needs to
plot locally to the mode $0$ to see differences between $\FFna$ and $\FFn$ when $n$ is large.

\begin{figure}
  \caption{\protect\includegraphics[scale=.58]{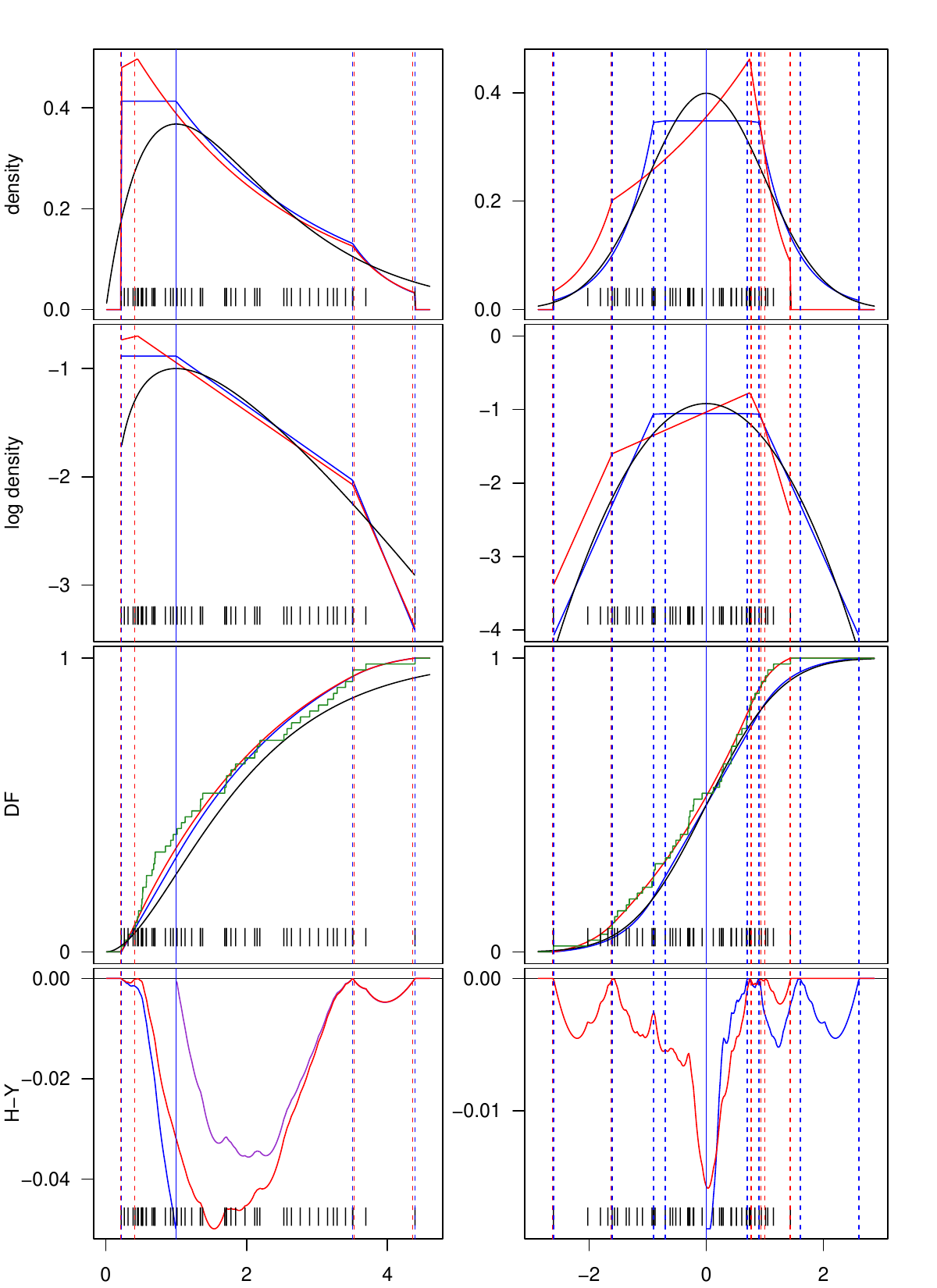} Left: Log-concave MLE and mode-constrained MLE of Gamma with density $xe^{-x}$, $x \ge 0$, with
    $n=50$ and $m=1$ well-specified.  Right: Log-concave MLE and $0$-symmetric MLE of $N(0,1)$ with $n=50$. \label{fig:plots:gamma-norm} }
\end{figure}

\begin{figure}
  \caption{\protect\includegraphics[width=.58\textheight,height=.4\textheight]{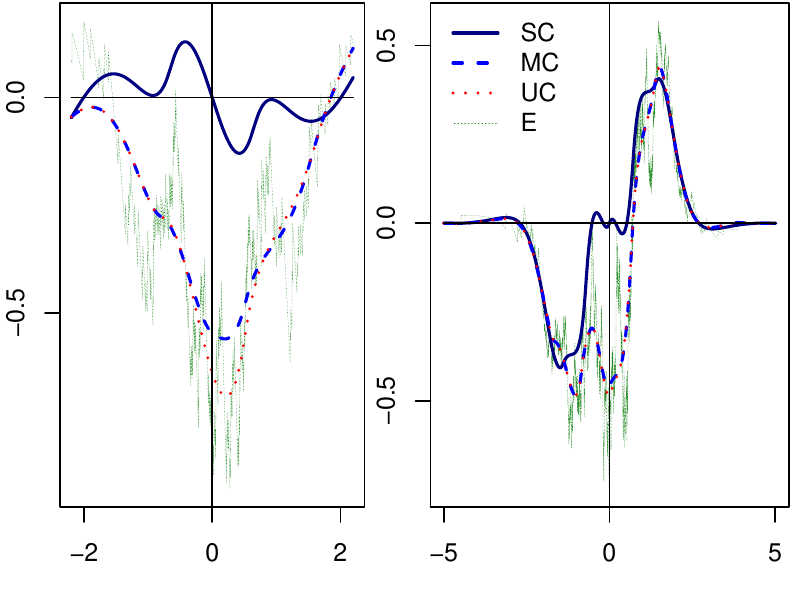}
    Empirical processes for the symmetry constrained (SC) log-concave MLE, mode constrained (MC) log-concave MLE, unconstrained (UC) log-concave MLE, and empirical (E) distribution.  We used a sample of $n=200$ (left) and $n=2000$ (right) from a $N(0,1)$ distribution. \label{fig:plots:empirical-processes} }
\end{figure}

\section{Outlook and further problems}
\label{sec:Outlook}

Motivated by likelihood ratio test considerations as well as potential uses in several semiparametric settings, we have introduced estimators for a log-concave density known to satisfy a further constraint of either having a known mode or of being symmetric (about $0$).  Our estimators are based on the maximum likelihood principle.  The constrained MLE's that we develop are more challenging to compute and to study theoretically than certain na\"ive estimators discussed in Subsection~\ref{subsec:naive-estimators}, but have much better theoretical behavior.  We developed a fast algorithm for computation of the estimators which is made available in the \verb+logcondens.mode+ package for the R programming language.  We found that the constrained MLE's are consistent and indeed we presented the $n^{-2/5}$ rate of convergence, globally and locally (with some proofs given in \cite{Doss:2013}).  We found the pointwise asymptotic distribution of the MLE's, in fact; this necessitated studying and characterizing certain limit processes that govern the limit distributions of the MLE's.  Studying the limit processes in the constrained cases seems to be somewhat more challenging than in the unconstrained case (i.e., than in the case given in Theorem~\ref{thm:charzn_uniqueness_full_UC}), because the definitions and characterizing conditions for the constrained limit processes depend on certain knots (of the limit process) in complicated ways.  Nonetheless, our proofs of Theorem~\ref{thm:MC-process-uniqueness-theorem} and Theorem~\ref{thm:symm-process-uniqueness-theorem} are different and shorter than the proof of Theorem~\ref{thm:charzn_uniqueness_full_UC}; for the latter, \cite{MR1891741} initially characterized the process on an interval $[-c,c]$ and then through further tightness-type arguments they showed that one can let $c \to \infty$.  In the proofs of Theorem~\ref{thm:MC-process-uniqueness-theorem} and Theorem~\ref{thm:symm-process-uniqueness-theorem}, we argue directly about a process on $(-\infty,\infty)$, skipping the step of considering the process on $[-c,c]$ and allowing for more direct proofs of the results.

The following are interesting questions beyond those already posed in the introduction that are motivated by the present work.
\medskip

\par\noindent
(a)  One motivation for the study of $\widehat{f}_n^0$ given here has been the likelihood ratio
tests and confidence intervals for the mode introduced in
\cite{Doss-Wellner:2016ModeInference}. %
But the constrained estimators may be of interest for the study of semiparametric two- and $k-$sample problems
with (constrained) log-concave errors.
For example suppose that $X_1, \ldots , X_m$ are i.i.d. with $X_i \stackrel{d}{=} \mu + \epsilon_i$, $i=1,\ldots , m$, while
$Y_1, \ldots , Y_n$ are i.i.d. with  $Y_j \stackrel{d}{=} \nu + \delta_j$ where $\mu, \nu \in \RR$ and
$\epsilon_i , \ \delta_j$ are i.i.d. with log-concave density $f$ with mode at $0$.  Other variants of this problem might involve
constraining $f$ to be log-concave with mean or median at $0$ rather than mode at $0$.
Constraining $f$ to be symmetric about its mode of $0$ and log-concave, as in
\cite{BalabdaouiDoss:2017}, is also of interest.
\medskip

\par\noindent
(b)
In \cite{BalabdaouiDoss:2017}, a mixture density $g_0(x) = \sum_{i=1}^k \pi_i f_0(x-\mu_i)$ is estimated where $f_0$ is constrained to be in $\SLC_0$, $k=2$, $\mu_1<\mu_2$, and $\pi_1 = 1- \pi_2 \notin \lb 0, 1/2, 1 \rb$.
Restriction to the case $k=2$ is made for identifiability reasons.
Can the asymptotic distribution theory we developed in the present paper be extended to the MLE of $f_0$ (and of $g_0$) in the semiparametric mixture setting?   Extensions to the case $k > 2$ could also  be possible and would certainly be interesting.

\section{Proof sketches and outlines}
\label{sec:Proofs-outlines}

In this section we give some outlines of the proofs of the results in
Section~\ref{sec:LimitDistributions}.  Full proofs are given in
Section~\ref{sec:proofs-new}. Here we outline the material in each subsection of
Section~\ref{sec:proofs-new}.

\bigskip

\par\noindent {\em Subsection~\ref{ssec:LocalRatesProofs}:}

The main goal
of Subsection~\ref{ssec:LocalRatesProofs} is to show the following
proposition.
\begin{proposition}
  \label{cor:local:vp_vpprime_tight}
  Suppose either Assumption~2a holds (at $x_0 \ne m$) or Assumption~2b holds (at $x_0 = m$). Let $C > 0$ and let $I_n = [x_0 - C n^{-1/5} , x_0 + C n^{-1/5} ]$.
  Then
  \begin{equation}
    \label{eq:vpprime_tight}
    \sup_{t \in I_n}
    \left| (\widehat{\varphi}_n^0)'(t) - \varphi_0 '(x_0) \right| = O_p(n^{-1/5}),
  \end{equation}
  where $(\widehat{\varphi}_n^0)^\prime $ denotes either the right or left derivative,
  and
  \begin{equation}
    \label{eq:vp_tight}
    \sup_{t \in I_n}
    \left| \widehat{\varphi}_n^0 (t) - \varphi_0 (x_0) - (t-x_0) \varphi_0'(x_0) \right| = O_p(n^{-2/5}).
  \end{equation}
  We may replace the interval $I_n$ by
  $[\xi_n - C n^{-1/5} , \xi_n + C n^{-1/5} ]$ for any $\xi_n \to m$.  Then
  the random variables implied by the $O_p$ upper bounds depend on $C$ but
  not on $\xi_n$.
\end{proposition}

\par\noindent
This proposition is of crucial importance for showing the results in Subsection~\ref{subsec:limit-theory-MC} (Theorems~\ref{thm:MC-MLE-limit},
\ref{thm:MC-MLE-limit-A-process-version},
and \ref{thm:process-asymptotics-symmetry}).
The proof of Proposition~\ref{cor:local:vp_vpprime_tight} depends on
the following two propositions.
\begin{proposition}
  \label{cor:UniformConsistencyConstrained}
  Let $K = [b,c]$ be a closed interval contained in the interior of the support of $f_0 \in {\cal LC}_m$.
  Then
  \begin{align*}
    \sup_{t \in K} | \widehat{\varphi}_n^0 (t) - \varphi_0 (t) | \rightarrow_{a.s.} 0
    \; \text{ and }  \;
    \sup_{t \in K} | \widehat{f}_n^0 (t) - f_0 (t) | \rightarrow_{a.s.} 0 \ \ \
    \text{ as } \ \ n \rightarrow \infty.
  \end{align*}
\end{proposition}
\medskip

\begin{proposition}
  \label{prop:uniform-local-tightnessKnots}
  Suppose that Curvature Assumption~2b
  holds so $\vvo''(m)<0$.\\
  Let $\tau_+^0(\xi_n)$  denote the smallest
  knot of $\widehat{\varphi}_n^0$ strictly greater than $\xi_n$,
  and let $\tau_{-}^0(\xi_n)$  denote the largest
  knot of $\widehat{\varphi}_n^0$ strictly smaller than $\xi_n$.
  Then for all $\epsilon>0$ there exists $C^0> 0$ such that for any random variables $\xi_n \rightarrow_p m$
  \begin{equation*}
    P( n^{1/5} (\tau_+^0 (\xi_n) -  \tau_-^0 (\xi_n)) \ge C^0 ) \le \epsilon
  \end{equation*}
  for $n \ge N^0$ for some $N^0$.
  The integer  $N^0$ may depend on $\xi_m$ but  $C^0$
  does not depend on $\xi_m$.
\end{proposition}
\medskip

\par\noindent
Proposition~\ref{cor:UniformConsistencyConstrained}
is proved in \cite{Doss:2013}.  It is needed to show
Proposition~\ref{prop:uniform-local-tightnessKnots}, which
is needed to prove
Proposition~\ref{cor:local:vp_vpprime_tight}.
The proof of Proposition~\ref{cor:local:vp_vpprime_tight} depends on
Proposition~\ref{prop:uniform-local-tightnessKnots}, finding points of closeness of $\vvn$ and $\vvna$, and properties of convex functions.
The full proof of
Proposition~\ref{cor:local:vp_vpprime_tight}
is given in
\cite{Doss:2013} (see Corollary~4.2.7 there).
Thus the main goal of Subsection~\ref{ssec:LocalRatesProofs} is to prove
Proposition~\ref{prop:uniform-local-tightnessKnots} about the ``gap problem''
(a term coined in \cite{BW2007kmono}) for the constrained MLE near $m$.
The proof depends on constructing certain (somewhat complicated) classes of perturbation functions which can be related to  $\tau_+^0 (\xi_n) -  \tau_-^0 (\xi_n)$, and
then applying an argument pioneered by
\cite{MR1041391} %
to these perturbations (see Lemma~\ref{lem:gapEPT_NK_LK}).

\bigskip

\par\noindent {\em Subsections~\ref{ssec:LocalRatesProofs} and \ref{ssec:proofs_symm-lim-process}:}

Subsection~\ref{ssec:LocalRatesProofs} is devoted
to the proof of
Theorem~\ref{thm:MC-process-uniqueness-theorem}.  The proof proceeds by
defining an ``objective function''
\begin{equation}
  \phi_{a,b}(g)= \inv{2} \int_{a}^{b} g^2(t)dt - \int_{a}^{b} g(t)dX(t)
\end{equation}
for $a < 0 < b$.
Now, $\vva$ is not a minimizer of this objective function (which has finite bounds of integration) but we can show that $\vva$ behaves in a sense as  if it were a minimizer of $\phi_{a,b}$ over a the space of concave functions with mode at $0$. We show
that, for certain values $a<0<b$, directional derivatives   of $\phi_{a,b}$,
$    \int_{a}^{b} \Delta(t) (\vva(t)dt - dX(t)),$
in the direction of a function $\Delta$ (assumed to be concave with mode at $0$),
are either nonnegative or in some cases are zero (see
Propositions~\ref{pro:charzn0_full}
and \ref{pro:charznEqualityRewrite_full} for exact statements).  This allows us to argue as follows.  We want to show the uniqueness of any $\vp$ satisfying the characterizing conditions
of Theorem~\ref{thm:MC-process-uniqueness-theorem}.  We assume
there exist  $\vp_1$ and $\vp_2$ both satisfying the characterizing conditions.
We examine
$\phi_{a_2,b_2}(\vp_1) - \phi_{a_2,b_2}(\vp_2)$
and
$\phi_{a_1,b_1}(\vp_2) - \phi_{a_1,b_1}(\vp_1)$
where $a_i,b_i$ are certain knot points for $\vp_i$.
By Propositions~\ref{pro:charzn0_full} and
\ref{pro:charznEqualityRewrite_full}, we are able to show that both of these differences are
no smaller than $\int_{-n}^n ( \vp_1(t)-\vp_2(t))^2 dt > 0$ for $n>0$ related to the knot points $a_i,b_i$.  On the other hand, after deriving results about the knots $a_i,b_i$ and relating the processes $\vp_i$ to the ``observed'' process $Y$ (see
Lemma~\ref{lem:unq:Hrepresentation}), we are able to show that
$\phi_{a_2,b_2}(\vp_1) - \phi_{a_2,b_2}(\vp_2)$
and
$\phi_{a_1,b_1}(\vp_2) - \phi_{a_1,b_1}(\vp_1)$ are also nonpositive, by using properties of Brownian motion.  Thus the difference $\int_{-n}^n  ( \vp_1(t)-\vp_2(t))^2 dt = 0$. By letting $n \to \infty$, we see $\vp_1 = \vp_2$ so the proof is complete.
The proof is somewhat complicated by the fact that the ``knots'' of the concave function $\vva$ are not separated but rather could be a complicated ``Cantor-type'' set,
as described in \cite{Sinaui:1992wl}, and so their behavior requires careful analysis.
The proof of Theorem~\ref{thm:symm-process-uniqueness-theorem} in
Subsection~\ref{ssec:proofs_symm-lim-process} follows a similar argument.

\bigskip

\par\noindent
{\em Subsection~\ref{subsec:proofs-pointwise-limit-theory}:}
\label{subsec:proof-sketch-outl}

Theorem~\ref{thm:BRW2009-UMLE-limit} %
A %
about the unconstrained estimator is Theorem~2.1 of \cite{BRW2007LCasymp}.
Part B of the theorem
is then proved in an identical fashion as that theorem, because for $n$ large enough, in an $n^{-1/5}$ neighborhood of $x_0 \ne m$, the constrained and unconstrained estimators satisfy the same characterization.
Part C then follows from Part B. %
The main focus of
Subsection~\ref{subsec:proofs-pointwise-limit-theory} is
to show
Theorem~\ref{thm:MC-MLE-limit-A-process-version}.
Theorem~\ref{thm:process-asymptotics-symmetry} follows in a similar fashion.
Theorem~\ref{thm:MC-MLE-limit-A-process-version} proves
Theorem~\ref{thm:MC-MLE-limit} \ref{thm:MC-MLE-limit:item-A},
which we show now.
A similar argument shows that
Theorem~\ref{thm:process-asymptotics-symmetry} proves
Theorem~\ref{thm:MC-MLE-limit}~\ref{thm:MC-MLE-limit:item-B}.

\begin{proof}[Proof of Theorem~\ref{thm:MC-MLE-limit}~\ref{thm:MC-MLE-limit:item-A}] %
  Note that, now with $\sigma = 1/\sqrt{f_0 (m)}$ and $a = | \varphi_0^{(2)} (m) | /4!$, %
  \eqref{eq:H2-ScalingRelationMC-1} equals
  \begin{eqnarray}
    \frac{1}{\gamma_1 \gamma_2^2} H_R^{(2)} ( \cdot / \gamma_2)
    \equiv
    \frac{1}{\gamma_1 \gamma_2^2} \widehat{\varphi}^0 (\cdot / \gamma_2),  %
    \label{ScalingRelationMC}
  \end{eqnarray}
  where
  \begin{eqnarray}
    && \gamma_1 = \left ( \frac{f_0(m)^4 | \varphi_0^{(2)} (m) |^3}{(4!)^3}  \right )^{1/5} = \frac{1}{\sigma} \left ( \frac{a}{\sigma} \right )^{3/5} ,
        \label{GammaDefnsLogConcaveAtMode-1}\\
    && \gamma_2 = \left ( \frac{(4!)^2}{f_0 (m) | \varphi_0^{(2)} (m) |^2 } \right )^{1/5}  = \left ( \frac{\sigma}{a} \right )^{2/5},
    \label{GammaDefnsLogConcaveAtMode}
  \end{eqnarray}
  and we note that
  \begin{eqnarray}
    && \gamma_1 \gamma_2^{3/2} = \sigma^{-1} = \sqrt{f_0 (m)},  \ \ \ \gamma_1 \gamma_2^4 = a^{-1} = \frac{4!}{| \varphi_0^{(2)} (m)|} ,
    \label{GammaRelationsPart1} \\
    && \inv{\gamma_1 \gamma_2^2} = C(m, \varphi_0) \equiv \left ( \frac{4! f_0 (m)^2}{| \varphi_0^{(2)} (m)|} \right )^{-1/5} .
    \label{GammaRelationsPart2}
  \end{eqnarray}
  This gives the constant in the limit distribution for $\vvna(m)$.  For $(\vvna)'(m)$, we see
from \eqref{ScalingRelationMC} that
  \begin{eqnarray}
    (\vva_{a,\sigma})' =_d \frac{1}{\gamma_1 \gamma_2^3} (\vva)' ( \cdot / \gamma_2),
    \label{ScalingRelation-MC-2}
  \end{eqnarray}
  and $1 / \gamma_1 \gamma_2^3 = D(m, \vvo)$.
  Thus
  Theorem~\ref{thm:MC-MLE-limit} \ref{thm:MC-MLE-limit:item-A}
  follows
  from  Theorem~\ref{thm:MC-MLE-limit-A-process-version}.
\end{proof}

\label{proof-4.8-outline} Next we briefly outline the proof of Theorem~\ref{thm:MC-MLE-limit-A-process-version}.
We define two sets of localized processes.
We need to  define left-side and right-side processes; for ease of exposition,  here we only discuss right-side processes.
We let $\tnb \equiv m + b n^{-1/5}$.
We
let $\snR$ be any knot (sequence) of $\vvna$ strictly larger than $m$ satisfying $n^{1/5}(\snR-m)=O_p(1)$.
The first set of localized processes is the ``$f$-processes'' (written in terms of the densities and the empirical distribution):
\begin{align*}
  \YYnffaR(b) & \equiv \xn^{4/5} \IntRo \left(\IntRi (d\Fn -  \ffo(m) d\lambda) \right) dv, \\
  \HHnffaR(b) &\equiv \xn^{4/5} \IntRo \left( \IntRi (\ffna -
    \ffo(m))d\lambda \right) dv
  + \AnR n^{1/5}(\tnb - \snR),
\end{align*}
where
$  \AnR  = \xn^{3/5}\left(\FnR(\snR)-\FFnaR(\snR)\right) $, and $\AnR n^{1/5}(\tnb-\snR)$ is shown to be asymptotically negligible.
These processes are needed because we can show that
\begin{equation}
  \label{eq:20}
  \YYnffaR(\cdot) \Rightarrow
  \sqrt{f_0(m)} \int_{\nu_{n,R}}^{\cdot} \int_{\nu_{n,R}}^v dW dv
  - f_0^{(2)}(m) \int^\cdot_{\nu_{n,R}} \int_{\nu_{n,R}}^ v \frac{u^2}{2} du dv
\end{equation}
where
$\nu_{n,R} = n^{1/5} (s_{n,R} - m)$ and $W$ is a standard Brownian motion with $W(0)=0$ (see Lemma~\ref{lem:MC-dataprocs-ff-limit}).
Furthermore, the
characterization from Theorem~\ref{thm:CharThmTwo}~\ref{thm:CharThmTwo:B-MC} applies
to $\YYnffaR$ and $\HHnffaR$, so $\YYnffaR(b) -\HHnffaR(b) \ge 0$ for $b>0$, with equality at certain points
(see Lemma~\ref{lem:local-f-characterization}).

On the other hand, the tightness proposition from above,
Proposition~\ref{cor:local:vp_vpprime_tight},  applies to the log-densities.  Thus we define the second set of processes, the ``$\vp$-processes'', written in terms of log-densities:
\begin{align*}
    \YYnvvaR(b) & = \frac{\YYnffaR(b)}{\ffo(\mm)}
  - \xn^{4/5}\IntRo \IntRi \RRnaR(u)dudv,  \\
    \HHnvvaR(b) &=  \xn^{4/5} \IntRo \IntRi \left( \vvna(u)-\vvo(m) \right)du dv
    + \frac{\AnR n^{1/5} (t_{n,b} - \snR) }{\ffo(\mm)}
    \label{eq:local:HHnvvaR2HHnffaR}
\end{align*}
where $R_n^0$ is a remainder term.
These are related to the $f$-processes by a Taylor expansion (the delta method).
One can translate the characterizing inequalities from the $f$-processes to the $\vp$-processes, to see that $\YYnvvaR(b) - \HHnvvaR(b) \ge 0$ for $b \ge 0$ with equality at certain points. By \eqref{eq:20},
$\YYnvvaR$ can be shown to converge to a limiting Gaussian process.
Furthermore, we can apply
Proposition~\ref{cor:local:vp_vpprime_tight} and the Arzela-Ascoli theorem (after analyzing various remainder terms) to see that $\HHnvvaR$ is tight (Lemma~\ref{lem:local:Htight}).

Finally, we make a subsequence argument using tightness.  By tightness,
Prohorov's theorem, and the Skorokhod construction, for any subsequence we
can find a further subsubsequence that converges almost surely to a limit
process.  By using the characterization (Lemma~\ref{lem:Y-bigger-H-vv}, and
by analysis of various remainder terms), we show that the limit process must
satisfy the unique characterizing conditions given by
Theorem~\ref{thm:MC-process-uniqueness-theorem}.  This shows the limit is the
same along all subsequences and so the unique process $\vva$ given in
Theorem~\ref{thm:MC-process-uniqueness-theorem} is the limit, which completes the proof.
The argument showing that the characterizing conditions pass from the finite sample processes to the limit process is somewhat complicated by the fact that the integrands in question are  defined to begin at random knot points.

Another issue of note is that $(\vvna)'$ is a discontinuous function.  We must choose or find an appropriate metric space in which to study its convergence; the metric we choose is the so-called $M_1$ Skorokhod metric, which differs from the (perhaps more standard) $J_1$ Skorokhod metric
(referred to as ``the'' Skorokhod metric in chapter 12 of \cite{Billingsley1999CPM}).
The $J_1$ metric unfortunately does not allow multiple jumps to approximate a single jump, whereas the $M_1$ metric does.  Since we do not have a proof that multiple jumps of $(\vvna)'$ do not approximate a single jump in the limit, we must use the $M_1$ metric.
See Lemma~\ref{prop:FccM-precompact} and the preceding text for further discussion.

\section{Proofs}
\label{sec:proofs-new}

\subsection{Proofs for Section~\ref{sec:MLEs}}
Proofs of
Theorems~\ref{thm:Existence}~\ref{thm:existence:A-unconstrained}, %
\ref{thm:CharThmOne}~\ref{thm:CharThmOne:A-UC}, %
\ref{thm:CharThmTwo}~\ref{thm:CharThmTwo:A-UC} %
and
Corollaries~\ref{cor:EstimatedFNearlyTouchesEDFatKnotsUnconstrained} A %
and
\ref{cor:MeanVarianceInequalities} A %
may be found in \cite{MR2459192},  %
\cite{RufibachThesis},  %
and \cite{DR2009LC}.   %
Proofs of parts B and C of
Corollary~\ref{cor:EstimatedFNearlyTouchesEDFatKnotsUnconstrained} %
and part B of  Corollary~\ref{cor:MeanVarianceInequalities}  %
follow from the corresponding parts of Theorems
\ref{thm:CharThmOne}
and \ref{thm:CharThmTwo},
much as in the unconstrained case, A.
It remains to prove parts B and C of Theorems
\ref{thm:Existence},
\ref{thm:CharThmOne},
and \ref{thm:CharThmTwo}.

In the following proofs, we let ${\cal C}_{n,m}$ denote the (random) class of concave functions with knots at the $Z_i$'s and support
on $[X_{(1)}, X_{(n)}]$, and let ${\cal K}_{n,m}$ denote the class of concave functions $\varphi$ with knots at
the $Z_i$'s and where $e^{\varphi}$ is a density with support $[X_{(1)}, X_{(n)}]$.

\par\noindent
\begin{proof}[Proof of Theorem~\ref{thm:Existence}]

  Proof of Theorem~\ref{thm:Existence}~B:
  As
  in the unconstrained case (Theorem 2.1 of \cite{DR2009LC}), it is easy to argue that the solution is
  piecewise linear with knots at the $Z_i$'s, and that
  $\widehat{\varphi}_{n}^0$ is flat either directly to the left of the mode
  or directly to the right of the mode as long as the mode is not a data
  point.  If the mode is equal to one of the $X_i$'s then the proof
  given by \cite{RufibachThesis} for the unconstrained MLE existence applies directly.
  Thus assume the mode is not one of the $X_i$.
  Consider  a sequence $\lb v_j \rb$ which has limit coordinates $\gamma = (\gamma_1, \ldots, \gamma_N)$ which may be $\pm \infty$.  Let $\vp_\gamma$ be the piecewise linear function given by linearly interpolating $\gamma$.    Then $\vp_\gamma$ has a flat modal region on $[Z_i, Z_j]$ for some $i < j$.  Since $\int e^{\vp_\gamma(x)}dx = 1$, we have $\vp_\gamma(m) \le \log(1/ (Z_j-Z_i))$.  Since $m$ is the mode of $e^{\vp_\gamma}$ no coordinate of $\gamma$ is positive infinity, if one of the coordinates is $-\infty$ then $\Psi_n(\vp_\gamma) = -\infty$.  This shows we can consider the continuous function $\Psi_n$ on a compact set so it achieves a maximum.  The proof that if $\widehat{\psi}_n$ maximizes $\Psi_n$ over ${\cal C}_m$ then $\int e^{\widehat{\psi}_n(x)} dx = 1$ as well as the proof that $\widehat{\psi}_n$ is unique are as in the unconstrained case (\cite{RufibachThesis}).

  For Theorem~\ref{thm:Existence}~C, note that
  \begin{equation*}
    \argmax_{g \in \SLC_0} \inv{n} \sum_{i=1}^n \log g(X_i)
    =     \argmax_{g \in \SLC_0} \inv{n} \sum_{i=1}^n \log g(|X_i|)
    =     \argmax \inv{n} \sum_{i=1}^n \log g(|X_i|)
  \end{equation*}
where the last argmax is over log-concave functions with mode at $0$ and which integrate to $1/2$ on $[0,\infty)$.
\end{proof}

\par\noindent
The proof of Theorem~\ref{thm:CharThmOne}~\ref{thm:CharThmOne:B-MC} is standard.
See \cite{Doss:2013}.
For the  proof of Theorem~\ref{thm:CharThmTwo}~B we need to introduce a certain
 \emph{cone} $C \subseteq \RR^d$ (defined, e.g., on page 13 of
\cite{MR0274683}). We say the cone $C$ is \emph{ (finitely) generated} by a set $\{ b_i
\in C$ : $i=1,\ldots,k < \infty \}$ if for all $c \in C$ we can write $c =
\sum_{i=1}^{k} \alpha_i b_i $ for some nonnegative numbers $\alpha_i \ge 0$.
Let  $(x)_{-}=\min(x,0)$ and
$(x)_{+}=\max(x,0)$. %
\begin{proposition}
  \label{prop:charzn:CnaCone}
  ${\cal C}_{n,m}$ is a convex cone with finite generating set given by
  \begin{equation*}
    \{(x-Z_{i})_{-}\}_{2\le    i\le k}\,\bigcup\,\{(Z_{i}-x)_{-}\}_{k\le i\le N-1}\,\bigcup\{\pm1\}.
  \end{equation*}
\end{proposition}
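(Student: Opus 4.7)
My plan has two parts: first, show that $\mathcal{C}_{n,m}$ is a convex cone containing the listed functions; second, show every element of $\mathcal{C}_{n,m}$ is a nonnegative linear combination of them. The first part is routine. Each listed generator is the pointwise minimum of an affine function with $0$, hence concave, with a single kink at some $Z_i$: for $(x - Z_i)_-$ with $2 \le i \le k$ the kink sits at $Z_i \le m$ and the function equals $0$ on $[Z_i, \infty)$, so the maximum is attained at $m$; the case $(Z_i - x)_-$ with $k \le i \le N-1$ is symmetric, and the constants $\pm 1$ are trivially in $\mathcal{C}_{n,m}$. Closure of $\mathcal{C}_{n,m}$ under nonnegative linear combinations is immediate, since concavity is preserved by nonnegative sums, the union of knot sets remains a subset of $\{Z_1, \ldots, Z_N\}$, and if every summand attains its maximum at $m$ then so does the sum.

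For the second part, I would parameterize any $\varphi \in \mathcal{C}_{n,m}$ by the pair $(\varphi(m), (\alpha_1, \ldots, \alpha_{N-1}))$, where $\alpha_j = (\varphi(Z_{j+1}) - \varphi(Z_j))/(Z_{j+1} - Z_j)$ is the slope of $\varphi$ on $[Z_j, Z_{j+1}]$. Concavity together with mode at $Z_k = m$ is equivalent to the chain $\alpha_1 \ge \cdots \ge \alpha_{k-1} \ge 0 \ge \alpha_k \ge \cdots \ge \alpha_{N-1}$. A direct computation shows that the generator $(x - Z_i)_-$ has slope $1$ on the subintervals with index $j \le i-1$ and slope $0$ elsewhere, while $(Z_i - x)_-$ has slope $0$ for $j \le i-1$ and slope $-1$ for $j \ge i$. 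Setting $b_i = \alpha_{i-1} - \alpha_i$ for $2 \le i \le k-1$, $b_k = \alpha_{k-1}$, $c_k = -\alpha_k$, and $c_i = \alpha_{i-1} - \alpha_i$ for $k+1 \le i \le N-1$, all coefficients are nonnegative by concavity ($\alpha_{i-1} \ge \alpha_i$) and the mode inequalities ($\alpha_{k-1} \ge 0 \ge \alpha_k$), and a telescoping check shows $\sum_i b_i (x - Z_i)_- + \sum_i c_i (Z_i - x)_-$ has exactly the slope sequence $(\alpha_j)$. Since every listed generator vanishes at $m$ (because $Z_i \le m$ forces $(m - Z_i)_- = 0$ and $Z_i \ge m$ forces $(Z_i - m)_- = 0$), the resulting combination equals $0$ at $m$; adding $\max(\varphi(m), 0) \cdot (+1) + \max(-\varphi(m), 0) \cdot (-1)$ then matches the value at $m$. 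Two piecewise linear functions with knots in $\{Z_1, \ldots, Z_N\}$ that agree at $m$ and have equal slopes on each subinterval coincide, so the combination equals $\varphi$.

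The main obstacle, such as it is, is the bookkeeping at the central index $i = k$: both generators $(x - Z_k)_-$ and $(Z_k - x)_-$ appear, with coefficients $b_k = \alpha_{k-1}$ and $c_k = -\alpha_k$, and it is precisely the nonnegativity of these two numbers that encodes the mode constraint at $m$. Boundary cases $k = 1$ or $k = N$ (which occur only when $m$ coincides with $X_{(1)}$ or $X_{(n)}$) are handled by interpreting the corresponding empty sums as zero, after which the argument goes through unchanged.
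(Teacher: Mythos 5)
Your proof is correct and takes essentially the same approach as the paper: decompose $\varphi$ according to its slope jumps at the $Z_i$, with nonnegativity of the coefficients following from concavity and the mode constraint at $Z_k = m$, and use the constants $\pm 1$ to match the value at $m$. You spell this out in more detail than the paper's terse one-line formula, and your bookkeeping at the central index $k$ (splitting the mode constraint into $b_k = \alpha_{k-1} \ge 0$ and $c_k = -\alpha_k \ge 0$) is exactly right.
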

\begin{proof}
  It is clear that ${\cal C}_{n,m}$ is a cone because concavity and the mode are
  preserved under positive scaling. For $\varphi\in {\cal C}_{n,m}$, with
  $\varphi'(Z_{i}-)=\sum_{j=i}^{k}a_{j}$ for $i\le k$, with,
  $\varphi'(Z_{i}+)=\sum_{j=k}^{i}b_{j}$ for $i\ge k$, and with $\varphi(m)=C$, we
  can write
  $\varphi(x)=C+\sum_{i=2}^{k}a_{i}(x-Z_{i})_{-}+\sum_{i=k}^{n-1}b_{i}(Z_{i}-x)_{-}$.
\end{proof}

\begin{proof}[Proof of Theorem~\ref{thm:CharThmTwo}]
Proof of Part B: First we assume $\widehat{\varphi}_n^0$ is the MLE and use
\eqref{eq:char0} to show that \eqref{eq:LeftFenchelInequalitiesConstrained}
and \eqref{eq:RightFenchelInequalitiesConstrained} hold.  Using the
generating functions described in Proposition~\ref{prop:charzn:CnaCone} as
our $\Delta$ yields equations \eqref{eq:LeftFenchelInequalitiesConstrained}
and \eqref{eq:RightFenchelInequalitiesConstrained} via integration by
parts. That is, for $t \le m$, we choose $\Delta(x)=\left(x-t\right)_{-}$
(which is concave with $m$ as a mode).  Then integration by parts yields
$-\int_{X_{(1)}}^{t}F(x)dx=\int_{X_{(1)}}^{X_{(n)}}(x-t)_{-}dF(x)$, by
Lemma~\ref{lem:integration-by-parts}~\ref{lem:integ-by-parts-closed-interval},
for $F$ equal to either $\FF_n$ or $\widehat{F}_n^0$ since
$\Fn(X_{(1)}-) = \FFna(X_{(1)}-) = 0$.  Thus, by our initial characterization
\eqref{eq:char0}, we get \eqref{eq:LeftFenchelInequalitiesConstrained}.
Similarly, for $t\ge m$, let $\Delta(x)=\left(t-x\right)_{-}$; this yields
\begin{align*}
  \MoveEqLeft{\left(t-X_{(n)}\right)\FF_n(X_{(n)}) -\int_{t}^{X_{(n)}}\FF_n(x)d(-x)}\\
& \le  \left(t-X_{(n)}\right)\widehat{F}_n^0 (X_{(n)})-\int_{t}^{X_{(n)}}\widehat{F}_n^0(x)d(-x),
\end{align*}
and, recalling that we have already shown $\widehat{F}_n^0(X_{(n)})=1$, this is equivalent to
$
\int_{t}^{X_{(n)}}\FF_n(x)dx\le \int_{t}^{X_{(n)}}\widehat{F}_n^0 (x)dx,
$
so we have
  \eqref{eq:RightFenchelInequalitiesConstrained}.
  We get equality at some knot points also: set
  $\Delta(x)=\left(x-b\right)_{+}$ where $b\ge m$ is any RK. Then, by the definition of a
  RK, $\Delta$ is an allowable perturbation because
  $\widehat{\varphi}_n^0 (b+)-\widehat{\varphi}_n^0(b-)>0$ so for some
  $\delta$ small enough, $\widehat{\varphi}_n^0+\delta\Delta$ is still concave with mode at $m$.  Using
  this $\Delta$ we get
  \begin{align*}
    \MoveEqLeft \FF_n (X_{(n)})\left(X_{(n)}-b\right)-\int_{b}^{X_{(n)}}\FF_n(x)dx  \\
    &= \int_{X_{(1)}}^{X_{(n)}}(x-b)_{+}d \FF_n(x) \le \int_{X_{(1)}}^{X_{(n)}}(x-b)_{+}d\widehat{F}_n^0 (x) \\
    &=  \widehat{F}_n^0 (X_{(n)}) \left(X_{(n)}-b\right)-\int_{b}^{X_{(n)}}\widehat{F}_n^0 (x) dx, %
  \end{align*}
  so that $\int_{b}^{X_{(n)}}\FF_n(x)dx \ge\int_{b}^{X_{(n)}}\widehat{F}_n^0 (x)dx$, and thus for any
  $b\ge m$ that is a RK we have the inequality both ways,
  $\int_{b}^{X_{(n)}}\FF_n(x)dx=\int_{b}^{X_{(n)}}\widehat{F}_n^0 (x)dx$.
  We have thus shown that
  \eqref{eq:LeftFenchelInequalitiesConstrained} and
  \eqref{eq:RightFenchelInequalitiesConstrained} hold
  with the appropriate equalities.

  Now we will show the reverse implication. We assume
  \eqref{eq:LeftFenchelInequalitiesConstrained}
  and \eqref{eq:RightFenchelInequalitiesConstrained} hold and consider $\Delta$
  with support $[X_{(1)},X_{(n)}]$ and piecewise linear with knots at
  the $Z_{i}$.  These are all the $\Delta$'s we need to consider,  %
  since the rest were ruled out previously by elementary considerations.  We
  also need $\widehat{\varphi}_n^0 +\epsilon\Delta$ to be concave with mode $m$.
  Now, we do not know if $m$  will be a NK or a LK or
  a RK, so we argue by cases. If $m$ is a knot for $\widehat{\varphi}_n^0 $ in one
  direction, without loss of generality, we can say that $m$ is a RK and
  we have $\int_{m}^{c}\FF_n(x)dx=\int_{m}^{c}\widehat{F}_n^0 (x)dx$ for any $c>m$ that
  is also a knot. Recall that we  have defined the indices
  $1=j_{1},\ldots,j_{l^0}=N$ so that $Z_{j_{i}}$ are the knots. We  write
  \begin{equation}
    \Delta'(r-)=\sum_{i=2}^{l}-C_{i}1_{[Z_{j_{i}-1}<r\le
      Z_{j_{i}}]}+\sum_{j=j_{i-1}+1}^{j_{i}}\beta_{j}1_{[Z_{j_{i}-1}<r\le
      Z_{j}]}\label{eq:deltaDecomposition}
  \end{equation}
  with $\beta_{j}\ge0$ and all $C_{i}\ge0$.   %
  Since $m$ is a RK,
  $m$ is not also a LK (otherwise $m$ is simply a knot and
  $\widehat{\varphi}_n^0$ coincides with the unconstrained MLE and the characterization
  of the unconstrained MLE in \citep{DR2009LC} implies we are done).
  This forces $C_{p}=0$ (which refers to the interval $(Z_{j_{p}-1}, m=Z_{j_{p}}]$).
  We thus have
  \begin{align*}
    \int\Delta d\FF_n
    & =  \Delta(X_{(n)})-\left[\sum_{i=2}^{l}
      -C_{i}\int_{Z_{j_{i}-1}}^{Z_{j_{i}}}\FF_n\left(x\right)dx +
      \sum_{j=j_{i-1}+1}^{j_{i}}\beta_{j}\int_{Z_{j_{i}-1}}^{Z_{j}}\FF_n\left(x\right)dx\right]\\
    & \le \Delta(X_{(n)})-\left[\sum_{i=2}^{l}-C_{i}\int_{Z_{j_{i}-1}}^{Z_{j_{i}}} \widehat{F}_n^0 \left(x\right)dx
      +\sum_{j=j_{i-1}+1}^{j_{i}}\beta_{j}\int_{Z_{j_{i}-1}}^{Z_{j}}\widehat{F}_n^0 \left(x\right)dx\right]\\
    & =  \int\Delta d\widehat{F}_n^0
  \end{align*}
  as desired, where the inequality follows from noting that
  $  -\beta_{j}\int_{Z_{j_{i}-1}}^{Z_{j}}\FF_n(x) dx \le -\beta_{j}\int_{Z_{j_{i}-1}}^{Z_{j}}\widehat{F}_n^0 (x)dx
  $
  by assumption and because $\beta_{j}\ge0$. We also have
  $$
  C_{i}\int_{Z_{j_{i}-1}}^{Z_{j_{i}}}\FF_n\left(x\right)dx  =  C_{i}\int_{Z_{j_{i}-1}}^{Z_{j_{i}}}\widehat{F}_n^0 \left(x\right)dx
  $$
  for all $i$ except for $i=p$, by the equality-at-knots assumption.
  However, for $i=p,$ we have $C_{i}=0$.
  An analogous argument holds for the case where $m$ is an LK
  and for the case where $m$ is neither an LK nor an RK.

  Part C follows as in the proof of Theorem~\ref{thm:Existence}~C;
  $\widehat{g}_n^+$ is the MLE over $\LC_0$ of $|X_1|, \ldots, |X_n|$.  Thus
  we apply the result of Part B.  Note that
  $0 \in S_n(\widehat{\psi}_n^0) \cap [0, |X|_{(n)}]$ only if
  $(\widehat{\psi}_n^0)'(0+)<0$ (so that $0$ is a right knot).  This is only
  possible if $0$ is an observed data point.
\end{proof}

Proof of Corollary~\ref{cor:EstimatedFNearlyTouchesEDFatKnotsUnconstrained}~B and C follow as in the unconstrained case (Corollary~2.5, \cite{DR2009LC}).

\subsection{Proofs for local rates of convergence, limit processes, and limit theory}

\subsubsection{Proofs for local rates of convergence} %
\label{ssec:LocalRatesProofs}

This section is devoted to showing,
Proposition~\ref{cor:local:vp_vpprime_tight},
stated above, which is needed for
proving Theorems~\ref{thm:MC-MLE-limit-A-process-version},
and \ref{thm:process-asymptotics-symmetry}.
See \ref{sec:Proofs-outlines} for a discussion of the proof of
Proposition~\ref{cor:local:vp_vpprime_tight}, the full details of which are
given in \cite{Doss:2013} (Corollary~4.2.7).
Our main goal here is to
prove Proposition~\ref{prop:uniform-local-tightnessKnots}.

\begin{proof}[Proof of Proposition~\ref{prop:uniform-local-tightnessKnots}] %
  We first consider the case  $\xi_{n} = m$. %
  For ease of notation and without loss of generality, we assume $m=0$
  and abbreviate $\tau_{n,+}^0 (0)$  by  $\tau_{n,+}^0$  and $\tau_{n,-}^0 $ by
  $\tau_{n,-}^0$.
  We will argue via a family of perturbations which can
  be separated into subfamilies, depending on whether $0$ is a left-knot
  (LK), $0$ is a right-knot (RK), or $0$ is not a knot (NK).
  If $0$ is a one-sided knot (LK or RK), we have different perturbation subfamilies
  depending on whether $\tau_{n,+}^0 >- \ \tau_{n,-}^0$  or not. We will start
  with the case in which $0$ is a LK and we construct $\Delta$ that
  has the two properties
  \begin{equation}
    \int_{\tau_{n,-}^0}^{\tau_{n,+}^0}\Delta(t)dt=0,\label{eq:GP_LKfilter1}
  \end{equation}
  \begin{equation}
    \int_{\tau_{n,-}^0}^{0} t \Delta(t)dt=0.\label{eq:GP_LKfilter2}
  \end{equation}
  Whereas in the unconstrained case construction of such an acceptable
  perturbation function was straightforward \citep{BRW2007LCasymp}, in the
  constrained case, construction of such a $\Delta$ that is an acceptable
  perturbation (i.e.\ keeps the mode fixed) is much less straightforward.
  We consider several cases separately.\\
  {\bf Case 1.}  $\tau_{n,+}^0 > - \tau_{n,-}^0$.
  In this case we define
  \begin{eqnarray}
    \Delta_{LK,0}(t)=\begin{cases}
      \frac{\tau_{n,+}^0 +m_{2}\cdot(-\frac{\tau_{n,-}^0}{4})}{\frac{\tau_{n,-}^0}{4}-\tau_{n,-}^0}\cdot(t-\tau_{n,-}^0) ,
      & \,\,\tau_{n,-}^0 \le t \le \frac{\tau_{n,-}^0}{4} \\
      \tau_{n,+}^0 + m_{2}\cdot(-t),  & \,\,\frac{\tau_{n,-}^0}{4}\le t\le0 \\
      (\tau_{n,+}^0 -t),  & \,\,0\le t\le\tau_{n,+}^0 \\
      0,  & \mbox{otherwise,}  \end{cases}  \label{LKDeltaZeroPart1}
  \end{eqnarray}
  where
  $$
  m_{2}:=m_{2}(\tau_{n,-}^0,\tau_{n,+}^0)
  =\left( \frac{-9-3\frac{\tau_{n,+}^0}{-\tau_{n,-}^0}}{1 - 5\frac{\tau_{n,+}^0}{-\tau_{n,-}^0}} \right)
  \left(\frac{\tau_{n,+}^0}{-\tau_{n,-}^0}\right) .
  $$
  This function has integral
  $$
  \frac{(\tau_{n,+}^0)(5\tau_{n,+}^0 -\tau_{n,-}^0)(\tau_{n,+}^0 -\tau_{n,-}^0)}{2(5\tau_{n,+}^0 + \tau_{n,-}^0)}
  \equiv M_{LK,case1} (\tau_{n,+}^0 -\tau_{n,-}^0) .
  $$
  {\bf Case 2:}  $\tau_{n,+}^0 < - \tau_{n,-}^0$.  In this case we define
  \begin{eqnarray}
    \Delta_{LK,0}(t)=\begin{cases}
      (t-\tau_{n,-}^0),  & \,\,\tau_{n,-}^0 \le t \le  \frac{\tau_{n,-}^0}{2} \\
      \frac{\tau_{n,-}^0}{2}-\tau_{n,-}^0 + m_{2} \cdot (\frac{\tau_{n,-}^0}{2} - t),  & \,\,\frac{\tau_{n,-}^0}{2}\le t\le0 \\\
      \left(\frac{-\frac{\tau_{n,-}^0}{2} + m_{2}\frac{\tau_{n,-}^0}{2}}{\tau_{n,+}^0 } \right) (\tau_{n,+}^0 - t),  & \,\,0\le t\le\tau_{n,+}^0 \\
      0,  & \mbox{otherwise,}\end{cases} \label{LKDelta0Part2}
  \end{eqnarray}
  where
  $$
  m_{2}:=m_{2}(\tau_{n,-}^0,\tau_{n,+}^0)
  =\frac{2\tau_{n,-}^0 + \tau_{n,+}^0}{2\tau_{n,-}^0 - 5\tau_{n,+}^0 }
  $$
  is defined so that \eqref{eq:GP_LKfilter2} holds.
  This function has integral
  $$
  \frac{-\tau_{n,-}^0 (3\tau_{n,+}^0 - \tau_{n,-}^0)(\tau_{n,+}^0 - \tau_{n,-}^0)}{10\tau_{n,+}^0 - 4\tau_{n,-}^0}
  \equiv M_{LK,case2} (\tau_{n,+}^0 - \tau_{n,-}^0) .
  $$
  Then we define
  \begin{equation}
    \Delta_{LK,1}(t) = \Delta_{LK,0}(t)-M_{LK} 1_{[\tau_{n,-}^0,\tau_{n,+}^0 ]}(t),
    \label{eq:Delta1_LK}
  \end{equation}
  where
  $$
  M_{LK}=\frac{(\tau_{n,+}^0)(5\tau_{n,+}^0 -\tau_{n,-}^0)}
  {2(5\tau_{n,+}^0 + \tau_{n,-}^0 )}1_{[\tau_{n,+}^0 >\tau_{n,-}^0]}
  +  \frac{-\tau_{n,-}^0(3\tau_{n,+}^0 - \tau_{n,-}^0)}
  {10\tau_{n,+}^0 - 4\tau_{n,-}^0}1_{[\tau_{n,+}^0 \le\tau_{n,-}^0]},
  $$
  is $o_{p}(1)$ by uniform consistency of $\widehat{\varphi}_n^0$ and is the appropriate
  shift so that \eqref{eq:GP_LKfilter1} holds.
  Then $\Delta_{LK,0}$
  is an acceptable perturbation for $\widehat{\varphi}_n^0$, since we can have
  $m_{2}>1$ when $0$ is a LK, and $\Delta_{LK,1}$ has the properties
  \eqref{eq:GP_LKfilter1} and \eqref{eq:GP_LKfilter2}.  We also define
  $\Delta_{RK,1}$ analogously as $\Delta_{LK,1}$, with analogous constant  $M_{RK}$.

  Now consider the case in which $0$ is not a knot (NK). In this case,
  because $(\widehat{\varphi}_n^0)^{\prime} (t) = 0$  for all
  $t\in (\tau_{n,-}^0,\tau_{n,+}^0)$, we only need $\Delta$ to have the property
  \begin{equation}
    \int_{\tau_{n,-}^0}^{\tau_{n,+}^0}\Delta(t)dt=0.\label{eq:GP_NKfilter}
  \end{equation}
  So, if $\tau_{n,+}^0>-\tau_{n,-}^0$ define
  \begin{eqnarray}
    \Delta_{NK,0}(t):=
    \begin{cases}
      \frac{\tau_{n,+}^0}{-\tau_{n,-}^0}(t-\tau_{n,-}^0), & \mbox{ for }t\in[\tau_{n,-}^0,0]\\
      (\tau_{n,+}^0-t), & \mbox{ for }t\in[0,\tau_{n,+}^0] \\
      0, & \mbox{ otherwise, }
    \end{cases} \label{NKDelta0Part1}
  \end{eqnarray}
  and
  if $\tau_{n,+}^0 < -\tau_{n,-}^0$,  define
  \begin{eqnarray}
    \Delta_{NK,0}(t):=
    \begin{cases}
      (t-\tau_{n,-}^0), & \mbox{ for }t\in[\tau_{n,-}^0, 0]\\
      \frac{-\tau_{n,-}^0}{\tau_{n,+}^0}(\tau_{n,+}^0-t), & \mbox{ for }t\in[0,\tau_{n,+}^0]\\
      0, & \mbox{ otherwise.}\end{cases}  \label{NKDelta0Part2}
  \end{eqnarray}
  Denote $h_{l}=\max(\tau_{n,+}^0,-\tau_{n,-}^0)$ and $h_{s}=\min(\tau_{n,+}^0,-\tau_{n,-}^0)$.
  We then compute
  $\int\Delta_{NK,0}(x)dx= h_{l}(\tau_{n,+}^0-\tau_{n,-}^0)/2$,
  so set
  \begin{equation}
    \Delta_{NK,1}(x):=\Delta_{NK,0}(x)-\frac{h_{l}}{2}1_{[\tau_{n,-}^0,\tau_{n,+}^0]}(x),\label{eq:delta1_NK}
  \end{equation}
  so that \eqref{eq:GP_NKfilter} is satisfied. Let
  \begin{align}
    & \Delta_{0}(x)  =\Delta_{RK,0}(x)1_{[m \text{ is RK}]}+\Delta_{LK,0}(x)1_{[m  \text{ is LK}]}+\Delta_{NK,0}(x)1_{[m \text{ is NK}]},     \nonumber \\
    & M_{n}  =M_{RK}1_{[m \text{ is RK}]}+M_{LK}1_{[m \text{ is LK}]}+h_{l}1_{[m \text{ is NK}]},
      \nonumber
  \end{align}
  and let
  \begin{align}
    \begin{split}
      \Delta_{1}(x)
      &=  \Delta_{RK,1}(x)1_{[m \text{ is RK}]}
      +\Delta_{LK,1}(x)1_{[m \text{ is LK}]}
      +  \Delta_{NK,1}(x)1_{[m \text{ is NK}]}\\
      & =  \Delta_{0}(x)-M_{n}.
    \end{split}
        \label{eq:GP_defDelta1}
  \end{align}
  Note that $M_{n}$ is $o_{p}(1)$ by uniform consistency in a neighborhood of
  $m$.  By the characterization
  Theorem~\ref{thm:CharThmOne}\ref{thm:CharThmOne:B-MC} and
  Corollary~\ref{cor:EstimatedFNearlyTouchesEDFatKnotsUnconstrained}, we see
  that
  \begin{eqnarray}
    \int\Delta_{1}d(\FF_n - F_0)
    & =  & \int\Delta_{1}d(\FF_n-\widehat{F}_n^0) + \int\Delta_{1}d(\widehat{F}_n^0 - F_0) \nonumber \\
    & =  & \int \Delta_{0}d(\FF_n- \widehat{F}_n^0) - M_{n}\int_{\tau_{n,-}^0}^{\tau_{n,+}^0}d(\FF_n-\widehat{F}_n^0)
    +\int\Delta_{1}d(\widehat{F}_n^0 - F_0)\nonumber \\
    & \le & M_{n}\bigg |\int_{\tau_{n,-}^0}^{\tau_{n,+}^0}d(\FF_n-\widehat{F}_n^0 ) \bigg | +  \int\Delta_{1}(x)(\widehat{f}_n^0- f_0)(x)dx
    \nonumber \\
    & \le & \frac{2M_{n}}{n} +\int\Delta_{1}(x)(\widehat{f}_n^0 -f_0)(x)dx.
    \label{eq:BasicInequalityKnotTightness}
  \end{eqnarray}
  Then Lemma \ref{lem:gapEPT_NK_LK} yields
  \begin{equation}
    \left| \int\Delta_{1}d(\FF_n- F_0) \right | \le  O_{p}(n^{-4/5}) + \frac{K}{2}h_{l}^{4},
    \label{eq:kimPollard_LK}
  \end{equation}
  \[
  \int\Delta_{1}(x)(\widehat{f}_n^0 - f_0)(x)dx  \le-Kh_{l}^{4}+o_{p}(h_{l}^{4}),
  \]
  for some $K>0$ and where we picked $\epsilon$ from
  Lemma \ref{lem:gapEPT_NK_LK} to be $K/2$.
  So, by rearranging (\ref{eq:BasicInequalityKnotTightness}),  we have
  \begin{eqnarray*}
    K(1+o_{p}(1))h_{l}^{4}
    & \le &  -\int\Delta_{1}(x)(\widehat{f}_n- f_0)(x)dx\\
    & \le & \frac{2M_{n}}{n} - \int\Delta_{1}d(\FF_n - F_0)
    \le  \frac{2M_{n}}{n}+ \bigg |\int\Delta_{1}d(\FF_n- F_0) \bigg |\\
    & \le & O_{p}(n^{-4/5})+\frac{K}{2}h_{l}^{4},
  \end{eqnarray*}
  and hence
  \[
  (K/2 + o_p (1) ) h_{l}^{4}) \le  O_{p}(n^{-4/5})
  \]
  which yields  $0 \le h_{l}=O_{p}(n^{-1/5})$.
  Since $\tau_{n,+}^0$ and $-\tau_{n,-}^0$ are both  bounded by $h_{l}$,
  we are done for the case $\xi_n  = m$.

  Extending to the case where $\xi_n \to m$ rather than being fixed and equal to
  $m$  follows from considering the event
  $m \in [\tau_{n,-}^0(\xi_n), \tau_{n,+}^0(\xi_n)]$
  and its complement separately, and then by showing on the event
  $m \notin [\tau_{n,-}^0(\xi_n),\tau_{n,+}^0(\xi_n)]$ that
  $\tau_{n,+}^0(\xi_n)-\tau_{n,-}^0(\xi_n) = O_p(n^{-1/5})$.
  This is straightforward.
\end{proof}

The following lemmas were used.

\begin{lemma}
  \label{lem:gapEPT_NK_LK}
  We continue with the setup of Proposition~\ref{prop:uniform-local-tightnessKnots}.  %
  That is, we define
  $h_{l} =\max(\tau_{n,+}^0 -m, m-\tau_{n,-}^0 )$,
  $h_{s} =\min(\tau_{n,+}^0 -m, m-\tau_{n,-}^0)$
  and
  $\Delta_{1}$ as in \eqref{eq:GP_defDelta1}.
  Then %
  for all $\epsilon>0$,
  \begin{equation}
    \left |\int \Delta_{1}d(\FF_n- F_0) \right |
    \le\epsilon h_{l}^{4} + O_{p}(n^{-4/5})   \label{eq:R2n}
  \end{equation}
  and
  \begin{equation}
    \int \Delta_{1}(x)(\widehat{f}_n^0 - f_0 ) (x) dx
    \le - \frac{f_0(0)|\varphi_0''(0)|}{2}Kh_{l}^{4} + o_{p}(h_{l}^{4}),\label{eq:R1n}
  \end{equation}
  where $K>0$ is from Lemma \ref{lem:deltaMonomialIntegral}, and does
  not depend on $f_0$.
\end{lemma}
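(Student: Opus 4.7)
The two bounds are of very different natures: \eqref{eq:R2n} is a stochastic bound on an empirical process increment, and \eqref{eq:R1n} is a deterministic-type expansion exploiting the two moment conditions built into $\Delta_1$.

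\textbf{Bound~\eqref{eq:R2n}.} The plan is to show the basic empirical-process estimate
\begin{equation*}
  \left|\int \Delta_1 \, d(\mathbb{F}_n - F_0)\right| = O_p\!\left(h_l^{3/2} n^{-1/2}\right),
\end{equation*}
and then to balance this rate against $h_l^4$ via Young's inequality. The function $\Delta_1$ has support inside $[\tau_{n,-}^0, \tau_{n,+}^0]$ whose length is $O_p(h_l)$, is bounded by $O(h_l)$, and belongs to a pointwise-controlled family of piecewise linear functions with at most a bounded number of knots. Consequently $\sigma^2(\Delta_1) := \int \Delta_1^2\,dF_0 \lesssim h_l \cdot h_l^2 \cdot \|f_0\|_\infty^{\mathrm{loc}} = O(h_l^3)$, and the relevant class of piecewise linear functions has polynomial uniform covering numbers, so a standard maximal inequality (as in Section~2.14 of \citet{MR2766867} or Theorem~3.12 of \citet{MR1385671}) gives the $O_p(h_l^{3/2} n^{-1/2})$ rate uniformly in the data-dependent knots. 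Then Young's inequality $ab \le (3/8)a^{8/3} + (5/8) b^{8/5}$ applied to $a = h_l^{3/2}$, $b = n^{-1/2}$ yields exactly $\epsilon h_l^4 + O_p(n^{-4/5})$ after absorbing the constant into $\epsilon$, finishing \eqref{eq:R2n}.

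\textbf{Bound~\eqref{eq:R1n}.} For the deterministic bound we first reduce from $\widehat f_n^0 - f_0$ to $\widehat\varphi_n^0 - \varphi_0$ via the Taylor expansion $\widehat f_n^0 - f_0 = f_0(e^{\widehat\varphi_n^0 - \varphi_0}-1) = f_0 (\widehat\varphi_n^0 - \varphi_0) + O((\widehat\varphi_n^0 - \varphi_0)^2)$; by Corollary~\ref{cor:UniformConsistencyConstrained} the remainder integrates to $o_p(h_l^4)$ on the $O(h_l)$-length support of $\Delta_1$. On that support $\widehat\varphi_n^0$ is affine to the left of $0$ and affine to the right of $0$ (or even flat on $[\tau_-^0,\tau_+^0]$ in the NK case), so writing $\varphi_0(t) = \varphi_0(0) + \varphi_0'(0)\,t + \tfrac12 \varphi_0''(0)\,t^2 + o(t^2)$ and $f_0(t) = f_0(0) + O(t)$ gives
\begin{equation*}
  \int \Delta_1 (\widehat f_n^0 - f_0) = f_0(0)\int \Delta_1(t)\bigl[ c_L + d_L t \bigr]1_{\{t<0\}} dt + f_0(0)\int \Delta_1(t)\bigl[c_R + d_R t\bigr] 1_{\{t>0\}} dt
\end{equation*}
\begin{equation*}
  \qquad - \tfrac{1}{2} f_0(0) \varphi_0''(0)\int \Delta_1(t)\, t^2\, dt + o_p(h_l^4),
\end{equation*}
where $c_L,d_L,c_R,d_R$ are affine coefficients. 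The two moment conditions \eqref{eq:GP_LKfilter1}, \eqref{eq:GP_LKfilter2} together with the analogous condition on $[0,\tau_+^0]$ and the NK/RK variants were engineered to make the first two integrals vanish (this is exactly what forced the careful case analysis in the construction of $\Delta_1$). The remaining term is controlled by Lemma~\ref{lem:deltaMonomialIntegral}, which gives $\int \Delta_1(t)\,t^2\,dt \ge K h_l^4$ for a universal $K>0$; combined with $\varphi_0''(0)<0$ (Assumption~\ref{assm:curvaturePositive}) and $f_0(0)>0$, this yields the asserted upper bound $-\tfrac{1}{2} f_0(0)|\varphi_0''(0)| K h_l^4 + o_p(h_l^4)$.

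\textbf{Main obstacle.} The genuinely delicate step is the vanishing of the linear and constant terms in the expansion of \eqref{eq:R1n}. In the LK/RK cases $\Delta_1$ is discontinuous at $0$ and the two half-line moment conditions must be handled separately with the correct affine coefficients on each side, and one must also verify that cross-terms arising from $f_0(t) = f_0(0) + f_0'(0)t + O(t^2)$ contribute only at order $o(h_l^4)$. This is a purely computational obstacle but requires the case split (NK vs.\ LK vs.\ RK and the $h_l$ vs.\ $h_s$ subcases) to be carried through consistently with the construction of $\Delta_0$ in \eqref{LKDeltaZeroPart1}--\eqref{NKDelta0Part2}.
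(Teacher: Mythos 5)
Your plan follows the paper's own route for both bounds, and is essentially correct, but a few details need cleaning up.

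For~\eqref{eq:R1n}: the decomposition into piecewise-affine contributions plus the quadratic $\varphi_0^{(2)}$ term, and the use of Lemma~\ref{lem:deltaMonomialIntegral}, is exactly what the paper does. Three wrinkles: (i) Lemma~\ref{lem:deltaMonomialIntegral} gives $\int \Delta_1(t)\,t^2\,dt \le -K h_l^4$, not $\ge K h_l^4$; the final sign is right in your statement but the intermediate claim has the wrong sign. (ii) There is no moment condition \emph{analogous to \eqref{eq:GP_LKfilter2}} on $[0,\tau_{n,+}^0]$ in the LK case, and none is needed: what kills the $t>0$ linear term is that $\widehat\varphi_n^0$ has zero slope on $[0,\tau_{n,+}^0]$ when $0$ is a LK, \emph{combined with} $\varphi_0'(m)=0$. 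You should make this dependence on $\varphi_0'(m)=0$ explicit, since without it the $d_R\int_{t>0}t\Delta_1\,dt$ term does not vanish. (iii) The constant terms $c_L = c_R$ by continuity of $\widehat\varphi_n^0$ at $0$, and then $\int\Delta_1 = 0$ kills them — worth stating, since you list $c_L$ and $c_R$ as if they might be independent.

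For~\eqref{eq:R2n}: the VC class, the envelope scaling $\sigma^2(\Delta_1) = O(h_l^3)$, and a maximal inequality are all what the paper uses (via Lemma~A.1 of \cite{BW2007kmono}). However, your Young's inequality $ab \le \tfrac38 a^{8/3} + \tfrac58 b^{8/5}$ gives coefficient $3/8$ on $h_l^4$, which cannot be ``absorbed into $\epsilon$'' for $\epsilon < 3/8$; you need the weighted form $ab \le \tfrac38 \lambda^{8/3} a^{8/3} + \tfrac58 \lambda^{-8/5} b^{8/5}$ with $\lambda$ small. More substantively, the assertion that ``a standard maximal inequality gives the $O_p(h_l^{3/2}n^{-1/2})$ rate uniformly in the data-dependent knots'' hides the real work: because $h_l$ is random and can live at any scale in $(0,\delta]$, you must peel over dyadic scales of the knot-gap before a maximal inequality applies, and that peeling is precisely what the cited Lemma~A.1 of \cite{BW2007kmono} encapsulates — and it already delivers the $\epsilon h_l^4 + O_p(n^{-4/5})$ form directly, making the Young detour superfluous. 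So your route is not wrong, but the peeling lemma is doing the heavy lifting in both versions.
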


\begin{proof}
  We examine $\int \Delta_{1}(x)(\widehat{f}_n^0 - f_0)(x)dx$ by repeated
  Taylor expansions, where we let $\Delta_{1}$ be $\Delta_{LK,1}$ or $\Delta_{NK,1}$,
  and we will expand at $m$, which we again take to be $0$, without
  loss of generality.
  Write $(\widehat{f}_n^0 -f_{0})=f_{0}((\widehat{f}_n^0 / f_0 ) -1 ) =f_{0}(\exp\{\widehat{\varphi}_n^0 -\varphi_{0} \} -1)$.
  Then write $d_{n}=\widehat{\varphi}_n^0 -\varphi_{0}$ so we can expand
  \begin{eqnarray*}
    \exp(d_{n}(t))-1
    & = & \sum_{i=1}^{1}\frac{d_{n}(t)^{i}}{i!} + e^{\xi_{1,n,t}}\frac{d_{n}(t)^{2}}{2!}, \\
    f_0 (t) & = & \sum_{i=0}^{1}\frac{f_0^{(i)}(0)t^{i}}{i!} +\frac{f_0^{(2)}(\xi_{2, n,t})t^{2}}{2!}
  \end{eqnarray*}
  for $t\in[\tau_{n,-}^0 ,\tau_{n,+}^0]$, where $\xi_{1,n,t}$ is between
  $0$ and $d_{n}(t)$ and $\xi_{2,n,t}$ is between $0$ and $t$.
  So, writing $\|\cdot \|_{\infty}$ as the uniform norm over $[\tau_{n,-}^0,\tau_{n,,+}^0 ]$,
  we can see that
  $f_0(t)(e^{d_{n}(t)}-1)$ equals
  \begin{align*}
    \MoveEqLeft
    \left(\sum_{i=0}^{1}\frac{(f_0)^{(i)}(0)t^{i}}{i!} + \frac{(f_0)^{(2)}(\xi_{2,n,t})t^{2}}{2!}\right)
    \left(\sum_{i=1}^{1}\frac{d_{n}(t)^{i}}{i!}+e^{\xi_{1,n,t}}\frac{d_{n}(t)^{2}}{2!}\right)\\
    & =   f_0 (0) d_{n}(t) + o_{p}(\| d_{n}(t)\|_{\infty})
  \end{align*}
  since $f_0(0)>0,$ $f_0^{(i)}$ is continuous and thus bounded on
  a neighborhood of $0$ for $i \in \{ 0, 1, 2\}$, and, by uniform consistency,
  $d_{n}(t)^{i}$ and $t^{i}$ both go to $0$ uniformly in a neighborhood of $0$.
  Then we examine
  \begin{equation*}
    \int_{\tau_{n,-}^0}^{\tau_{n,+}^0}\Delta_{1}(t) d_{n}(t) dt
    =  \sum_{i=0}^{1}\frac{d_{n}^{(i)}(0)}{i!}\int_{\tau_{n,-}^0}^{\tau_{n,+}^0}t^{i}\Delta_{1}(t)dt
    +\int_{\tau_{n,-}^0}^{\tau_{n,+}^0}\frac{d_{n}^{(2)}(\xi_{3,n,t})}{2!}t^{2}\Delta_{1}(t)dt
    \label{eq:GP_splitLKNK}
  \end{equation*}
  where $\xi_{3,n,t} \in[\tau_{n,-}^0,\tau_{n,+}^0]$.
  Note that $d_{n}^{(2)}(t)=-\varphi_{0}^{(2)}(t)$,
  and that since $d_{n}^{(2)}$ is continuous \textit{at} $0$ we
  can write $d_{n}^{(2)}(\xi_{3,n,t}) =d_{n}^{(2)}(0)+\epsilon_{n}(t)$
  where $\|\epsilon_{n}(t)\|_{\infty} \rightarrow_{p}0$ since
  $\tau_{n,+}^0-\tau_{n,-}^0\rightarrow_{p}0$.
  Now we consider the different possible forms $\Delta_{1}$ may take.

  If $0$ is NK, then $(\widehat{\varphi}_n^0 )^{(2)}(0)=0 =\varphi_0^\prime(0)$, so
  \begin{eqnarray}
    \lefteqn{\int_{\tau_{n,-}^0}^{\tau_{n,+}^0}\Delta_{NK,1}(t)d_n(t)dt } \nonumber \\
    & = & \sum_{i=0}^{2}\frac{d_{n}^{(i)}(0)}{i!}\int_{\tau_{n,-}^0}^{\tau_{n,+}^0}t^{i}
    \Delta_{NK,1}(t)dt
    +\int_{\tau_{n,-}^0}^{\tau_{n,+}^0}\frac{\epsilon_{n}(t)}{2!}t^{2}\Delta_{NK,1}(t)dt \nonumber   \\
    & = &  -\frac{\varphi_0^{(2)}(0)}{2!}\int_{\tau_{n,-}^0}^{\tau_{n,+}^0}t^{2}\Delta_{NK,1}(t)dt
    + \  \int_{\tau_{n,-}^0}^{\tau_{n,+}^0}\frac{\epsilon_{n}(t)}{2!}t^{2} \Delta_{NK,1}(t)dt.  \label{eq:GP_expand_dnt_NK}
  \end{eqnarray}
  Now we show that we get the same expansion if $0$ is a LK.
  Note that for
  $t\in[0,\tau_{n,+}^0],$ $\widehat{\varphi}_n^0 (t) = \widehat{\varphi}_n^0 (0) $ and for $t\in[\tau_{n,-}^0,0]$,
  $\widehat{\varphi}_n^0 (t) = \widehat{\varphi}_n^0 (0)  + (\widehat{\varphi}_n^0)^{\prime} (0-)t$.
  Thus
  \begin{align*}
    \int_{\tau_{n,-}^0}^{\tau_{n,+}^0}\Delta_{LK,1}(t) \widehat{\varphi}_n^0(t)dt
    & =  \int_{\tau_{n,-}^0}^{0}\Delta_{LK,1}(t)\widehat{\varphi}_n^0 (t)dt +  \int_{0}^{\tau_{n,+}^0}\Delta_{LK,1}(t)\widehat{\varphi}_n^0 (t)dt\\
    & =  \widehat{\varphi}_n^0 (0) \int_{\tau_{n,-}^0}^{0}\Delta_{LK,1}(t)dt
    +(\widehat{\varphi}_n^0 )^{\prime} (0-) \int_{\tau_{n,-}^0}^{0}t\Delta_{LK,1}(t)dt\\
    & \ \ + \ \widehat{\varphi}_n^0 (0) \int_{0}^{\tau_{n,+}^0}\Delta_{LK,1}(t)dt\\
    & =  0.
  \end{align*}
  Hence,
  \begin{eqnarray}
    \lefteqn{\int_{\tau_{n,-}^0}^{\tau_{n,+}^0}\Delta_{LK,1}(t)d_{n}(t)dt } \nonumber \\
    & = & \int_{\tau_{n,-}^0}^{\tau_{n,+}^0}\Delta_{LK,1}(t)\widehat{\varphi}_n^0 (t)dt
    -\int_{\tau_{n,-}^0}^{\tau_{n,+}^0}\Delta_{LK,1}(t)\varphi_{0}(t)dt   \nonumber \\
    & = & -\int_{\tau_{n,-}^0}^{\tau_{n,+}^0}\Delta_{LK,1}(t)\varphi_{0}(t)dt    \nonumber \\
    & = & -\sum_{i=0}^{2}\frac{\varphi_0^{(i)}(0)}{i!}\int_{\tau_{n,-}^0}^{\tau_{n,+}^0}t^{i}\Delta_{LK,1}(t)dt
    + \int_{\tau_{n,-}^0}^{\tau_{n,+}^0}\frac{\epsilon_{n}(t)}{2!}t^{2}\Delta_{LK,1}(t)dt    \nonumber \\
    & = & - \frac{\varphi_0^{(2)}(0)}{2!} \int_{\tau_{n,-}^0}^{\tau_{n,+}^0}t^{2}\Delta_{LK,1}(t)dt
    + \int_{\tau_{n,-}^0}^{\tau_{n,+}^0}\frac{\epsilon_{n}(t)}{2!}t^{2} \Delta_{LK,1}(t)dt. \label{eq:GP_expandDnt_LK}
  \end{eqnarray}
  Since an analogous statement holds for $\Delta_{RK,1}$, we have shown
  \begin{eqnarray*}
    \int_{\tau_{n,-}^0}^{\tau_{n,+}^0}\Delta_{1}(t)d_{n}(t)dt
    & = & -\frac{\varphi_0^{(2)}(0)}{2!}\int_{\tau_{n,-}^0}^{\tau_{n,+}^0}t^{2}\Delta_{1}(t)dt
    +  \int_{\tau_{n,-}^0}^{\tau_{n,+}^0}\frac{\epsilon_{n}(t)}{2!}t^{2}\Delta_{1}(t)dt
  \end{eqnarray*}
  where $\|\epsilon_{n} \|_{\infty}\rightarrow_{p}0$. Thus
  \begin{eqnarray*}
    \int\Delta_{1}(x)(\widehat{f}_n^0 - f_0 )(x) dx
    & = & f_0 (0)(1+o_{p}(1)) \int \Delta_{1}(t)d_{n}(t)\\
    & = & f_0 (0)(1+o_{p}(1))^{2}\frac{-\varphi_0{(2)}(0)}{2!} \int_{\tau_{n,-}^0}^{\tau_{n,+}^0}t^{2}\Delta_{1}(t)dt .
  \end{eqnarray*}
  Lemma~\ref{lem:deltaMonomialIntegral} shows that
  \begin{equation}
    \int_{\tau_{n,-}^0}^{\tau_{n,+}^0}t^{2}\Delta_{1}(t)dt  \le - Kh_{l}^{4}\
    \label{eq:DeltaMonomialIntegral_NK}
  \end{equation}
  which yields \eqref{eq:R1n}, our desired conclusion:
  \begin{eqnarray*}
    \int_{\tau_{n,-}^0}^{\tau_{n,+}^0}\Delta_{1}(t)  \left(\widehat{f}_n^0 - f_0 \right)(t)dt
    & \le & -\frac{f_0(0) |\varphi_0^{(2)}(0)|}{2}  Kh_{l}^{4}+o(h_{l}^{4}).
  \end{eqnarray*}

  Now we show \eqref{eq:R2n}.
  First for a fixed $\delta>0$, we know that
  $\tau_{n,+}^0$ and $\tau_{n,-}^0$ will be in $[-\delta,\delta]$ eventually, with
  high probability. Now we consider three families of functions, analogous to
  $\Delta_{LK,1}$, $\Delta_{RK,1}$, and $\Delta_{NK,1}$, respectively.
  Define
  $\Delta_{LK,1,b,c}$ by replacing $\tau_{n,-}^0$ with $b$ and $\tau_{n,+}^0$ with
  $c$ in \eqref{eq:Delta1_LK}.
  Define $\Delta_{NK,1,b,c}$ by replacing
  $\tau_{n,-}^0$ with $b$ and $\tau_{n,+}^0$ with $c$ in \eqref{eq:delta1_NK}.
  Define  $\mathcal{F}_{LK,b,R}:=\{\Delta_{LK,b,y}|b<a<y,\mbox{ }0\le y-b\le R\}$
  and
  $\mathcal{F}_{NK,b,R}:=\{\Delta_{NK,b,y}|b<a<y,\mbox{ }0\le y-b\le R\}$
  for  $R>-b$.
  Define $\mathcal{F}_{RK,b,R}$ analogously to $\mathcal{F}_{LK,b,R}$.
  Let
  $\mathcal{F}=\mathcal{F}_{LK,b,R} \cup\mathcal{F}_{RK,b,R}\cup\mathcal{F}_{NK,b,R}$,
  and note that $\mathcal{F}$ is VC-class with VC-index of $4$.
  Thus Theorem~2.6.7 on page 141 of \cite{VW1996WCEP}
  shows that the entropy bound condition in Lemma A.1 on page 2560 of \cite{BW2007kmono}
  holds for $\mathcal{F}$.
  Then the function $F_{b,R}(x)$, defined to be
  constant equal to $(7/4)R$ on $[b,b+R]$ and $0$ otherwise, is an
  envelope for $\mathcal{F}$. That $F_{b,R}$ is an envelope is immediate for
  $\Delta\in\mathcal{F}_{NK,b,R}$ and for the setting where
  $\tau_{n,+}^0<-\tau_{n,-}^0$
  and $\Delta\in\mathcal{F}_{LK,b,R}$
  (and analogously when $\tau_{n,+}^0>-\tau_{n,-}^0$ and $\Delta\in\mathcal{F}_{RK,b,R}$).
  (For $\Delta\in\mathcal{F}_{NK,b,R}$, the
  longer interval has slope $\pm1$ and the other interval has opposite sign
  slope. For the case $\tau_{n,+}^0<-\tau_{n,-}^0$ and
  $\Delta\in\mathcal{F}_{LK,b,R}$, the interval $[\tau_{n,-}^0,\tau_{n,-}^0/2]$
  has slope $1$ and the slope on the rest is opposite sign (and analogously for
  $\mathcal{F}_{RK,b,R}$).)
  For the case $\tau_{n,+}^0\ge-\tau_{n,-}^0$ and
  $\Delta\in\mathcal{F}_{LK,b,R}$, we need only note
  $$
  0\le m_{2}=\frac{\tau_{n,+}^0}{-\tau_{n,-}^0}\left(\frac{-9-3\frac{\tau_{n,+}^0}{-\tau_{n,-}^0}}{1-5\frac{\tau_{n,+}^0}{-\tau_{n,-}^0}}\right)\le3,
  $$
  so that $(-\tau_{n,-}^0/4) m_{2}\le (3/4) \tau_{n,+}^0$.
  Next, we compute the integral of the envelope squared
  \begin{eqnarray*}
    EF_{b,R}^{2}(X)
    & = & \int_{b}^{b+R}R^{2} f_0(x)dx  \le  \| f_0 \|_{\infty} R^{3},
  \end{eqnarray*}
  where $\| f_0 \|_{\infty}$ is the supremum
  over $\RR$ of the (log-concave) density $f_{0}$, and is thus universal across
  $b$ and $R$. Thus, we can conclude from
  Lemma A.1 on page 2560 of \cite{BW2007kmono}
  that for
  $\epsilon>0$ and with $s=2$ and $d=2$,
  \begin{eqnarray*}
    \left | \int\Delta_{1}d(\FF_{n}- F_0) \right |
    & \le & \epsilon(\tau_{n,+}^0-\tau_{n,-}^0)^{4} + O_{p}(n^{-4/5}),\\
    & \le & \epsilon2^{4}h_{l}^{4} + O_{p}(n^{-4/5})
  \end{eqnarray*}
  as desired.
\end{proof}

\begin{lemma}
  \label{lem:deltaMonomialIntegral}
  For $h_{l}=\max(\tau_{n,+}^0- m, m-\tau_{n,-}^0)$,
  $h_{s}=\min(\tau_{n,+}^0- m , m-\tau_{n,-}^0)$ and $\Delta_{1}$ defined by
  \eqref{eq:GP_defDelta1}, we have
  \begin{eqnarray*}
    \int\Delta_{1}(t)(t-  m )^{2}dt & \le & - Kh_{l}^{4},
  \end{eqnarray*}
  for some $K>0$.
\end{lemma}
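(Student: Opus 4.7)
The plan is direct computation: since $\Delta_1$ is piecewise linear with only a handful of pieces, each given by an explicit formula in terms of $\tau_{n,-}^0$, $\tau_{n,+}^0$, and $m_2$, the integral $\int \Delta_1(t)(t-m)^2\, dt$ can be evaluated case by case in closed form. First I would reduce to $m=0$ by translation and exploit the symmetry between the LK and RK cases to handle only NK and LK. Then I would rescale by setting $t = h_l s$, which transforms $\Delta_1$ into $h_l\,\widetilde\Delta_1(s)$ for a dimensionless piecewise linear function $\widetilde\Delta_1$ depending only on the ratio $\eta = h_s/h_l \in [0,1]$ and on which sub-case applies (NK with $\tau_{n,+}^0 \ge -\tau_{n,-}^0$, NK with $\tau_{n,+}^0 < -\tau_{n,-}^0$, LK with $\tau_{n,+}^0 \ge -\tau_{n,-}^0$, or LK with $\tau_{n,+}^0 < -\tau_{n,-}^0$). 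This gives
\[
\int \Delta_1(t)\, t^2\, dt \;=\; h_l^{4} \int \widetilde\Delta_1(s)\, s^2\, ds \;=:\; - h_l^{4}\, G_{\mathrm{case}}(\eta),
\]
and reduces the lemma to showing $G_{\mathrm{case}}(\eta) \ge K > 0$ uniformly in $\eta \in [0,1]$ across the finitely many cases.

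For the NK case, direct integration using \eqref{NKDelta0Part1}--\eqref{eq:delta1_NK} gives, in the sub-case $\tau_{n,+}^0 \ge -\tau_{n,-}^0$,
\[
\int \Delta_{NK,0}(t)\, t^2\, dt = \tfrac{1}{12}\,h_l(h_l^{3}+h_s^{3}), \qquad M_n \int_{\tau_{n,-}^0}^{\tau_{n,+}^0}\! t^2\, dt = \tfrac{h_l}{2}\cdot\tfrac{h_l^{3}+h_s^{3}}{3},
\]
so that $\int \Delta_{NK,1}(t)\, t^2\, dt = -\tfrac{1}{12}\,h_l(h_l^{3}+h_s^{3}) \le -\tfrac{1}{12}\,h_l^{4}$. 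The complementary sub-case is identical by symmetry, yielding $K_{\mathrm{NK}}=1/12$.

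For the LK case I would follow the same recipe: the explicit four-piece formulas \eqref{LKDeltaZeroPart1} and \eqref{LKDelta0Part2} for $\Delta_{LK,0}$, together with the constant shift in \eqref{eq:Delta1_LK}, make $\int \Delta_{LK,1}(t)\, t^2\, dt$ a rational function of $\eta$ of low degree. Strict negativity is expected because $\Delta_0$ is bump-shaped with peak at $0$, so $\Delta_1 = \Delta_0 - M_n$ is positive near $0$ and negative toward the endpoints; weighting by $t^2$ then produces a strictly negative integral. Continuity of the resulting rational function of $\eta$ and pointwise strict negativity on the compact interval $[0,1]$ (with the endpoints $\eta = 0$ and $\eta = 1$ checked separately) yield a uniform lower bound $K_{\mathrm{LK}}>0$ by compactness. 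Setting $K = \min(K_{\mathrm{NK}}, K_{\mathrm{LK}})$ and using the symmetric bound $K_{\mathrm{RK}}=K_{\mathrm{LK}}$ completes the proof.

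The hard part will be the LK computation: one must carry the parameter $m_2 = m_2(\tau_{n,-}^0, \tau_{n,+}^0)$ through four linear pieces, integrate $t^2$ against each, and verify that the resulting rational function of $\eta$ is bounded away from $0$ on $[0,1]$. The book-keeping is routine but tedious, and checking the boundary $\eta = 1$ (where the two LK sub-case definitions of $\Delta_{LK,0}$ meet) is the main place where extra care is needed to ensure $K_{\mathrm{LK}}$ can be chosen uniformly.
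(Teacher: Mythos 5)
Your NK computation is correct and reproduces the paper's bound $\int t^2\Delta_{NK,1}\,dt = -\tfrac{1}{12}h_l(h_l^3+h_s^3) \le -h_l^4/12$. For the LK branch your rescaling--plus--compactness plan is a genuine alternative to the paper, which instead evaluates $\int t^2\Delta_{LK,1}\,dt$ in closed form as a rational function of the knot positions and bounds it directly: Case~1 ($\tau_{n,+}^0 > -\tau_{n,-}^0$) gives $\le -(\tau_{n,+}^0)^4/12$ after a one-line denominator estimate, while Case~2 requires the substitution $J(v,c)=K(-v,cv)$ and a short polynomial inequality to land at $\le -(-\tau_{n,-}^0)^4/336$. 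Your compactness route trades that algebraic cleverness for a nonconstructive constant, and it does work: the denominators become $(5-\eta)h_l$ in Case~1 and $(5\eta+2)h_l$ in Case~2, neither of which vanishes on $\eta\in[0,1]$, so $G_{\mathrm{case}}(\eta)$ extends continuously and (as one checks at the endpoints) stays strictly positive.

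That said, your heuristic contains two inaccuracies and the central verification is deferred, which is a genuine gap. First, ``bump-shaped with peak at $0$'' is false: in LK Case~1 the maximum of $\Delta_{LK,0}$ is at $t=\tau_{n,-}^0/4 < 0$, not at $0$, and the LK filter enforces only $\int_{\tau_{n,-}^0}^{0} t\Delta\,dt = 0$ rather than $\int t\Delta\,dt = 0$ over the full interval, so there is no clean Chebyshev-type argument available; one must actually compute $G_{\mathrm{LK}}$. Second, the two LK sub-case formulas for $\Delta_{LK,0}$ do \emph{not} agree at $\eta=1$: with $\tau_{n,+}^0 = -\tau_{n,-}^0 =: h$, Case~1 yields $\Delta_{LK,0}(0)=h$ (using $m_2=3$) while Case~2 yields $\Delta_{LK,0}(0)=3h/7$ (using $m_2=1/7$), so there is no ``meeting point'' to reconcile --- compactness must be applied to each sub-case separately with each endpoint of $[0,1]$ checked. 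The actual demonstration that $G_{\mathrm{LK}}(\eta)>0$ uniformly on $[0,1]$ in each sub-case is the entire content of the lemma for the LK/RK branch and is not carried out in the proposal.
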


\begin{proof}
  First, we consider $\Delta_{LK,1}$, assume without loss of generality
  that $m =0$, and assume $\tau_{n,+}^0>-\tau_{n,-}^0$.
  Direct computation shows
  \begin{align*}
    \int_{\tau_{n,-}^0}^{\tau_{n,+}^0}t\Delta_{LK,1}(t)dt
    & =  -\frac{5}{12} \left(\frac{(\tau_{n,+}^0-\tau_{n,-}^0) (\tau_{n,+}^0)^{3}}{5\tau_{n,+}^0+\tau_{n,-}^0}\right)
    \le  -\frac{5}{12}\left(\frac{(\tau_{n,+}^0-\tau_{n,-}^0)(\tau_{n,+}^0)^{3}}{5(\tau_{n,+}^0)}\right)
  \end{align*}
  and this is bounded above by
  \begin{equation*}
    -\frac{5}{12}\frac{(\tau_{n,+}^0)^{4}}{5\tau_{n,+}^0}
    =  -\frac{(\tau_{n,+}^0)^{3}}{12}  =  -\frac{h_{l}^{3}}{12},
  \end{equation*}
  since $\tau_{n,+}^0>-\tau_{n,-}^0$.

  We break
  $\int_{\tau_{n,-}^0}^{\tau_{n,+}^0}t^{2}\Delta_{LK,1}(t)dt$ into
  two pieces. First we see
  \begin{eqnarray*}
    \int_{\tau_{n,-}^0}^{0}t^{2}\Delta_{LK,1}(t)dt
    & = & \frac{-3(\tau_{n,-}^0)^{4}\tau_{n,+}^0-19(-(\tau_{n,-}^0))^{3}(\tau_{n,+}^0)^{2}}{96(5\tau_{n,+}^0+\tau_{n,-}^0)}
    <  0.
  \end{eqnarray*}
  Next we see that
  \begin{eqnarray*}
    \int_{0}^{\tau_{n,+}^0}t^{2}\Delta_{LK,1}(t)dt
    & = & \frac{-3(-\tau_{n,-}^0)(\tau_{n,+}^0)^{4}-5(\tau_{n,+}^0)^{5}}{12(5\tau_{n,+}^0+\tau_{n,-}^0)}\\
    & \le & -\frac{3(-\tau_{n,-}^0)(\tau_{n,+}^0)^{4}+5(\tau_{n,+}^0)^{5}}{12\cdot5(\tau_{n,+}^0)}
    \le  -\frac{(\tau_{n,+}^0)^{4}}{12}.
  \end{eqnarray*}
  Thus,
  \[
  \int_{\tau_{n,-}^0}^{\tau_{n,+}^0}t^{2}\Delta_{LK,1}(t)dt\le-\frac{(\tau_{n,+}^0)^{4}}{12}=-\frac{h_{l}^{4}}{12},
  \]
  as desired.

  Now we consider $\Delta_{LK,1}$ when $\tau_{n,+}^0 <-\tau_{n,-}^0$, and
  again split the computation into two pieces. First we see
  \begin{eqnarray*}
    \int_{0}^{\tau_{n,+}^0}t^{2}\Delta_{LK,1}(t)dt
    & = & -\frac{2(\tau_{n,-}^0)^{2}(\tau_{n,+}^0)^{3}
      - 3(\tau_{n,+}^0)^{5}}{12(5\tau_{n,+}^0-2\tau_{n,-}^0)} .
  \end{eqnarray*}
  Next we find that
  \begin{eqnarray*}
    \int_{\tau_{n,-}^0}^{a}t^{2}\Delta_{LK,1}(t)dt
    & = & -\frac{(-\tau_{n,-}^0)^{5}+5(\tau_{n,-}^0)^{4}\tau_{n,+}^0}{48(5\tau_{n,+}^0-2\tau_{n,-}^0)}\\
  \end{eqnarray*}
  so that
  \begin{eqnarray}
    \int_{\tau_{n,-}^0}^{\tau_{n,+}^0}t^{2}\Delta_{LK,1}(t)dt
    & =  &\frac{(\tau_{n,-}^0 )^5 - 5 (\tau_{n,-}^0 )^4 \tau_{n,+}^0 - 8 (\tau_{n,-}^0 )^3 (\tau_{n,+})^3 + 12 (\tau_{n,+}^0 )^5}
    {48 ( 5 \tau_{n,+}^0 - 2 \tau_{n,-}^0 )}  \nonumber \\
    & \le & - \frac{(-\tau_{n,-}^0)^4}{48\cdot 7}
    \label{DeltaLKtimestSquaredCase2}
  \end{eqnarray}
  and it remains only to prove the last inequality.
  To do this, let $a \equiv \tau_{n,-}^0 < 0 < \tau_{n,+}^0 \equiv b$ where $-a\ge b$.
  Thus we want to show that
  \begin{eqnarray*}
    K(a,b) = - 5 a^5 + 30 a^4 b + 56 a^2 b^3 - 84 b^5 \ge 0
  \end{eqnarray*}
  for all $a < 0 < b$ with $-a \ge b$.  But this holds if and only if $J(v,c) \equiv K(-v,cv) \ge 0$
  holds for all $v\ge 0 $ and $0 \le c \le 1$.  But
  \begin{align*}
    J(v,c) = K(-v,cv)
    \ge  v^5 (5 c^5 + 30 c^5 + 56 c^5 - 84 c^5 )
    & =  v^5 c^5 \cdot 7 \ge 0 .
  \end{align*}
  Thus \eqref{DeltaLKtimestSquaredCase2} holds.

  Identical calculations hold for $\Delta_{RK,1}$.
  Now we examine $\Delta_{NK,1}$.
  Direct computation shows
  \begin{equation*}
    \int_{\tau_{n,-}^0}^{\tau_{n,+}^0}t^{2}\Delta_{NK,1}(t)dt=- \frac{1}{12}(h_{s}^{3}h_{l}+h_{l}^{4})\le - \frac{1}{12}h_{l}^{4}.
  \end{equation*}
\end{proof}

\subsubsection{Proofs for mode-constrained limit process}
\label{subsec:proofs-MC-process}

This entire section is devoted to the proof of
Theorem~\ref{thm:MC-process-uniqueness-theorem}, and thus throughout this section we take
the definitions and assumptions as given in that theorem.

\begin{proof}[Proof of Lemma~\ref{lem:process-uniqueness-theorem-constant-interval}]
  If $0 \in \Sa$ then one of $\tauplusa$ or
  $\tauminusa$ is $0$, because then $(\vva)^{(2)}$ is not
  constant-equal-to-$0$ on any open neighborhood of $0$; if it is not constant-equal-to-$0$
  on $[0,\delta)$ for any $0 < \delta $ then
  $(\vva)'$ is also not constant on $[0, \delta)$,
  so since $0 \ge (\vva)'(0+)$, we have
  $(\vva)'(\delta+) < 0$ for all $\delta > 0$, i.e.\ $\tauplusa = 0$ (of
  course $\tauplusa \ge 0 $ since $0$ is the mode of $\vva$).  Similarly, if
  $(\vva)^{(2)}$ is not $0$ on any neighborhood below $0$, then $\tauminusa =
  0$.

  Now, if $0 \notin \Sa$ then $\tauminusa = \sup \Sa \cap (-\infty, 0]$ and
  $\tauplusa = \inf \Sa \cap [0, \infty)$.  If $0 \in \Sa$ then we have shown
  that one of $\tau^0_{\pm}$ equals $0$.  Thus, in either case, it is clear
  that $(\vva)^{(2)}(t \pm) = 0$ for $ 0 \le t < \tauplusa$ and for $
  \tauminusa < t \le 0$.  Thus, regardless of whether $0 \in \Sa$,
  $(\vva)^{(2)}$ and so $(\vva)'$ are constant and equal to $0$ on
  $(\tauminusa, \tauplusa)$, i.e.\ \eqref{eq:vva-modal-interval} holds (and
  so $(\tauminusa, \tauplusa)$ is the modal interval of $\vva$).
\end{proof}

The next lemma gives the sense in which $\vva$ is piecewise affine.
\begin{lemma}
  \label{cor:S0-knot-properties}
  Assume that $H_L, H_R,$ and $\vva$ are as in
  Theorem~\ref{thm:MC-process-uniqueness-theorem}.  Define
  \begin{align}
    S_L& = \{ t \le 0 : H_L(t) = Y_L(t), H_L'(t)=Y_L'(t) \}, \nonumber \\
    S_R &= \{ t \ge 0 : H_R(t) = Y_R(t), H_R'(t)=Y_R'(t) \},  \nonumber  \\
    \mbox{ and }
    S^0 & = S_L \cup S_R  \cup \{ 0 \}. \label{eq:def:S}
  \end{align}
  Then $(\vva)'$ is a monotonically nonincreasing function and the `bend
  points' of $\vva$, $\Sa$, defined in \eqref{eq:defnSa}, satisfy $\Sa \subset S^0$.
  Additionally, with probability $1$ the following statements hold.  The sets
  $S_L, S_R,$ and $S^0$ are all
  closed and have Lebesgue measure $0$.  For any fixed $t \ne 0$, $t \notin
  S^0$ and so $(H^0)^{(3)}(t)$ is well-defined.
\end{lemma}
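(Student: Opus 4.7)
My plan is to verify the four claims in sequence: monotonicity of $\vva$, the containment $\Sa \subset S^0$, closedness of $S_L, S_R, S^0$, and the measure-zero assertion together with its pointwise consequence for $(H^0)^{(3)}$. The first is immediate from the hypothesis that $\vva$ is concave with argmax at $0$, which gives $(\vva)'\ge 0$ on $(-\infty,0]$ and $(\vva)'\le 0$ on $[0,\infty)$; thus $\vva$ is unimodal and in particular nonincreasing on $[0,\infty)$.

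For the containment $\Sa \subset S^0$, I would exploit the orthogonality in \eqref{eq:CharznEquality2_full}. Concavity of $\vva$ makes $d(\vva)'$ a nonpositive measure on $\RR$, while \eqref{eq:CharznInequalityL2_full} makes $H_L - Y_L \le 0$ on $(-\infty,0]$; the integrand $(H_L - Y_L)\,d(\vva)'$ is therefore nonnegative, so the vanishing of the integral over $(-\infty,\tauminusa]$, together with continuity of $H_L - Y_L$, forces the equality $H_L = Y_L$ on the closed support of $d(\vva)'$ restricted to $(-\infty, \tauminusa]$. Any $t \in \Sa$ with $t \le \tauminusa$ lies in this support by the definition \eqref{eq:defnSa} of $\Sa$, and because $H_L - Y_L$ is $C^1$ and attains its maximum value $0$ at such a $t$, the first-order condition also gives $H_L'(t) = Y_L'(t)$, placing $t$ in $S_L$. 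Points of $\Sa \cap (\tauminusa, 0)$ are ruled out by Lemma~\ref{lem:process-uniqueness-theorem-constant-interval}, which forces $(\vva)' \equiv 0$ and hence $(\vva)^{(2)} \equiv 0$ on $(\tauminusa, \tauplusa)$. The mirror argument using \eqref{eq:CharznInequalityR2_full} on $[0,\infty)$ handles the right side, and $0 \in S^0$ by construction. Closedness of $S_L, S_R, S^0$ is then clear since they are intersections of zero loci of continuous functions: $Y_L$ is $C^1$ because $Y_L'(t) = X(t) - X(\tauminusa)$ is continuous, and $H_L$ is $C^2$ because $H_L^{(2)} = \vva$ is continuous by concavity on $\RR$; the same reasoning applies to $S_R$.

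The main obstacle is the measure-zero claim, which I would address by an argument modelled on the invelope analysis in \cite{MR1891741}. Set $g := Y_L - H_L$, which by the above is $C^1$ and nonnegative on $(-\infty,0]$. If $\{g = 0\}$ had positive Lebesgue measure, then at any Lebesgue density point $t_0$ of this set $g$ attains its minimum, so $g'(t_0) = 0$; rearranging gives $W(t_0) = W(\tauminusa) + 4(t_0^3 - \tauminusa^3) + H_L'(t_0)$, i.e., $W$ would agree with a random $C^1$ function on a set of positive Lebesgue measure. A Fubini-style argument leveraging the nonatomicity of the marginal law of $W(t) - H_L'(t)$ (while carefully tracking the path-dependence of $H_L'$ on $W$ near $t$, as in the cited reference) yields the contradiction, giving $S_L$ Lebesgue-measure zero almost surely; the symmetric argument gives the same for $S_R$ and hence for $S^0$. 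The pointwise statement at fixed $t \ne 0$ reduces to the nonatomicity of $(H_L - Y_L)(t)$ or its right-hand analogue, so $P(t \in S^0) = 0$; and on the complement of $S^0$, $(H^0)^{(3)}(t)$ is well defined as a one-sided derivative of the concave function $\vva$, where the absence of a jump in $(\vva)'$ at $t$ follows from $t$ being outside the bend set $\Sa$.
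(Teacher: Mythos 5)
Your treatment of the containment $\Sa \subset S^0$ and the closedness of $S_L, S_R, S^0$ matches the paper's proof in essence: both exploit the sign structure in \eqref{eq:CharznEquality2_full} (nonnegative integrand $(H_L-Y_L)\,d(\vva)'$, continuity, vanishing integral) to force equality of $H_L$ and $Y_L$ on the support of $d(\vva)'$, then use the first-order condition at the interior maximum of the $C^1$ function $H_L - Y_L$ to get the matching derivative condition (the paper records this as \eqref{eq:equality-S}), while closedness is just continuity of these two functions. The rest of that part is fine.

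The genuine gap is in the measure-zero claim. You reduce it to showing that the set $\{t \le 0 : g(t)=0\}$ (with $g = Y_L - H_L$) has Lebesgue measure zero and then invoke ``a Fubini-style argument leveraging the nonatomicity of the marginal law of $W(t) - H_L'(t)$, while carefully tracking the path-dependence of $H_L'$ on $W$.'' That parenthetical is precisely where the argument is missing. A naive Fubini reduction requires $P(g(t)=0)=0$ for each fixed $t$, but $g(t)$ is a functional of the entire Brownian path through both $Y_L$ and $H_L$, so the usual nonatomicity-of-a-Gaussian-marginal argument does not apply: there is no independence or conditioning structure offered to control the law of $g(t)$, and the observation that at a density point one also has $g'(t_0)=0$ is automatic from $g \ge 0$ being $C^1$, so it adds nothing. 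You appeal to ``as in the cited reference,'' but \cite{MR1891741} does \emph{not} use a Fubini argument here; it uses a quadratic shift to reduce to a greatest-convex-minorant problem. That is exactly what the paper's proof does: it writes $\tilde H_R(t) = H_R(t) + at^2$, chooses $a$ so $\tilde H_R$ is convex on $[0,c]$, observes that $\tilde H_R + A \le M_{\tilde Y} \le \tilde Y$ where $M_{\tilde Y}$ is the greatest convex minorant of $\tilde Y = Y + at^2$ (using that $Y_R$ differs from $Y$ by an affine term $A$), and then appeals to the Cantor-type structure of the touch set of $M_{\tilde Y}$ with $\tilde Y$, a result that comes from Theorem~1 of \cite{Sinaui:1992wl} via Corollary~2.1 of \cite{MR1891741}. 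That parabolic-embedding step is the essential idea you are missing; without it (or some comparable result) the measure-zero claim, and the derived pointwise statement that a fixed $t \ne 0$ avoids $S^0$ almost surely, are unproved. The final remark about $(H^0)^{(3)}(t)$ being well-defined is close to the paper's --- though what you really want to say is that $t \notin \Sa$ forces $\vva$ to be affine near $t$, hence $(\vva)' = (H^0)^{(3)}$ is locally constant there, rather than merely ``absence of a jump in $(\vva)'$.''
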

The lemma says that for any knot $\tau \le \tau_+^0$, if $\tau < 0$ then
$\tau \in S_L$.  Similarly if $\tau \ge \tau_-^0$ and $\tau > 0$ then $\tau \in
S_R$.  It is possible but not guaranteed that $0$ is a knot and lies in
either $S_L$ or $S_R$.
\begin{proof}
  By Theorem~\ref{thm:MC-process-uniqueness-theorem}, displays
  \eqref{eq:CharznInequalityL2_full} and \eqref{eq:CharznInequalityR2_full}, $H^0 - Y^0
  \le 0$,
  which allows us to conclude
  \begin{equation}
    \label{eq:equality-S}
    \begin{split}
      \lb t < 0  : H_L(t)=Y_L(t) \rb
      &  = \lb t < 0 :
      H_L(t)=Y_L(t), H_L'(t)=Y_L'(t) \rb, \\
      \lb t > 0  : H_R(t)=Y_R(t) \rb
      & = \lb t > 0 :
      H_R(t)=Y_R(t), H_R'(t)=Y_R'(t) \rb;
    \end{split}
  \end{equation}
  the first line follows since $H_L-Y_L$ is differentiable on $ (-\infty,0)
  $, and a differentiable function has derivative $0$ at a local maximum
  (see, e.g., \cite{MR0349288}, %
  page 153, Problem 3, part (a)).  The same argument applies to the second
  line of \eqref{eq:equality-S}.

  Now, the following argument holds with probability $1$ and for any fixed $c
  > 0$.  On $[0,c]$, $H_R$ has a bounded second derivative, so that there
  exists a constant $a > 0$ such that $\tilde H_R(t) := H_R(t) + at^2$ is
  convex on $[0,c]$.  Let $\tilde Y_R(t) := Y_R(t) + at^2.$
  Now, $Y_R + A = Y $ for an affine function  $A$  and where $Y(t) = \int_0^t
  \int_0^u dX(z) du$.  Let $\tilde Y(t) = Y(t) + at^2 = \tilde Y_R(t) +  A(t)$
  so that
  \begin{equation}
    \label{eq:points-of-touch-set-equality}
    \begin{split}
      \{ x \in [0,c] : H_R(x)= Y_R(x) \}
      &= \{ x \in [0,c] : \tilde H_R(x) +
      A(x) = \tilde Y_R(x) + A(x) \} \\
      & = \{ x \in [0,c] : \tilde H_R(x) + A(x) = \tilde Y(x) \}
    \end{split}
  \end{equation}
  We also have $\tilde H_R + A \le \tilde Y_R + A = \tilde Y$ by
  \eqref{eq:CharznInequalityR2_full}, so that, letting $M_{\tilde Y}$ be the
  greatest convex minorant of $\tilde Y$ on $[0,c]$, we have $\tilde H_R + A
  \le M_{\tilde Y} \le \tilde Y$, since $\tilde H_R + A$ is convex and below
  $\tilde Y$.

  Let $T = \lb x \in [0,c] : M_{\tilde Y}(x) = \tilde Y(x) \rb$.
  By the proof of Corollary~2.1 of \cite{MR1891741}
  (see also
  Definition 1 and Theorem 1 of \cite{Sinaui:1992wl}),
  $T$ is a (Cantor-type) set which has Lebesgue
  measure $0$;  \eqref{eq:points-of-touch-set-equality} is contained
  in $T$, and thus
  (by \eqref{eq:equality-S} and)  by letting $c \to \infty$, we see that $S_R$
  is contained in a
  set which has Lebesgue measure $0$.
  Finally, $S_R$ is closed because $H_R-Y_R$ and $(H_R-Y_R)'$ are both
  continuous functions.  By an analogous argument, we can conclude that $S_L$
  is closed and has Lebesgue measure $0$ and thus also $S^0$ is closed and
  has Lebesgue measure $0$.  By \eqref{eq:CharznEquality2_full},
  \begin{equation}
    \label{eq:7}
    \int_{\{t \in (-\infty, \tau_-^0] : \HHLt(t)\ne\YYLt(t)\}} d(\vva)'(t)
    = 0
    = \int_{\{t \in [\tau_+^0, \infty) : \HHRt(t)\ne\YYRt(t)\}} d(\vva)'(t),
  \end{equation}
  (regardless of whether one of $\tau_+^0$ or $\tau_-^0$ is $0$ or not).

  Thus we now conclude that $\Sa \subset S^0$ as follows.  If $\tau$ is an
  element of the set $\Sa$ then for any $\epsilon >0 $
  $(\vva)'(\tau-\epsilon, \tau+\epsilon ) < 0$ (here $(\vva)'$ refers to the
  signed measure corresponding to $(\vva)'$).  This is by the definition of
  derivative; since $(\vva)'$ is nonincreasing, $\delta \mapsto (\vva)'(\tau
  + \delta) - (\vva)'(\tau-\delta) \le 0$ is nonincreasing.  Thus if
  \begin{equation}
    \label{eq:5}
    \frac{ (\vva)'(\tau + \delta) - (\vva)'(\tau-\delta)}{ 2 \delta}
  \end{equation}
  does not converge to $0$ as $\delta \searrow 0$, then for all $\epsilon
  >0 $ there is $0 < \delta < \epsilon$ such that \eqref{eq:5} is $<$
  const., i.e.,
  \begin{equation}
    \label{eq:6}
    (\vva)'(\tau + \epsilon) - (\vva)'(\tau-\epsilon)
    \le (\vva)'(\tau+\delta) - (\vva)'(\tau-\delta) < 0.
  \end{equation}
  Since $\HHRt-\YYRt$ is continuous on all of $\RR$, if
  $\HHRt(\tau)-\YYRt(\tau) < 0$ for $\tau \ge \tau^0_+$, then on a
  neighborhood $(\tau-\epsilon, \tau+\epsilon)$ for some $\epsilon > 0$ we
  have $\HHRt-\YYRt < 0$ and so the integral on the right-hand side of
  \eqref{eq:7} is strictly less than $0$.  Thus if $\tau \ge \tau_+^0$, then
  $(\HHRt-\YYRt)(\tau) = 0$.  For $\tau > 0$, this implies that $\tau \in
  S_R$ by \eqref{eq:equality-S}.  Similarly, if $ \tau \le \tau_-^0$ and
  $\tau < 0$, then $(\HHLt-\YYLt)(\tau)=0$ and $\tau \in S_L$.  We have
  $(\tau_-^0,\tau_+^0) \cap \Sa = \emptyset$ by
  Lemma~\ref{lem:process-uniqueness-theorem-constant-interval}, so we have
  shown that if $\tau \in \Sa$, then $\tau $ is either $0$ (if one of
  $\tau^0_-$ or $\tau^0_+$ is $0$) or is in $S_L$ or $S_R$, i.e.\ $\tau \in
  S^0$.

  Now, by the proof of Theorem 1 of \cite{Sinaui:1992wl}, any fixed
  point $t \ge 0$ belongs to $T \supset S_R$
  with probability zero.  An analogous statement holds
  for $t \le 0$ and $S_L$.  Thus, if $t \ne 0$, $t \not\in S^0$ and so
  $(H_L)^{(2)}$ is concave and affine in a neighborhood of $t$, so
  $(H_L)^{(3)}(t)$ is well-defined.
\end{proof}

By Lemma~\ref{cor:S0-knot-properties},
\begin{equation}
  \label{eq:10}
  \HHLt(\TauL) = \YYLt(\TauL) = 0, \mbox{ and }
  \HHLt'(\TauL) = \YYLt'(\TauL) = 0.
\end{equation}
This is because, by its definition, either $\TauL < 0$, in which case $\TauL
\in S_L$, or there is a sequence $\lb \tau^0_{L,n} \rb \subset S_L$ with
$\tau^0_{L,n} < 0$ for all $n$.  In this latter case, since $H_L-Y_L$ and
$H_L'-Y_L'$ are both continuous, we still conclude that $\HHLt(\TauL) =
\YYLt(\TauL) = 0$, and $\HHLt'(\TauL) = \YYLt'(\TauL) = 0$.  Analogously,
$H_R(\TauR) = Y_R(\TauR)$ and $H_R'(\TauR) = Y_R'(\TauR)$.  This suggests the
following definitions:
\begin{equation}
  \label{eq:defn_Ftildes}
  \FFLct(u) = \int^{\TauL}_u \vva(v)dv
  \quad   \mbox{ and } \quad
  \FFRct(u) = \int_{\TauR}^u \vva(v)dv,
\end{equation}
\begin{equation}
  \label{eq:defn_Xtildes}
  \XXLct(u) = \int^{\TauL}_u dX(v)
  \quad  \mbox{ and } \quad
  \XXRct(u) = \int_{\TauR}^u dX(v).
\end{equation}
It is then true by definition that
\begin{equation}
  \label{eq:9}
  \YYLt(t)
  = \int_t^{\TauL} \XXLct(u) du,
  \quad \mbox{ and }
  \quad
  \YYRt(t)
  = \int_{\TauR}^t \XXRct(u) du.
\end{equation}
It is furthermore true that
\begin{align}
  \HHLt(t) & = \int_t^{\TauL} \int_u^{\TauL} \vva(v)dvdu
  = \int_t^{\TauL} \FFLct(u) du,
  \label{eq:defHLtilde} \\
  \HHRt(t) & =  \int_{\TauR}^t \int_{\TauR}^u \vva(v)dv du
  = \int_{\TauR}^t \FFRct(u) du,
  \label{eq:defHRtilde}
\end{align}
since $\HHLt(t) = \int^{\TauL}_t \int^{\TauL}_u \vva(v) dv du + A_L(t)$ where
$A_L(t)$ is an affine function, and we just verified
in \eqref{eq:10} that $A_L(t) \equiv 0$. An
analogous statement holds for $H_R$.

\begin{remark}
  Note that since $S^0$ is closed, $\tau_+, \tau_-$ are both elements of $S^0$.
  Also, it is possible for $H_L^{(3)}$ to have a point of increase at $0$
  without $H_L'(0)=Y_L'(0)$, since the inequality
  \eqref{eq:CharznInequalityL2_full} only holds on $(-\infty, 0]$, so not in an
  open neighborhood of $0$.  Similarly for $H_R^{(3)}$.  This is why we add the
  point $0$ to $S^0$.
\end{remark}
Lemma~\ref{cor:S0-knot-properties} suggests that we can think of $\vva$
as being piecewise affine (with a potentially uncountable number of knot
points), because with probability $1$, the union of the open intervals on
which $\vva$ is affine
has full Lebesgue measure on the real line (meaning its complement has
Lebesgue measure $0$).
For $t \in \RR$, we let $\tauplusa(t)$ be the first knot larger than $t$, and
analogously for $\tauminusa(t)$,
\begin{align}
  \tauplusa(t) = \inf \lp \Sa \cap [t, \infty)\rp
  \quad \mbox{ and } \quad
  \tauminusa(t) = \sup  \lp \Sa \cap (-\infty, t] \rp .
  \label{eq:def-tauplusa-tauminusa}
\end{align}
\begin{lemma}
  \label{lem:vva_infiniteknots}
  We again assume the full setup of Theorem
  \ref{thm:MC-process-uniqueness-theorem}.
  Then, for any (fixed or random)
  $T \ge 0$, with probability $1$ there are `knot points'
  $\tauplusa(T)$ and $\tauminusa(T)$ %
  in $\Sa$.  %
\end{lemma}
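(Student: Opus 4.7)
The plan is to decompose the claim into two steps: first, show that both $\Sa \cap (-\infty, T]$ and $\Sa \cap [T, \infty)$ are nonempty with probability one; second, invoke closedness of $\Sa$ (which follows because being a ``bend point'' is a local property, so that $(\vva)^{(2)}(t\pm) = 0$ forces $(\vva)'$ to be locally constant near $t$, making $(\Sa)^c$ open) to conclude that the supremum and infimum in \eqref{eq:def-tauplusa-tauminusa} are attained in $\Sa$.

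Existence of $\tauminusa(T)$ for $T \ge 0$ is immediate from the hypothesis $\TauL > -\infty$ of Theorem~\ref{thm:MC-process-uniqueness-theorem}: this forces $\Sa \cap (-\infty, 0) \ne \emptyset$ (otherwise the defining supremum would be $-\infty$), hence $\Sa \cap (-\infty, T] \ne \emptyset$ for every $T \ge 0$, so $\tauminusa(T) = \sup(\Sa \cap (-\infty, T])$ is finite and lies in $\Sa$ by closedness.

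Existence of $\tauplusa(T)$ requires $\Sa$ to be unbounded above almost surely, which I would establish by contradiction.  Suppose that on an event of positive probability there is $T \ge 0$ with $\Sa \cap [T, \infty) = \emptyset$.  Then on that event $(\vva)'$ is constant on $[T, \infty)$, equal to some $-b$ with $b \ge 0$, so $\vva$ is affine on $[T, \infty)$.  Since $\HHRt^{(2)} = \vva$ by the assumptions of Theorem~\ref{thm:MC-process-uniqueness-theorem}, $\HHRt$ is then a cubic polynomial on $[T, \infty)$ with leading coefficient $-b/6$, so $|\HHRt(t)| = O(t^3)$ as $t \to \infty$.  On the other hand, $\YYRt(t) = \int_{\TauR}^t (X(u) - X(\TauR))\,du$ with $X(u) = W(u) - 4u^3$; the deterministic part contributes $-t^4 + O(t)$, while the integrated-Brownian part is $O(t^{3/2})$ almost surely by the law of the iterated logarithm, so $\YYRt(t)/t^4 \to -1$ almost surely.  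Therefore $\HHRt(t) - \YYRt(t) \to +\infty$, contradicting \eqref{eq:CharznInequalityR2_full}.  This yields $\Sa \cap [T, \infty) \ne \emptyset$ uniformly in $T \ge 0$ almost surely, handling both fixed and random $T$ simultaneously.

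The main obstacle is setting up the polynomial-growth comparison cleanly: one must use the hypothesis $\TauR < \infty$ almost surely so that the asymptotic regime $t \to \infty$ is reached past $\TauR$, and the degree gap between the at-most-cubic growth of $\HHRt$ and the quartic decay of $\YYRt$ must be exploited so that the constants of integration determined by $\HHRt(T)$ and $\HHRt'(T)$ (which are random but finite) cannot cancel the $-t^4$ term.  A symmetric argument via $\HHLt$ and $\YYLt$ would in fact yield the analogous statement for $T \le 0$, but is not needed here since the lemma restricts to $T \ge 0$.
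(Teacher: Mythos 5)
Your argument is essentially the paper's: both fix $T \ge 0$, suppose for contradiction that $\Sa$ has no points in $(T,\infty)$, note that $\HHRt$ is then cubic on $[T,\infty)$ while $\YYRt(t)/t^4 \to -1$ almost surely because the integrated Brownian motion grows strictly slower than $t^4$, and conclude that $\HHRt - \YYRt \to +\infty$, violating \eqref{eq:CharznInequalityR2_full}. (Incidentally your signs are the correct ones; the paper's written proof has two compensating slips, writing $\int_0^t X = V(t)+t^4$ rather than $V(t)-t^4$ and then $\YYRt-\HHRt \le 0$ rather than $\ge 0$.) Two small cautions. First, the LIL gives $\int_0^t W(u)\,du = O\bigl(t^{3/2}\sqrt{\log\log t}\bigr)$, not $O(t^{3/2})$ — immaterial for your purpose since it is still $o(t^4)$, but worth stating precisely. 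Second, your parenthetical justification that $\Sa$ is closed does not hold as stated: $(\vva)^{(2)}(t\pm)=0$ does \emph{not} force $(\vva)'$ to be locally constant near $t$ (a nonincreasing function such as $s\mapsto -(s-t)^2\one_{[t,\infty)}(s)$ has vanishing one-sided derivatives at $t$ without being constant on any right neighborhood), so $(\Sa)^c$ need not be open by that reasoning. The paper's proof is equally silent about attainment of the $\inf$/$\sup$; in practice what is used downstream is the unboundedness of $\Sa$ (equivalently, that the $\tau_\pm^0(T)$ are finite), which your contradiction argument does correctly establish.
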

\begin{proof}
  We fix $T \ge 0$, and we will show that there exists $\tau_+(T) \in \Sa$
  with $\tau_+(T) > T \ge 0$.  We assume for
  contradiction that $\vva$ has no knots on $(T,\infty)$, and thus is linear.
  Thus $\HHRt$ is cubic on $[T,\infty)$, so can be written as $\HHRt(t) =
  \sum_{i=0}^3 A_i (t-T)^i$ for some random $A_i$.  By definition, we have
  \begin{equation*}
    \begin{split}
      \YYRt(t) = \int_{\TauR}^t \XXRt(u)du
      & = \int_{\TauR}^t (X(u)-X(\TauR))du \\
      & = \int_0^t X(u)du -\int_0^{\TauR} X(u)du - (t-\TauR)X(\TauR).
    \end{split}
  \end{equation*}
  In other words, $\YYRt(t)$ is $\int_0^t X(u)du = V(t) + t^4$ plus a
  random affine function, where $V(t)=\int_0^t W(u)du$.  Thus we can
  write
  \begin{equation*}
    \YYRt(t)-\HHRt(t) = V(t) + t^4 - \sum_{i=0}^3 B_i(t-T)^i,
  \end{equation*}
  for some new random coefficients, $B_i$ (where only for $i=0,1$ are
  $B_i$ not equal to $A_i$).  Now, let $\varphi(t) =
  \sqrt{\frac{2}{3} t^3 \log \log t}$. Then by page 1714 of
  \cite{Lachal1997LIL} (or from page 238 of \cite{Watanabe1970LIL}),
  we know that almost surely
  \begin{equation*}
    \limsup_{t \to \infty} \frac{\int_0^t W(u)du}{\varphi(t)} = 1.
  \end{equation*}
  Thus,
  \begin{equation*}
    \frac{\YYRt(t)-\HHRt(t)}{\varphi(t)}
    = \frac{V(t)}{\varphi(t)} + \frac{t^4 - \sum_{i=0}^3 B_i(t-T)^i}{\varphi(t)},
  \end{equation*}
  which gets larger than $0$ for $t$ large enough, as it is almost surely
  bounded below by a quadratic polynomial (with positive first coefficient)
  minus $1$.  This contradicts the fact that $\YYRt(t)-\HHRt(t) \le 0$ for
  all $t$, so we are done. Our argument applies with probability $1$ to any
  $T \ge 0$, and thus to the entire sample space of any random $T \ge 0$.  An
  identical argument works for showing there exists a knot less than $-T$.
\end{proof}
We will not speak of $\vva$ as a minimizer of an objective
function, but we will instead show that for acceptable $\Delta$ perturbations
that $\int \Delta(t) (\vva(t)dt-dX(t)) \ge 0$, i.e. $\vva$ behaves as we
would expect a minimizer to behave.
\begin{proposition}
  \label{pro:charzn0_full}
  We assume the full setup of Theorem~\ref{thm:MC-process-uniqueness-theorem}.
  Let $\Delta: \RR \to \RR$ be %
  concave with %
  maximum at $0$. If $a,b \in \Sa$ with $-\infty < a < 0 < b < \infty$ then
  \begin{equation}
    \label{eq:charzn0_full}
    \int_{a}^{b} \Delta(t) (\vva(t)dt - dX(t)) \ge 0,
  \end{equation}
  and thus, by Lemma \ref{lem:vva_infiniteknots},
  $    \limsup_{a \to \infty} \int_{-a}^{a} \Delta(t) (\vva(t)dt - dX(t)) \ge     0.$
\end{proposition}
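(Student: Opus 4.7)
The plan is to prove this as the continuum analogue of the finite-sample characterization Theorem~\ref{thm:CharThmOneConstrained}: for every concave $\Delta$ with mode at $0$, the ``subgradient'' $\int \Delta\,(\vva\,dt - dX)$ is nonnegative.  The core idea is to pass, by two successive integrations by parts, from an integral against $(\vva\,dt-dX)$ to one against the nonpositive measure $d\Delta'$ and the nonpositive Fenchel gap processes $H_L - Y_L$ and $H_R - Y_R$, whose sign is supplied by \eqref{eq:CharznInequalityL2_full}--\eqref{eq:CharznInequalityR2_full}.

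First I would split the integral at $0$.  Since $a,b \in \Sa$ with $a < 0 < b$, Lemma~\ref{cor:S0-knot-properties} places $a \in S_L$ and $b \in S_R$, yielding $(H_L - Y_L)(a) = (H_L' - Y_L')(a) = 0$ and $(H_R - Y_R)(b) = (H_R' - Y_R')(b) = 0$.  On $(a,0)$ I write $\vva(t)\,dt = dH_L'(t)$ (since $H_L' \in C^1$ with $(H_L')' = \vva$) and analogously on $(0,b)$.  The integral $\int_a^0 \Delta\,dX$ is defined via the Wiener integration-by-parts identity for continuous deterministic BV integrands, $\int_a^0 \Delta\,dW = \Delta(0)W(0) - \Delta(a)W(a) - \int_a^0 W(t)\,d\Delta(t)$.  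Using $Y_L'(t) = X(t) - X(\TauL)$ from~\eqref{eq:9}, the constant $X(\TauL)$ cancels between boundary contributions and the integral after the first integration by parts; a second integration by parts, viewing $H_L - Y_L$ as a $C^1$ antiderivative of $H_L' - Y_L'$ against the nonincreasing BV integrand $\Delta'$, combined with $(H_L-Y_L)(a) = 0$, then produces
\begin{equation*}
  \int_a^0 \Delta(\vva\,dt - dX) = \Delta(0)(H_L'-Y_L')(0) - \Delta'(0-)(H_L-Y_L)(0) + \int_a^0 (H_L-Y_L)\,d\Delta',
\end{equation*}
with the mirror identity on $[0,b]$.

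Summing the two halves, the $\Delta(0)$-coefficient becomes $(H_L'-Y_L')(0) - (H_R'-Y_R')(0)$.  Using $H_L'(\TauL) = H_R'(\TauR) = 0$ together with $Y_L'(t) = X(t) - X(\TauL)$ and $Y_R'(t) = X(t) - X(\TauR)$, this equals $\int_{\TauL}^{\TauR}(\vva(s)\,ds - dX(s))$, which vanishes by~\eqref{eq:CharznFequalsX_M_full}.  The four surviving terms are each manifestly nonnegative: $-\Delta'(0-)(H_L - Y_L)(0) \ge 0$ because $\Delta'(0-) \ge 0$ (maximum of concave $\Delta$ at $0$) and $(H_L-Y_L)(0) \le 0$; the symmetric inequality handles $\Delta'(0+)(H_R-Y_R)(0)$; and both $\int(H_L-Y_L)\,d\Delta'$ and $\int(H_R-Y_R)\,d\Delta'$ are nonnegative since the integrands are nonpositive and $d\Delta'$ is a nonpositive measure by concavity of $\Delta$.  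The $\limsup$ assertion then follows immediately by choosing $a_n, b_n \in \Sa$ with $a_n \to -\infty$ and $b_n \to \infty$, whose existence is guaranteed by Lemma~\ref{lem:vva_infiniteknots}.

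The main obstacle I anticipate is the careful interpretation of $\int_a^0 \Delta\,dX$, since $X$ contains a Brownian component and is not of bounded variation; this forces one to use the Wiener/semimartingale integration-by-parts formula rather than the classical Stieltjes version, and to verify that $\Delta$ being continuous BV on the compact interval $[a,0]$ is enough for the identity to hold.  Once that step is in place, the rest is algebraic bookkeeping, and the argument turns on two small miracles: the cancellation of the $X(\TauL)$ and $X(\TauR)$ contributions within each half, and the simultaneous vanishing of the $\Delta(0)$-coefficient via~\eqref{eq:CharznFequalsX_M_full}.
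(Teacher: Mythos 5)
Your proof is correct and follows essentially the same route as the paper's: split the integral at $0$, use $a\in S_L$, $b\in S_R$ to kill the boundary terms at $a$ and $b$, integrate by parts twice to reach $H_L-Y_L$ and $H_R-Y_R$ against the nonpositive measure $d\Delta'$, observe that the $\Delta(0)$-coefficient reduces to $\int_{\TauL}^{\TauR}(\vva\,dt-dX)=0$ by \eqref{eq:CharznFequalsX_M_full}, and read off nonnegativity of the remaining four terms from \eqref{eq:CharznInequalityL2_full}--\eqref{eq:CharznInequalityR2_full} and concavity of $\Delta$. The only cosmetic difference is that you track $H_L'-Y_L'$ and $H_R'-Y_R'$ while the paper works with the equivalent quantities $\FFLt-\XXLt$ and $\FFRt-\XXRt$.
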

\begin{proof}
  We use the notation $g(a,b] = g(b)-g(a)$ here. We have
  \begin{align*}
    \MoveEqLeft   \int_{a}^{b} \Delta(t) (\vva(t)dt - dX(t))  \\
    & =  -\int_{a}^0 \Delta(t)(d\FFLt (t)-d\XXLt(t)) + \int_0^b \Delta(t)(d\FFRt(t)-d\XXRt(t))  \\
    & =  - \left[ (\Delta(\FFLt-\XXLt))(a,0] - \int_{a}^0 ((\FFLt-\XXLt)\Delta')(t)dt \right]  \\
    & \quad + (\Delta(\FFRt-\XXRt))(0,b] - \int_0^b ((\FFRt-\XXRt)\Delta')(t)dt.
  \end{align*}
  By Lemma~\ref{cor:S0-knot-properties}, $a \in S_L$, $b \in S_R$, and since
  neither $a$ nor $b$ is $0$, we have $(\FFRt-\XXRt)(b)=0=(\FFLt-\XXLt)(a)$ and
  we recall \eqref{eq:CharznEquality2_full}. Also recalling that
  $(\HHLt-\YYLt)'= -(\FFLt-\XXLt)$, we see that the above display equals
  \begin{align*}
    \MoveEqLeft[4]{ -\Delta(0)( (\FFLt-\XXLt)(0)+(\FFRt-\XXRt)(0) ) } \\
    \MoveEqLeft{ - \left[ ((\HHRt - \YYRt)\Delta'(\cdot +))(0,b] - \int_0^b (\HHRt-\YYRt)(t)d\Delta'(t)  \right] } \\
    \MoveEqLeft{ - \left[ ((\HHLt-\YYLt)\Delta'(\cdot -))(a,0] - \int_{a}^0 (\HHLt-\YYLt)(t)d\Delta'(t) \right] } \\
    & =  -\Delta(0) \left( \int_0^{\TauL}(\vva(t)dt-dX(t)) + \int_{\TauR}^0 (\vva(t)dt-dX(t))  \right) \\
    & \quad + (\HHRt-\YYRt)(0) \Delta'(0+) - (\HHLt-\YYLt)(0)\Delta'(0-) \\
    & \quad + \int_{0}^b (\HHRt-\YYRt)(t)d\Delta'(t) + \int_a^0 (\HHLt-\YYLt)(t)d\Delta'(t) \\
    & \ge 0,  %
  \end{align*}
  where the inequality follows because each of the three lines in the final
  expression is $\le 0$, as follows.  The first line is equal to $0$ by
  \eqref{eq:CharznFequalsX_M_full}; the third line is $\ge 0$ by
  \eqref{eq:CharznInequalityL2_full} and \eqref{eq:CharznInequalityR2_full},
  and the fact that $\Delta$ is concave so $\Delta'$ is monotonically
  nonincreasing so $d\Delta'$ is a nonpositive measure;
  similarly, the second line is $\ge 0$ because $\Delta$ has maximum at $0$,
  so that  $(\HHRt-\YYRt)(0)$, $\Delta'(0+)$,
  $(\HHLt-\YYLt)(0),$ and $-\Delta'(0-)$
  are nonpositive.
\end{proof}
The above proof can be extended  to $\Delta$ such that $\vva(t) + \epsilon
\Delta(t) \in {\cal G}^0$, where ${\cal G}^0$ is the set of concave functions
with maximum at $0$, but we will not need this per se.  %
Rather, in the next result we will express the same idea by showing for knots
$a < 0 < b$ that $\int_{a}^{b} \vva(t) \left(\vva(t)dt-dX(t) \right) = 0$,
and re-express this via integration by parts formulae.
\begin{proposition}
  \label{pro:charznEqualityRewrite_full}
  We again assume the full setup of Theorem
  \ref{thm:MC-process-uniqueness-theorem} and assume that $a,b \in \Sa$, and
  $a < 0 < b$.  Then
  \begin{align*}
    \MoveEqLeft \int_{a}^{b} \vva(t) (\vva(t)dt-dX(t)) \\
    & = \int_a^0 ((\FFLt-\XXLt)(\vva)')(t)dt - \int_0^b ((\FFRt-\XXRt)(\vva)')(t)dt  \\
    & =  \int_a^{\TauL} (\HHLt-\YYLt)(t)d(\vva)'(t)  +  \int_{\TauR}^b (\HHRt-\YYRt)(t)d(\vva)'(t)  \\
    & =  0.
  \end{align*}
\end{proposition}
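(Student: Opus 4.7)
The plan is to prove the three equalities in sequence: the first by integration by parts against the smooth measures $d\FFLct$ and $d\FFRct$; the second by another integration by parts against the (signed) measure $d(\vva)'$; and the third by combining \eqref{eq:CharznEquality2_full} with a sign argument on the integrand.

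\textbf{First equality.} Split at $0$ and use that, by \eqref{eq:defn_Ftildes}--\eqref{eq:defn_Xtildes}, $\vva(t)\,dt - dX(t)$ equals $-d(\FFLct - \XXLct)(t)$ for $t \le 0$ and $d(\FFRct - \XXRct)(t)$ for $t \ge 0$. Integration by parts on $[a,0]$ and $[0,b]$, with $\vva$ as the $C^0$ factor, produces boundary terms at $a$, at $b$, and at $0$. The terms at $a$ and $b$ vanish because $a \in S_L$ and $b \in S_R$ (Lemma~\ref{cor:S0-knot-properties}, since $\Sa \subset S^0$ and $a \ne 0 \ne b$), so $(\FFLct-\XXLct)(a) = 0 = (\FFRct-\XXRct)(b)$. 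The contribution at $0$ equals $-\vva(0)\bigl[(\FFLct-\XXLct)(0) + (\FFRct-\XXRct)(0)\bigr] = \vva(0)\int_{\TauL}^{\TauR}(\vva(v)\,dv - dX(v)) = 0$ by \eqref{eq:CharznFequalsX_M_full}.

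\textbf{Second equality.} Use $(\HHLct - \YYLct)' = -(\FFLct - \XXLct)$, so $\int_a^0(\FFLct-\XXLct)(\vva)'\,dt = -\int_a^0(\HHLct-\YYLct)'(\vva)'\,dt$. Because the target upper limit is $\TauL$ rather than $0$, first split $\int_a^0 = \int_a^{\TauL} + \int_{\TauL}^0$. On $[a,\TauL]$ a standard Stieltjes integration-by-parts yields $\int_{(a,\TauL]}(\HHLct-\YYLct)\,d(\vva)'$, with both boundary terms vanishing because $(\HHLct-\YYLct)$ is $0$ at $a$ (since $a \in S_L$) and at $\TauL$ (by \eqref{eq:10}). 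On the remaining interval $(\TauL,0)$ --- which may be empty --- the function $(\vva)'$ is constant, equal to some $c := (\vva)'(\TauL+)$, because $(\TauL,0)\cap\Sa = \emptyset$ by definition of $\TauL$; direct computation gives $\int_{\TauL}^0(\FFLct-\XXLct)(\vva)'\,dt = -c\,(\HHLct-\YYLct)(0)$. A brief case analysis removes this residual: either $\TauL = \tau_-^0$, in which case $c=0$ by Lemma~\ref{lem:process-uniqueness-theorem-constant-interval}; or $\TauL < \tau_-^0$, which (by the definitions of $\TauL$ and $\tau_-^0$) forces $\tau_-^0 = 0 \in \Sa$, so $d(\vva)'$ has a strictly negative atom at $\{0\}$ and the sign argument of Step~3 forces $(\HHLct-\YYLct)(0) = 0$. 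The symmetric argument handles the right-hand integral.

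\textbf{Third equality.} The measure $(\HHLct-\YYLct)\,d(\vva)'$ is nonnegative, since $\HHLct-\YYLct \le 0$ by \eqref{eq:CharznInequalityL2_full} and $d(\vva)'$ is a nonpositive measure (as $\vva$ is concave). Identity \eqref{eq:CharznEquality2_full} then asserts that this nonnegative measure has total mass $0$ on $(-\infty, \tau_-^0]$, so it is the zero measure there; in particular, its restriction to $(a,\TauL] \subset (-\infty, \tau_-^0]$ is zero, giving $\int_a^{\TauL}(\HHLct-\YYLct)\,d(\vva)' = 0$, and symmetrically $\int_{\TauR}^b(\HHRct-\YYRct)\,d(\vva)' = 0$. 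The hardest part will be the bookkeeping of Stieltjes boundary conventions and the handling of a possible atom of $d(\vva)'$ at $0$ in Step~2; this is precisely why the characterization \eqref{eq:CharznEquality2_full} must be phrased in terms of $\tau_\pm^0$ rather than of $\TauL, \TauR$.
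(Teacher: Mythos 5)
Your proof is correct and follows essentially the same approach as the paper's: integration by parts twice, with \eqref{eq:CharznFequalsX_M_full} killing the boundary contribution at $0$ in the first step, and the sign argument (nonpositive integrand against the nonpositive measure $d(\vva)'$) combined with \eqref{eq:CharznEquality2_full} giving the final zero. The one bookkeeping difference is that the paper takes $\tau_-^0$ rather than $\TauL$ as the inner endpoint — since $(\vva)'\equiv 0$ on $(\tau_-^0,0)$ by Lemma~\ref{lem:process-uniqueness-theorem-constant-interval}, the residual integral over the modal region vanishes outright and no case analysis is needed; the boundary term at $\tau_-^0$ then dies because $\tau_-^0\in S_L$ when $\tau_-^0<0$ (Lemma~\ref{cor:S0-knot-properties}), with the case $\tau_-^0=0$ handled by the same atom argument you use. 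Within your case analysis, the assertion ``$\tau_-^0=0\in\Sa$, so $d(\vva)'$ has a strictly negative atom at $\{0\}$'' skips a step: $0\in\Sa$ by itself only says a one-sided second derivative of $\vva$ is nonzero, not that $(\vva)'$ jumps. What actually produces the atom in this branch is that $\TauL<\tau_-^0=0$ forces the defining condition for $\tau_-^0$ to hold at $0$ itself, so $c=(\vva)'(0-)>0$, while the mode constraint and concavity give $(\vva)'(0+)\le 0$; hence $(\vva)'$ jumps downward at $0$. With that spelled out, your argument closes correctly.
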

\begin{proof}
  Since $a < 0 < b$, we again have $\FFLt(a)-\XXLt(a)=0=\FFRt(b)-\XXRt(b)$ by
  Lemma~\ref{cor:S0-knot-properties}, so
  \begin{align*}
    \MoveEqLeft \int_{a}^{b} \vva(t) (\vva(t)dt-dX(t)) \\
    & = -\int_{a}^0 \vva(t)d(\FFLt(t)-\XXLt(t))
    + \int_0^b \vva(t)d(\FFRt(t)-\XXRt(t))
  \end{align*}
  which equals
  \begin{align*}
    \MoveEqLeft
    -\left[ (\vva(\FFLt-\XXLt))(a,0] - \int_a^0 ((\FFLt-\XXLt)(\vva)')(t)dt  \right] \\
        &  + (\vva(\FFRt-\XXRt))(0,b] - \int_0^b ((\FFRt-\XXRt)(\vva)')(t)dt
  \end{align*}
  which equals
  \begin{align*}
    \MoveEqLeft
    -\left[ (\vva(\FFLt-\XXLt))(0) - \int_a^0 ((\FFLt-\XXLt)(\vva)')(t)dt  \right] \\
    & - (\vva(\FFRt-\XXRt))(0) - \int_0^b ((\FFRt-\XXRt)(\vva)')(t)dt \\
    & \quad =  \int_a^0 ((\FFLt-\XXLt)(\vva)')(t)dt - \int_0^b ((\FFRt-\XXRt)(\vva)')(t)dt,
  \end{align*}
  where the last equality is by \eqref{eq:CharznFequalsX_M_full}.  Since
  $\vva$ is constant on $(\tau_{-}^0,\tau_{+}^0)$, and using the notation $g[c,d]
  = g(d) - g(c-)$, we can write the last expression above as
  \begin{align*}
    \MoveEqLeft{\int_a^{\tau_{-}^0} ((\FFLt-\XXLt)(\vva)')(t)dt - \int_{\tau_{+}^0}^b ((\FFRt-\XXRt)(\vva)')(t)dt } \\
    & =   ((\vva)'(\HHLt-\YYLt))[a,\tau_{-}^0] - \int_{[a,\tau_{-}^0]} (\HHLt-\YYLt)(t)d(\vva)'(t) \\
    & \quad -\left[ ((\vva)'(\HHRt-\YYRt))[\tau_{+}^0,b] - \int_{[\tau_{+}^0,b]} (\HHRt-\YYRt)(t)d(\vva)'(t) \right]
  \end{align*}
which equals
\begin{align*}
   -\int_{[a,\tau_-^0]} (\HHLt-\YYLt)(t)d(\vva)'(t)  +  \int_{[\tau_+^0,b]} (\HHRt-\YYRt)(t)d(\vva)'(t)
   = 0,
\end{align*}
  using integration by parts formula (see
  Lemma~\ref{lem:integration-by-parts}) for the first equality since
  $\HHLt-\YYLt$ and $\HHRt-\YYRt$ are continuous, and using
  \eqref{eq:CharznEquality2_full} for the third equality.  The second
  equality follows from Lemma~\ref{cor:S0-knot-properties}, since a knot $a$
  and limit of knots $\tau_-$ are elements of $S_L$, and similarly $b,
  \tau_+$ are elements of $S_R$.
\end{proof}
Next we prove a representation lemma, analogous to the midpoint result for the
unconstrained (and compact support) case in Lemma 2.3 on page 1631 of
\cite{MR1891741}.  %
\begin{lemma}
  \label{lem:unq:Hrepresentation}
  We again assume the full setup of Theorem
  \ref{thm:MC-process-uniqueness-theorem}.  Let $\tau_1,\tau_2 \in \Sa$%
  be such that $\vva$ is affine on $[\tau_1,\tau_2]$,
  and let $t \in [\tau_1,\tau_2]$.  For any function $g$, we define
  $\Delta g = g(\tau_2)-g(\tau_1)$ and $\bar{g} =
  \frac{g(\tau_1)+g(\tau_2)}{2}$, including in particular,
  $\Delta \tau = \tau_2-\tau_1$ and
  $\bar{\tau} = (\tau_1+\tau_2)/2$.
  Then if $0 < \tau_1 < \tau_2$, we can conclude
  \begin{equation}
    \begin{split}
      \label{eq:unq:HrepresentationR}
      \HHRt(t) & = \frac{(\YYRt(\tau_2)(t-\tau_1) + \YYRt(\tau_1)(\tau_2-t))}{\Delta \tau} \\
      & \quad - \inv{2} \left( \frac{\Delta \XXRt}{\Delta \tau}
        + \frac{4}{(\Delta \tau)^3}(\bar{X}_R\Delta\tau - \Delta \YYRt)(t-\bar{\tau}) \right)
      (t-\tau_1)(\tau_2-t),
    \end{split}
  \end{equation}
  and thus
  \begin{equation}
    \label{eq:unq:HrepresentationR_taubar}
    \HHRt(\bar{\tau}) = \bar{Y}_R - \inv{8}\Delta \XXRt \Delta \tau.
  \end{equation}
  If $\tau_1<\tau_2<0$, we can conclude
  \begin{equation}
    \label{eq:unq:HrepresentationL}
    \begin{split}
      \HHLt(t) & = \frac{\YYLt(\tau_2)(t-\tau_1)+\YYLt(\tau_1)(\tau_2-t)}{\Delta \tau} \\
      & \quad -\inv{2}\left( \frac{-\bar{X}_L}{\Delta \tau} +
        \frac{4}{(\Delta \tau)^3}(-\bar{X}_L \Delta \tau - \Delta \YYLt)(t-\bar{\tau}) \right)
      (t-\tau_1)(\tau_2-t),
    \end{split}
  \end{equation}
  and thus
  \begin{equation}
    \label{eq:unq:HrepresentationL_taubar}
    \HHLt(\bar{\tau}) = \bar{Y}_L + \inv{8}\Delta\XXLt \Delta \tau.
  \end{equation}
\end{lemma}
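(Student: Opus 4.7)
The plan is to exploit the fact that on an interval $[\tau_1,\tau_2]$ on which $\vva$ is affine, $\HHRt$ (resp.\ $\HHLt$) is a cubic polynomial, and then to pin down this cubic via four Hermite-type boundary conditions inherited from $\tau_1,\tau_2 \in \Sa$.

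First, I would unpack the regularity. On $[\tau_1,\tau_2]$ the function $\vva$ is affine by hypothesis, so by \eqref{eq:defn_Ftildes} $\FFRct$ is quadratic on this interval and hence by \eqref{eq:defHRtilde} $\HHRt$ is a cubic polynomial in $t$ on $[\tau_1,\tau_2]$. Next, since $\tau_1,\tau_2 \in \Sa$ with $0<\tau_1<\tau_2$, Lemma~\ref{cor:S0-knot-properties} gives $\tau_1,\tau_2 \in S_R$, so the four boundary conditions
\begin{equation*}
\HHRt(\tau_i) = \YYRt(\tau_i), \qquad \HHRt'(\tau_i) = \YYRt'(\tau_i) = \XXRt(\tau_i), \qquad i=1,2,
\end{equation*}
hold, where the second equality in each case is immediate from \eqref{eq:9}. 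A cubic polynomial on $[\tau_1,\tau_2]$ is uniquely determined by its values and derivatives at the two endpoints, so it suffices to verify that the right-hand side of \eqref{eq:unq:HrepresentationR} is a cubic in $t$ satisfying these four boundary conditions.

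For the verification step, I would parametrize the candidate cubic as
\begin{equation*}
p(t) = \frac{\YYRt(\tau_2)(t-\tau_1)+\YYRt(\tau_1)(\tau_2-t)}{\Delta\tau} - \tfrac{1}{2}\bigl(\alpha + \beta(t-\bar\tau)\bigr)(t-\tau_1)(\tau_2-t),
\end{equation*}
observe that the correction term vanishes at $t=\tau_i$ so the value conditions $p(\tau_i)=\YYRt(\tau_i)$ hold automatically, then differentiate and match $p'(\tau_i)=\XXRt(\tau_i)$. The two derivative equations form a $2\times 2$ linear system in $\alpha,\beta$; solving it gives exactly $\alpha = \Delta\XXRt/\Delta\tau$ and $\beta = (4/(\Delta\tau)^3)(\bar X_R \Delta\tau - \Delta\YYRt)$, establishing \eqref{eq:unq:HrepresentationR}. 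Setting $t=\bar\tau$ in \eqref{eq:unq:HrepresentationR}, the linear-interpolation term reduces to $\bar Y_R$, the factor $(t-\bar\tau)$ kills the $\beta$ contribution, and $(t-\tau_1)(\tau_2-t)$ becomes $(\Delta\tau/2)^2$, so the cubic correction collapses to $-\tfrac{1}{8}\Delta\XXRt\,\Delta\tau$, yielding \eqref{eq:unq:HrepresentationR_taubar}. The left-hand formulas \eqref{eq:unq:HrepresentationL} and \eqref{eq:unq:HrepresentationL_taubar} follow by the symmetric argument, using that for $\tau_1,\tau_2 \in \Sa$ with $\tau_1<\tau_2<0$ the analogous boundary conditions $\HHLt(\tau_i)=\YYLt(\tau_i)$ and $\HHLt'(\tau_i)=-\XXLt(\tau_i)$ hold (the sign flip coming from the reversed orientation of the inner integral in \eqref{eq:defHLtilde} and \eqref{eq:9}), together with the identification $\bar X_L$ in place of $\Delta\XXLt/\Delta\tau$ after a sign accounting; this sign bookkeeping between the left and right cases is the only nontrivial step and is what I would expect to be the main source of error to double-check carefully.
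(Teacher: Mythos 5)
Your approach matches the paper's proof: both exploit that $\HHRt$ (resp.\ $\HHLt$) is a cubic polynomial on the affine interval, pin down the cubic via the four Hermite conditions at $\tau_1,\tau_2$ supplied by Lemma~\ref{cor:S0-knot-properties} together with $\YYRt' = \XXRct$ (resp.\ $\YYLt' = -\XXLct$), and then verify. The only difference is the direction of the final calculation: you set up a $2\times 2$ linear system and solve for the coefficients $\alpha,\beta$, whereas the paper names the displayed formula $P_L$ and differentiates it to confirm $P_L'(\tau_i) = -\XXLt(\tau_i)$ directly — the same algebra run backwards versus forwards.
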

\begin{proof}
  We assume that on $[\tau_1, \tau_2]$, that $\vva$ is linear and thus
  $\HHLt$ and $\HHRt$ are cubic polynomials.  Thus, taking $\tau_1 <\tau_2 <
  0$, $\HHLt$ is defined by its values and its derivative's values at
  $\tau_{i}$, for $i=1,2$.  Thus, if we name the polynomial on the right hand
  side of \eqref{eq:unq:HrepresentationL} $P_L$, it suffices to check that
  $P_L(\tau_{i})$ and $P_L'(\tau_{i})$ equal $H_L(\tau_i)$ and
  $H_L'(\tau_i)$, respectively, for $i=1,2$, to conclude that $\HHLt(t) =
  P_L(t)$ for $t \in [\tau_1,\tau_2]$.  We know that $\HHLt(\tau_{i}) =
  \YYLt(\tau_{i})$ by \eqref{eq:CharznEquality2_full} and it is immediate
  that $P_L(\tau_{i}) = \YYLt(\tau_{i})$, so we only need to check the
  derivative values.  To differentiate, we denote
  $$A(t) = \inv{2}\left(
    \frac{-\bar{X}_L}{\Delta \tau} + \frac{4}{(\Delta \tau)^3}(-\bar{X}_L
    \Delta \tau - \Delta \YYLt)(t-\bar{\tau}) \right),$$ so that
  \begin{equation*}
    P_L(t) = \frac{\YYLt(\tau_2)(t-\tau_1)+\YYLt(\tau_1)(\tau_2-t)}{\Delta \tau}  - A(t)(t-\tau_1)(\tau_2-t),
  \end{equation*}
  and
  \begin{equation*}
    P_L'(t) = \frac{\YYLt(\tau_2)
      - \YYLt(\tau_1)}{\Delta \tau}  - A'(t)(t-\tau_1)(\tau_2-t)
    - A(t)((\tau_2-t) - (t-\tau_1)),
  \end{equation*}
  so that
  \begin{align*}
    P_L'(\tau_1) & = \frac{\Delta \YYLt}{\Delta \tau} - A(\tau_1)\Delta \tau \\
    & = \frac{\Delta \YYLt}{\Delta \tau}
    - \inv{2} \left\{ \frac{-\Delta \XXLt}{\Delta \tau}
      + \frac{4}{(\Delta \tau)^3} (-\bar{X}_L\Delta\tau
      - \Delta\YYLt) \left(\frac{-\Delta\tau}{2}\right) \right\}\Delta \tau
    \\
    &  = -\XXLt(\tau_1).
  \end{align*}
  This equals $\HHLt'(\tau_1)$, as desired, since $\HHLt'(\tau_1) =
  \YYLt'(\tau_1)$
  by Lemma~\ref{cor:S0-knot-properties} since $\tau_1 $ is strictly less than
  $0$, and $ \YYLt'(\tau_1) = -\XXLt(\tau_1)$.  Similarly, $P_L'(\tau_2) = -
  \XXLt(\tau_2)$ and, letting $P_R$ be the polynomial on the right hand side
  of \eqref{eq:unq:HrepresentationL}, $P_R'(\tau_{i}) = \XXRt(\tau_{i})$ and
  $P_R(\tau_i) = \YYRt(\tau_i)$.  Then \eqref{eq:unq:HrepresentationR_taubar}
  and \eqref{eq:unq:HrepresentationL_taubar} follow immediately.
\end{proof}

Next, we show a tightness-type of results for the bend points.  Recall the
definition \eqref{eq:def-tauplusa-tauminusa} of $\tau_-^0(t)$ and
$\tau_+^0(t)$.
\begin{lemma}
  \label{lem:unq_knotsTight}
  Let the assumptions of Theorem~\ref{thm:MC-process-uniqueness-theorem} hold.
  Then, for all $\epsilon > 0 $ there exists $M_\epsilon$ such that for all
  $t >0  $,
  \begin{align}
    &P( \tauplusa(t) > t+M_\epsilon)  < \epsilon, \label{eq:unq:knotsTight_R_plus}\\
    &P( \tauminusa(-t) < -t-M_\epsilon)  < \epsilon,  \label{eq:unq:knotsTight_L_minus}\\
    &P( (t-M_\epsilon) \vee 0  \le  \tauminusa(t) \vee 0  )  > 1- \epsilon,  \label{eq:unq:knotsTight_R_minus}\\
    &P(  \tauplusa(-t) \wedge 0 \le (-t+M_\epsilon) \wedge 0 )  > 1-\epsilon, \label{eq:unq:knotsTight_L_plus}
  \end{align}
  where $M_\epsilon$ does not depend on  $t$.
\end{lemma}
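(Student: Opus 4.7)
By symmetry, the left-hand bounds reduce to the right-hand ones. The involution $W(s) \mapsto -W(-s)$ preserves the law of two-sided Brownian motion and sends $X(s)$ to $-X(-s)$, so by Theorem~\ref{thm:MC-process-uniqueness-theorem} it interchanges $(\HHLt, \YYLt, \TauL, \tau_-^0)$ with $(\HHRt, \YYRt, -\TauR, -\tau_+^0)$. Uniqueness in Theorem~\ref{thm:MC-process-uniqueness-theorem} then yields $\tau_+^0(t) \stackrel{d}{=} -\tau_-^0(-t)$, reducing \eqref{eq:unq:knotsTight_L_minus} to \eqref{eq:unq:knotsTight_R_plus} and \eqref{eq:unq:knotsTight_L_plus} to \eqref{eq:unq:knotsTight_R_minus}. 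I focus on the two right-hand statements.

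The central tool is a Gaussian tail inequality from Lemma~\ref{lem:unq:Hrepresentation}. Fix $t > 0$ and $M > 0$, and suppose $\tau_+^0(t) > t + M$. Assume first $\tau_-^0(t) > 0$ (the case $\tau_-^0(t) \le 0$, in which $(\tau_-^0(t), \tau_+^0(t))$ is the modal interval spanning the origin, is treated by a parallel argument combining \eqref{eq:CharznInequalityL2_full}, \eqref{eq:CharznInequalityR2_full}, and \eqref{eq:CharznFequalsX_M_full}). Set $\alpha := \tau_-^0(t)$ and $\beta := \tau_+^0(t)$; then $\beta - \alpha \ge M$, $\vva$ is affine on $(\alpha,\beta)$, and by Lemma~\ref{cor:S0-knot-properties}, $\HHRt$ is the cubic Hermite interpolant of $\YYRt$ on $[\alpha,\beta]$. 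Applying identity \eqref{eq:unq:HrepresentationR_taubar} at $\tau_1 = \alpha, \tau_2 = \beta$, the constraint $\HHRt(\bar\tau) \le \YYRt(\bar\tau)$ (with $\bar\tau = (\alpha+\beta)/2$) combined with integration by parts yields
\[
0 \;\le\; \YYRt(\bar\tau) - \HHRt(\bar\tau) \;=\; \int_\alpha^\beta \phi_{\alpha,\beta}(v)\, dX(v), \quad \phi_{\alpha,\beta}(v) := \tfrac{\beta-\alpha}{8} - \tfrac12 \min(v-\alpha,\beta-v).
\]
The kernel $\phi_{\alpha,\beta}$ is symmetric about $\bar\tau$ with $\int \phi_{\alpha,\beta}\,dv = 0$ and $\int \phi_{\alpha,\beta}(v)(v-\bar\tau)^2\,dv = (\beta-\alpha)^4/192$. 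Splitting $dX = dW - 12v^2\,dv$ and using the symmetry of $\phi_{\alpha,\beta}$ about $\bar\tau$ to kill the $\bar\tau$- and $\bar\tau^2$-terms in $v^2$, the deterministic piece evaluates to $\int \phi_{\alpha,\beta}(v)\cdot 12v^2\,dv = (\beta-\alpha)^4/16$, independent of $\bar\tau$. The constraint therefore becomes
\[
\int_\alpha^\beta \phi_{\alpha,\beta}(v)\, dW(v) \;\ge\; \frac{(\beta-\alpha)^4}{16}. \qquad (\star)
\]
For a deterministic pair $a < b$, the left-hand side of $(\star)$ is centered Gaussian with variance $(b-a)^3/192$, giving tail probability at most $\exp(-c(b-a)^5)$ for some absolute constant $c > 0$.

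I now pass from random $(\alpha,\beta)$ to a $t$-uniform bound. Combining a lattice discretization of $\alpha \in [0,t]$ with a dyadic decomposition of $L = \beta - \alpha \in [M,\infty)$, and absorbing discretization errors via local Lipschitz bounds on $(a,b) \mapsto \int_a^b \phi_{a,b}\,dW$, one obtains $P(\tau_+^0(t) > t + M) \lesssim t \exp(-cM^5)$. The $t$-factor is removed by scaling: for large $s_0$, $X(s_0 + s) - X(s_0) - X'(s_0)s \approx [W(s_0+s) - W(s_0)] - 12 s_0 s^2$ for $s$ small, so the rescaling $s = s_0^{-2/3}\tilde s$ carries the local invelope problem near $s_0$ to the canonical Brownian-plus-quadratic-drift invelope with $O(1)$ bend-point spacing; consequently $\tau_+^0(t) - t = O_P(t^{-2/3})$ at large $t$, while for $t$ bounded the tail bound is uniform by taking $M_\epsilon$ large. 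Statement \eqref{eq:unq:knotsTight_R_minus} is proved identically, since $\tau_-^0(t) < (t-M) \vee 0$ with $t > M$ also forces a knot-free interval of length at least $M$ containing $t$.

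The main obstacle is the uniformity in $t$: the midpoint identity forces $\alpha,\beta$ to lie in the Cantor-type random set $S^0$ rather than at stopping times, so a direct discretization introduces a factor linear in $t$. Handling this via the scaling argument at large $t$ — showing the local knot spacing of $\vva$ contracts as $t^{-2/3}$ — is the most delicate step, but the super-polynomial decay $\exp(-cM^5)$ in $(\star)$ easily absorbs the resulting overhead.
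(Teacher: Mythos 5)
Your midpoint identity and the Gaussian computation behind $(\star)$ are correct and coincide with the paper's route through Lemma~\ref{lem:unq:Hrepresentation}: the constraint $\int_\alpha^\beta\phi_{\alpha,\beta}\,dW\ge(\beta-\alpha)^4/16$ is just a rewriting of the paper's displayed inequality $V(\bar\tau)-\bar V+\tfrac18\Delta W\,\Delta\tau\le-(\Delta\tau/2)^4$, and the variance $(\beta-\alpha)^3/192$ checks out. The symmetry reduction via $W(s)\mapsto -W(-s)$ is also valid.

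The genuine gap is in the passage to a $t$-uniform bound. You discretize the lower knot $\alpha\in[0,t]$ and dyadically decompose $L=\beta-\alpha$, producing a bound $P(\tauplusa(t)>t+M)\lesssim t\,e^{-cM^5}$, and then propose to remove the factor $t$ by asserting $\tauplusa(t)-t=O_P(t^{-2/3})$ from a local rescaling of $X$ near $s_0$. That second assertion is itself a substantial result --- a local limit theory for the bend-point spacing of the invelope at large times --- and nothing in your setup proves it; it is not a routine consequence of $(\star)$. The paper never needs it. The observation the paper actually uses is exact translation invariance of the constraint functional: once the event is localized to $\{\tau_1<t-M,\ \tau_2>t+M\}$, the quantity $V(\bar\tau)-\bar V+\tfrac18\Delta W\,\Delta\tau$ depends only on the increments $W(s)-W(t)$ and $\int_t^s W\,du$ for $s$ near $t$, and the paper shows this is equal in distribution to the corresponding functional anchored at $0$. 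This reduces the general-$t$ tail to the $t=0$ tail, which is precisely GJW's inequality (2.27) on page 1633 of \cite{MR1891741} (a chaining bound over $\exists\,s_1<-M,\ s_2>M$, already done for you). No factor of $t$ ever appears, and no scaling analysis is needed.

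A secondary, smaller gap: you dismiss the case $\tauminusa(t)\le 0$ as a ``parallel argument.'' In fact that case is the harder one: the cubic Hermite representation of Lemma~\ref{lem:unq:Hrepresentation} requires both knots strictly on one side of $0$, and integrating by parts across the origin picks up the cross-term $\int_{\TauL}^{\TauR}(\vva\,dv-dX)=0$ from \eqref{eq:CharznFequalsX_M_full} together with the one-sided inequalities \eqref{eq:CharznInequalityL2_full}--\eqref{eq:CharznInequalityR2_full}, rather than a clean expression of the form $(\star)$. The paper sidesteps this case entirely: it chooses $t_\epsilon$ large enough that $P(\tauminusa(t)\le 0)<\epsilon$ (using knot existence as in Lemma~\ref{lem:vva_infiniteknots}) and argues only on the event $\tauminusa(t)>0$. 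You should do the same rather than claim a parallel argument that you have not carried out.
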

\begin{proof}
  We will show for all $t,\epsilon >0 $ there exists $M=M_\epsilon$ such that
  $P( \tauplusa(t) > t+M) < \epsilon $. The statement for $\tauminusa(-t)$ is
  analogous. By Lemma \ref{lem:vva_infiniteknots}, for any $t$ we can find
  $\tau_2 \in \Sa$ where $\tau_2 < \infty$ is taken to be $\tauplusa(t)$; similarly, we can take $t \equiv
  t_\epsilon$ large enough such that with probability $1-\epsilon$ there
  exists a knot $0< \tau_1 < t$.  To match notation up with Lemma
  \ref{lem:unq:Hrepresentation}, we will
  define $\Delta g$ and $\bar{g}$, for any function $g$, as in the
  lemma. Since $\vva$ is affine on $[\tau_1,\tau_2]$, Lemma
  \ref{lem:unq:Hrepresentation} allows us to conclude that $\YYRt(\bar{\tau})
  \le \HHRt(\bar{\tau}) = \bar{Y}_R - \Delta \XXRt \Delta \tau /
  8$ %
  which is if and only if
  \begin{equation}
    \label{eq:unq:taubar_mainineq}
    Y(\bar{\tau}) \le \bar{Y} - \inv{8} \Delta X \Delta \tau,
  \end{equation}
  where $Y(t)= \int_0^t X(u)du = V(t) + t^4$ and $V(t)=\int_0^tW(u)du.$ The
  ``if and only if'' follows because $\YYRt(t) = Y(t) + A(t)$ where $A(t)$ is
  a random affine function.  Since for any affine function $ A(\bar{\tau}) =:
  \bar{A},$ we see that
  \begin{equation*}
    \YYRt(\bar{\tau})-\bar{Y}_R = Y(\bar{\tau}) + A(\bar{\tau}) - (\bar{Y}+\bar{A})
    = Y(\bar{\tau}) - \bar{Y}.
  \end{equation*}
  Since $\Delta X $ trivially equals $\Delta \XXRt$, we have shown
  \eqref{eq:unq:taubar_mainineq}. Let $M_\epsilon > 0$ and let $B_t$ be the
  event $\lb 0 < \tau_1 < t, \tau_2 > t +M_\epsilon \rb$. We then see
  \begin{align}
    P(\tau_2 > t_{\epsilon} + M_\epsilon) %
    & \le P( \tau_1 \le 0) +
    P(B_{t_\epsilon}) \notag \\
    & \le \epsilon
    + P\left(\YYRt(\bar{\tau}) \le \HHRt(\bar\tau)= \bar{Y}_R - \inv{8}\Delta\XXRt \Delta\tau ,
      B_{t_\epsilon} \right) \notag \\
    & \le 2\epsilon, \label{eq:unq:mainIneq_probability}
  \end{align}
  where we now show that the last inequality follows from page 1633 in the
  proof of Lemma~2.4 in \cite{MR1891741}. %
  We have already noted that $ \YYRt(\bar{\tau}) \le \bar{Y}_R -
  \inv{8}\Delta\XXRt \Delta\tau $ if and only if $ Y(\bar{\tau}) \le \bar{Y}
  - \inv{8}\Delta X \Delta\tau $.  Then
  \cite{MR1891741} %
  show algebraically that this inequality can be rewritten as
  \begin{align*}
    V(\bar{\tau}) - \bar{V} + \inv{8}\Delta W \Delta \tau \le - \left(\frac{\Delta \tau}{2}\right) ^4.
  \end{align*}
  Thus we have shown for any $t > 0$,
  \begin{align*}
    \MoveEqLeft{ P\left(\YYRt(\bar{\tau}) \le \HHRt(\bar\tau)
        = \bar{Y}_R - \inv{8}\Delta\XXRt \Delta\tau ,
        B_t \right) } \\
    & = P\left(  V(\bar{\tau}) - \bar{V} + \inv{8}\Delta W \Delta \tau \le
      - \left(\frac{\Delta \tau}{2}\right) ^4,
      B_t \right).
  \end{align*}
  \cite{MR1891741} %
  show that
  \begin{equation}
    \label{eq:unq:mainineqRewrite1}
    P\left(  V(\bar{\tau}) - \bar{V} + \inv{8}\Delta W \Delta \tau \le
      - \left(\frac{\Delta \tau}{2}\right) ^4,
      \tau_1 < M_\epsilon, \tau_2 > M_\epsilon \right)
    < \epsilon,
  \end{equation}
  and thus that
  \begin{equation}
    \label{eq:unq:mainineqRewrite2}
    P\left(  V(\bar{\tau}) - \bar{V} + \inv{8}\Delta W \Delta \tau \le
      - \left(\frac{\Delta \tau}{2}\right) ^4,
      \tau_1 < t-M_\epsilon, \tau_2 > t+M_\epsilon \right)
    < \epsilon.
  \end{equation}
  This independence from $t$ follows because
  \begin{equation*}
    \left\{(W(s)-W(t),
      V(s)-V(t)-(s-t)W(t))\right\}_{s \in \RR}
  \end{equation*}
  is equal in distribution to $
  \left\{W(u-t),\int_t^u W(u-t)\, du)\right\}_{u \in \RR}$, since $\int_t^s
  W(u)du = \int_t^s (W(u)-W(t))du + (s-t)W(t)$, and thus
  $V(\bar{s}) - \bar{V} + \inv{8}\Delta W \Delta s $ equals
  \begin{align*}
    \MoveEqLeft V(\bar{s})-V(t)-W(t)(\bar{s} -t) \\
    \MoveEqLeft \quad - \left(\inv{2}(V(s_1) - V(t)-W(t)(s_1-t))
      + \inv{2}(V(s_2) - V(t) -W(t)(s_2-t)) \right)\\
    \MoveEqLeft \quad +\inv{8}(W(s_2)-W(t)-(W(s_1)-W(t)))(s_2-t-(s_1-t)) \\
    & =_d V\left(\frac{r_1+r_2}{2}\right) - \frac{V(r_1)+V(r_2)}{2} + \inv{8}(W(r_2)-W(r_1))(r_2-r_1)
  \end{align*}
  where $\bar{s}=(s_1+s_2)/2$, $\bar{V}=(V(s_1)+V(s_2))/2$, $\Delta W=W(s_2)-W(s_1)$,
  and $\Delta s=s_2-s_2$ and $r_i=s_i-t$ for $i=1,2$.
  This shows that the left hand sides of both of \eqref{eq:unq:mainineqRewrite1} and
  \eqref{eq:unq:mainineqRewrite2} are, regardless of $t$, bounded by
  \begin{equation}
    \label{eq:unq:brownianBound}
    P\left(  V(\bar{s}) - \bar{V} + \inv{8}\Delta W \Delta s \le
      - \left(\frac{\Delta s}{2}\right) ^4,
      \mbox{ for some }\; s_1 < -M_\epsilon, s_2 > M_\epsilon \right).
  \end{equation}
  This probability is defined in (2.27) on page 1633 of
  \cite{MR1891741}, %
  and is shown to be less than $\epsilon$ at the top of page 1634, so, using
  this fact, we have now shown \eqref{eq:unq:mainineqRewrite1} and
  \eqref{eq:unq:mainineqRewrite2}.

  The probability we consider in \eqref{eq:unq:mainIneq_probability} is on
  the event $B_{t_\epsilon} = \{ 0<\tau_1<t_\epsilon , \tau_2 > t_\epsilon +
  M_\epsilon \} $ rather than $C_{t_\epsilon} \equiv
  \{ 0<\tau_1<t_\epsilon -
  M_\epsilon , \tau_2 > t_\epsilon + M_\epsilon \}$.  The only cost for this
  is we need to double our $M_\epsilon$ for this to correspond with the
  probability in \eqref{eq:unq:mainineqRewrite2}.  Thus
  \eqref{eq:unq:mainIneq_probability} holds, but we do not yet have
  independence from $t$ because of the $t_\epsilon$ in the expression. We
  easily circumvent this by replacing $M_\epsilon$ by $t_\epsilon +
  M_\epsilon$.  Now we have shown \eqref{eq:unq:knotsTight_R_plus} holds with
  $M_\epsilon$ independent of $t$.

  Now we show \eqref{eq:unq:knotsTight_L_minus}.
  Note that we can write an analogous version of
  \eqref{eq:unq:mainIneq_probability} for $t > M_\epsilon$ as
  \begin{align}
    P( 0 \le \tau_1 \le t - M_\epsilon)
    & \le P(\tau_2 > t+M_\epsilon) +
    P(  C_t ) \label{eq:tau1-prob-bound}  \\
    & \le \epsilon + P\left(\YYRt(\bar{\tau}) \le \HHRt(\bar\tau)= \bar{Y}_R
      - \inv{8}\Delta\XXRt \Delta\tau ,
      C_{t_\epsilon} \right) \notag \\
    & \le 2 \epsilon \notag
  \end{align}
  because, by the argument we just went through, the probability in the third
  line is again bounded by \eqref{eq:unq:brownianBound}.  Note we have $t$ in
  place of $t_\epsilon$ in \eqref{eq:tau1-prob-bound}, so the above statement
  is already independent of $t$ as long as $t > M_\epsilon$.  Thus we have
  shown $P( (t-M) \vee 0 \le \tauminusa(t) \vee 0 \le t ) > 1- \epsilon$,
  since if $t < M_\epsilon$ this probability is trivially $1$.  Showing the
  analogous statements for the left side, the existence of $M$, not depending on
  $t$, such that $P(\tauminusa(-t) < -t - M) < \epsilon$ %
  and $P( -t \le \tauplusa(-t) \wedge 0 \le (-t+M) \wedge 0 ) > 1-\epsilon$,
  can be done analogously.
\end{proof}

The next result will relate the unconstrained and constrained limit
estimators in the Gaussian setting.
\begin{corollary}
  \label{cor:unq:MCequalsUC}
  Let the definitions and assumptions of
  Theorem~\ref{thm:MC-process-uniqueness-theorem} hold, and let $Y$ and $H$
  be as in
  Theorem~\ref{thm:charzn_uniqueness_full_UC}. Let $\vv \equiv H''$.
  For any $t \in \RR$, define
  \begin{align}
    s^+(t)  &= \inf \{s \in [t,\infty) : \vva(s)= \vv(s)\} \\
    s^-(t)  &= \sup \{s \in (-\infty,t] : \vva(s)= \vv(s)\}.
  \end{align}
  Then we can say that for all $\epsilon > 0 $, there exists $M_\epsilon$,
  not depending on $t$,  such that
  \begin{align}
    P(t - s^-(t) > M_\epsilon) < \epsilon \label{eq:unq:MCequalsUC_minus}\\
    P(s^+(t)-t > M_\epsilon) < \epsilon.\label{eq:unq:MCequalsUC_plus}
  \end{align}
\end{corollary}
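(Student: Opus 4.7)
The plan is to construct, with probability at least $1-\epsilon$, four alternating touch points of the unconstrained touch set $S$ and the constrained touch set $S_R$ inside a uniformly bounded window above $t$, then combine the intermediate value theorem with Rolle's theorem twice to extract a point where $\vv-\vva$ vanishes.

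Without loss of generality take $t > 0$ and focus on the bound for $s^+(t)-t$; the bound on $t - s^-(t)$ and the case $t \le 0$ (where one works with $\HHLt$, $\YYLt$ on the left of $0$) are entirely symmetric. Since $\YYRt(u) = Y(u) - Y(\TauR) - (u-\TauR)X(\TauR)$ for $u \ge 0$, the function $A(u) := Y(u) - \YYRt(u) = Y(\TauR) + (u-\TauR)X(\TauR)$ is affine, and I define
\[
D(u) := H(u) - \HHRt(u) - A(u), \qquad u \ge 0.
\]
Concavity of both $\vv$ and $\vva$ gives their continuity, so $D \in C^2([0,\infty))$ with $D'' = \vv - \vva$. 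The key sign observations are: at any $\tau \in S \cap (0,\infty)$ one has $H(\tau)=Y(\tau)$, giving $D(\tau) = \YYRt(\tau) - \HHRt(\tau) \ge 0$ by \eqref{eq:CharznInequalityR2_full}; and at any $\tau^0 \in S_R$ one has $\HHRt(\tau^0) = \YYRt(\tau^0)$, giving $D(\tau^0) = H(\tau^0) - Y(\tau^0) \le 0$ by the analog of \eqref{eq:CharznInequalityR2_full} for $H$.

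The main step builds an alternating chain $t < \sigma^0_1 < \sigma_1 < \sigma^0_2 < \sigma_2$ with $\sigma^0_i \in S_R$, $\sigma_i \in S$, and $\sigma_2 - t \le M_\epsilon$ independent of $t$. Starting from $t$, Lemma~\ref{lem:unq_knotsTight} provides uniform tightness of the next $S^0$-touch above any given point, and the analogous result for the unconstrained invelope (Lemma~2.4 of \cite{MR1891741}) does the same for $S$; iterating four times produces the chain with probability at least $1-\epsilon$. At the four constructed points $D$ carries the alternating signs $\le 0, \ge 0, \le 0, \ge 0$, so the intermediate value theorem gives three zeros $z_1<z_2<z_3$ of $D$ inside $(\sigma^0_1,\sigma_2)$; two applications of Rolle's theorem yield two zeros of $D'$ in $(z_1,z_3)$, and a further application produces a zero $u^\ast \in (z_1,z_3) \subset [t,t+M_\epsilon]$ of $D''=\vv-\vva$, so that $s^+(t) \le u^\ast \le t + M_\epsilon$.

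The main obstacle is the uniform-in-$t$ iteration at random starting points: Lemma~\ref{lem:unq_knotsTight} is proved for fixed deterministic $t$, whereas $\sigma^0_1, \sigma_1, \sigma^0_2$ are random functionals of the driving Brownian motion. I would resolve this by either (a) establishing a simultaneous tightness bound of the form $P\bigl(\sup_{s \in [t,t+M_1]}(\tau^0_+(s)-s) > M_2\bigr)<\epsilon$ with $M_2$ independent of $t$, following the Brownian stationarity and scaling manipulations used in the proof of Lemma~\ref{lem:unq_knotsTight} together with a covering argument, or (b) replacing the iteration with a direct count: show that with high probability $[t,t+M]$ contains many touch points of each of $S$ and $S_R$ for $M$ large, and extract an alternating subsequence of length four. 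A subsidiary technicality is that $S \cap S_R$ may be nonempty, but any shared touch point $\tau$ automatically satisfies $D(\tau) = D'(\tau) = 0$, and between two consecutive shared touches $D$ is a cubic with two double zeros, forcing $D \equiv 0$ and hence $\vv = \vva$ directly on that interval.
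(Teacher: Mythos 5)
Your proof takes essentially the same route as the paper's: construct an alternating chain of touch points of $S$ and $S_R$ on one side of zero, observe the sign alternation of the difference $D = (H-Y)-(\HHRt-\YYRt)$ (written in the paper as $\HHRt-H-A$, up to sign) at these points, apply the Intermediate Value Theorem to find three zeros of $D$, and then the Mean Value Theorem (Rolle) twice to land a zero of $D'' = \vv - \vva$; tightness of the gaps is supplied by Lemma~\ref{lem:unq_knotsTight} and its unconstrained counterpart, Lemma~2.7 of \cite{MR1891741}. Your sign conventions for $D$ at the two types of touch points are the correct ones; the displayed inequalities in the paper at $\nu^0_i$ and $\nu_i$ evidently have their senses reversed by a typographical slip, since both $(H-Y)\le 0$ and $(\HHRt-\YYRt)\le 0$ everywhere.

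The subtlety you flag --- that Lemma~\ref{lem:unq_knotsTight} is stated for a fixed deterministic reference point, so iterating it at the random points $\sigma^0_1,\sigma_1,\dots$ is not immediately licensed --- is a genuine one, and the paper's proof brushes past it with a one-sentence assertion that with high probability every interval of length $\ge M$ on one side of $0$ contains an alternating sequence. Your resolution (b) as you phrased it is not quite enough on its own: ``many touch points of each of $S$ and $S_R$ in $[t,t+M]$'' does not force an alternating subsequence, since the two sets could in principle be segregated. The clean repair, which is what the paper is implicitly appealing to, is the consecutive-deterministic-intervals argument: choose $M'$ so that $P(\tau_+(s)-\tau_-(s) > M') < \epsilon/5$ and likewise for the constrained touch points, uniformly over deterministic $s$; then for $j=1,\dots,5$ the interval $I_j = [t+(j-1)M',\, t+jM']$ contains a touch point of the prescribed alternating type (unconstrained for odd $j$, constrained for even $j$, say) with probability at least $1-\epsilon/5$, because $\tau_+(m_j)-\tau_-(m_j)\le M'$ at the deterministic midpoint $m_j$ of $I_j$ forces one of $\tau_\pm(m_j)$ into $I_j$. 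On the intersection event you read off the alternating chain inside $[t,t+5M']$, and everything is uniform in $t$ because the constants $M'$ do not depend on $s$. Your option (a) would also work but requires the extra simultaneous-tightness lemma; the consecutive-intervals route avoids it entirely.
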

\begin{proof}
  Define a right-side sequence of knots to be a sequence of points
  \begin{equation*}
    0 < \nu_1 < \nu^0_{1} < \nu_2 < \nu^0_{2} < \nu_3,
  \end{equation*}
  where $\nu_{i}$ are knots for $\vv$ and $\nu^0_i$ are knots for $\vva$.
  Similarly, define  a left-side sequence of knots
  \begin{equation*}
    \nu_{-3} < \nu^0_{-2} < \nu_{-2} < \nu^0_{-1} < \nu_{-1} < 0.
  \end{equation*}
  Then we argue by the Intermediate Value Theorem and the Mean Value Theorem.
  First, we assume we are given such a sequence, without loss of generality
  take it to be a right-side sequence (on the probability $1$ event on which
  Theorem~\ref{thm:charzn_uniqueness_full_UC} holds).  Then we can say, by
  our hypotheses, that
  \begin{align}
    (\HHRt-\YYRt)(\nu^0_{i}) &= 0 \le (H-Y)(\nu^0_{i}) \quad \mbox{ for } i=1,2 \\
    (\HHRt-\YYRt)(\nu_i)    &\ge 0 = (H-Y)(\nu_i) \quad \mbox{ for } i=1,2,3.
  \end{align}
  By the Intermediate Value Theorem we can pick points $x_1 \in
  [\nu_1,\nu^0_{1}], x_2 \in [\nu^0_{1},\nu_2], x_3 \in
  [\nu_2,\nu^0_{2}]$ such that $ (\HHRt-\YYRt)(x_i) = (H-Y)(x_i)$ for
  $i=1,2,3$.  Since $\YYRt(t)-Y(t) = A(t)$ is a (random) affine
  function, we can conclude for $i=1,2,3$ that
  \begin{equation*}
    \HHRt(x_i)-H(x_i)-A(x_i)=0.
  \end{equation*}
  We apply the Mean Value Theorem and get $t_i \in (x_i,x_{i+1})$ for $i=1,2$
  such that
  \begin{equation*}
    \FFRt(t_i) - H'(t_i) - A'(t_i) = 0.
  \end{equation*}
  Again applying the Mean Value Theorem, we get $s \in (t_1,t_2) \subset
  (x_1,x_3) \subset [\nu_1, \nu^0_{2}] \subset [\nu_1,\nu_3]$.

  Now we will construct right-side sequences or left-side sequences of knots
  and be done.  Note that by Lemma \ref{lem:unq_knotsTight} and the analogous
  lemma for the unconstrained case, Lemma 2.7, page 1638, of
  \cite{MR1891741}, %
  there exists a large $M>0$ such that with probability $1-\epsilon$ there
  exists a right-side sequence of knots contained in any interval of length
  $\ge M$ that lies in $[0, \infty)$ and a left-side sequence of knots in any
  interval of length $\ge M$ that lies in $(-\infty,0]$.  For any $t>0$, note
  that the interval $[t-2M,t]$ contains an interval of length at least $M$
  which lies either entirely in $(-\infty,0]$ or entirely in
  $[0,\infty)$. Thus, $[t-2M,t]$ contains a one-sided sequence of knots, and
  thus an $s<t$ such that $\vva(s)=\vv(s)$, with probability $1-\epsilon$.
  Similarly, there exists a one-sided sequence of knots in $[t,t+M]$, and
  thus an $s>t$ such that $\vva(s)=\vv(s)$, with probability $1-\epsilon$.
  Thus, for $t>0$, we have shown \eqref{eq:unq:MCequalsUC_minus} and
  \eqref{eq:unq:MCequalsUC_plus}.  Similarly, for $t < 0$, we consider
  intervals $[t-M,t]$ and $[t,t+2M]$ in which there exist one-sided sequences
  of knots, which allows us to conclude that $\vva(s) = \vv(s)$ for an $s >
  t$ and an $s < t$.
\end{proof}

\begin{lemma}
  \label{lem:unq:MCdiffUCtight_t}
  Let the assumptions of Theorem~\ref{thm:MC-process-uniqueness-theorem}
  hold.
  For all $\epsilon > 0 $ there exists $M_\epsilon$, not depending on $t$, such that
  \begin{align}
    P \left( |\vva(t)- \vv(t) | > M_\epsilon \right) <
    \epsilon
    \mbox{ and }
    P \left( |(\vva)'(t)- \vv'(t) | > M_\epsilon \right) <
    \epsilon,
    \label{eq:unq:ftight_t}
  \end{align}
  where the derivatives can be taken
  to be right or left derivatives.
\end{lemma}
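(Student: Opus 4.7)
The plan is to transfer uniform-in-$t$ tightness from the unconstrained invelope derivatives of $\vv$ to $\vva$ by exploiting Corollary~\ref{cor:unq:MCequalsUC}, which says that $\vva$ and $\vv$ must coincide at points close to any $t$, combined with concavity.

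First I would iterate Corollary~\ref{cor:unq:MCequalsUC}, together with Lemma~\ref{lem:unq_knotsTight} (and its unconstrained analogue, Lemma~2.7 of \cite{MR1891741}), to show that for any $\epsilon > 0$ there exist constants $M = M_\epsilon$ and $\delta = \delta_\epsilon > 0$ such that with probability at least $1-\epsilon$, uniformly in $t$, four points
$$ s_{-2} < s_{-1} < t < s_1 < s_2 $$
exist inside $[t-M, t+M]$, with each gap $s_{i+1}-s_i \ge \delta$, and with $\vva(s_i) = \vv(s_i)$ for $i \in \{-2,-1,1,2\}$. The idea is that Corollary~\ref{cor:unq:MCequalsUC} gives one such point in any sufficiently long interval, so sampling four disjoint intervals of length $M_{\epsilon/4}$ on each side of $t$ produces four such points with controlled gaps, by a union bound.

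Next I would exploit concavity of both $\vva$ and $\vv$. For any concave function $\phi$, the chord slope on an interval to the left of $t$ is an upper bound for $\phi'(t)$, while the chord slope on an interval to the right of $t$ is a lower bound. Applying this to both $\vva$ and $\vv$ at the shared points $s_i$ gives that both $(\vva)'(t)$ and $\vv'(t)$ lie in the closed interval
$$
\left[\frac{\vv(s_2)-\vv(s_1)}{s_2-s_1},\; \frac{\vv(s_{-1})-\vv(s_{-2})}{s_{-1}-s_{-2}}\right],
$$
and hence
$$
|(\vva)'(t) - \vv'(t)|
\;\le\;
\frac{\vv(s_{-1})-\vv(s_{-2})}{s_{-1}-s_{-2}}
-\frac{\vv(s_2)-\vv(s_1)}{s_2-s_1}.
$$
The right-hand side involves only $\vv$, evaluated at points within distance $M$ of $t$ with gaps at least $\delta$. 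Thus it is tight uniformly in $t$ provided the values $\vv(\xi), \vv'(\xi)$ are uniformly tight in $\xi$. This uniform-in-$\xi$ tightness for the unconstrained invelope is the main obstacle; it is established in \cite{MR1891741} via arguments analogous to those used for Lemma~\ref{lem:unq_knotsTight} here, exploiting invariance of Brownian increments under shifts together with the law of the iterated logarithm bounds for $\int_0^{\,\cdot} W(u)\,du$.

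Finally, for the first inequality in \eqref{eq:unq:ftight_t}, choose a point $s^-(t)$ from Corollary~\ref{cor:unq:MCequalsUC} with $\vva(s^-(t)) = \vv(s^-(t))$ and $|t - s^-(t)|$ tight uniformly in $t$. Then
$$
\vva(t) - \vv(t)
= \int_{s^-(t)}^{t}\bigl[(\vva)'(u)-\vv'(u)\bigr]\,du,
$$
so $|\vva(t)-\vv(t)|$ is bounded by $|t-s^-(t)|\cdot \sup_{u \in [s^-(t),t]}|(\vva)'(u)-\vv'(u)|$. The supremum is tight uniformly in $t$ because the derivative bound just obtained is uniform in its argument (apply the previous step at each $u$ with the same constant $M_\epsilon$), and $|t-s^-(t)| = O_p(1)$ uniformly in $t$, giving the desired conclusion.
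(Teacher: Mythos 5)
Your proposal follows essentially the same route as the paper: locate points $s_{-2}<s_{-1}<t<s_1<s_2$ near $t$ where $\vva$ and $\vv$ coincide via Corollary~\ref{cor:unq:MCequalsUC}, then squeeze $(\vva)'(t)$ between $\vv'$ evaluated at the outer pair using concavity, then reduce to a tightness result for the unconstrained invelope. A few remarks on where your writeup is imprecise relative to the paper's.

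First, your claim that ``$\vv(\xi), \vv'(\xi)$ are uniformly tight in $\xi$'' is not true as stated: both diverge as $|\xi|\to\infty$ (at rates $-12\xi^2$ and $-24\xi$). What Lemma~2.7 of \cite{MR1891741} actually gives is tightness of the \emph{deviations} $\vv(\xi)-h_0(\xi)$ and $\vv'(\xi)-h_0'(\xi)$ from the deterministic drift $h_0(t)=-12t^2$, uniformly in $\xi$. This matters for your argument: the quantity you end up needing to control is the \emph{difference} $\vv'(s_{-2})-\vv'(s_2)$, and only after subtracting and adding $h_0'$ (as in display~\eqref{eq:unq:stdMonotoneDecomp} of the paper) does this decompose into tight-deviation terms plus a deterministic term $|h_0'(s_{-2})-h_0'(s_2)| \le 24\cdot 4M$ that is bounded because $s_{\pm 2}$ lie within $2M$ of $t$. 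Your proof sketch waves at this mechanism but doesn't make the $h_0$-correction explicit, and as written the assertion is false.

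Second, the requirement that the gaps $s_{i+1}-s_i \ge \delta$ is superfluous: the chord slopes $\frac{\vv(s_2)-\vv(s_1)}{s_2-s_1}$ and $\frac{\vv(s_{-1})-\vv(s_{-2})}{s_{-1}-s_{-2}}$ are always sandwiched between $\vv'(s_2)$ and $\vv'(s_{-2})$ by concavity, so the bound $|(\vva)'(t)-\vv'(t)| \le \vv'(s_{-2})-\vv'(s_2)$ holds regardless of gap size. The paper never needs a gap lower bound.

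Third, for the function-level bound, the paper applies Lemma~\ref{fact:unq:convexFunDiffs} (the explicit concave-difference inequality) at $s_{-1}, s_1$ directly, whereas you integrate $(\vva)'-\vv'$ from $s^-(t)$ to $t$. Your approach works, but the phrase ``apply the previous step at each $u$'' needs to be tightened: the point is that the \emph{same} four points $s_{\pm 1}, s_{\pm 2}$ serve for every $u\in[s_{-1},s_1]$, so $\sup_{u\in[s_{-1},t]}|(\vva)'(u)-\vv'(u)| \le \vv'(s_{-2})-\vv'(s_2)$ is a single event, not an intersection of infinitely many. Once that's spelled out, your argument and the paper's argument both go through.
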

\begin{proof}
  This follows from Lemma \ref{lem:unq_knotsTight} and an argument similar to
  the finite sample tightness results.  %
  We can pick, by Corollary
  \ref{cor:unq:MCequalsUC}, $t-2M < s_{-2}<s_{-1} < t < s_1 < s_2 < t+ 2M$
  where $\vva(s_i) = \vv(s_i)$ for $i=-2,-1,1,2$, with probability
  $1-\epsilon$ for $M$ appropriately large. Then
  \begin{align*}
    (\vva)'(t) \le \frac{\vva(s_2)-\vva(s_1)}{s_2-s_1} = \frac{\vv(s_2)-\vv(s_1)}{s_2-s_1}
    \le \vv'(s_2),
  \end{align*}
  and, similarly,
  \begin{equation*}
    (\vva)'(t) \ge \vv'(s_{-2})
  \end{equation*}
  where $\vv'$ and $(\vva)'$ can be either the left or right
  derivatives. Thus
  \begin{equation*}
    (\vva)'(t)-\vv'(t) \le \vv'(s_{2})-\vv'(t) \le \vv'(s_2) - \vv'(s_{-2})
  \end{equation*}
  and
  \begin{equation*}
    (\vva)'(t)-\vv'(t) \ge \vv'(s_{-2})-\vv'(t) \ge \vv'(s_{-2}) - \vv'(s_2),
  \end{equation*}
  that is,
  \begin{equation}
    \label{eq:11}
    |(\vva)'(t)- \vv'(t)| \le \vv'(s_2) - \vv'(s_{-2}).
  \end{equation}
  Let $h_0(t) = -12t^2$.  With high probability, the right side of
  \eqref{eq:11} is bounded by
  \begin{equation}
    \label{eq:unq:stdMonotoneDecomp}
    \begin{split}
      \MoveEqLeft  \vv'(t+2M)-\vv'(t-2M) \\
      &  \le |\vv'(t+2M)- h_0'(t+2M)|
      + |h_0'(t+2M)- h_0'(t-2M)| \\
      & \quad + | h_0'(t-2M)- \vv'(t-2M)|,
    \end{split}
  \end{equation}
  which is less than $M+M+24\cdot 2M$ with probability $1-\epsilon$,
  independently of $t$, by (2.36) or (2.37) of Lemma 2.7 on page 1638 of
  \cite{MR1891741}. %
  Thus we have shown the second statement in \eqref{eq:unq:ftight_t}, which
  we will now use to show the first statement in \eqref{eq:unq:ftight_t}.

  We first apply Lemma~\ref{fact:unq:convexFunDiffs} to the difference
  $|\vva(t)-\vv(t)|$ by applying \eqref{eq:unq:convexFunDiffs} to both
  $\vva-\vv$ and to $\vv - \vva$, using the points $s_{-1}$ and $s_1$ as $a$
  and $b$, respectively. Then by \eqref{eq:unq:convexFunDiffs}, we
  can bound both of these differences if we can bound both
  \begin{equation}
    (\vva)'(s_1)- (\vva)'(s_{-1}) \le (\vva)'(t+M)- (\vva)'(t-M) \label{eq:unq:vvap_diff}
  \end{equation}
  and
  \begin{equation}
    \vv'(s_1)-\vv'(s_{-1})  \le \vv'(t+M)-\vv'(t-M) , \label{eq:unq:fp_diff}
  \end{equation}
  since all the other terms are $0$ by the definition of the $s_i$. Here we
  can take the derivatives to be either left or right derivatives.  As in
  \eqref{eq:unq:stdMonotoneDecomp}, we can bound $ (\vva)'(t+M) -
  (\vva)'(t-M)$ from above by
  \begin{equation*}
    \begin{split}
      \MoveEqLeft |(\vva)'(t+M)-\vv'(t+M)|
      +     |\vv'(t+M)-\vv'(t-M)| \\
      & +     |\vv'(t-M) - (\vva)'(t-M)|.
    \end{split}
  \end{equation*}
  The first and last terms are bounded by the second statement in
  \eqref{eq:unq:ftight_t}. The middle term is shown to be bounded by
  \eqref{eq:unq:stdMonotoneDecomp}. The middle term also bounds
  \eqref{eq:unq:fp_diff}. All of this is with probability $1-\epsilon$ and
  uniformly in $t$, so we are done.
\end{proof}

For the next lemma, let $h_0(t) = -12t^2$ be the ``true'' concave function.
\begin{lemma}
  \label{lem:unq:tight_t}
  Let the definitions and assumptions of
  Theorem~\ref{thm:MC-process-uniqueness-theorem} hold.  Then, for all
  $\epsilon > 0 $ there exists $M_\epsilon$, independent of $t$, such that
  \begin{align}
    P \left( |\vva(t)- h_0(t) | > M_\epsilon \right) < \epsilon \label{eq:unq:ftight_t_f0} \\
    P \left( |(\vva)'(t)- h_0'(t) | > M_\epsilon \right) <
    \epsilon. \label{eq:unq:fpR_tight_t_f0}
  \end{align}
  where the derivatives can be right or left derivatives.
\end{lemma}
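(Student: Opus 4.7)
The plan is to derive this lemma as a direct consequence of Lemma~\ref{lem:unq:MCdiffUCtight_t}, combined with the analogous tightness result already established for the unconstrained limit process $\vv = H''$ (where $H$ is the invelope from Theorem~\ref{thm:charzn_uniqueness_full_UC}). The key observation is that Lemma~\ref{lem:unq:MCdiffUCtight_t} gives us uniform-in-$t$ stochastic boundedness of the differences $|\vva(t) - \vv(t)|$ and $|(\vva)'(t) - \vv'(t)|$, so it suffices to compare $\vv$ (rather than $\vva$) with the ``true'' quadratic $h_0(t) = -12 t^2$.

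First, I would invoke the unconstrained analogue: Lemma~2.7 on page 1638 of \cite{MR1891741} (the same lemma already cited in the proof of Lemma~\ref{lem:unq:MCdiffUCtight_t}) provides constants $M_\epsilon'$, not depending on $t$, such that
\begin{equation*}
  P(|\vv(t) - h_0(t)| > M_\epsilon') < \epsilon/2
  \quad \mbox{ and } \quad
  P(|\vv'(t) - h_0'(t)| > M_\epsilon') < \epsilon/2,
\end{equation*}
where the latter holds for either choice of one-sided derivative. This is precisely the tightness of the unconstrained invelope-based process around the true function $h_0$ at any (deterministic or random) $t$.

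Second, by Lemma~\ref{lem:unq:MCdiffUCtight_t}, choose $M_\epsilon''$, also independent of $t$, so that
$P(|\vva(t) - \vv(t)| > M_\epsilon'') < \epsilon/2$ and $P(|(\vva)'(t) - \vv'(t)| > M_\epsilon'') < \epsilon/2$, uniformly in $t$. Setting $M_\epsilon = M_\epsilon' + M_\epsilon''$, the triangle inequalities
\begin{equation*}
  |\vva(t) - h_0(t)| \le |\vva(t) - \vv(t)| + |\vv(t) - h_0(t)|,
\end{equation*}
\begin{equation*}
  |(\vva)'(t) - h_0'(t)| \le |(\vva)'(t) - \vv'(t)| + |\vv'(t) - h_0'(t)|,
\end{equation*}
together with a union bound give both \eqref{eq:unq:ftight_t_f0} and \eqref{eq:unq:fpR_tight_t_f0}, with $M_\epsilon$ independent of $t$. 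The statement about one-sided derivatives carries over verbatim since both references allow either choice.

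There is no real obstacle here: the substantive work has already been done in Lemma~\ref{lem:unq:MCdiffUCtight_t} (which, via Corollary~\ref{cor:unq:MCequalsUC} and the knot-tightness Lemma~\ref{lem:unq_knotsTight}, transfers tightness between the constrained and unconstrained limit processes) and in the cited Lemma~2.7 of \cite{MR1891741}. The only minor bookkeeping point is to make sure the one-sided derivative versions of both input bounds are the ones being combined, and that the constants $M_\epsilon', M_\epsilon''$ truly do not depend on $t$, which is exactly what those lemmas assert.
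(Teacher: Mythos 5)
Your proposal is correct and follows exactly the same route as the paper, which states that the lemma is immediate from Lemma~2.7 of \cite{MR1891741} combined with Lemma~\ref{lem:unq:MCdiffUCtight_t}; you have simply spelled out the triangle inequality and union bound that the paper leaves implicit.
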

\begin{proof}
  This is immediate from Lemma 2.7, page 1638, of
  \cite{MR1891741} %
  and Lemma \ref{lem:unq:MCdiffUCtight_t}.
\end{proof}

We are now in a position to prove
Theorem~\ref{thm:MC-process-uniqueness-theorem}.
\begin{proof}[Proof of Theorem~\ref{thm:MC-process-uniqueness-theorem}]
  We define objective functions with variable bounds of integration,
  \begin{equation}
    \label{eq:defn_phiab}
    \phi_{a,b}(g)= \inv{2} \int_{a}^{b} g^2(t)dt - \int_{a}^{b} g(t)dX(t),
  \end{equation}
  where we will always take $a < 0 < b$.  For $i=1,2$, we will take
  $\HHLti{i}$ and $\HHRti{i}$ to satisfy the hypotheses stated in the
  theorem, and we need to show $\HHLti{1} \equiv \HHLti{2}$ and
  $\HHRti{1} \equiv \HHRti{2}$ almost surely.  We will denote
  $\FFLti{i}=-\HHLti{i}'$ and $\FFRti{i}=\HHRti{i}'$ and
  \begin{equation}
    \label{eq:unq:defn:fi}
    \ffi{i}=\HHLti{i}''=\HHRti{i}''.
  \end{equation}
  We also will use the notation $d\FFi{i}(t) = \ffi{i}(t)dt$.  Now, using
  that $\ffi{1}^2-\ffi{2}^2 = (\ffi{1}-\ffi{2})^2 +
  2(\ffi{1}-\ffi{2})\ffi{2}$, we see that
  \begin{align*}
    \phi_{a,b}(\ffi{1})-\phi_{a,b}(\ffi{2})
    = &  \inv{2}\int_{a}^{b} \left( \ffi{1}-\ffi{2} \right)^2 d\lambda
    + \int_{a}^{b} \left( \ffi{1}(t)-\ffi{2}(t)\right) d(\FFi{2}(t)-X(t))
  \end{align*}
  where $\lambda$ is Lebesgue measure.
  Now, we specify that $\ani{i}$ and $\bni{i}$ \label{unq:ani_bni} are knots
  for $\ffi{i}$, and, using Lemma~\ref{lem:vva_infiniteknots}, we take
  $\ani{2}<\ani{1}<-n<0<n<\bni{1}<\bni{2}$. Then
  \begin{align*}
    \phi_{\ani{2},\bni{2}} (\varphi_1)-  \phi_{\ani{2},\bni{2}} (\varphi_2)
    \ge \inv{2}\int_{\ani{2}}^{\bni{2}} \left( \ffi{1}-\ffi{2} \right)^2 d\lambda
    \ge \inv{2}\int_{-n}^{n} \left( \ffi{1} -\ffi{2} \right)^2 d\lambda
  \end{align*}
  by Propositions \ref{pro:charzn0_full} and
  \ref{pro:charznEqualityRewrite_full}, and, similarly,
  \begin{equation*}
    \phi_{\ani{1},\bni{1}} (\varphi_2)- \phi_{\ani{1},\bni{1}} (\varphi_1) \ge
    \inv{2}\int_{-n}^{n} \left( \ffi{2}(t)-\ffi{1}(t) \right)^2 dt.
  \end{equation*}
  Now, we see directly from \eqref{eq:defn_phiab} that
  $  \phi_{\ani{2},\bni{2}}(\ffi{1}) - \phi_{\ani{1},\bni{1}} (\varphi_1)$ equals
  \begin{align*}
    \MoveEqLeft \inv{2}\int_{\ani{2}}^{\bni{2}} \ffi{1}^2(t)dt -
    \inv{2}\int_{\ani{1}}^{\bni{1}} \ffi{1}^2(t)dt -
    \left( \int_{\ani{2}}^{\bni{2}} \ffi{1}(t)dX(t)-\int_{\ani{1}}^{\bni{1}} \ffi{1}(t)dX(t)  \right) \\
    = & \inv{2}\int_{A_n} \ffi{1}^2(t)dt - \int_{A_n} \ffi{1}(t)dX(t),
  \end{align*}
  where $A_n = [\ani{2},\ani{1}] \cup [\bni{1},\bni{2}]$.
  Thus we have
  \begin{align*}
    \int_{-n}^{n} \left(\ffi{1}-\ffi{2}\right)^2 d\lambda
    \le & \; \phi_{\ani{2},\bni{2}}(\ffi{1}) - \phi_{\ani{1},\bni{1}} (\ffi{1}) - (\phi_{\ani{2},\bni{2}}(\ffi{2})
    - \phi_{\ani{1},\bni{1}}(\ffi{2})) \\
    = & \inv{2}\int_{A_n} \left( \ffi{1}^2-\ffi{2}^2 \right)d\lambda
    - \int_{A_n} \left( \ffi{1}(t)-\ffi{2}(t) \right) dX(t).
  \end{align*}
  Recalling $ \ffol(t) = 12t^2$,
  we see the previous display equals
  \begin{align*}
    \int_{A_n} \inv{2}\left( \left(\ffi{1}-\ffol\right)^2 - \left(\ffi{2}-\ffol\right)^2 \right) d\lambda
    - \left( \ffi{1}-\ffi{2} \right)dW.
  \end{align*}
  Thus we can conclude that
  \begin{align}
    0 & \le \lim_n \int_{-n}^{n} \left(\ffi{1}-\ffi{2}\right)^2 d\lambda
    \nonumber \\
    & \le \liminf_n  \left( \inv{2}\int_{A_n} \left( \left(\ffi{1}-\ffol\right)^2
        - \left(\ffi{2}-\ffol\right)^2 \right) d\lambda
      - \int_{A_n} \left( \ffi{1}-\ffi{2} \right)dW
    \right).  \label{eq:unq:contradictionIneq} %
  \end{align}
  The proof will now proceed as follows.  We first will show that the right
  hand side of \eqref{eq:unq:contradictionIneq} is finite.
  For a function $g \colon \RR \to \RR$, we let
  $ \Vert g\Vert_a^b = \sup_{t \in [a,b]} |g(t)|$ and $ \Vert g\Vert_a^\infty =
  \sup_{t \in [a,\infty)} |g(t)|.$ Using that $\int_{-\infty}^{\infty}
  \left(\ffi{1}-\ffi{2}\right)^2 d\lambda$ is finite we will then conclude that
  $||\ffi{1}(t)-\ffi{2}(t)||_n^\infty \to 0$ as $n \to \infty$.  We then will
  revisit our earlier argument which showed the right hand side of
  \eqref{eq:unq:contradictionIneq} was finite and use this new fact to show
  that \eqref{eq:unq:contradictionIneq} is, in fact, $0$. This will finish the
  proof.

  Thus, our next step is to show that $\int_{-\infty}^\infty \left(
    \ffi{1}(t)-\ffi{2}(t) \right)^2 dt < \infty$.  Note that we only need to
  control the $\liminf_n$ of the right hand side of
  \eqref{eq:unq:contradictionIneq} since $\int_{-n}^n (\ffi{1}-\ffi{2})^2 \,
  d\lambda $ is non-negative and non-decreasing in $n$.  We will first show
  that $\int_{\bni{1}}^{\bni{2}} (\ffi{1}-\ffi{2})\, dW < \infty$.  An
  identical argument shows $\int_{\ani{1}}^{\ani{2}} (\ffi{1}-\ffi{2})dW <
  \infty$.  By integration by parts,
  \begin{equation}
    \label{eq:unq:term2}
    \begin{split}
      \int_{\bni{1}}^{\bni{2}} \left(\ffi{1}-\ffi{2}\right)(u)dW(u)
      & = \int_{\bni{1}}^{\bni{2}} \left(\ffi{1}-\ffi{2}\right)(u) d(W(u)-W(\bni{1})) \\
      & = \left(W(\bni{2})-W(\bni{1}\right)
      \left(\ffi{1}(\bni{2})-\ffi{2}(\bni{2}) \right) \\
      & \quad - \int_{\bni{1}}^{\bni{2}}
      \left( W(u)-W(\bni{1}) \right)
      \left( \ffi{1}'(u)-\ffi{2}'(u) \right) du \\
    \end{split}
  \end{equation}
  where we can take $\ffi{i}'$ to be the right-derivative, but this choice is
  inconsequential because of the almost sure continuity of $W$.  Thus, by
  \eqref{eq:unq:ffi_ff0_sup} and \eqref{eq:unq:ffip_ff0p_sup}, for all $n$,
  with probability $1-\epsilon$, we can conclude that $
  \big| \int_{\bni{1}}^{\bni{2}} \left(\ffi{1}-\ffi{2}\right)(u)dW(u) \big|$
  is bounded above by
  \begin{equation*}
     K_\epsilon \left( \left|W(\bni{2})-W(\bni{1})\right|
      + \left| \int_{\bni{1}}^{\bni{2}} (W(u)-W(\bni{1}))du  \right| \right).
  \end{equation*}
  Lemma~\ref{lem:unq:tight_integrals} shows for $i=1,2$ that
  $\int_{\bni{1}}^{\bni{2}} (\ffi{i}-\ffol)^2 d\lambda < K_{\epsilon,2}$ with
  probability $1-\epsilon$.  Thus since, by \eqref{eq:unq:bdiff_Op1}, $\left(
    \left|W(\bni{2})-W(\bni{1}) \right| + \left| \int_{\bni{1}}^{\bni{2}}
      (W(u)-W(\bni{1}))du \right| \right)$ is $O_p(1)$, and since this argument
  is perfectly symmetrical and applies to the interval $[\ani{1},\ani{2}]$, we
  have now shown that the right hand side of \eqref{eq:unq:contradictionIneq}
  is $O_p(1)$ and thus finite almost surely, as desired.

  Now that we have shown that $\int_{-\infty}^{\infty} (\ffi{1}-\ffi{2})^2
  d\lambda < \infty$ almost surely, we can conclude that
  \begin{equation}
    \label{eq:unq:ffi_ff0_supToZero}
    \Vert \ffi{1}-\ffi{2} \Vert_{n}^{\infty}  \to 0
  \end{equation}
  almost surely as $n \to \infty$, and now using
  \eqref{eq:unq:ffi_ff0_supToZero} with arguments similar to those used above,
  we will show $\int_{-\infty}^{\infty} \left( \ffi{1}-\ffi{2} \right)^2
  d\lambda = 0 $ almost surely.  By \eqref{eq:unq:ffi_ff0_supToZero}, Lemma
  \ref{lem:unq:tight_integrals} below allows us to conclude that almost surely
  $\int_{\bni{1}}^{\bni{2}} | \ffi{1}'-\ffol' | d\lambda \to 0$.  Thus we can
  reexamine \eqref{eq:unq:term2} and
  see that the right side is bounded above by
  \begin{align*}
    & \left|W(\bni{2})-W(\bni{1})\right| \left|\ffi{1}(\bni{2})-\ffi{2}(\bni{2}) \right|
    + ||W(\cdot)-W(\bni{1})||_{\bni{1}}^{\bni{2}} \int_{\bni{1}}^{\bni{2}} |\ffi{1}'(u)-\ffi{2}'(u)|du \\
    & \le \epsilon \left( \left|W(\bni{2})-W(\bni{1})\right|
      + ||W(\cdot)-W(\bni{1})||_{\bni{1}}^{\bni{2}} \right),
  \end{align*}
  where we may choose $n$ large enough to make the inequality occur with
  probability $1-\epsilon$ for any positive $\epsilon$.  Thus, since $\left(
    \left|W(\bni{2})-W(\bni{1})\right| +
    ||W(\cdot)-W(\bni{1})||_{\bni{1}}^{\bni{2}} \right) = O_p(1)$, we have
  shown that we may choose $n$ large enough that with probability $1-\epsilon$
  \begin{equation}
    \label{eq:unq:term2_small}
    \left| \int_{\bni{1}}^{\bni{2}} \left(\ffi{1}(u)-\ffi{2}(u)\right) \,
      dW(u) \right| \le \epsilon.
  \end{equation}
  Next we show that the other term in \eqref{eq:unq:contradictionIneq},
  $\int_{A_n} \left( \left(\ffi{1}-\ffol\right)^2 -
    \left(\ffi{2}-\ffol\right)^2 \right) \, d\lambda / 2$, is small.
  By Lemma~\ref{lem:unq:tight_integrals},
  for any $\epsilon > 0 $ we may pick an $M_\epsilon$ such
  that both $|\int_{\bni{1}}^{\bni{2}} (\ffi{1}-\ffol) \, d\lambda | $ and
  $\bni{2}-\bni{1}$ are bounded by $M_\epsilon$ with probability
  $1-\epsilon$. Thus, defining $\epsilon_2 = \epsilon / M_\epsilon$ we take $n$
  large enough such that with probability $1-\epsilon$ we have
  $||\ffi{1}-\ffi{2}||_{n}^\infty < \epsilon_2$. Then let
  $\delta(t)=\ffi{1}(t)-\ffi{2}(t)$ and conclude that
  \begin{align*}
    \int_{\bni{1}}^{\bni{2}} (\ffi{1}-\ffol)^2 d\lambda
    & = \int_{\bni{1}}^{\bni{2}} (\ffi{2}-\ffol + \delta)^2 d\lambda \\
    & \le \int_{\bni{1}}^{\bni{2}} (\ffi{2}-\ffol)^2 d\lambda
    + 2 \epsilon_2 \int_{\bni{1}}^{\bni{2}} |\ffi{2}-\ffol| d\lambda
    + \epsilon_2^2 (\bni{2}-\bni{1}),
  \end{align*}
  and that the above display is bounded above by
  \begin{equation*}
    \int_{\bni{1}}^{\bni{2}} (\ffi{2}-\ffol)^2 \, d\lambda
    + \epsilon +
    \left(\frac{\epsilon}{M_\epsilon}\right)^2     M_\epsilon
    \le \int_{\bni{1}}^{\bni{2}} (\ffi{2}-\ffol)^2 \, d\lambda
    + 2 \epsilon ,
  \end{equation*}
  with probability $1-2\epsilon$ and $n$ large enough. Similarly, $
  \int_{\bni{1}}^{\bni{2}} (\ffi{2}-\ffol)^2 d\lambda \le
  \int_{\bni{1}}^{\bni{2}} (\ffi{1}-\ffol)^2 d\lambda + 2\epsilon$,
  and thus
  \begin{equation}
    \label{eq:unq:term1_small}
    \left| \inv{2}\int_{A_n} \left( \left(\ffi{1}-\ffol\right)^2 -
        \left(\ffi{2}-\ffol \right)^2 \right) d\lambda \right | \le \epsilon
  \end{equation}
  with probability $1-2\epsilon$.  Thus we have shown that with probability
  approaching $1$ both terms in \eqref{eq:unq:contradictionIneq} are bounded
  by $\epsilon$ as $n $ goes to infinity. Thus since $\int_{-n}^{n}
  (\ffi{1}-\ffi{2})^2 d\lambda$ is non decreasing in $n$, $\int_{-n}^{n}
  (\ffi{1}-\ffi{2})^2 d\lambda < \epsilon$ with probability $1-\epsilon$ and
  thus it must be $0$ almost surely.
\end{proof}

The following lemma translates Lemma~\ref{lem:unq:tight_t} into a more direct
tightness result.
\begin{lemma}
  \label{lem:unq:tight_integrals}
  Let the definitions and assumptions of
  Theorem~\ref{thm:MC-process-uniqueness-theorem} hold and let
  $h_0(t)=-12t^2$.  Let $\ffi{i}$, $i=1,2$, be as in \eqref{eq:unq:defn:fi}
  and $\ani{i}$ and $\bni{i}$ as defined on page~\pageref{unq:ani_bni}.  We
  then have
  \begin{equation}
    \bni{2}-\bni{1} = O_p(1). \label{eq:unq:bdiff_Op1}
  \end{equation}
  Furthermore, for $i=1,2$ and any $\epsilon > 0$ and $k>0$, there exist
  $K_\epsilon,K_{\epsilon,k} >0$ such that
  with probability greater than $1-\epsilon$ we have
  \begin{align}
    ||\ffi{i}-\ffol||_{\bni{1}}^{\bni{2}} < K_\epsilon  \label{eq:unq:ffi_ff0_sup} \\
    ||\ffi{i}'-\ffol'||_{\bni{1}}^{\bni{2}} < K_\epsilon,  \label{eq:unq:ffip_ff0p_sup}
  \end{align}
  (in which we take $\ffi{i}'$ to be either the right of the left derivative)
  and thus that
  \begin{equation}
    \label{eq:unq:ffi_ff0_int}
    \int_{\bni{1}}^{\bni{2}} |\ffi{i}-\ffol|^k d\lambda < K_{\epsilon,k},
  \end{equation}
  where $K_\epsilon $ and $K_{\epsilon,k} $ do not depend on $n$.
  Further, if almost surely
  $\Vert \ffi{1} - \ffi{2} \Vert_n^\infty \to 0$ as $n \to \infty$ then we can
  conclude that almost surely
  \begin{equation}
    \label{eq:unq:ffip_ff0p_int}
    \int_{\bni{1}}^{\bni{2}} | (\ffi{i})'- \ffol' | \, d\lambda \to 0
  \end{equation}
  as $n \to \infty$, for $i=1,2$.
  The statements also hold if we replace
  $\bni{1}$ by $\ani{2}$ and $\bni{2}$ by $\ani{1}$.
\end{lemma}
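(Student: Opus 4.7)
Plan. The lemma collects five bounds concerning the random points $\bni{1} < \bni{2}$ — chosen as knots of $\ffi{1}$ and $\ffi{2}$ respectively, just to the right of $n$ — together with the proximity of each $\ffi{i}$ to the deterministic function $\ffol(t)=-12t^2$ on $[\bni{1},\bni{2}]$. My plan is to address them in the order \eqref{eq:unq:bdiff_Op1}, \eqref{eq:unq:ffip_ff0p_sup}, \eqref{eq:unq:ffi_ff0_sup}, \eqref{eq:unq:ffi_ff0_int}, \eqref{eq:unq:ffip_ff0p_int}.

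For \eqref{eq:unq:bdiff_Op1} I take $\bni{1}$ to be the smallest knot of $\ffi{1}$ strictly greater than $n$ and $\bni{2}$ the smallest knot of $\ffi{2}$ strictly greater than $\bni{1}$. Two applications of \eqref{eq:unq:knotsTight_R_plus} in Lemma~\ref{lem:unq_knotsTight} — first to $\ffi{1}$ at reference point $n$, then to $\ffi{2}$ at the random reference point $\bni{1}$ — give $\bni{1}-n \le M_\epsilon$ and $\bni{2}-\bni{1}\le M_\epsilon$ on an event of probability at least $1-2\epsilon$. The independence of $M_\epsilon$ from the reference point asserted in that lemma is what allows the second application to take a random base point.

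For the uniform derivative bound \eqref{eq:unq:ffip_ff0p_sup}, both $\ffi{i}'$ (by concavity of $\ffi{i}$) and $\ffol'(t)=-24t$ are monotonically nonincreasing, so on $[\bni{1},\bni{2}]$
\[
\ffi{i}'(\bni{2}-)-\ffol'(\bni{1}) \;\le\; \ffi{i}'(t)-\ffol'(t) \;\le\; \ffi{i}'(\bni{1}+)-\ffol'(\bni{2}).
\]
Lemma~\ref{lem:unq:tight_t} controls $|\ffi{i}'(\bni{j}\pm)-\ffol'(\bni{j})|$ pointwise by $M_\epsilon$, and $|\ffol'(\bni{1})-\ffol'(\bni{2})|=24(\bni{2}-\bni{1})=O_p(1)$ by \eqref{eq:unq:bdiff_Op1}. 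For \eqref{eq:unq:ffi_ff0_sup} I write
\[
\ffi{i}(t)-\ffol(t) = \bigl(\ffi{i}(\bni{1})-\ffol(\bni{1})\bigr) + \int_{\bni{1}}^{t}\bigl(\ffi{i}'(s)-\ffol'(s)\bigr)\, ds,
\]
bound the boundary value by Lemma~\ref{lem:unq:tight_t}, and combine \eqref{eq:unq:ffip_ff0p_sup} with \eqref{eq:unq:bdiff_Op1} to control the integral. Bound \eqref{eq:unq:ffi_ff0_int} is then the trivial consequence $\int_{\bni{1}}^{\bni{2}}|\ffi{i}-\ffol|^k\, d\lambda \le (\bni{2}-\bni{1})\,\bigl(\|\ffi{i}-\ffol\|_{\bni{1}}^{\bni{2}}\bigr)^{k}$.

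The hardest step is \eqref{eq:unq:ffip_ff0p_int}, which asks for vanishing — not merely boundedness — of $\int_{\bni{1}}^{\bni{2}}|\ffi{i}'-\ffol'|\, d\lambda$ under the sup-norm hypothesis on $\ffi{1}-\ffi{2}$. The signed integral $\int_{\bni{1}}^{\bni{2}}(\ffi{1}'-\ffi{2}')\, d\lambda$ telescopes via the fundamental theorem of calculus to $(\ffi{1}-\ffi{2})(\bni{2})-(\ffi{1}-\ffi{2})(\bni{1}) \to 0$, and the plan is to upgrade this to $L^1$ closeness of $\ffi{1}'$ and $\ffi{2}'$ by writing each monotone derivative as $\ffi{i}'(t)=\ffi{i}'(\bni{1}+)-\mu_i((\bni{1},t])$ for positive Radon measures $\mu_i$ and invoking a Hahn decomposition of the signed measure $\mu_1-\mu_2$. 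The main obstacle — and the place where I expect the argument to require machinery specific to the invelope structure rather than general concavity — is transferring $L^1$ closeness between $\ffi{1}'$ and $\ffi{2}'$ into $L^1$ closeness of each $\ffi{i}'$ to the deterministic envelope $\ffol'$; this transfer should exploit that $\bni{1}$ and $\bni{2}$ are knots of $\ffi{1}$ and $\ffi{2}$ respectively, pinning the boundary values of both derivatives against $\ffol'$ through Lemma~\ref{lem:unq:tight_t}.
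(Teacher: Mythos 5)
For \eqref{eq:unq:bdiff_Op1} and \eqref{eq:unq:ffi_ff0_int} your arguments match the paper and are correct. For the two sup-norm bounds \eqref{eq:unq:ffip_ff0p_sup} and \eqref{eq:unq:ffi_ff0_sup} the monotone-squeeze and FTC ideas are sound in spirit, but you invoke Lemma~\ref{lem:unq:tight_t} at the \emph{random} points $\bni{1},\bni{2}$, whereas that lemma gives tightness of $|(\vva-\ffol)(t)|$ and $|(\vva)'(t)-\ffol'(t)|$ only at each \emph{fixed} $t$ (with constant uniform in $t$, which is not the same thing). The repair, and what the paper actually does, is to first use Lemma~\ref{lem:unq_knotsTight} to confine $[\bni{1},\bni{2}]\subset[n,n+M_\epsilon]$ on a high-probability event and then run the squeeze at the deterministic endpoints $n$, $n+M_\epsilon$. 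Note also that for $\ffi{i}$ itself (as opposed to $\ffi{i}'$) a naive monotone squeeze on $[n,n+M_\epsilon]$ fails because the deterministic variation $|\ffol(n+M_\epsilon)-\ffol(n)|=12(2nM_\epsilon+M_\epsilon^2)$ is unbounded in $n$; this is precisely why the paper reaches for the concave-difference bound of Lemma~\ref{fact:unq:convexFunDiffs}, whose correction term involves only a difference of derivatives $\ffol'(n+M_\epsilon)-\ffol'(n)=-24M_\epsilon$, which stays bounded. Your FTC route -- expanding from the fixed endpoint $n$ rather than $\bni{1}$ and using the uniform derivative bound on $[n,n+M_\epsilon]$ -- accomplishes the same thing a little more directly, and is a legitimate alternative to Lemma~\ref{fact:unq:convexFunDiffs}.

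On \eqref{eq:unq:ffip_ff0p_int} you correctly smelled a problem, but the resolution is much simpler than what you sketch. The displayed conclusion $\int_{\bni{1}}^{\bni{2}}|\ffi{i}'-\ffol'|\,d\lambda\to 0$ is a typographical slip: it is not what the hypothesis $\Vert\ffi{1}-\ffi{2}\Vert_n^\infty\to 0$ buys, it is not what the paper's own proof establishes, and it is not what is used in the body of the proof of Theorem~\ref{thm:MC-process-uniqueness-theorem} (where the bound is applied to $\int_{\bni{1}}^{\bni{2}}|\ffi{1}'-\ffi{2}'|\,du$). So the statement to prove is $\int_{\bni{1}}^{\bni{2}}|\ffi{1}'-\ffi{2}'|\,d\lambda\to 0$, exactly as you suspected. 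Your Hahn-decomposition plan is far heavier than necessary, and moreover decomposing the \emph{signed measure} $\mu_1-\mu_2$ is the wrong object: you want to control the positive and negative parts of the \emph{function} $\ffi{1}'-\ffi{2}'$, and the Hahn sets for the measure can be arbitrary Borel sets, destroying the telescoping. The elementary fact you are missing is structural and comes from how the knots are chosen, not from any invelope machinery: choosing $\bni{2}$ to be the \emph{first} knot of $\ffi{2}$ strictly greater than $\bni{1}$ makes $\ffi{2}$ affine, hence $\ffi{2}'$ \emph{constant}, on $(\bni{1},\bni{2})$. Since $\ffi{1}'$ is nonincreasing, $\ffi{1}'-\ffi{2}'$ is nonincreasing on that interval and so changes sign at most once. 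Split the integral at the unique crossing point; on each piece the sign is fixed, so $\int|\ffi{1}'-\ffi{2}'|\,d\lambda=\bigl|\int(\ffi{1}'-\ffi{2}')\,d\lambda\bigr|=\bigl|(\ffi{1}-\ffi{2})(b)-(\ffi{1}-\ffi{2})(a)\bigr|\le 2\Vert\ffi{1}-\ffi{2}\Vert_n^\infty\to 0$, and summing the two pieces gives the claim.
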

\begin{proof}
  \eqref{eq:unq:bdiff_Op1} follows
  immediately from Lemma \ref{lem:unq_knotsTight}.

  Next we will show \eqref{eq:unq:ffi_ff0_sup} and \eqref{eq:unq:ffip_ff0p_sup}.  Let
  $g_1$ and $g_0$ be monotone functions.
  Then for any $t \in [a,b]$, we have that
  \begin{equation*}
    g_1(t)-g_0(t)  \le g_1(b)-g_0(a)  = g_1(b)-g_0(b)+g_0(b)-g_0(a)
  \end{equation*}
  and similarly $g_0(t)-g_1(t) \le g_0(b)-g_0(a) + g_1(a)-g_0(a)$. Thus
  \begin{align*}
    |(g_1-g_0)(t)| & \le |(g_1-g_0)(b)| + |(g_1-g_0)(a)| + g_0(b)-g_0(a).
  \end{align*}
  By monotonicity and Lemma \ref{lem:unq:tight_t}, we can say
  \begin{equation*}
    || \ffi{i}'-\ffol' ||_{\bni{1}}^{\bni{2}} < 2M_\epsilon + \ffol'(n+M_\epsilon)-\ffol'(n)
    = 2M_\epsilon + 24M_\epsilon,
  \end{equation*}
  where $\ffi{i}'$ refers to either the left or the right derivative.
  This is independent of $n$ thanks to the linearity of $\ffol'$.
  Thus we have shown \eqref{eq:unq:ffip_ff0p_sup}.

  Now we establish \eqref{eq:unq:ffi_ff0_sup}.  Fix $i \in \lb 1,2 \rb$.  We
  will apply Lemma~\ref{fact:unq:convexFunDiffs} twice, with $\ffi{i}$ as
  $g_1$ and $\ffol$ as $g_0$, and then with the reverse assignments.  We let
  $[a,b] = [n, n+M]$.  Regardless of the choice of which is $g_1$ and which
  is $g_0$, we can bound the first two terms in
  \eqref{eq:unq:convexFunDiffs}, the weighted differences $\lambda
  (g_1(n+M)-g_0(n+M)) + (1-\lambda) (g_1(n)-g_0(n)) $, by $2M_\epsilon$ with
  probability $1-\epsilon$, independently of $n$, by
  Lemma~\ref{lem:unq:tight_t}.  If $\ffol$ is $g_0$ then for the third term
  of \eqref{eq:unq:convexFunDiffs} we have to bound $(h_0'(n+) - h_0'(n+M-)) =
  24M$ which is independent of $n$.  If $\ffi{i}$ is $g_0$, then for the
  third term of \eqref{eq:unq:convexFunDiffs} we have that $ |
  \ffi{i}'(n+M-)-\ffi{i}'(n+)) |$ is bounded above by
  \begin{align*}
    | \ffi{i}'(n+M-) - \ffol'(n+M)|
    + |\ffol'(n)-\ffi{i}'(n+)| + \ffol'(n+M)-\ffol'(n)
  \end{align*}
  which we can again bound independently of $n$ by the linearity of
  $\ffol'$ and Lemma \ref{lem:unq:tight_t} with probability
  $1-\epsilon$.  Since $[\bni{1},\bni{2}] \subset [n,n+M]$ with
  probability $1-\epsilon$ for appropriately large $M$, and since
  $(n+M-t)(t-n)/M \le (M/2)^2 / M$ which is independent of $n$ and
  $t$, the bound is independent of $n$ or $t$.
  Thus we have shown \eqref{eq:unq:ffi_ff0_sup}.  Then
  \eqref{eq:unq:ffi_ff0_int} follows immediately from
  \eqref{eq:unq:ffi_ff0_sup} and \eqref{eq:unq:bdiff_Op1}, since we can bound
  $ \int_{\bni{1}}^{\bni{2}} |\ffi{i} - \ffol|^k d \lambda \le
  \int_{\bni{1}}^{\bni{2}} K_\epsilon^k d\lambda \le K_\epsilon^k \cdot
  K_\epsilon,$ with probability $1-\epsilon$.

  Finally, we show that if for a random outcome $\omega$,
  $||\ffi{1}^\omega-\ffi{2}^\omega ||_n^\infty \to 0$ as $n \to \infty$ then
  \eqref{eq:unq:ffip_ff0p_int} follows.  First, note for any $a,b$ that if
  $\epsilon < \int_{a}^{b} ((\ffi{i}^\omega)'-\ffol')d\lambda =
  (\ffi{i}^\omega-\ffol)(b)-(\ffi{i}^\omega-\ffol)(a)$, and if
  $(\ffi{i}^\omega-\ffol)(b) > \epsilon/2$ then $ (\ffi{i}^\omega-\ffol)(a) <
  - \epsilon/2$.  Similarly, if $-\epsilon > \int_{a}^{b} (
  (\ffi{i}^\omega)'-\ffol')d\lambda $ we can conclude that
  $(\ffi{i}^\omega-\ffol)$ at $a$ or at $b$ is larger than $\epsilon/2$ in
  absolute value.  Since we can take $n$ large enough that $| \ffi{i}^\omega
  -\ffol|$ is less than $\epsilon/2$ at any $a,b > n$, by contradiction we
  have that $| \int_{a}^{b} ( (\ffi{i}^\omega)'-\ffol')d\lambda | < \epsilon$
  for such $a$ and $b$.  Now, since $\left\{ t \in [\bni{1},\bni{2}) :
    (\ffi{1}^\omega)'(t)> (\ffi{2}^\omega)'(t) \right\}$ and $\left\{ t \in
    [\bni{1},\bni{2}) : (\ffi{1}^\omega)'(t) \le (\ffi{2}^\omega)'(t)
  \right\}$ are both intervals by monotonicity of $( \ffi{1}^\omega) '$ and
  linearity of $(\ffi{2}^\omega)'$ on $[\bni{1},\bni{2}]$, we can conclude
  that $\int_{a}^{b} |(\ffi{i}^\omega)'-\ffol'|d\lambda < \epsilon$ as desired.
\end{proof}

\subsubsection{Proof for symmetric limit process}
\label{ssec:proofs_symm-lim-process}

\begin{proof}[Proof of Theorem~\ref{thm:symm-process-uniqueness-theorem}]
  The proof follows as in the proof of Theorem~\ref{thm:MC-process-uniqueness-theorem} in the previous section, where we replace $H_R$ by $H^+$, $\tau_+^0$ by $\tau_+^+$, $\tau_R^0$ by $\tau_R^+$, and we take $H_L$ to be $0$, and also replace $\tau_L$ and $\tau_+^0$ by $0$.

In particular, we can see that analogs of the characterizing equalities and inequalities hold, and that
an analog of Lemma~\ref{lem:process-uniqueness-theorem-constant-interval} holds.
The analog of Lemma~\ref{lem:process-uniqueness-theorem-constant-interval} can  immediately be seen to hold  since by definition $(\psia)^{(2)}(t)=0$ for $t \in (0, \tau_+^+)$.
Proofs similar to the proofs of Proposition~\ref{pro:charznEqualityRewrite_full} and
Proposition~\ref{pro:charzn0_full} show the following characterizing proposition holds.
\begin{proposition}
  \label{pro:symmetric-proc:charzn}
  If $\Delta \colon [0,\infty) \to \RR$ is concave with maximum at $0$ and if $0 \le b \in \widehat{S}^+$, then
  \begin{equation*}
    \int_0^b \Delta(t) \lp \psia(t)dt - dX(t) \rp \ge 0.
  \end{equation*}
  And if $\Delta = \psia$ then the inequality is an equality.
\end{proposition}
In the proof of this proposition we define $F_R^+(u) = \int_{\tau_R^+}^u \psia(v)dv$ and $X_R^+(u) = \int_{\tau_R^+}^u dX(v)$ and then use the fact that by \eqref{eq:12}, $(F_R^+-X_R^+)(0) = 0$.
The proof of Theorem~\ref{thm:symm-process-uniqueness-theorem} then  proceeds analogously to the proof of Theorem~\ref{thm:MC-process-uniqueness-theorem}.  This completes the proof of Theorem~\ref{thm:symm-process-uniqueness-theorem}.
\end{proof}

\subsubsection{Proofs for pointwise limit theory}
\label{subsec:proofs-pointwise-limit-theory}

Theorem~\ref{thm:BRW2009-UMLE-limit} %
A %
about the unconstrained estimator is Theorem~2.1 of \cite{BRW2007LCasymp}.
Part B of the theorem
is then proved in an identical fashion as that theorem, because for $n$ large enough, in an $n^{-1/5}$ neighborhood of $x_0 \ne m$, the constrained and unconstrained estimators satisfy the same characterization.   %
Part C then follows from Part B. %
Thus we focus first on proving Theorem~\ref{thm:MC-MLE-limit} \ref{thm:MC-MLE-limit:item-A},
where $x_0 = m$, which follows from Theorem~\ref{thm:MC-MLE-limit-A-process-version}.

The statements in
\eqref{thm:mode-ff-asymptotics}
about the limit distribution of $\ffna$ and $(\ffna)'$ now follow from the delta method (Taylor expansion).  We devote the entire remainder of this subsection to proving Theorem~\ref{thm:MC-MLE-limit-A-process-version}.  All the lemmas contained in the proof use the same notation, and have the same hypotheses as the theorem.
An outline of the structure of the proof can be found in Section~\ref{sec:Proofs-outlines}.

\begin{proof}[Proof of Theorem~\ref{thm:MC-MLE-limit-A-process-version}]
  Let $b \in \RR$ denote our ``local'' parameter and let
  \begin{equation}
    \label{eq:13}
    \tnb \equiv m + b n^{-1/5}
  \end{equation}
  be the ``global'' parameter.   We also let $\snL$ be any  knot (sequence)  of $\vvna$ strictly less than $m$ satisfying $n^{1/5}(\snL -m) = O_p(1)$ and let $\snR$ be any knot (sequence) of $\vvna$ strictly larger than $m$ satisfying $n^{1/5}(\snR-m)=O_p(1)$.  Recall $\lambda$ is Lebesgue measure, and define
  \begin{align*}
    \YYnffo(b) & \equiv \xn^{4/5} \IntUCo
    \left( \IntUCi (d\Fn   -  \ffo(m)d\lambda) \right) dv, \\
    \HHnffo(b) & \equiv \xn^{4/5} \IntUCo \left( \IntUCi (\ffn - \ffo(m))d\lambda  \right) dv
    + \An b + \Bn ,
  \end{align*}
and
  \begin{align*}
    \YYnffaL(b) & \equiv\xn^{4/5} \IntLo \left(\IntLi (d\Fn  - \ffo(m)d\lambda) \right) dv, \\
    \YYnffaR(b) & \equiv \xn^{4/5} \IntRo \left(\IntRi (d\Fn -  \ffo(m) d\lambda) \right) dv, \\
    \HHnffaL (b) &\equiv \xn^{4/5} \IntLo \left( \IntLi (\ffna - \ffo(m))d\lambda \right) dv
    + \AnL n^{1/5} (\snL - \tnb) , \\
    \HHnffaR(b) &\equiv \xn^{4/5} \IntRo \left( \IntRi (\ffna -
      \ffo(m))d\lambda \right) dv
    + \AnR n^{1/5}(\tnb - \snR),
  \end{align*}
where
\begin{align*}
  A_n = \xn^{3/5} \left( \FFn(\mm) - \Fn(\mm) \right)
  \quad  \mbox{ and }  \quad B_n   =  \xn^{4/5} \left( \HHn(\mm) - \YYn(\mm)\right),
\end{align*}
and
\begin{align}
  \AnL & = \xn^{3/5}\left(\FnL(\snL)-\FFnaL(\snL)\right),  \label{eq:defn:AnL} \\
  \AnR & = \xn^{3/5}\left(\FnR(\snR)-\FFnaR(\snR)\right) \label{eq:defn:AnR}
\end{align}
(recalling the definitions of $\FnL$ and $\FnR$ from \eqref{eq:def:finite-sample-L-R-processes}). Additionally, let
\begin{align*}
  \XXnffo(b) &:= (\YYnffo)'(b) = \xn^{3/5} \int_{\mm}^{\tnb} \left( d\Fn- \ffo(m)d\lambda  \right), \\
  \XXnffaL(b) &:= -(\YYnffaL)'(b) = \xn^{3/5} \int_{\tnb}^{\snL} \left( d\Fn - \ffo(m)d\lambda \right), \\
  \XXnffaR(b) &:= (\YYnffaR)'(b) = \xn^{3/5} \int_{\snR}^{\tnb} \left( d\Fn- \ffo(m)d\lambda \right).
\end{align*}
The terms for the constrained processes which would correspond to $B_n$ turn out to be $0$.  Also,  $\AnL$ and $\AnR$ appear to be off by a sign change when compared with $A_{\xn}$: this is because of the definitions of our left- and right-processes, which entails, e.g., $(\HHnaR-\YYnR)'(t) = -(\FFnaR-\FnR)(t)$.
Note that in \cite{BRW2007LCasymp}, $\YYnffo$ is denoted by
$\mathbb{Y}_n^{loc}$ and
similarly for $\HHnffo$.

The proof proceeds as follows. We will  derive the limit distribution for the empirical process-type $\YY$ and $\XX$ terms.  We will show that the estimator-type $H$ terms (and appropriate derivatives) are tight, and also satisfy characterizations analogous to those given in
Theorem~\ref{thm:charzn_uniqueness_full_UC}
and
Theorem~\ref{thm:MC-process-uniqueness-theorem}.  We argue then (by a continuous mapping argument) that a characterization must hold in the limit (along subsequences, using tightness of the $H$ processes) and
then apply
Theorem~\ref{thm:charzn_uniqueness_full_UC}
and
Theorem~\ref{thm:MC-process-uniqueness-theorem} to conclude that the limit is as desired.

For $0 < c \le \infty $, define
\begin{align*}
  \mathcal{C}_c &= \{h | h:[-c,c] \to \RR, h \mbox{ is continuous} \} \\
  \mathcal{D}_c &= \{h | h: (-c,c) \to \RR,
  h \mbox{ is cadlag and bounded} \},
\end{align*}
where ``cadlag'' means right-continuous functions which have limits from the left.  If $c = \infty$ then we we interpret the definition of $\mathcal{C}_\infty$ to mean continuous functions $h$ defined on $(-\infty,\infty)$.  We let $\Vert f \Vert$ be the supremum of $f$ over its domain, and this is the distance we use in $ \mathcal{C}_c $ when $c < \infty$. When $c = \infty$ we use the topology of
convergence on all compacta (see \cite{Whitt:1970ut}).  For $\mathcal{D}_c$ the uniform norm is too strong, so generally one uses a Skorokhod norm (\cite{Skorokhod1956}, see also \cite{Billingsley1999CPM}).  We endow, for the moment, $\mc{D}_c$ with the $J_1$ Skorokhod norm (referred to as ``the'' Skorokhod topology in chapter 12 of \cite{Billingsley1999CPM}).  When we come to proving tightness of our $H$-type processes we will further discuss topological details.  Now we focus on the empirical processes.  We let ``$A_n \Rightarrow A$'' mean that $A_n$ converges weakly to $A$ in a space that will be specified in each context \citep{Billingsley1999CPM}.  The proof of the following lemma is standard but we will refer to it several times so we provide it here. Recall $t_{n,b} = m + b n^{-1/5}$, for $b \in \RR$, and recall that $\snL, \snR$ are chosen to be of order $n^{-1/5}$ from $m$ in probability.  Let    $\DD_n = \Fn - \FFo$, and define processes
 $\underline{\AAA}_n (b) \equiv (\AAA_{n,1} (b), \AAA_{n,2} (b), \AAA_{n,3} (b) )$ and
  $\underline{\BB}_n (b) \equiv (\BB_{n,1} (b), \BB_{n,2} (b), \BB_{n,3} (b) )$ for $b \in \RR$  by
 \begin{eqnarray*}
  \underline{\AAA}_n (b) \equiv
  \frac{n^{3/5}}{ \sqrt{f_0(m)}} \left (
  \int_m^{t_{n,b}} d\DD_n,
  \int_{t_{n,b}}^{s_{n,L} } d\DD_n,
  \int_{s_{n,R}}^{t_{n,b}} d\DD_n  \right ) ,
  \end{eqnarray*}
  and
  \begin{eqnarray*}
  \underline{\BB}_n (b) \equiv
  \frac{n^{4/5}}{ \sqrt{f_0 (m)}}  \left (
  \int_m^{t_{n,b}} \int_m^v d\DD_n dv,
  \int_{t_{n,b}}^{s_{n,L}} \int_{v}^{s_{n,L}} d\DD_n dv,
  \int_{s_{n,R}}^{t_{n,b}} \int_{s_{n,R}}^v d\DD_n dv  \right ).
  \end{eqnarray*}
  Let $\nu_{n,R} = n^{1/5}( s_{n,R}-m)$ and $ \nu_{n,L} = n^{1/5}(s_{n,L}-m)$.
  For a (sequence of) Brownian motion processes $W \equiv W_n$ on $\RR$ we also define corresponding approximating (nearly Gaussian) processes $\underline{G}_{n} \equiv (G_{n,1}, \ldots, G_{n,6})$ by
  \begin{align*}
    (G_{n,1}, G_{n,2}, G_{n,3})
    =  \left ( \int_0^b dW,
      \int_b^{\nu_{n,L}} dW,
      \int_{\nu_{n,R}}^b dW \right ),
  \end{align*}
  and
  \begin{align*}
    (G_{n,4}, G_{n,5}, G_{n,6})
    =     \left (  \int_0^b \int_0^v dW dv,
      \int_b^{\nu_{n,L}} \int_{v}^{\nu_{n,L}} dW  dv,
      \int_{\nu_{n,R}}^b \int_{\nu_{n,R}}^v dW dv
    \right )   .
  \end{align*}
  \begin{lemma}
  \label{lem:local:Yconvergence}
  The vector of processes $(\AAA_n(b), \BB_n(b))$ can be defined on a common probability space  with a sequence of Brownian motion processes $W \equiv W_n$ so that
  \begin{equation*}
    \sup_{b \in [-c,c]} | ( \AAA_n(b) , \BB_n(b) )  - \underline{G}_n(b) | \to_p 0.
  \end{equation*}
\end{lemma}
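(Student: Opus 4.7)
The plan is to invoke the Koml\'os-Major-Tusn\'ady (KMT) strong approximation of the uniform empirical process, after the quantile transformation $U_i = F_0(X_i)$, to couple $\sqrt{n}(\Fn-F_0)(t)$ with a Brownian bridge $B_n(F_0(t))$ uniformly in $t$ with error $O(n^{-1/2}\log n)$ a.s., and then to exploit local linearization of $F_0$ around $m$ together with Brownian scaling to recognize the localized integrals as integrals of a standard Brownian motion. All six coordinates of $\underline{G}_n$ are built from the same $W$ by Fubini, so a single coupling suffices.

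First I would construct $W \equiv W_n$ as follows. By KMT (see Theorem 5.1.6 of Cs\"org\H{o} and R\'ev\'esz or Theorem 12.1.3 of \cite{MR2766867}), on a suitable probability space there is a Brownian motion $\widetilde W_n$ on $[0,1]$ (obtained from the bridge after removing the $s\widetilde W_n(1)$ correction, whose contribution is $O_p(n^{-1/5-1/2})$ after the scaling) so that $\sup_{t\in\RR}|\sqrt{n}(\Fn-F_0)(t)-\widetilde W_n(F_0(t))| = O(n^{-1/2}\log n)$ a.s. Define $W_n(b):= f_0(m)^{-1/2}\widetilde W_n(F_0(m)+f_0(m)bn^{-1/5})$ on an appropriate range, which by Brownian scaling has the law of a standard two-sided Brownian motion restricted to that range; extend it to $\RR$ in any fixed way. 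Then writing $t_{n,b}=m+bn^{-1/5}$ and setting
\[
\int_m^{t_{n,b}}d\DD_n = n^{-1/2}\bigl[\widetilde W_n(F_0(t_{n,b}))-\widetilde W_n(F_0(m))\bigr] + O(n^{-1/2}\log n)
\]
uniformly in $b\in[-c,c]$, multiplication by $n^{3/5}/\sqrt{f_0(m)}$ gives $\AAA_{n,1}(b) = f_0(m)^{-1/2}n^{1/10}[\widetilde W_n(F_0(t_{n,b}))-\widetilde W_n(F_0(m))] + O(n^{-2/5}\log n)$. Using $F_0(t_{n,b})-F_0(m) = f_0(m)bn^{-1/5}+O(n^{-2/5})$ (which follows from continuous differentiability of $f_0$ at $m$) together with L\'evy's uniform modulus of continuity for Brownian motion, one replaces the bridge increment by $\widetilde W_n(F_0(m)+f_0(m)bn^{-1/5})-\widetilde W_n(F_0(m))$ up to $O_p(n^{-1/10}\sqrt{\log n})$, which by Brownian scaling equals $f_0(m)^{1/2}n^{-1/10}W_n(b)$. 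Hence $\sup_{|b|\le c}|\AAA_{n,1}(b)-G_{n,1}(b)|\to_p 0$. The same argument handles $\AAA_{n,2}$ and $\AAA_{n,3}$ after noting that $\snL$ and $\snR$ are $O_p(n^{-1/5})$ from $m$ by Proposition~\ref{prop:uniform-local-tightnessKnots}, so their scaled versions $\nu_{n,L},\nu_{n,R}$ stay in a bounded (random but tight) range.

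Next, the coordinates $\BB_{n,j}$ are obtained from the $\AAA_{n,j}$ by Fubini: e.g.\ $\int_m^{t_{n,b}}\int_m^v d\DD_n\,dv = \int_m^{t_{n,b}}(t_{n,b}-u)\,d\DD_n(u)$, and analogously $G_{n,4}(b)=\int_0^b(b-u)\,dW(u)$. Thus the same coupling that controls the $\AAA_n$-part controls the $\BB_n$-part up to an integration against the bounded kernel $(b-u)$ on $|b|\le c$, which preserves uniform convergence in probability. Joint convergence of all six coordinates is automatic from the single Brownian motion construction.

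The main obstacle is ensuring that the $O(n^{-1/2}\log n)$ KMT remainder, when multiplied by the aggressive scaling factor $n^{4/5}$ used for $\BB_n$, remains $o_p(1)$ uniformly on $[-c,c]$; this works because the remainder enters only through a differenced empirical-minus-bridge term integrated over an $n^{-1/5}$ window, producing an effective rate of $n^{4/5}\cdot n^{-1/2}\cdot n^{-1/5}\log n = n^{1/10}\log n\cdot n^{-1/5}=n^{-1/10}\log n\to 0$ (a similar calculation for $\AAA_n$ gives $n^{-2/5}\log n$). A secondary technical point is the nonlinearity of $F_0$ at $m$, but since $f_0$ is twice continuously differentiable near $m$ by Assumption~\ref{assm:curvaturePositive}, the substitution $F_0(t_{n,b})=F_0(m)+f_0(m)bn^{-1/5}+O(n^{-2/5})$ combined with the uniform modulus of continuity of Brownian motion on compacta is enough to absorb the curvature into a $o_p(1)$ remainder uniformly over $b\in[-c,c]$.
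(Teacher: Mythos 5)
Your proposal follows essentially the same route as the paper: the probability integral transform reduces to the uniform empirical process, a KMT/Skorokhod coupling gives a Brownian bridge $B_n$ with $\|\UU_n - B_n\| = O(n^{-1/2}\log n)$ a.s., the bridge is converted to Brownian motion $W_n$ with a negligible correction, local linearization of $F_0$ at $m$ plus Brownian scaling turns $\widetilde W_n(F_0(\cdot))$ into $W_n(\cdot)$, and the $\BB_n$ coordinates follow by integrating the $\AAA_n$ approximation over a bounded window. Two small slips worth noting: your displayed definition of $W_n(b)$ is missing both the centering at $\widetilde W_n(F_0(m))$ and a factor $n^{1/10}$ (your later scaling identity has it right), and the bridge-to-BM correction $(F_0(t_{n,b})-F_0(m))\widetilde W_n(1)$ after multiplying by $n^{3/5}/\sqrt{f_0(m)}$ is $O_p(n^{-1/10})$ rather than $O_p(n^{-7/10})$; it still vanishes, so the conclusion is unaffected.
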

\begin{proof}
  We prove that the difference of $\AAA_{n,1}(b) = n^{3/5} \int_m^{\tnb} d\DD_n$ and $G_{n,1}$ converges to $0$ in probability uniformly in $|b| \le c$ and that the difference of $\BB_{n,1}(b) = n^{4/5} \int_m^{\tnb} \int_m^v d\DD_n dv$ and $G_{n,4}(b) $ converges to $0$ in probability uniformly in $|b| \le c$.   The proofs for the other components are analogous.  We can see $n^{3/5}  \int_m^{\tnb} d\DD_n$ is equal in distribution to
  \begin{equation}
    \label{eq:local:BB} %
    n^{1/10} \lp \UU_n( \FFo(\tnb)) - \UU_n (\FFo(m)) \rp
  \end{equation}
  where $\UU_n(t) \equiv \sqrt{n} (\FF_n^*(t)-t)$
  is the empirical process corresponding to $\Fn^{*}(t)$, the empirical d.f.\ for $\xn$ i.i.d.\ uniform random variables.   By a Skorokhod construction (see e.g.\ Theorem 12.3.4, page 502, of
  \cite{MR3396731}; %
  or see \cite{MR893903}) %
  there exist a sequence of Brownian bridge processes $B_{\xn}$ such that $\Vert \UUn - B_{\xn} \Vert = O(\log(\xn)\xn^{-1/2})$ almost surely.  Thus, \eqref{eq:local:BB} is equal to
  \begin{align}
    \label{eq:local:BBseq}
    \xn^{1/10} (B_n(\FFo(t_{n,b})) - B_n(\FFo(\mm)) ) +  n^{-4/10}
      \log(\xn)  M_n(b)  ,
  \end{align}
  where for all $|b| \le c$, $0 \le M_n(b) \le M = O(1) $ almost surely.
  Next we use that $B_n(t) = W_n(t) - tW_n(1)$ where $W_n(t)= B_n(t)+t N$ is a Brownian motion and $N$ is a
  standard Normal random variable.
  Thus \eqref{eq:local:BBseq} equals
  \begin{align*}
    \MoveEqLeft \xn^{1/10} \left( W_n(\FFo(t_{n,b})) - W_n(\FFo(\mm)) -
      (\FFo(t_{n,b})-\FFo(\mm))W_n(1) \right)
    + o_p(1),
  \end{align*}
  which is equal to
  \begin{align*}
    \MoveEqLeft W_n(b) \sqrt{\xn^{1/5}(\FFo(t_{n,b})-\FFo(\mm)) / b} - W_n(1)
    \ffo(\mm) b \xn^{-1/10}
    + o_p(1) \\
    & = W_n(b) \sqrt{\ffo(\mm)} +  o_p(1).
  \end{align*}
  This shows that $\sup_{b \in [-c,c]} | \AAA_{n,1}(b) - G_{n,1}(b)| = o_p(1)$.   Using this, we see that the process $n^{4/5} \int_m^{\tnb} \int_{m}^v d\DD_n dv $ defined on this probability space equals
  \begin{equation*}
    n^{1/5} \int_m^{\tnb} \lp \sqrt{\ffo(m)} W_n ( n^{1/5}(v - m)) + o_p(1) \rp dv
    =   \sqrt{\ffo(m)} \int_0^b W_n(v)dv + o_p(1)
  \end{equation*}
with the $o_p(1)$ error still uniform in $|b| \le c$.  Thus $\sup_{b \in [-c,c]} | \BB_{n,1}(b) - G_{n,4}(b)| = o_p(1)$.  This completes the proof for two of the terms and the other four are analogous.
\end{proof}

\begin{lemma}
  \label{lem:MC-dataprocs-ff-limit}
  Let $\underline{P}_{\xn} \equiv (P_{n,1}, \ldots, P_{n,6})$ be a vector of drift terms where we let
  \begin{align*}
    P_{n,1}(b) & = \inv{6} b^3,  &
    P_{n,2}(b) & =   \int^{n^{1/5}(\snL - m)}_b  \inv{2} u^2 du, \\
    P_{n,3}(b)&  =\int_{n^{1/5}(\snR - m)}^b  \inv{2} u^2 du, &
    P_{n,4}(b) & =  \inv{24} b^4,
  \end{align*}
  and
  \begin{align*}
    P_{n,5}(b) & = \int_b^{n^{1/5}(\snL - m)}  \int^{n^{1/5}(\snL - m)}_v  \inv{2} u^2 dudv,
    \\
    P_{n,6}(b) & = \int^b_{n^{1/5}(\snR - m)}  \int_{n^{1/5}(\snR - m)}^v  \inv{2} u^2 dudv.
  \end{align*}
  Then the vector of processes $( \underline{\XX}_n^f , \underline{\YY}_n^f) \equiv \left( \XXnffo, \XXnffaL, \XXnffaR, \YYnffo, \YYnffaL, \YYnffaR \right)$ can be defined on a common probability space  with a sequence of Brownian motion processes $W \equiv W_n$ such that for $ 0 < c < \infty$
  \begin{equation*}
    \begin{split}
      \sup_{b \in [-c,c]}
      | ( \underline{\XX}_n^f(b) , \underline{\YY}_n^f(b))  -
      \sqrt{\ffo(\mm)}
      \underline{G}_{\xn}(b)
      -\ffo''(\mm) \underline{P}_{\xn}(b) |
      \to_p 0
      \quad \mbox{ as } n \to \infty,
    \end{split}
  \end{equation*}
  where $\underline{G}_n$ is as in Lemma~\ref{lem:local:Yconvergence}.
\end{lemma}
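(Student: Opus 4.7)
The plan is to decompose each of the six processes into an ``empirical part'' and a ``drift part,'' using the decomposition
\begin{equation*}
d\FF_n - f_0(m)\,d\lambda \;=\; (d\FF_n - dF_0) \;+\; (f_0 - f_0(m))\,d\lambda.
\end{equation*}
Inserting this into each definition yields, for example,
\begin{equation*}
\XXnffo(b) \;=\; n^{3/5}\!\int_{m}^{t_{n,b}} d(\FF_n - F_0) \;+\; n^{3/5}\!\int_{m}^{t_{n,b}} (f_0(x)-f_0(m))\,dx,
\end{equation*}
and analogously for the other five processes. The first summand is handled by Lemma~\ref{lem:local:Yconvergence}: on a common probability space the triples $\underline{\AAA}_n$ and $\underline{\BB}_n$ are uniformly within $o_p(1)$ of the Gaussian processes $\sqrt{f_0(m)}$ times $(G_{n,1},G_{n,2},G_{n,3})$ and $(G_{n,4},G_{n,5},G_{n,6})$ respectively on $[-c,c]$, and the six components of $(\underline{\XX}_n^f,\underline{\YY}_n^f)$ are precisely $\sqrt{f_0(m)}$ times these triples plus the drift. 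The only wrinkle is that $\snL, \snR$ are random, but by hypothesis $\nu_{n,L} = n^{1/5}(\snL-m) = O_p(1)$ and $\nu_{n,R}=n^{1/5}(\snR-m) = O_p(1)$, so the uniform control from Lemma~\ref{lem:local:Yconvergence} (already phrased to allow these random endpoints in $G_{n,2},G_{n,3},G_{n,5},G_{n,6}$) carries over.

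For the drift part, I use that $m$ is the mode of the $C^2$ function $f_0$, so $f_0'(m)=0$, and Taylor expansion gives
\begin{equation*}
f_0(x) - f_0(m) \;=\; \tfrac{1}{2} f_0''(m)(x-m)^2 + r(x-m),\qquad r(u) = o(u^2)
\end{equation*}
uniformly for $|x-m| \le \delta$ (any fixed $\delta > 0$). A direct substitution $u=n^{1/5}(x-m)$ then converts each drift integral into the corresponding $P_{n,i}$. For instance,
\begin{equation*}
n^{3/5}\!\int_{m}^{t_{n,b}}\!\!\tfrac12 f_0''(m)(x-m)^2\,dx \;=\; \tfrac16 f_0''(m) b^3 = f_0''(m) P_{n,1}(b),
\end{equation*}
and the $r$-remainder is $O(n^{3/5}\cdot n^{-3/5}\cdot |b|^3\cdot \sup_{|u|\le c}o(1)) = o(1)$ uniformly on $|b|\le c$. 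The analogous computations for $\XXnffaL,\XXnffaR$ produce $f_0''(m)P_{n,2}, f_0''(m)P_{n,3}$, and the double integrals for $\YYnffo,\YYnffaL,\YYnffaR$ produce $f_0''(m)P_{n,4}, f_0''(m)P_{n,5}, f_0''(m)P_{n,6}$, again with Taylor remainders that are $o_p(1)$ uniformly on $[-c,c]$ because $\snL,\snR$ lie within $Cn^{-1/5}$ of $m$ with probability arbitrarily close to one.

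Combining the two parts componentwise and taking the maximum over the six components completes the proof. The main (and only real) obstacle is verifying that the drift computations and the remainder bounds are uniform in $b \in [-c,c]$ despite the random endpoints $\snL,\snR$; this is dispatched by conditioning on the high-probability event $\{|\nu_{n,L}|\vee|\nu_{n,R}|\le M\}$ for large $M$, on which all integrals are over a bounded interval $[m-(M\vee c)n^{-1/5},m+(M\vee c)n^{-1/5}]$ where the Taylor remainder $\sup_{|u|\le M\vee c}o(1)$ is $o(1)$ deterministically, and then using tightness to let $M\to\infty$.
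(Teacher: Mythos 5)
Your proposal is correct and follows the same approach as the paper: decompose $d\FF_n - f_0(m)\,d\lambda = d(\FF_n - F_0) + (f_0 - f_0(m))\,d\lambda$, apply Lemma~\ref{lem:local:Yconvergence} to the empirical part, and Taylor-expand $f_0$ about $m$ (using $f_0'(m)=0$) for the drift. The paper carries out the computation for $\XXnffaR$ only and remarks that the other five terms are analogous; your writeup is a slightly more detailed version of the same argument, including the conditioning device for the random endpoints $\snL,\snR$, which is a reasonable way to make the paper's ``uniform in $|b|\le c$'' claim explicit.
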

\begin{proof}
  We will show that the statement holds for  $\XXnffaR$.  The proof for the other $\XX$ terms and for all the $\YY$ terms are similar.  Now, $n^{-3/5} \XXnffaR(b)$ equals
  $\int_{\snR}^{t_{n,b}} d\DD_n + \int_{\snR}^{t_{n,b}} (\ffo - \ffo(m))d\lambda$; by a second-order Taylor expansion of $\ffo$, since $\ffo'(m)=0$, the second term equals
  \begin{align*}
    \int_{\snR}^{t_{n,b}} (\ffo^{(2)}(m)+o(1)) \frac{ (w-m)^2}{2} dw
    = n^{-3/5} (\ffo^{(2)}(m)+o(1)) \int_{n^{1/5}(\snR-m)}^b \frac{u^2}{2}du
  \end{align*}
  where the $o(1)$ is uniform in $|b| \le c$.  By Lemma~\ref{lem:local:Yconvergence}, we have shown $\XXnffaR$ has the desired limit.
\end{proof}

The next lemma shows that the localized processes still satisfy the characterizing system of equalities/inequalities of the global estimators; once these characterizing equalities/inequalities are carried over to the limit they will allow us to identify the limit distribution, via Theorems~\ref{thm:charzn_uniqueness_full_UC} and \ref{thm:MC-process-uniqueness-theorem}.

\begin{lemma}
  \label{lem:local-f-characterization}
  For $b \in \RR$,
  \begin{align}
    \YYnffaL(b)-\HHnffaL(b) \ge 0, \label{eq:local:YLbiggerHL-f} \\
    \YYnffaR(b)-\HHnffaR(b) \ge 0, \label{eq:local:YRbiggerHR-f}
  \end{align}
  and
  \begin{align}
    \int^{\tau_{\xn,-}^0}_{\infty} \lp \YYnffaL(b)-\HHnffaL(b) \rp d(\HHnffaL)^{(3)}(b)
    & = 0, \label{eq:local:YL-HLf-equality} \\
    \int_{\tau_{\xn,+}^0}^\infty \left( \YYnffaR(b)-\HHnffaR(b) \right) d(\HHnffaR)^{(3)}(b)
    & = 0 \label{eq:local:YR-HRf-equality}
  \end{align}
  where $\tau_{n,-}^0$ is the largest left-knot of $\vvna$ (no larger than $m$) and $\tau_{n,+}^0$ is the smallest right-knot of $\vvna$ (no smaller than $m$).
\end{lemma}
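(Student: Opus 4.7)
The plan is to reduce all four assertions to the global characterization of $\ffna$ in Theorem~\ref{thm:CharThmTwoConstrained} through the key identity
\begin{equation*}
\YYnffaL(b) - \HHnffaL(b) = n^{4/5}(\YYnL - \HHnaL)(\tnb),
\end{equation*}
and its right-hand analogue $\YYnffaR(b) - \HHnffaR(b) = n^{4/5}(\YYnR - \HHnaR)(\tnb)$. Once these are in hand, \eqref{eq:local:YLbiggerHL-f}--\eqref{eq:local:YRbiggerHR-f} follow at once from the inequality part of Theorem~\ref{thm:CharThmTwoConstrained} on the appropriate side of $m$.

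To derive the left-hand identity, I would first use Fubini (valid regardless of whether $\tnb\le \snL$ or $\tnb>\snL$, under the usual convention $\int_a^b = -\int_b^a$) to collapse
\begin{equation*}
\int_{\tnb}^{\snL}\int_v^{\snL} d\mu(u)\,dv = \int_{\tnb}^{\snL}(u-\tnb)\,d\mu(u)
\end{equation*}
applied to $d\mu = d(\Fn-\FFna)$, and then integrate by parts via Lemma~\ref{lem:integration-by-parts}. The boundary contribution equals $(\snL-\tnb)(\Fn-\FFna)(\snL) = (\snL-\tnb)n^{-3/5}\AnL$ by \eqref{eq:defn:AnL}, and this cancels the affine correction $\AnL n^{1/5}(\snL-\tnb)$ built into $\HHnffaL$. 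What remains is
\begin{equation*}
\YYnffaL(b)-\HHnffaL(b) = n^{4/5}\bigl[(\YYnL-\HHnaL)(\tnb) - (\YYnL-\HHnaL)(\snL)\bigr],
\end{equation*}
and since $\snL$ is a left knot of $\vvna$, the equality part of Theorem~\ref{thm:CharThmTwoConstrained} kills the second term, yielding the claimed identity.

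For the equality at knots \eqref{eq:local:YL-HLf-equality}--\eqref{eq:local:YR-HRf-equality}, I would differentiate $\HHnffaL$ three times to find $(\HHnffaL)^{(3)}(b) = n^{1/5}(\ffna)'(\tnb)$, so that $d(\HHnffaL)^{(3)}$ is a discrete signed measure supported on those $b$ with $\tnb\in S_n(\vvna)$, with weights given by the jumps of $(\ffna)'$. The range of integration (up to $\tau^0_{n,-}$) restricts attention to left knots of $\vvna$; at each such knot $\tau$, Theorem~\ref{thm:CharThmTwoConstrained} gives $(\YYnL-\HHnaL)(\tau)=0$. Combined with the identity from the previous paragraph, every term in the resulting sum vanishes, so the integral is zero. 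The right-hand versions follow by interchanging $L\leftrightarrow R$ and $\tau^0_{n,-}\leftrightarrow\tau^0_{n,+}$.

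I do not foresee any deep obstacle; the only care is bookkeeping signs when $\tnb$ passes through $\snL$ (or $\snR$), but since Fubini and integration by parts respect the directed-integral convention, this is routine.
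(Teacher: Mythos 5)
Your proposal is correct and follows essentially the same strategy as the paper's proof: both reduce the local identity to $\YYnffaR(b)-\HHnffaR(b) = n^{4/5}(\YYnR - \HHnaR)(\tnb)$ using the cancellation provided by $\HHnaR(\snR)=\YYnR(\snR)$ at the knot $\snR$, and then invoke the global characterization Theorem~\ref{thm:CharThmTwoConstrained}. The paper arrives at the identity by splitting the inner integral at $X_{(n)}$ and absorbing the $\AnR$ term directly, whereas you use Fubini plus integration by parts — a cosmetic difference — and you are somewhat more explicit that $d(\HHnffaR)^{(3)}$ is a discrete measure supported at (rescaled) knots of $\vvna$, which the paper leaves implicit.
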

\begin{proof}
  We consider the right-side process, and the left-side ones are analogous. The difference
  $    \YYnffaR(b)-\HHnffaR(b) $ equals
  \begin{equation*}
    \xn^{4/5}
    \IntRo \left( \IntRi d\Fn -     \ffna d\lambda \right) dv -
    (b - n^{1/5}(\snR-m))\AnR ,
  \end{equation*}
  which, by the definition of $\AnR$, equals
  \begin{equation*}
    \begin{split}
      \MoveEqLeft  \xn^{4/5} \IntRo \left( \int_{\snR}^{X_{(\xn)}} (d\Fn(u) -
        \ffna(u)du ) - \int_{v}^{X_{(\xn)}} (d\Fn(u)
        - \ffna(u)du)  \right) dv  \\
      & \quad - \xn^{4/5} \left( t_{n,b} - \snR \right)
      \int_{\snR}^{X_{(\xn)}} (d\Fn(u) - \ffna(u)du) ,
    \end{split}
  \end{equation*}
  which equals
  \begin{equation*}
    -\xn^{4/5} \IntRo \left( \int_{v}^{X_{(\xn)}} (d\Fn(u)
      - \ffna(u)du)  \right) dv,
  \end{equation*}
  which equals
  \begin{equation}
    \label{eq:local:globalToLocalProcs}
    \xn^{4/5} \int_{t_{n,b}}^{X_{(\xn)}} \left( \int_{v}^{X_{(\xn)}} (d\Fn(u)
      - \ffna(u)du)  \right) dv,
  \end{equation}
  since
  $\HHnaR(\snR) - \YYnR(\snR)  = 0$ by
  Theorem~\ref{thm:CharThmTwo}~\ref{thm:CharThmTwo:B-MC}
  Thus,
  \begin{equation*}
    \YYnffaR(b)-\HHnffaR(b) = \xn^{4/5} \left(\YYnR(t_{n,b})-\HHnaR(t_{n,b}) \right) \ge 0
  \end{equation*}
  for all $b \ge 0$, with equality if $t_{n,b}$ is a right-knot, by
  Theorem~\ref{thm:CharThmTwo}~\ref{thm:CharThmTwo:B-MC} %
  We have thus shown \eqref{eq:local:YRbiggerHR-f} and \eqref{eq:local:YR-HRf-equality}.
  Showing \eqref{eq:local:YLbiggerHL-f} and \eqref{eq:local:YL-HLf-equality} is analogous.
\end{proof}

In order to show tightness of our $H$-processes, we want to apply Proposition~\ref{cor:local:vp_vpprime_tight}, which is at the log-density level.  To do this, we need to translate from the $f$-processes to processes defined at the log level, which we will refer to as $\vp$-processes.
Let
\begin{equation}
  \label{eq:18}
  \begin{split}
    \YYnvvo(b) &= \frac{\YYnffo(b)}{\ffo(\mm)}
    - \xn^{4/5} \IntUCo \IntUCi R(u) dudv, \\
    \HHnvvo(b) & = \frac{\HHnffo(b)}{\ffo(\mm)}
    - \xn^{4/5} \IntUCo \IntUCi R_{\xn}(u)  du dv ,
    \\
    \YYnvvaL(b) & =  \frac{\YYnffaL(b)}{\ffo(\mm)}
    - \xn^{4/5}\IntLo \IntLi \RRnaL(u) dudv, \\
    \YYnvvaR(b) & = \frac{\YYnffaR(b)}{\ffo(\mm)}
    - \xn^{4/5}\IntRo \IntRi \RRnaR(u)dudv, \\
    \HHnvvaL(b) & = \frac{\HHnffaL(b)}{\ffo(\mm)}
    - \xn^{4/5} \IntLo \IntLi \RRnaL(u) dudv,
    \\
    \HHnvvaR(b) & = \frac{\HHnffaR(b)}{\ffo(\mm)}
    - \xn^{4/5} \IntRo \IntRi \RRnaR(u)dudv,
  \end{split}
\end{equation}
where
\begin{align*}
  R_{\xn}(u) = \sum_{j=2}^\infty \inv{j!} \left( \vvn(u)-\vvo(\mm)
  \right)^j,
  \quad \mbox{ and } \quad
  \RRna(u) = \sum_{j=2}^\infty \inv{j!} \left( \vvna(u)-\vvo(\mm)
  \right)^{j}.
\end{align*}
Also let
\begin{align*}
  \XXnvvo(v) &= (\YYnvvo)'(v) = \frac{\XXnffo(v)}{\ffo(\mm)}
  - \xn^{3/5} \int_{\mm}^{t_{n,v}} \RRno(u) du, \\
  \XXnvvaL(v) &= -(\YYnvvaL)'(v) = \frac{\XXnffaL(v)}{\ffo(\mm)}
  - \xn^{3/5} \int_{t_{n,v}}^{\snL} \RRnaL(u) du, \\
  \XXnvvaR(v) &= (\YYnvvaR)'(v) = \frac{\XXnffaR(v)}{\ffo(\mm)}
  - \xn^{3/5} \int_{\snR}^{t_{n,v}} \RRnaR(u) du.
\end{align*}
The above definitions are motivated by the following identities.

\begin{lemma}[$f$ to $\vp$ identities]
  \label{lem:identities}
  We have
  \begin{align}
    \ffo(\mm)^{-1}\left( \ffn(u) - \ffo(m) \right) = \vvn(u) - \vvo(m)
    + \RRno(u) \label{eq:local:ffnTOvp}
    \\
    \ffo(\mm)^{-1}\left( \ffna(u) - \ffo(m) \right) = \vvna(u) - \vvo(m)
    + \RRna(u). \label{eq:local:ffnaTOvp}
  \end{align}
The $\vp$-processes thus satisfy
\begin{align}
  \HHnvvo(b)  &= \xn^{4/5} \IntUCo \IntUCi \left( \vvn(u)-\vvo(m) \right)dudv  +
                \frac{\An b + \Bn}{\ffo(\mm)} \label{eq:local:HHnvvo2HHnffo} \\
  \HHnvvaL(b) &= \xn^{4/5} \IntLo \IntLi \left( \vvna(u)-\vvo(m) \right) du dv
                + \frac{\AnL n^{1/5} (\snL- t_{n,b}) }{\ffo(\mm)}
                \label{eq:local:HHnvvaL2HHnffaL} \\
  \HHnvvaR(b) &=  \xn^{4/5} \IntRo \IntRi \left( \vvna(u)-\vvo(m) \right)du dv
                + \frac{\AnR n^{1/5} (t_{n,b} - \snR) }{\ffo(\mm)}
                \label{eq:local:HHnvvaR2HHnffaR}
\end{align}
\end{lemma}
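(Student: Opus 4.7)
The plan is to establish the two pointwise identities first and then obtain the three process-level identities by direct substitution into the definitions of $\HHnvvo, \HHnvvaL,$ and $\HHnvvaR$.

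For \eqref{eq:local:ffnTOvp}, I would write
\[
\ffo(\mm)^{-1}\bigl(\ffn(u) - \ffo(\mm)\bigr) = e^{\vvn(u) - \vvo(\mm)} - 1 = \sum_{j=1}^{\infty} \frac{1}{j!}\bigl(\vvn(u) - \vvo(\mm)\bigr)^{j},
\]
using $\ffn = e^{\vvn}$ and $\ffo(\mm) = e^{\vvo(\mm)}$, then split off the $j=1$ term as $\vvn(u) - \vvo(\mm)$ and recognize the remaining tail as $\RRno(u)$ by its definition. The identity \eqref{eq:local:ffnaTOvp} follows in the same way, replacing $\vvn$ and $\RRno$ by $\vvna$ and $\RRna$.

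For the process identity \eqref{eq:local:HHnvvo2HHnffo}, I would start from the definition of $\HHnvvo$, namely
\[
\HHnvvo(b) = \frac{\HHnffo(b)}{\ffo(\mm)} - \xn^{4/5} \IntUCo \IntUCi R_{\xn}(u)\, du\, dv,
\]
unfold $\HHnffo$ using its definition, and divide the $\ffn - \ffo(\mm)$ piece by $\ffo(\mm)$. Substituting \eqref{eq:local:ffnTOvp} produces $(\vvn(u) - \vvo(\mm)) + \RRno(u)$ inside the double integral; the $\RRno$ term then cancels precisely against the subtracted remainder integral in the definition of $\HHnvvo$, and the linear drift $(\An b + \Bn)/\ffo(\mm)$ is left over. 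The identities \eqref{eq:local:HHnvvaL2HHnffaL} and \eqref{eq:local:HHnvvaR2HHnffaR} are obtained by the same algebraic maneuver, using \eqref{eq:local:ffnaTOvp} in place of \eqref{eq:local:ffnTOvp} and noting that the analogues of $\Bn$ vanish (as remarked after the definitions of $\AnL, \AnR$), so only the single affine terms $\AnL n^{1/5}(\snL - t_{n,b})/\ffo(\mm)$ and $\AnR n^{1/5}(t_{n,b} - \snR)/\ffo(\mm)$ remain.

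There is no real obstacle here; the content is purely bookkeeping, and the only point requiring care is that the remainder integral appearing in the definition of each $\vp$-process has been set up to match exactly the tail of the $e^x - 1 - x$ expansion that arises from substituting the $f$-to-$\vp$ identity. Once this matching is observed, the three process identities are an immediate consequence of linearity of integration.
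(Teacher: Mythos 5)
Your proposal is correct and matches the paper's argument: both establish \eqref{eq:local:ffnTOvp}--\eqref{eq:local:ffnaTOvp} by expanding $e^{\vvn(u)-\vvo(\mm)}-1$ (resp.\ $e^{\vvna(u)-\vvo(\mm)}-1$) in its power series, identifying the $j=1$ term and the tail $\RRno$ (resp.\ $\RRna$), and then derive \eqref{eq:local:HHnvvo2HHnffo}--\eqref{eq:local:HHnvvaR2HHnffaR} by substituting into the definitions of the $\vp$-processes and cancelling the remainder integrals. No gap.
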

\begin{proof}
  The identities \eqref{eq:local:ffnTOvp} and \eqref{eq:local:ffnaTOvp} are
  just the exponential series expansion about the density at $\mm$,
  \begin{equation}
    \label{eq:exp-expansion}
    \hat{g}(u)-\ffo(\mm) = \ffo(\mm)(e^{\{\hat{\vp}(u)-\vvo(\mm)\}} - 1)
    =\ffo(\mm) \sum_{j=1}^\infty \inv{j!}(\hat{\vp}(u)-\vvo(\mm))^j,
  \end{equation}
  where $\hat{g}$ is either $\ffn$ or $\ffna$, and $\hat{\vp}$ is either $\vvn$ or $\vvna$, respectively.  Now \eqref{eq:local:HHnvvo2HHnffo}, \eqref{eq:local:HHnvvaL2HHnffaL}, and \eqref{eq:local:HHnvvaR2HHnffaR} follow directly from either \eqref{eq:local:ffnTOvp} or \eqref{eq:local:ffnaTOvp}
  and the definitions of the processes.
\end{proof}

\begin{lemma}
  \label{lem:local:analyzeRemainder}
  Taking $b \in [\mm - c\xn^{-1/5},
  \mm+ c\xn^{-1/5}]$ for any $c > 0$, we have
  \begin{equation}
    \RRno(b) =  o_p(\xn^{-2/5})
    \quad \mbox{ and } \quad
    \RRna(b) = o_p(\xn^{-2/5})  ,
    \label{eq:local:analyzeRemainder}
  \end{equation}
  uniformly for $b \in [\mm - c \xn^{-1/5}, \mm+c\xn^{-1/5}]$.
\end{lemma}
\begin{proof}
  Note $(\hat{\vp}(u) - \vvo(m))^j = O_p(n^{-2j/5})$, for $j \ge 2$, from by \eqref{eq:vp_tight} for $\hat{\vp} = \vvna$, and the analogous (4.17) on page 1319 of \cite{BRW2007LCasymp} for $ \hat{\vp} = \vvn$, since $\vvo'(\mm) =0$.  Thus, we have shown \eqref{eq:local:analyzeRemainder}.
\end{proof}

\begin{lemma}
  \label{lem:Y-bigger-H-vv}
  We have
  \begin{align}
    \YYnvvaL(b)-\HHnvvaL(b) \ge 0, \label{eq:local:YLbiggerHL-v}
    \quad \mbox{ for } b \le 0, \\
    \YYnvvaR(b)-\HHnvvaR(b) \ge 0, \label{eq:local:YRbiggerHR-v}
    \quad \mbox{ for } b \ge 0,
  \end{align}
  and
  \begin{align}
    \int^{\tau_{\xn,-}^0}_{\infty} \lp \YYnvvaL(b)-\HHnvvaL(b) \rp d(\HHnvvaL)^{(3)}(b)
    & = 0, \label{eq:local:YL-HLv-equality} \\
    \int_{\tau_{\xn,+}^0}^\infty \left( \YYnvvaR(b)-\HHnvvaR(b) \right) d(\HHnvvaR)^{(3)}(b)
    & = 0 \label{eq:local:YR-HRv-equality}
  \end{align}
  where $\tau_{n,-}^0$ is the largest left-knot of $\vvna$ (no larger than $m$) and $\tau_{n,+}^0$ is the smallest right-knot of $\vvna$ (no smaller than $m$).
\end{lemma}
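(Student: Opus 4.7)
The plan is to exploit the fact that the additional remainder terms subtracted in the definitions of $\YYnvvaL, \HHnvvaL$ (and their right-side analogues) are \emph{identical}, so that the $\vp$-level statements reduce to the $f$-level statements of Lemma~\ref{lem:local-f-characterization} already established.

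First I would observe, directly from the definitions, that
\[
\YYnvvaL(b) - \HHnvvaL(b) = \frac{1}{\ffo(m)}\bigl(\YYnffaL(b) - \HHnffaL(b)\bigr),
\qquad
\YYnvvaR(b) - \HHnvvaR(b) = \frac{1}{\ffo(m)}\bigl(\YYnffaR(b) - \HHnffaR(b)\bigr),
\]
since the remainder terms $\xn^{4/5}\IntLo\IntLi \RRnaL(u)\,du\,dv$ and $\xn^{4/5}\IntRo\IntRi \RRnaR(u)\,du\,dv$ cancel in the respective differences. With $\ffo(m) > 0$, the inequalities \eqref{eq:local:YLbiggerHL-v} and \eqref{eq:local:YRbiggerHR-v} follow immediately from \eqref{eq:local:YLbiggerHL-f} and \eqref{eq:local:YRbiggerHR-f}.

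For the equality statements I would differentiate the representation \eqref{eq:local:HHnvvaR2HHnffaR} three times in $b$, using $dt_{n,b}/db = n^{-1/5}$. The affine drift $\AnR n^{1/5}(t_{n,b}-\snR)/\ffo(m)$ drops out after the second derivative, leaving
\[
(\HHnvvaR)^{(3)}(b) = n^{1/5}\,(\vvna)'(t_{n,b}).
\]
Because $\vvna$ is piecewise linear with knots $S_n(\vvna)$, this is a nonincreasing step function of $b$, so the (nonpositive) Lebesgue--Stieltjes measure $d(\HHnvvaR)^{(3)}(b)$ is supported on precisely those $b \ge 0$ for which $t_{n,b}$ is a right-knot of $\vvna$ in the sense of Theorem~\ref{thm:CharThmTwoConstrained}. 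At every such $b$, that theorem gives $\YYnR(t_{n,b}) = \HHnaR(t_{n,b})$, which via the rescaling identity
\[
\YYnffaR(b) - \HHnffaR(b) = n^{4/5}\bigl(\YYnR(t_{n,b}) - \HHnaR(t_{n,b})\bigr)
\]
derived in \eqref{eq:local:globalToLocalProcs} forces $\YYnvvaR(b) - \HHnvvaR(b) = 0$ at every atom of $d(\HHnvvaR)^{(3)}$. Integrating the nonnegative integrand $\YYnvvaR - \HHnvvaR$ against this measure therefore yields zero, proving \eqref{eq:local:YR-HRv-equality}; the left-side equality \eqref{eq:local:YL-HLv-equality} is completely symmetric.

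The main obstacle is just the bookkeeping for the three-derivative computation --- verifying that the $\AnR$-drift contributes nothing to the third derivative and that the jumps of $(\HHnvvaR)^{(3)}$ occur exactly at those rescaled locations where $t_{n,b}$ is a right-knot of $\vvna$ (with a possible atom at $b = 0$ only when $m$ itself is a right-knot). Once these identifications are pinned down, the lemma is essentially the localized restatement of Theorem~\ref{thm:CharThmTwoConstrained} with the exponential remainder terms canceling by construction.
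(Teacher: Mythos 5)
Your argument is correct and takes essentially the same route as the paper's own (very terse) proof: the remainder processes subtracted in the $\vp$-level definitions are identical on the $\YY$ and $\HH$ sides, so the differences are just $\ffo(m)^{-1}$ times the corresponding $f$-level differences, and everything then follows from Lemma~\ref{lem:local-f-characterization} (equivalently, from the rescaled characterization in Theorem~\ref{thm:CharThmTwoConstrained}). The one thing you spell out that the paper leaves implicit is the computation $(\HHnvvaR)^{(3)}(b) = n^{1/5}(\vvna)'(\tnb)$ showing that $d(\HHnvvaR)^{(3)}$ is a purely atomic, nonpositive measure supported exactly at rescaled right-knots of $\vvna$; this is what makes the pairing of a nonnegative integrand with a measure supported where it vanishes give zero, and it is a worthwhile elaboration of the step the paper compresses into ``with equality if $t_{n,b}$ is a right-knot, as desired.''
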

\begin{proof}
  By the process definitions,
  \begin{equation*}
    \YYnvvaR(b)-\HHnvvaR(b) = \inv{\ffo(\mm)} \left( \YYnffaR(b)-\HHnffaR(b)\right),
  \end{equation*}
  so by Lemma~\ref{lem:local-f-characterization} we can conclude for $b \ge 0$ that
  \begin{equation*}
    \YYnvvaR(b)-\HHnvvaR(b) \ge 0,
  \end{equation*}
  with equality if $t_{n,b}$ is a right-knot, as desired. We have thus shown \eqref{eq:local:YRbiggerHR-v} and \eqref{eq:local:YR-HRv-equality}, and \eqref{eq:local:YLbiggerHL-v} and \eqref{eq:local:YL-HLv-equality} are similar.
\end{proof}

\begin{lemma}
  \label{lem:MC-dataprocs-vv-limit}
  The vector of processes  $ (\underline{\XX}_n^{\vp}, \underline{\YY}_n^{\vp}) \equiv \left( \XXnvvo, \XXnvvaL, \XXnvvaR , \YYnvvo, \YYnvvaL
    , \YYnvvaR \right)$ can be defined on a common probability space with a
  sequence of Brownian motion processes $W \equiv W_n$ such that for $ 0 < c
  < \infty$
  \begin{equation}
    \label{eq:local:vvNoiseProcesses}
    \begin{split}
      \sup_{b \in [-c,c]} \lv
      (\underline{\XX}_n^{\vp}(b), \underline{\YY}_n^{\vp}(b)) -
      \inv{\sqrt{\ffo(\mm)}}
      \underline{G}_{\xn}(b)
      - \vvo''(\mm) \underline{P}_{\xn}(b) \rv
      \to_p 0
      \quad \mbox{ as } n \to \infty,
    \end{split}
  \end{equation}
  where $\underline{G}_{\xn}$ and $\underline{P}_n$ are as in
Lemma~\ref{lem:local:Yconvergence} and
  Lemma~\ref{lem:MC-dataprocs-ff-limit}.
\end{lemma}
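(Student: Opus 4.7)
The plan is to deduce the $\vp$-process convergence directly from Lemma~\ref{lem:MC-dataprocs-ff-limit} by exploiting the identities in Lemma~\ref{lem:identities} and showing that the remainder integrals are negligible. By Lemma~\ref{lem:identities}, each $\vp$-process equals $1/\ffo(m)$ times the corresponding $f$-process, minus a double integral (for $\YY$, $\HH$) or single integral (for $\XX$) of the remainder $\RRno$ or $\RRna$. Thus, defining everything on the same probability space as in Lemma~\ref{lem:MC-dataprocs-ff-limit}, it suffices to show
\begin{equation*}
\frac{1}{\ffo(m)}\bigl(\sqrt{\ffo(m)}\,\underline{G}_n(b) + \ffo''(m)\,\underline{P}_n(b)\bigr)
= \tfrac{1}{\sqrt{\ffo(m)}}\underline{G}_n(b) + \vvo''(m)\,\underline{P}_n(b),
\end{equation*}
and that the remainder integrals vanish uniformly on compacta.

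The first identity is immediate: since $\ffo=e^{\vvo}$ we have $\ffo''=\ffo(\vvo''+(\vvo')^2)$, and $\vvo'(m)=0$ under Assumption~\ref{assm:curvaturePositive}, so $\ffo''(m)/\ffo(m)=\vvo''(m)$. The Gaussian coefficient reduces to $1/\sqrt{\ffo(m)}$ by the algebra.

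For the remainders, fix $c>0$ and restrict attention to $b\in[-c,c]$. Since $s_{n,L}-m$ and $s_{n,R}-m$ are $O_p(n^{-1/5})$, on the event $\{|n^{1/5}(s_{n,L}-m)|\vee|n^{1/5}(s_{n,R}-m)|\le c'\}$ with $c'$ chosen large, all points $u$ appearing in the integrals lie in $[m-c'n^{-1/5},m+c'n^{-1/5}]$. On this neighborhood, \eqref{eq:local:analyzeRemainder} gives $\RRno(u),\RRna(u)=o_p(n^{-2/5})$ uniformly. Then, for the $\XX$-type remainders, the single integral has length $O_p(n^{-1/5})$, so
\begin{equation*}
n^{3/5}\int \RRna(u)\,du = n^{3/5}\cdot o_p(n^{-2/5})\cdot O_p(n^{-1/5})=o_p(1),
\end{equation*}
uniformly in $b\in[-c,c]$. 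For the $\YY$-type remainders the double integral is over a region of area $O_p(n^{-2/5})$, so
\begin{equation*}
n^{4/5}\iint \RRna(u)\,du\,dv = n^{4/5}\cdot o_p(n^{-2/5})\cdot O_p(n^{-2/5})=o_p(1),
\end{equation*}
uniformly. The same bounds apply to the unconstrained remainder $\RRno$.

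The main subtlety, and the only place requiring care, is ensuring that the $o_p(n^{-2/5})$ bound on $\RRno$ and $\RRna$ is truly uniform over the (random) integration ranges $[t_{n,b},s_{n,L}]$, $[s_{n,R},t_{n,b}]$, and $[m,t_{n,b}]$ for $b\in[-c,c]$. This follows because \eqref{eq:vp_tight} in Corollary~\ref{cor:local:vp_vpprime_tight} (and its unconstrained analogue, Lemma 4.5 of \cite{BRW2007LCasymp}) gives $\sup|\hat\vp(u)-\vvo(m)|=O_p(n^{-2/5})$ uniformly on $[m-Cn^{-1/5},m+Cn^{-1/5}]$ for any fixed $C$, after replacing $\xi_n$ by $m$ and enlarging $C$ to absorb $c$ and $c'$; squaring this and applying it to the exponential series defining $\RRno$ and $\RRna$ gives the required uniform rate. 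Combining this with Lemma~\ref{lem:MC-dataprocs-ff-limit} proves \eqref{eq:local:vvNoiseProcesses}.
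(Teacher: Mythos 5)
Your proof is correct and follows the same route as the paper: the paper's proof is a one-liner that simply cites \eqref{eq:local:analyzeRemainder} and the identity $\vvo''(m)=\ffo''(m)/\ffo(m)$, and you have supplied precisely the bookkeeping (Lemma~\ref{lem:identities}, the coefficient algebra, the $n^{3/5}\cdot o_p(n^{-2/5})\cdot O_p(n^{-1/5})$ and $n^{4/5}\cdot o_p(n^{-2/5})\cdot O_p(n^{-2/5})$ bounds, and the uniformity over the random ranges) that the paper leaves implicit.
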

\begin{proof}
  By \eqref{eq:local:analyzeRemainder}, since $\vvo''(\mm) = \ffo''(\mm) / \ffo(\mm)$, we can conclude that \eqref{eq:local:vvNoiseProcesses} holds.
\end{proof}

\medskip

We have established the appropriate characterizing properties of the $\vp$-processes, and the limit distribution of the $\YY^\vp$ processes.  It remains to prove tightness of the $H^\vp$-processes.  To begin, we discuss the spaces in which our convergences will occur.  For $0 < c \le \infty $, define
\begin{align*}
  \mathcal{F}_{c,M} & = \left\{ f \in {\cal D}_c |
    f \mbox{ is non-increasing and } \Vert f \Vert \le M \right\},
\end{align*}
where ``cadlag'' means right-continuous functions which have limits from the left.  Our $H$, $H^{(1)}$, and $H^{(2)}$ functions (for the constrained and unconstrained estimators) are continuous and the uniform norm is appropriate for them.  The $H^{(3)}$ type functions lie in $\mc{F}_{c,M}$, and the uniform norm is too strong.  For the convergence of the $\YY$-processes we used the $J_1$ Skorokhod metric.  Unfortunately this also is too strong as it does not allow multiple jumps to approximate a single jump in $\mathcal{F}_{c,M}$ (see Remark B.0.10 in Appendix B of \cite{Doss:2013}).  Thus, we will use the so-called $M_1$\label{page:def:M1} Skorokhod metric on $\mc{F}_{c,M}$.
This is defined in Section~12.3 of \cite{Whitt2002Stochastic}, and discussed in the following sections.  We give a brief introduction here.
The $M_1$ metric is defined as follows.  For a set $A \subseteq \RR$, let $\Vert x - A \Vert := \inf_{y \in A} \vert x-y \vert.$ (Note that we have also taken $\Vert f \Vert$ to be the supremum of a function $f$ over its domain.  It will be clear from context which usage is intended.)  For a function $x$ and $\delta > 0$, let $ w_s(x,\delta) = \sup \Vert x(t_2) - [x(t_1),x(t_3)] \Vert,$ where the $\sup$ is taken over $t_1, t_2,$ and $t_3$ such that $-c \vee (t_2-\delta) \le t_1 < t_2 < t_3 \le c \wedge (t_2 + \delta)$.\footnote{Note that $w_s$ coincides with the definition of $\Delta_{M_1}$ in \cite{Skorokhod1956}.}  Note that since sequences that converge in the $J_1$ topology also converge in the $M_1$ topology (\cite{Whitt2002Stochastic}), the weak convergences proved for the empirical processes in Lemma~\ref{lem:MC-dataprocs-vv-limit} still hold when we use the $M_1$ topology.  By \cite{Whitt:1980er} (see Theorem~B.0.2 of \cite{Doss:2013}), $\mc{F}_{c,M}$ is a complete, separable metric space. And furthermore we have the following.
\begin{proposition}[Lemma~B.0.9 of \cite{Doss:2013}]
  \label{prop:FccM-precompact}
  $\mc{F}_{c,M}$ is precompact, meaning that every sequence in $\mc{F}_{c,M}$ has a convergent subsequence (not necessarily lying in $\mc{F}_{c,M}$).
\end{proposition}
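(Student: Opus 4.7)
The plan is to invoke the standard characterization of relative compactness in the $M_1$ Skorokhod topology from \cite{Whitt2002Stochastic} (Theorem~12.12.2): a subset $A \subset \mc{D}_c$ is relatively compact in $M_1$ if and only if $\sup_{f \in A} \Vert f \Vert < \infty$ and the strong modulus $\bar{w}_s(f,\delta)$ tends to zero uniformly over $f \in A$ as $\delta \downarrow 0$. Here $\bar{w}_s$ measures, for triples $t_1 < t_2 < t_3$ lying in a $\delta$-window, the distance from $f(t_2)$ to the segment joining $f(t_1)$ and $f(t_3)$, together with the analogous one-sided terms controlling behavior near the endpoints $\pm c$.

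Both conditions are trivially satisfied on $\mc{F}_{c,M}$. The uniform sup-norm bound is immediate from the definition $\Vert f \Vert \le M$. For the modulus, the key observation is that whenever $f$ is non-increasing and $t_1 < t_2 < t_3$, monotonicity forces $f(t_2) \in [f(t_3), f(t_1)]$, so $\Vert f(t_2) - [f(t_1), f(t_3)] \Vert = 0$. Consequently $w_s(f, \delta) \equiv 0$ for every $\delta > 0$ and every $f \in \mc{F}_{c,M}$, and the one-sided endpoint contributions to $\bar{w}_s$ vanish by the same monotonicity argument (a bounded monotone function automatically has one-sided limits at every boundary point). Both hypotheses of Whitt's criterion hold, giving precompactness.

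The only (minor) obstacle is matching the exact form of Whitt's criterion to our domain $(-c,c)$ and keeping the boundary moduli under control; this is routine book-keeping since monotonicity kills all the relevant terms. The proposition deliberately allows limits outside $\mc{F}_{c,M}$: $M_1$ limits of uniformly bounded non-increasing cadlag functions are again non-increasing and bounded, but may fail to remain right-continuous at isolated points where jumps accumulate from both sides in the approximating sequence, so the limit lies in a slightly larger space of monotone bounded functions with one-sided but not necessarily right-continuous regularity. No further argument is required for the statement as given.
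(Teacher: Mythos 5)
Your computation that the interior modulus $w_s(f,\delta)\equiv 0$ for non-increasing $f$ is correct and is the right first step: monotonicity forces $f(t_2)\in[f(t_3),f(t_1)]$ for $t_1<t_2<t_3$, so the segment-distance vanishes identically. The gap is in the dismissal of the endpoint contributions to $\bar w_s$. In Whitt's Theorem~12.12.2 (formulated on a closed interval $[0,T]$) the endpoint terms are the plain oscillations
\begin{equation*}
  \bar v(x,0,\delta)=\sup\bigl\{\,|x(t_1)-x(t_2)| : 0\le t_1\le t_2\le\delta\,\bigr\},
  \qquad
  \bar v(x,T,\delta)=\sup\bigl\{\,|x(t_1)-x(t_2)| : T-\delta\le t_1\le t_2\le T\,\bigr\},
\end{equation*}
\emph{not} segment-distances, so monotonicity does not annihilate them. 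For non-increasing $x$, $\bar v(x,0,\delta)=x(0)-x(\delta)$, and this does not go to zero uniformly over $\mc{F}_{c,M}$: for $f_n=-\one_{[-c+1/n,\,c)}$ (so $\|f_n\|\le 1$, non-increasing, cadlag) one has $w_s(f_n,\delta)\equiv 0$ but $\bar v(f_n,-c,\delta)=1$ for every $n>1/\delta$. Whitt's condition~(12.39) therefore fails, and in fact $\{f_n\}$ has no $SM_1$ limit in $D[-c,c]$ because the completed graph of every $f_n$ contains the point $(-c,0)$, which is bounded away from the graph of any plausible cadlag limit. So "routine book-keeping'' is not enough: the hypothesis that monotonicity kills \emph{all} the relevant terms is false for the closed-interval version of Whitt's criterion.

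What saves the statement is that the paper's $\mc{D}_c$ is defined on the open interval $(-c,c)$ with the modulus $w_s$ alone (note the paper's display for $w_s$ introduces no separate endpoint oscillation), so the offending moduli never enter the metric actually in play. But this means one cannot invoke Theorem~12.12.2 verbatim; one must either carefully re-derive a compactness criterion for the open-interval variant, or---more in line with the cited thesis \cite{Doss:2013}---argue via Helly's selection principle: extract a pointwise-convergent subsequence from boundedness and monotonicity, build a cadlag non-increasing limit $f$, and use that for monotone functions pointwise convergence at continuity points of the limit implies $M_1$ convergence on any compact $[a,b]\subset(-c,c)$ whose endpoints avoid the (at most countably many) jump points of $f$; this suffices for precompactness in $\mc D_c$. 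A secondary slip: $M_1$ limits of cadlag functions are cadlag, and monotonicity and the sup-bound are preserved, so $\mc F_{c,M}$ is in fact closed under $M_1$ limits; the parenthetical in the proposition is just the standard wording of ``precompact,'' not a hint that the limit escapes the class through failure of right-continuity.
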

 This is the fundamental property we need for tightness arguments, to which we now proceed.

\begin{lemma}
  \label{lem:local:Htight}
  The processes $(\HHnvvaL)''', (\HHnvvaL)'', (\HHnvvaL)'$ and $\HHnvvaL$ are tight in $\mathcal{D}_c \times \mathcal{C}_c^3$ when $0 < c < \infty$.  The same tightness holds if we replace the $L$-processes by the $R-$processes.
\end{lemma}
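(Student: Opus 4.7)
The plan is to differentiate $\HHnvvaL$ in $b$, recognize each derivative as a rescaled evaluation of $\vvna$ or $(\vvna)'$ at $\tnb = m + bn^{-1/5}$, and then promote the pointwise bounds from Corollary~\ref{cor:local:vp_vpprime_tight} to process-level tightness using Arzel\`a--Ascoli for the three continuous components and monotonicity together with Proposition~\ref{prop:FccM-precompact} for the cadlag third derivative.

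First, direct differentiation of \eqref{eq:local:HHnvvaL2HHnffaL}, using $d\tnb/db = n^{-1/5}$, yields
\[
(\HHnvvaL)''(b) = n^{2/5}(\vvna(\tnb)-\vvo(m)),
\qquad
(\HHnvvaL)'''(b) = n^{1/5}(\vvna)'(\tnb),
\]
while $(\HHnvvaL)'$ and $\HHnvvaL$ are the corresponding iterated antiderivatives on $[b, \nu_{n,L}]$ plus the affine drift $\AnL n^{1/5}(\snL - \tnb)/\ffo(m)$. Since $\snL$ is a knot of $\vvna$ distinct from $m$, Corollary~\ref{cor:EstimatedFNearlyTouchesEDFatKnotsConstrained} forces $|\AnL| \le n^{3/5}/n = n^{-2/5}$, and combined with $n^{1/5}(\snL-m) = O_p(1)$ from Proposition~\ref{prop:uniform-local-tightnessKnots}, the entire $\AnL$-contribution is $o_p(1)$ uniformly on $|b|\le c$, so it plays no role in the tightness argument.

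Next, using $\vvo'(m) = 0$ together with Corollary~\ref{cor:local:vp_vpprime_tight} (applied after enlarging the localization window to dominate $|\nu_{n,L}|$ with arbitrarily high probability) produces uniform $O_p(1)$ bounds on $(\HHnvvaL)''$ and $(\HHnvvaL)'''$ over $[-c,c]$. Integrating these bounds against $b$ yields uniform $O_p(1)$ bounds for $(\HHnvvaL)'$ and $\HHnvvaL$, and the same integral representation supplies the Lipschitz-type estimate
\[
|(\HHnvvaL)^{(k)}(b_1)-(\HHnvvaL)^{(k)}(b_2)| \le |b_1-b_2|\sup_{|b|\le c}|(\HHnvvaL)^{(k+1)}(b)|
\]
for $k = 0, 1, 2$, which is exactly the equicontinuity-in-probability required for Arzel\`a--Ascoli to yield tightness of each continuous component in $\mc{C}_c$.

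The main obstacle is the third derivative $(\HHnvvaL)'''$, which is merely cadlag with jumps at the knots of $\vvna$ that can cluster in the limit, so the $J_1$ topology is too strong. The key observation is that concavity of $\vvna$ makes $(\vvna)'$ monotonically nonincreasing, forcing $(\HHnvvaL)'''$ into the class $\mc{F}_{c,M}$ of nonincreasing cadlag functions bounded by $M$ with probability exceeding $1-\eta$ once $M$ is chosen to dominate the $O_p(1)$ bound above. Proposition~\ref{prop:FccM-precompact} then supplies $M_1$-precompactness of $\mc{F}_{c,M}$, giving $M_1$-tightness of $(\HHnvvaL)'''$ in $\mc{D}_c$. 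Joint tightness in $\mc{D}_c \times \mc{C}_c^3$ follows from the marginal tightnesses, and the $R$-process argument is entirely symmetric upon replacing $\snL$ by $\snR$ and flipping the orientation of the integrals.
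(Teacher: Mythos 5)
Your proof is correct and follows essentially the same route as the paper's: identify $(\HHnvvaL)^{(2)}$ and $(\HHnvvaL)^{(3)}$ as $n^{2/5}(\vvna(\tnb)-\vvo(m))$ and $n^{1/5}(\vvna)'(\tnb)$, use Corollary~\ref{cor:local:vp_vpprime_tight} for the uniform $O_p(1)$ bounds, handle the affine $\AnL$-drift via Corollary~\ref{cor:EstimatedFNearlyTouchesEDFatKnotsConstrained} (which is exactly the content of Lemma~\ref{lem:AnToZero}), obtain tightness of the three continuous components via Arzel\`a--Ascoli, and place the monotone cadlag third derivative in $\mc{F}_{c,M}$ so that Proposition~\ref{prop:FccM-precompact} supplies $M_1$-tightness. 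The only cosmetic difference is that you spell out the derivative identifications and the bound on $\AnL$ explicitly rather than citing Lemma~\ref{lem:AnToZero}.
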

\begin{proof}
  We will discuss the tightness for the left-side processes. The argument for
  the right-side processes is analogous.
  Proposition~\ref{cor:local:vp_vpprime_tight}
  shows that for any $\epsilon$, we can take $M>0$ large enough that $(\HHnvvaL)'''$ lies in $\mathcal{F}_{c, M}$ with probability $1-\epsilon$.  Since $\mathcal{F}_{c,M}$ is precompact in $\mathcal{D}_c$ by Proposition~\ref{prop:FccM-precompact}, $(\HHnvvaL)'''$ is tight.  Then $(\HHnvvaL) ''$ is uniformly bounded by Proposition~\ref{cor:local:vp_vpprime_tight}, and since its derivative is uniformly bounded, and since the set of functions with their values as well as the values of their derivatives uniformly bounded by $M$ is compact in $\mathcal{C}_c$ (via the Arzela-Ascoli theorem, see e.g.\ \cite{Royden}), we can conclude that $(\HHnvvaL)''$ is tight in $\mathcal{F}_{c,M}$. Similarly, since integrals on bounded intervals of uniformly bounded functions are also uniformly bounded, and by Lemma~\ref{lem:AnToZero} below, together with the fact that $n^{1/5}(\snL - b)$ is $O_p(1)$ by assumption
we see that $(\HHnvvaL)'$ and $\HHnvvaL$ are uniformly bounded, and their respective derivatives are uniformly bounded, so we can again conclude that they are tight.
  An identical argument works for the right-side
  processes.
\end{proof}

We will want to consider our processes in $\mc{C}_\infty$ and in $\mc{D}_\infty$.  For the continuous processes in $\mc{C}_\infty$, Corollary~5 of \cite{Whitt:1970ut} says that processes that are tight in $\mc{C}_c$ for all $0 < c < \infty$ are then tight in $\mc{C}_{\infty}$.  By Theorem~12.9.3 of \cite{Whitt2002Stochastic} (with Prohorov's theorem, e.g.\ \cite{VW1996WCEP} page 21),  processes that are tight in $\mc{D}_c$, $0 < c < \infty$, are tight in $\mc{D}_\infty$. For the next lemma, recall the definitions of $\AnL, \AnR$ in
\eqref{eq:defn:AnL} and \eqref{eq:defn:AnR}.

\begin{lemma}
  \label{lem:AnToZero}
  As $n \to \infty$,
  \begin{align}
    \label{eq:local:AnToZero}
    |\AnL| \to 0  \; \mbox{ and } \;    |\AnR| \to 0,
    \mbox{ almost surely.}
  \end{align}
\end{lemma}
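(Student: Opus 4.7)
My approach is to observe that $\AnL$ and $\AnR$ are $n^{3/5}$-scaled discrepancies between the empirical distribution function and the integrated mode-constrained MLE, evaluated at knot points of $\vvna$. The key input will be Corollary~\ref{cor:EstimatedFNearlyTouchesEDFatKnotsConstrained}, which provides the pathwise bound $\FF_n - n^{-1} \le \widehat{F}_n^0 \le \FF_n$ at every knot of $\vvna$ other than $m$. Since by hypothesis $\snL$ and $\snR$ are knots of $\vvna$ strictly on either side of $m$, the corollary applies at both of them, and this will immediately deliver the conclusion.

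For $\AnL$, the definitions give $\FnL \equiv \FF_n$ and $\FFnaL \equiv \widehat{F}_n^0$ as functions, so the corollary yields $|\FnL(\snL) - \FFnaL(\snL)| \le n^{-1}$ and hence $|\AnL| \le n^{3/5}\cdot n^{-1} = n^{-2/5} \to 0$. For $\AnR$ I would first translate to left-sided quantities: since $\widehat{F}_n^0(X_{(\xn)}) = 1$ one has $\FFnaR(t) = 1 - \FFnaL(t)$, while by definition $\FnR(t) = 1 - \FnL(t-) = 1 - \FnL(t) + \FF_n\{t\}$, where $\FF_n\{t\} \le n^{-1}$. Combining these,
\[
\FnR(\snR) - \FFnaR(\snR) = \FFnaL(\snR) - \FnL(\snR) + \FF_n\{\snR\},
\]
whose absolute value is at most $n^{-1}$ by the corollary (the two summands lie in $[-n^{-1},0]$ and $[0,n^{-1}]$ respectively), giving $|\AnR| \le n^{-2/5} \to 0$.

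The convergence is in fact deterministic once Corollary~\ref{cor:EstimatedFNearlyTouchesEDFatKnotsConstrained} is invoked, so the almost-sure statement is automatic. There is essentially no obstacle to this argument; the only minor bookkeeping point is the left-continuity correction $\FF_n\{\snR\}$ that arises when $\snR$ happens to coincide with a data point, and this contributes only an extra $n^{-1}$ term which is negligible after the $n^{3/5}$ scaling.
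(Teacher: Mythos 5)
Your proof is correct and follows essentially the same route as the paper's: both invoke Corollary~\ref{cor:EstimatedFNearlyTouchesEDFatKnotsConstrained} at the knots $\snL$ and $\snR$, which lie strictly on either side of $m$, to get $|\AnL|, |\AnR| \le n^{-2/5}$. Your treatment of $\AnR$ is a bit more explicit than the paper's one-line ``similarly'' (in particular you correctly track the atom $\FF_n\{\snR\}$ that appears when relating $\FnR$ to $\FnL$), but the argument and the key ingredient are the same.
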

\begin{proof}
  Because $\snL$ is strictly less than $m$, we can apply
  Corollary~\ref{cor:EstimatedFNearlyTouchesEDFatKnotsUnconstrained} B,
  so that $|\AnL| = \xn^{3/5} \left| \FnL(\snL)-\FFnaL(\snL) \right| \le \xn^{-2/5} \to 0$ almost surely.  Similarly, since $\FnL(X_{(\xn)})=1=\Fn(X_{(\xn)})$, by the same corollary, $|\AnR| \to 0$ almost surely.
\end{proof}

With Lemmas~\ref{lem:Y-bigger-H-vv}, \ref{lem:MC-dataprocs-vv-limit}, and \ref{lem:local:Htight} in hand, we can now finish the proof of the theorem.  Fix a subsequence $n'$. Let
\begin{align*}
  Z_{n,L} & = \lp (\HHnvvaL)^{(3)}, (\HHnvvaL)^{(2)}, (\HHnvvaL)^{(1)}, \HHnvvaL,
  \XXnvvaL,  %
  \YYnvvaL \rp, \\
  Z_{n,R} & = \lp (\HHnvvaR)^{(3)}, (\HHnvvaR)^{(2)}, (\HHnvvaR)^{(1)}, \HHnvvaR,
  \XXnvvaR, %
  \YYnvvaR  \rp,
\end{align*}
By Lemmas~\ref{lem:MC-dataprocs-vv-limit} and \ref{lem:local:Htight},    $Z_{n,R}$ and $Z_{n,L}$ are both tight in the space
$E_c \equiv \mc{D}_c \times \mc{C}_c^3 \times \mc{D}_c \times \mc{C}_c$ with $0 < c < \infty$.
This means they are also tight in $E_\infty$,
by the discussion after Lemma~\ref{lem:local:Htight}.
Thus there exists a subsubsequence $n''$ such that $Z_{n'',R}$ and $Z_{n'',L}$ converge weakly.  By the Skorokhod construction (see e.g., Chapter 14 of \cite{MR1762415}), %
we may assume that the convergence is almost sure (a.s.).  Let
 $(Z_{0,L}, Z_{0,R})$ be the limit and let
$Z_{0,L}=(H_L^{(3)} , H_L^{(2)}, H_L^{(1)}, H_L,  %
X_L, %
Y_L)$,
and
$Z_{0,R}=(H_R^{(3)} , H_R^{(2)}, H_R^{(1)}, H_R,  %
X_R, %
Y_R)$.
Note that $(\HHnvvaR)^{(2)} = (\HHnvvaL)^{(2)}$ and this function is of course concave with mode at $0$, so  $H_R^{(2)} = H_L^{(2)}$ must also be concave with mode at $0$.
Let
\begin{equation}
  \label{eq:def:tauL-tauR}
  \TauL = \sup \lp \Sa( H^{(2)}_R) \cap (-\infty, 0) \rp
  \, \mbox{ and } \,
  \TauR = \inf \lp  \Sa(H^{(2)}_R) \cap (0  , \infty) \rp
\end{equation}
with $\Sa(H^{(2)}_R)$ defined as in \eqref{eq:defnSa}.  There must be a sequence of knots $\tau_{n'',R} \in ( S_n(\vvna)  \cap (m, \infty))$ such that $n^{1/5}(\tau_{n'', R} - m) \to \TauR$ a.s.  To see that we can take $\tau_{n'',R}$ strictly greater than $m$, by \eqref{eq:def:tauL-tauR}
we see the only way $\TauR=0$ is if there exists a sequence of points of $\Sa( H^{(2)}_R)$ strictly greater than $0$ and converging to $0$.   Similarly there is a sequence $\tau_{n'', L} \in ( S_n(\vvna) \cap (-\infty, m))$ such that $n^{1/5}( \tau_{n'', L} \to \tau_L$ a.s.  In our definitions of the ($f$- and $\vp$-) processes, $s_{n,R}$ was any knot strictly greater than $m$ satisfying $n^{1/5}(s_{n,R} - m) = O_p(1)$, and analogously for $s_{n,L}$.  Take $s_{n'',R} = \tau_{n'',R}$ and $s_{n'',L} = \tau_{n'',L}$.
Then let
\begin{align*}
  s_{n''}
  & = (n'')^{1/5} \lp \tau_{n'',L} -m, \tau_{n'',R}-m, \tau^0_{n'',-} - m, \tau^0_{n'',+} - m \rp,
\end{align*}
let $s_{n''} \to (\tau_L, \tau_R, \tau_-, \tau_+)$
and let $Z_0 = (Z_{0,L}, Z_{0,R}, \tau_L, \tau_R, \tau_-, \tau_+)$, where $\tau_-$ and $\tau_+$ are the limits of the corresponding terms again by tightness from Proposition~\ref{prop:uniform-local-tightnessKnots}.
By Lemma~\ref{lem:MC-dataprocs-vv-limit},
if we let $Y \equiv Y_{a, \sigma}$
as defined in \eqref{eq:defn:Y-a-sigma}
with $a = |\vvo^{(2)}(m)|/ 4!$ and $\sigma = 1/ \sqrt{\ffo(m)}$,
then
\begin{align*}
  Y_R(b) & = \int_{\tau_R}^b \int_{\tau_R}^v dY'(v) dv,
  &
  X_R(b) & = Y_R'(b) = \int_{\tau_R}^b  dY', \\
  Y_L(b) & = \int_{b}^{\tau_L} \int_v^{\tau_L} dY'(v) dv,
  &
  X_L(b) & = -Y_L'(b)  = \int_b^{\tau_L} dY'.
\end{align*}
Let $\phi : E_c^2\times \RR^4 \to \RR$ be defined by
\begin{equation*}
  \phi(z_1, \ldots, z_{16}) = \int_{z_{13}}^{z_{14}} z_8 d\lambda - dz_{11}
\end{equation*}
giving
\begin{align*}
  \phi(Z_{n})
  & = \int_{n^{1/5}(\tau_{n,L}-m)}^{ n^{1/5}(\tau_{n,R}-m)}
  (\HHnvvaR)^{(2)} d\lambda - d (\YYnvvaR)^{(1)}\\
  &=
  (\HHnvvaR- \YYnvvaR)'( n^{1/5}(\tau_{n,R}-m))
  - (\HHnvvaR- \YYnvvaR)'(n^{1/5}(\tau_{n,L}-m)) .
\end{align*}
By Lemma~\ref{lem:FequalsX_M} below, $\phi(Z_{n''}) \to 0$ a.s., and $Z_{n''} \to Z_0$ a.s., and $\phi$ is continuous at $z$ such that $z_8$ and $z_{11}$ are continuous functions, so a.s.\ $\phi(Z_0) = 0$, i.e.\
\begin{equation*}
  \int_{\tau_{L}}^{\tau_{R}} H_{R}^{(2)}d\lambda - d(Y_{R}^{(1)}) = 0
  \quad   \mbox{ a.s.},
\end{equation*}
so condition \eqref{eq:CharznFequalsX_M_full}  of Theorem~\ref{thm:MC-process-uniqueness-theorem} holds.
Now, let $\phi_c : E_c^2 \times \RR^4 \to \RR$ be defined by $ \phi_c(z) = \inf_{b \in [0,c]} (z_4(b) - z_6(b)) \wedge 0$ giving $ \phi_c(Z_n) = \inf_{b \in [0,c]} (\HHnvvaL(b) - \YYnvvaL(b)) \wedge 0$.  This $\phi_c$ is continuous since $z_4$ and $z_6$ are continuous, and $\phi_c(Z_n) = 0$ a.s.\ by
Lemma~\ref{lem:Y-bigger-H-vv}, so $\phi_c(Z_0) = 0$ a.s., for all $c$.  An analogous argument holds for the right-side processes.  Thus,
\begin{align*}
  (H_L - Y_L)(b) & \le 0 \quad \mbox{ for } \ b \le 0, \\
  (H_R - Y_R)(b) & \le 0 \quad \mbox{ for } \ b \ge 0,
\end{align*}
so \eqref{eq:CharznInequalityL2_full} and \eqref{eq:CharznInequalityR2_full} hold.
Now, let $\phi_{R,c}(z) = \int \one_{[z_{16},c]} (z_{10} - z_{12}) dz_7$
(where $z_{16}$ corresponds to $\tau_+$, $z_{10}$ to $H_R$, $z_{12}$ to $Y_R$, and $z_7$ to $H_R^{(3)}$).
Let
\begin{equation*}
  \phi_R(z) = \int_{[\tau_+^0(z_2), \infty)} (z_{10}- z_{12}) dz_7,
\end{equation*}
where $\tau_+^0(z_2)$ is defined as in \eqref{eq:defn:tau0+}
(and thus $\Sa(z_2)$ is defined as in \eqref{eq:defnSa}).
We want to show $\phi_R(Z_0) = 0 $ a.s.\
Note that $\tau_+ \le \tau_+^0(Z_{0,2})$, although a priori we may not have equality. This is because a linear function may be well approximated by a nonlinear function, but the reverse is not true.  Thus $\vvna$ could potentially have knots strictly between the limit knot $\tau_+^0(Z_{0,2})$ and $m$ (on an $n^{-1/5}$ scale) so $\tau_+$ could be smaller than $\tau_+^0(Z_{0,2})$, but $\vvna$ must have knots approaching $\tau_+^0(Z_{0,2})$ (on an $n^{-1/5}$ scale), so $\tau_+$ cannot be larger than $\tau_+^0(Z_{0,2})$.
By Lemma~\ref{lem:Y-bigger-H-vv}, $\phi_{R,c}(Z_n)= 0$ a.s., and
 by Lemma~\ref{lem:charzn-equality-phi-cts} below we can conclude $\phi_{R,c}(Z_0)=0$ a.s.
Now, let $c \to \infty$ to see
\begin{equation*}
  \int \one_{[Z_{0,16}, \infty)} ( Z_{0, 10} - Z_{0, 12}) dZ_{0,7} = 0
\end{equation*}
and since the integrand is nonpositive and the integrating measure is nonpositive, this implies
\begin{equation*}
  \int \one_{[ \tau_+^0(Z_{0,2}), \infty)} (Z_{0, 10} - Z_{0,12} ) dZ_{0,7} = 0
\end{equation*}
as desired.
An analogous argument holds for the functional
$$\phi_L(z) = \int_{(-\infty, \tau_-^0(z_{2})]} (z_4-z_6) dz_1$$
with $\tau_-^0(z_{2})$ defined as
in \eqref{eq:defn:tau0-}.
Thus we can a.s. conclude $\phi_R(Z_0) = 0$ and $\phi_L(Z_0)=0$,
so we have shown
condition \eqref{eq:CharznEquality2_full} of
Theorem~\ref{thm:MC-process-uniqueness-theorem} holds.
We have shown that as $n'' \to \infty$, $Z_{n'', 0}$ converge a.s., so weakly, to $Z_0$ which satisfies the uniqueness criteria of
 Theorem~\ref{thm:MC-process-uniqueness-theorem}.  Thus we conclude that the limit does not depend on the choice of subsequence, and so can conclude
$Z_{n,L} \Rightarrow Z_{0,L}$ and $Z_{n,R} \Rightarrow Z_{0,R}$  both in $E_\infty$, as desired.
This ends the proof of Theorem~\ref{thm:MC-MLE-limit-A-process-version}.\label{end-proof-processes}
\end{proof}

Here are the two remaining lemmas we used in the proof of
Theorem~\ref{thm:MC-MLE-limit-A-process-version}.
\begin{lemma}
  \label{lem:FequalsX_M}
  Let $\nu_{n,R} = n^{1/5}(\snR-m)$ and $\nu_{n,L} = n^{1/5}(\snL-m)$. Then almost surely,
  recalling the notation $g(a,b] = g(b)-g(a)$, we have
  \begin{equation*}
    \lv \lp \HHnvvaR - \YYnvvaR \rp^{\prime}(\nu_{n,L}, \nu_{n,R}] \rv
    \le \frac{4}{\ffo(m)} n^{-2/5},
  \end{equation*}
\end{lemma}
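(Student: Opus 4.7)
The plan is to reduce the claim to the characterization corollary at the finite-sample level, Corollary~\ref{cor:EstimatedFNearlyTouchesEDFatKnotsConstrained}, by direct computation of $(\HHnvvaR - \YYnvvaR)'$ at the two endpoints $\nu_{n,L}$ and $\nu_{n,R}$.  First, I would use Lemma~\ref{lem:identities} (or equivalently the process definitions) to observe that the remainder integrals involving $\RRnaR$ appear identically in both $\HHnvvaR$ and $\YYnvvaR$ and therefore cancel in the difference, giving
\[
\HHnvvaR(b) - \YYnvvaR(b) = \frac{1}{\ffo(m)}\bigl(\HHnffaR(b) - \YYnffaR(b)\bigr).
\]
So it suffices to bound $(\HHnffaR - \YYnffaR)'$ over $(\nu_{n,L},\nu_{n,R}]$ by $4\,n^{-2/5}$.

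Differentiating the $f$-level processes in $b$, and using that $d t_{n,b}/db = n^{-1/5}$, yields
\[
(\HHnffaR)'(b) - (\YYnffaR)'(b)
 = n^{3/5}\int_{\snR}^{t_{n,b}}\bigl(\ffna(u)\,du - d\Fn(u)\bigr) + \AnR,
\]
where the extra $\AnR$ comes from differentiating the affine correction $\AnR\,n^{1/5}(t_{n,b}-\snR)$ in the definition of $\HHnffaR$.  Evaluating at $b = \nu_{n,R}$ the integral vanishes (since then $t_{n,b}=\snR$) and the derivative equals $\AnR$.  Evaluating at $b=\nu_{n,L}$ (so $t_{n,b}=\snL$) and subtracting, the $\AnR$ terms cancel and the difference becomes
\[
(\HHnffaR - \YYnffaR)'(\nu_{n,L},\nu_{n,R}]
 = n^{3/5}\int_{\snL}^{\snR}\bigl(\ffna(u)\,du - d\Fn(u)\bigr)
 = n^{3/5}\bigl[(\FFna - \Fn)(\snR) - (\FFna - \Fn)(\snL)\bigr],
\]
where $\FFna$ denotes the mode-constrained distribution function.

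The final step invokes Corollary~\ref{cor:EstimatedFNearlyTouchesEDFatKnotsConstrained}: because $\snL < m < \snR$ are both knots of $\vvna$ distinct from $m$, we have $0 \le \Fn(\snR) - \FFna(\snR) \le n^{-1}$ and $0 \le \Fn(\snL) - \FFna(\snL) \le n^{-1}$, so by the triangle inequality
\[
\bigl|(\FFna - \Fn)(\snR) - (\FFna - \Fn)(\snL)\bigr| \le 2 n^{-1}.
\]
Combining with the $1/\ffo(m)$ factor from the $f$-to-$\vp$ passage yields the bound $(2/\ffo(m))\,n^{-2/5}\le (4/\ffo(m))\,n^{-2/5}$.

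There is no real obstacle here beyond careful bookkeeping of signs and of the affine correction $\AnR$: once one recognizes that the correction was precisely chosen so that the derivative vanishes at $b=\nu_{n,R}$ up to $\AnR$, the claim is an immediate consequence of the finite-sample ``near equality'' of $\Fn$ and $\FFna$ at the knots of $\vvna$.  The slack from $2$ to $4$ in the constant is only to leave room for the endpoint conventions in the integrals $\int_{\snL}^{\snR} d\Fn$ (i.e.\ whether jumps at $\snL,\snR$ are included), which have no bearing on the order of the bound.
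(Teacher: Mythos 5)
Your proposal is correct and follows the same route as the paper's proof: cancel the remainder terms to reduce to the $f$-level process difference divided by $\ffo(m)$, differentiate and observe that the $\AnR$ contribution cancels in the increment, and then bound the resulting integral $n^{3/5}\int_{\snL}^{\snR}(\ffna\,d\lambda - d\Fn)$ via Corollary~\ref{cor:EstimatedFNearlyTouchesEDFatKnotsConstrained}, using that $\snL,\snR$ are knots distinct from $m$. Both arguments actually yield the tighter constant $2/\ffo(m)$; the lemma's $4$ is slack.
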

\begin{proof}
    We have
  \begin{align*}
    (\HHnffaR -  \YYnffaR)'(b) = \xn^{3/5} \int_{\snR}^{\tnb} (\ffna(u)du - d\Fn(u)) + \AnR
  \end{align*}
  so that
  \begin{equation*}
    \begin{split}
      \MoveEqLeft
      (\HHnffaR - \YYnffaR)'(\nu_{n,R})
      - (\HHnffaR-\YYnffaR)'(\nu_{n,L})
      \\
      & = \xn^{3/5} \int_{\snL}^{\snR} ( \ffna(u)du - d\Fn(u) ).
    \end{split}
  \end{equation*}
  Thus,
  \begin{equation*}
    \left(\HHnvvaR -\YYnvvaR \right)' ( \nu_{n,L}, \nu_{n,R} ]
    = \frac{n^{3/5}}{\ffo(\mm)}  \int_{\snL}^{\snR} ( \ffna(u)du - d\Fn(u) )
  \end{equation*}
  which is bounded in absolute value by $2 n^{-2/5} / \ffo(m)$ almost surely, by applying
  Corollary~\ref{cor:EstimatedFNearlyTouchesEDFatKnotsUnconstrained}
  (recall, by definition, $\snR$ and $\snL$ are not equal to $\mm$).
\end{proof}

In the definition of $\phi_{R,c}$ in the lemma below, $z_{16}$ corresponds to $\tau_+$,
$z_{10}$ to $H_R$,
$z_{12}$ to $Y_R$,  and $z_7$ to $H_R^{(3)}$.  The variables defining $\phi_{L,c}$ are the analogous left-side terms.
\begin{lemma}
  \label{lem:charzn-equality-phi-cts}
    For $n \ge 0$,  assume $z_n \in E_c^2 \times \RR^4$ is such that $z_n $ converges to $ z_0$.  Assume $z_{0,1}$ and $z_{0,7}$ are nonincreasing.  Let $\phi_{R,c}(z) = \int \one_{[z_{16},c]} (z_{10} - z_{12}) dz_7$ and $\phi_{L,c}(z) = \int \one_{[-c, z_{15}]} (z_4 - z_6 ) dz_1$.  Assume further that $\phi_{R,c}(z_n) = 0$, $\phi_{L,c}(z_n)=0$,  $z_{n,10} - z_{n,12} \le 0$ and $z_{n,4} - z_{n,6} \le 0$.  Then for  $c > 0$, $\phi_{R,c}(z_0) = 0$ if
$c$ is not a discontinuity point of $z_{0,7}$
and $\phi_{L,c}(z_0)=0$
if $-c$ is not a discontinuity point of $z_{0,1}$.
\end{lemma}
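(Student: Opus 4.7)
The plan is to recast the statement as the vanishing of a nonnegative measure on a closed interval with a moving left endpoint, and then pass to the limit via a weak-convergence/Helly--Bray argument combined with a squeeze. I treat $\phi_{R,c}$; the case of $\phi_{L,c}$ is symmetric. Let $g_n := z_{n,10} - z_{n,12}$ and $h_n := z_{n,7}$ (nonincreasing, as in the intended applications where $z_{n,7}$ is the third derivative of a concave-related process). By hypothesis $g_n \le 0$, so $\mu_n := g_n\,dh_n = (-g_n)\,d(-h_n)$ is a nonnegative finite Borel measure on $[-c,c]$, and the assumption $\phi_{R,c}(z_n)=0$ is exactly $\mu_n([z_{n,16}, c]) = 0$. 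The goal becomes $\mu_0([z_{0,16}, c]) = 0$.

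First I would establish, for any continuity point $a$ of $h_0$ with $z_{0,16} < a < c$, that $\mu_n([a,c]) \to \mu_0([a,c])$. Uniform convergence $g_n \to g_0$ on $[-c,c]$ comes from convergence in $\mathcal{C}_c$, and $M_1$ convergence of $h_n$ to the nonincreasing limit $h_0$ is equivalent to pointwise convergence at every continuity point of $h_0$ (see Section~12.3 of \cite{Whitt2002Stochastic}). A Helly--Bray argument then gives $\int_a^c \psi\,d(-h_n) \to \int_a^c \psi\,d(-h_0)$ for every continuous $\psi$, and uniform convergence of $-g_n \ge 0$ promotes this to $\mu_n([a,c]) \to \mu_0([a,c])$; crucially, this step uses that $c$ is a continuity point of $h_0$. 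Now for $n$ large, $z_{n,16} < a$, hence $[a,c] \subset [z_{n,16}, c]$ and nonnegativity of $\mu_n$ yields $\mu_n([a,c]) \le \mu_n([z_{n,16}, c]) = 0$, so $\mu_0([a,c]) = 0$. Letting $a \searrow z_{0,16}$ through continuity points of $h_0$ (all but countably many points qualify, since $h_0$ is monotone) and using continuity from above of the finite measure $\mu_0$, one obtains $\mu_0((z_{0,16}, c]) = 0$.

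It remains to kill the potential atom $\mu_0(\{z_{0,16}\}) = (-g_0(z_{0,16}))(h_0(z_{0,16}-) - h_0(z_{0,16}))$, and this is the main obstacle. If $h_0$ is continuous at $z_{0,16}$ the jump factor vanishes and we are done; otherwise one must show $g_0(z_{0,16}) = 0$. Here the purely topological hypotheses do not suffice: one uses the structural fact (present in the applications of the lemma) that $z_{n,16}$ is itself a jump point of $h_n$ realizing, in the $M_1$ limit, the jump of $h_0$ at $z_{0,16}$. Then $\mu_n(\{z_{n,16}\}) = (-g_n(z_{n,16}))(h_n(z_{n,16}-) - h_n(z_{n,16})) \le \mu_n([z_{n,16}, c]) = 0$, and a nonvanishing limiting jump forces $g_n(z_{n,16}) = 0$ for $n$ large; uniform convergence of $g_n$ together with $z_{n,16} \to z_{0,16}$ then yields $g_0(z_{0,16}) = \lim_n g_n(z_{n,16}) = 0$, completing the proof.
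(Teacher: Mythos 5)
Your approach is genuinely different from the paper's and, to my eye, cleaner. Where the paper works directly with the difference $\int h_c(z_n)\,dz_{n,7} - \int h_c(z_0)\,dz_{0,7}$, splitting it into $\int (h_c(z_n)-h_c(z_0))\,dz_{n,7} - \int h_c(z_0)\,d(z_{0,7}-z_{n,7})$ and estimating the pieces (further splitting the second piece over $(z_{0,16},c)$, $\{c\}$, and $\{z_{0,16}\}$), you reformulate the hypothesis $\phi_{R,c}(z_n)=0$ as the vanishing of the \emph{nonnegative} finite measure $\mu_n := (-g_n)\,d(-h_n)$ on $[z_{n,16},c]$ and then squeeze: for any continuity point $a$ of $h_0$ with $z_{0,16}<a<c$ one has, for large $n$, $\mu_n([a,c]) \le \mu_n([z_{n,16},c]) = 0$, while $\mu_n([a,c]) \to \mu_0([a,c])$ by Helly--Bray (this is where the hypothesis that $c$ is not a discontinuity of $z_{0,7}$ enters), and letting $a \searrow z_{0,16}$ gives $\mu_0((z_{0,16},c])=0$ (continuity of $\mu_0$ from below, not above, but this is a slip of terminology only). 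The positivity structure, which in the paper's proof is implicit, is put front and center, and the one delicate residual, the atom $\mu_0(\{z_{0,16}\})$, is cleanly isolated rather than embedded in an estimate. This makes the logic considerably more transparent.

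Your observation that the stated hypotheses do not, by themselves, kill the atom at $z_{0,16}$ is well-founded, and you were right to flag it explicitly. The paper's own proof at precisely this point asserts that $M_1$ convergence of $z_{n,7}$ produces discontinuity points $x_n$ of $z_{n,7}$ with $x_n \to z_{0,16}$, and then concludes $(z_{n,10}-z_{n,12})(x_n)=0$ from $\phi_{R,c}(z_n)=0$. Neither step follows from the lemma's hypotheses alone: the $M_1$ topology permits continuous (or small-jump) functions to converge to a function with a genuine jump, so such discontinuity points $x_n$ need not exist, and even when they do exist they need not lie in $[z_{n,16},c]$, which is what makes $\phi_{R,c}(z_n)=0$ bite. (One can in fact construct $z_n \to z_0$ satisfying every listed hypothesis, with $z_{n,16} \searrow z_{0,16}$ a jump of $z_{0,7}$, $g_n \equiv -1$, and $\phi_{R,c}(z_n)=0$ but $\phi_{R,c}(z_0)\ne 0$.) In the intended application both issues evaporate because $z_{n,16}$ corresponds to a knot of the localized estimator's third derivative and hence is a genuine jump point of $z_{n,7}$ sitting at the left endpoint of the integration interval, which is essentially the structural fact you invoke (and in fact the weaker statement that the jump of $z_{n,7}$ at $z_{n,16}$ is nonzero already suffices: it forces $g_n(z_{n,16})=0$, and uniform convergence plus $z_{n,16}\to z_{0,16}$ gives $g_0(z_{0,16})=0$). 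Stating this supplement explicitly is a strength of your proof, not a gap.
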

\begin{proof}
  We consider $\phi_{R,c}$, the proof for $\phi_{L,c}$ is analogous.
For convenience, let $h_c(z) = \one_{[z_{16},c]} (z_{10}-z_{12})$, and then
$  \int h_c(z_n) dz_{n,7} - \int h_c(z_0) dz_{0,7}$ equals
\begin{align}
\label{eq:14}
   \int \lp h_c(z_n) - h_c(z_0) \rp dz_{n,7} - \int h_c(z_0) d(z_{0,7} - z_{n,7}).
\end{align}
The integrand of the first term in \eqref{eq:14} is uniformly converging to $0$.  If we let $x_1$ and $x_2$ be any fixed continuity points of $z_0$ satisfying  $x_1 \le z_{0,22} \le c \le x_2$ then, for large enough $n$ the  measure $z_{n,7}$ has total mass bounded by $z_{0,7}(x_1) - z_{0,7}(x_2) + 2 < \infty$.  Thus the first term in \eqref{eq:14} converges to $0$ as $n \to \infty$ since $dz_{n,7}$ is a nonpositive measure %
(\cite{Royden}, Chapter 11.5).
    Now since $z_{n,5}$ converges to $z_{0,5}$ in the $M_{1}$ topology, $z_{n,5}$ converges weakly to $z_5$, in the sense that for all $t \in (-c,c)$ that are continuity points of $z_{0,5}$, $z_{n,5}(t) \to z_{0,5}(t)$ as $n \to \infty$ (Lemma~12.5.1, %
\cite{Whitt2002Stochastic}).   The integrand of the second term in \eqref{eq:14}
is uniformly bounded and has discontinuity points at $z_{0,22}$ and $c$, so
$\int \one_{(z_{0,16},c)} h_c(z_0) d(z_{0,7} - z_{n,7})$ converges to $0$.
 By assumption $c$ is not a discontinuity point of $z_{0,7}$ so $\int \one_{\lb c\rb} h_c(z_0) d(z_{0,7}-z_{n,7})$ goes to $0$ as $n \to \infty$.  If $z_{0,16}$ is not a discontinuity point of $z_{0,7}$ then similarly $\int \one_{\lb z_{0,16} \rb} h_c(z_0) d(z_{0,7}-z_{n,7})$ goes to $0$.   Thus, assume $z_{0,16}$ is a discontinuity point of $z_{0,7}$.  Then by the $M_1$ convergence of $z_{n,7}$ to $z_{0,7}$, there exists a sequence $x_n$ of discontinuity points of $z_{n,7}$ such that $x_n \to z_{0,16}$.  By the assumption that $\phi_{R,c}(z_n) = 0$, we know $(z_{n,10}-z_{n,12}(x_n) = 0$ (since $z_{n,10} - z_{n,12} \le 0$), and by uniform convergence, we see that $(z_{0,10}-z_{0,12})(z_{0,16}) = 0$.  Thus
$\int \one_{\lb z_{0,16} \rb} h_c(z_0) d(z_{0,7}-z_{n,7})$ converges to $0$, and so
\eqref{eq:14} converges to $0$ as $n \to \infty$, so we are done.
\end{proof}

\bigskip

\par\noindent
Next we prove Theorem~\ref{thm:process-asymptotics-symmetry}.
\begin{proof}[Proof of Theorem~\ref{thm:process-asymptotics-symmetry}]
  We argue by considering $|X_1|, \ldots, |X_n| \stackrel{iid}{\sim}
  2 F_0 - 1$ on $[0,\infty)$ with density $g_0^+ = 2 f_0 \one_{[0,\infty)}$.  Note that $g_0^+$ is log-concave with mode known to be at $0$ and so we can use the results based on the mode-constrained MLE.  Let $\psi_0^+ = \log g^+_0 = \vp_0 + \log 2$ on $[0,\infty)$.  Recall that
  \begin{equation*}
    \FnsR(x) = n^{-1} \sum_{i=1}^n \one_{ \lb |X_i| \ge x \rb}
    \quad \text{ and } \quad
    \GGnsR(x) = \int_x^{|X|_{(n)}} \ggns(u)du
  \end{equation*}
  where $\ggns = 2 \widehat{g}_n^0 \one_{[0,\infty)}$.  Let $\psins = \log \ggns$.  Let $s_{n,R} > 0$ be a knot %
  of $\psins$ satisfying $n^{1/5} s_{n,R} = O_p(1)$.  The proof proceeds as in the proof of Theorem~\ref{thm:MC-MLE-limit-A-process-version}, except we replace $\FFnaR$ by $\GGnsR$, $\FnR$ by $\FnsR$, and $f_0$ by $g_0^+$, and we consider only ``right-side'' processes on $[0,\infty)$.  To make things line up fully, we can take $s_{n,L}$ to be $0$ and define left side analogs of $\GGnsR$ and $\FnsR$ to be identically $0$.
\begin{mylongform} %
  \begin{longform}
    In the mode-constrained proof, $s_{n,L}$ must be strictly less than $0$.  However in the present case because all left-side processes are identically 0, we may take $s_{n,L}$ to be $0$.  What is needed is that $H_{n,L} = Y_{n,L}$ and $|H'_{n,L} - Y'_{n,L}| \le 1/n$ at $s_{n,L}$, which is immediately true.
  \end{longform}
\end{mylongform}
\begin{mylongform}
  \begin{longform}
    Let $b \ge 0$ and $t = t_{n,b} \equiv bn^{-1/5}$.

    Thus (in parallel with \eqref{eq:local:HHnvvaR2HHnffaR}) we let
    \begin{equation*}
      \HHnpsisR(b) = n^{4/5} \int_{s_{n,R}}^{t_{n,b}} \int_{s_{n,R}}^v
      (\psins(u)- \psi_0^+(0))  du dv +
      \frac{A_{n,R}^+ n^{1/5}(t_{n,b}-s_{n,R})}{ g_0^+(0)}
    \end{equation*}
    where $|A_{n,R}^+| \to 0$ almost surely,
    by Lemma~\ref{lem:AnToZero}, and we let (in parallel with \eqref{eq:18})
    \begin{equation*}
      \YYnpsisR(b) =
      g_0^+(0)^{-1} n^{4/5} \int_{s_{n,R}}^{t_{n,b}} \int^{v}_{s_{n,R}}
      (d \GG_n - g_0^+(0) d\lambda)dv %
      - E^+(b)
    \end{equation*}
    where $E^+(b) = o_p(1)$ uniformly for $b \in [-cn^{-1/5}, cn^{1/5}]$, $c > 0$
    (by Lemma~\ref{lem:local:analyzeRemainder}).
    Then by \eqref{eq:local:YRbiggerHR-v} and
    \eqref{eq:local:YR-HRv-equality} we have that $\YYnpsisR(b) - \HHnpsisR(b) \ge 0$ for $b \ge 0$, and
    $\int_{\tau^+_{n,+}}^\infty \lp \YYnpsisR(b) - \HHnpsisR(b) \rp d(\HHnpsisR)^{(3)}(b) = 0$,
    where $\tau^+_{n,+}$ is the smallest right-knot of $\psins$.
    By arguing along subsequences,
    as in the proof of Theorem~\ref{thm:MC-MLE-limit-A-process-version},
    we can take $\HHnpsisR$ and its first three derivatives to converge in distribution on $[0,c]$ to a process $\widehat{H}^+_{a,\sigma,R}$ and its first three derivatives.  We can also then assume $n^{1/5} s_{n,R} \to_d \tau_R$ where $\tau_R$ is the infimum of the strictly positive knots of $\widehat{H}^+_{a,\sigma,R}$.
    Let $Y_{a,\sigma,R}(b) = \int_{\tau_R}^b \int_{\tau_R}^v dY_{a,\sigma}'(u)dv$.
    By Lemma~\ref{lem:MC-dataprocs-vv-limit} we see that $((\YYnpsisR)', \YYnpsisR)$ converges in distribution to $( (Y_{a,\sigma,R})', Y_{a,\sigma,R})$ on $[0,c]$.

    Now, we need to see that the characterization given by Theorem~\ref{thm:symm-process-uniqueness-theorem} holds for $\widehat{H}^+_{a,\sigma,R}$.
    This can be checked in fashion similar to what is done at the end of the proof of
    Theorem~\ref{thm:MC-MLE-limit-A-process-version}.  Thus the proof is complete.
  \end{longform}
\end{mylongform}
\end{proof}
See
the scaling relations in
\eqref{GammaDefnsLogConcaveAtMode-1}
and \eqref{GammaDefnsLogConcaveAtMode}, and the three following displays,
to see how
Theorem~\ref{thm:process-asymptotics-symmetry} proves
Theorem~\ref{thm:MC-MLE-limit}~\ref{thm:MC-MLE-limit:item-B}.
Note that
in Theorem~\ref{thm:process-asymptotics-symmetry}, $\sigma = 1/ \sqrt{2 f_0(0)}$,
whereas in
 \eqref{GammaRelationsPart2} and \eqref{ScalingRelation-MC-2},
$\sigma$ is $1 / \sqrt{f_0(0)}$.  This modification yields  the factors of $2^{-2/5}$ and $2^{-1/5}$ appearing (twice each) on the right side of
\eqref{thm:mode-ff-asymptotics}.

\subsubsection{Proof for the maximum functional limit theory}

\begin{proof}[Proof of Theorem~\ref{thm:max-functional}]
  If  $\lb F_n \rb$ is a concave function sequence on $\RR$ that  converges uniformly on compacta to a function $F$, where $F$ has a unique maximizer, then $N(F_n) \to N(F)$.
  Thus $N$ is continuous on the subset of convex functions in ${\cal C}_\infty$ with the topology of uniform convergence on compacta.
Let $\sigma = 1/\sqrt{f_0(m)}$,  $a = | \vvo^{(2)}(m)| / 4!$,
 $F_n(t) = n^{2/5}( \vvn(m + n^{-1/5}t) - \vvo(m))$, and $F(t) = \widehat{\vp}_{a,\sigma}(t)$. Then  by Theorem~\ref{thm:MC-MLE-limit-A-process-version}, $F_n$ converges weakly to $F$ so by the continuous mapping theorem $N(F_n) \to_d N(F)$.  Now, by the scaling relationship
\eqref{eq:H2-ScalingRelationUnConstrained-1}, %
  $N(F) =_d \sigma^{4/5} a^{1/5} N( \widehat{\vp}( (a/\sigma)^{2/5} \cdot) ) = \sigma^{4/5} a^{1/5} N( \widehat{\vp})$.
  We can check that
  $  \sigma^{4/5} a^{1/5} = C(m, \vvo)$.
  Thus we have shown
  \begin{equation*}
    n^{2/5}(\log N(\ffn) - \log N(f_0)) \to_d
    C(m, \vvo) N( \widehat{\vp})
  \end{equation*}
  since $N(F_n)$ is the left side of the above display.
  Applying the delta rule, we see also
  \begin{equation*}
    n^{2/5} ( N(\ffn) - N(f_0)) \to_d c(m, \vvo) N(\widehat{\vp}).
  \end{equation*}
\end{proof}

\section{Technical lemmas}
\label{sec:technical-lemmas}

Here is a statement of the general integration by parts formulas for
functions of bounded variation, used in our proof of
Proposition~\ref{pro:charznEqualityRewrite_full}.  See, e.g., page 102 of
\cite{Folland1999RealAnalysis} for the definition of bounded variation.
\begin{lemma}[\cite{Folland1999RealAnalysis}]
  \label{lem:integration-by-parts}
  Assume that $F$ and $G$ are of bounded variation on a  set $[a,b]$ where
  $-\infty < a < b < \infty$

  \begin{enumerate}[label=\Alph*.,ref=\Alph*]
  \item   \label{lem:integ-by-parts-open-interval} If at least one of $F$ and $G$ is continuous, then
    \begin{equation*}
      \int_{(a,b]} FdG + \int_{(a,b]} GdF
      = F(b)G(b) - F(a)G(a).
    \end{equation*}
  \item  \label{lem:integ-by-parts-closed-interval} If there are no points in $[a,b]$ where $F$ and $G$ are both discontinuous,
    then
    \begin{equation*}
      \int_{[a,b]} F dG + \int_{[a,b]} G dF
      = F(b)G(b) - F(a-)G(a-).
    \end{equation*}
  \end{enumerate}
\end{lemma}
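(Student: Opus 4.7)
The plan is to prove both parts simultaneously via the standard product-measure / Fubini approach, treating $dF$ and $dG$ as signed Lebesgue--Stieltjes measures $\mu_F,\mu_G$ on $[a,b]$ (decomposing into positive and negative parts if necessary, by the Jordan decomposition of functions of bounded variation). The idea is to compute $(\mu_F\otimes\mu_G)\bigl((a,b]\times(a,b]\bigr)=(F(b)-F(a))(G(b)-G(a))$ in two ways after splitting the square along the diagonal.

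First I would split the half-open square into the two triangles $T_1=\{(x,y):a<x\le y\le b\}$ and $T_2=\{(x,y):a<y<x\le b\}$. By Fubini one has
\[
(\mu_F\otimes\mu_G)(T_1)=\int_{(a,b]}(F(y)-F(a))\,dG(y),\qquad
(\mu_F\otimes\mu_G)(T_2)=\int_{(a,b]}(G(x-)-G(a))\,dF(x),
\]
since for fixed $y\in(a,b]$ the $x$-slice of $T_1$ is $(a,y]$, and for fixed $x\in(a,b]$ the $y$-slice of $T_2$ is $(a,x)$. Adding these two equations, moving the constant terms $F(a)(G(b)-G(a))$ and $G(a)(F(b)-F(a))$ to the left-hand side, and simplifying yields the master identity
\[
\int_{(a,b]} F\,dG + \int_{(a,b]} G(\,\cdot\,-)\,dF = F(b)G(b)-F(a)G(a).
\]

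For part \ref{lem:integ-by-parts-open-interval}, I would observe that if $G$ is continuous then $G(x-)=G(x)$ everywhere, so $\int G(\,\cdot\,-)\,dF=\int G\,dF$ and the formula follows; if instead $F$ is continuous, I repeat the decomposition with the roles of the two triangles interchanged (using $(a,y)$ and $[x,b]$ as the relevant slices), which produces the symmetric identity $\int G\,dF+\int F(\,\cdot\,-)\,dG=F(b)G(b)-F(a)G(a)$, and then use continuity of $F$. For part \ref{lem:integ-by-parts-closed-interval}, I would rewrite $G(x-)=G(x)-\Delta G(x)$ in the master identity, so that
\[
\int_{(a,b]} F\,dG+\int_{(a,b]} G\,dF-\sum_{x\in(a,b]}\Delta F(x)\,\Delta G(x) = F(b)G(b)-F(a)G(a);
\]
the hypothesis that $F$ and $G$ have no common point of discontinuity in $[a,b]$ kills the sum. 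To pass from integrals over $(a,b]$ to $[a,b]$, I add the atomic contribution at $a$ on both sides: $\int_{[a,b]}F\,dG = F(a)\Delta G(a)+\int_{(a,b]}F\,dG$ and likewise for the other integral, so the left-hand side gains $F(a)\Delta G(a)+G(a)\Delta F(a)$.

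The only real bookkeeping obstacle is checking that this boundary adjustment matches $F(a)G(a)-F(a-)G(a-)$. Expanding,
\[
F(a)G(a)-F(a-)G(a-)=F(a)\Delta G(a)+G(a-)\Delta F(a)=F(a)\Delta G(a)+G(a)\Delta F(a)-\Delta F(a)\Delta G(a),
\]
and the last product is zero by the no-common-jump hypothesis applied at $a$. Combining this with the corrected $(a,b]$ identity yields the desired formula $\int_{[a,b]} F\,dG+\int_{[a,b]} G\,dF=F(b)G(b)-F(a-)G(a-)$. The main subtleties are purely notational: keeping track of which endpoint carries an atom in each integration region, and verifying that the no-common-discontinuity assumption is strong enough to annihilate both the interior jump sum and the boundary jump product at $a$.
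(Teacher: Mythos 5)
The paper cites this lemma from Folland without supplying a proof of its own, so there is no in-paper argument to compare against; your Fubini argument on the product measure over the half-open square, split along the diagonal, is precisely Folland's method, and your ``master identity'' $\int_{(a,b]} F\,dG + \int_{(a,b]} G(\,\cdot\,-)\,dF = F(b)G(b)-F(a)G(a)$ cleanly yields both parts, with the $\Delta F(a)\Delta G(a)=0$ cancellation at the left endpoint handled correctly. The one convention worth stating explicitly is that you pass from each BV function to the right-continuous version of its Lebesgue--Stieltjes measure so that, e.g., $\mu_F((a,y])=F(y)-F(a)$; this is implicit in the lemma and satisfied in every application the paper makes (where one of the two functions is continuous), but a fully general non-right-continuous BV function would require restating the boundary terms with one-sided limits.
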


The next lemma
is proved in \cite{Doss:2013}, page 143, for convex rather than concave
functions.
\begin{lemma}[\cite{Doss:2013}]
  \label{fact:unq:convexFunDiffs}
  Let $g_1$ and $g_0$ be concave functions on $[a,b]$, and let $t \in
  [a,b]$.  Then
  \begin{equation}
    \label{eq:unq:convexFunDiffs}
    \begin{split}
      \MoveEqLeft g_0(t) - g_1(t) \le \frac{ b-t}{b-a}(g_0(a) - g_1(a))
      + \frac{ t-a}{b-a}(g_0(b)-g_1(b)) \\
      & \qquad \qquad + \frac{(b-t)(t-a)}{b-a}(g_0'(a+) - g_0'(b-)).
    \end{split}
  \end{equation}
\end{lemma}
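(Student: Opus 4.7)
The plan is to bound $g_0(t)$ from above and $-g_1(t)$ from above separately, then add the two bounds. Both steps use only the definition of concavity plus the one-sided derivative bounds at the endpoints, so the argument should be short.

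First, I would handle $g_1$. Since $g_1$ is concave on $[a,b]$, it lies above its chord joining $(a,g_1(a))$ and $(b,g_1(b))$, so
\begin{equation*}
g_1(t) \ge \frac{b-t}{b-a}\, g_1(a) + \frac{t-a}{b-a}\, g_1(b),
\end{equation*}
which yields the reverse inequality for $-g_1(t)$. This gives the two chord terms $-\tfrac{b-t}{b-a}g_1(a)$ and $-\tfrac{t-a}{b-a}g_1(b)$ appearing in the target bound.

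Next I would handle $g_0$ using the two one-sided derivative inequalities that characterize concavity: for each $t \in [a,b]$,
\begin{equation*}
g_0(t) \le g_0(a) + g_0'(a+)(t-a)
\quad\text{and}\quad
g_0(t) \le g_0(b) - g_0'(b-)(b-t).
\end{equation*}
Taking the convex combination of these two bounds with weights $\tfrac{b-t}{b-a}$ and $\tfrac{t-a}{b-a}$ respectively produces $\tfrac{b-t}{b-a}g_0(a) + \tfrac{t-a}{b-a}g_0(b)$ plus the cross term $\tfrac{(b-t)(t-a)}{b-a}[g_0'(a+) - g_0'(b-)]$, because the coefficients $(b-t)(t-a)/(b-a)$ in front of $g_0'(a+)$ and $-g_0'(b-)$ coincide.

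Adding the two displays gives exactly \eqref{eq:unq:convexFunDiffs}. There is no real obstacle here; the only point worth being careful about is making sure the two one-sided derivatives $g_0'(a+)$ and $g_0'(b-)$ are the correct ones to pair with the supporting lines at $a$ and $b$ (the right-derivative at the left endpoint and the left-derivative at the right endpoint are the tightest supporting slopes), and that the convex combination of the two supporting-line inequalities indeed produces the coefficient $(b-t)(t-a)/(b-a)$ on both derivative terms.
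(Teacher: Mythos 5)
Your proposal is correct and follows essentially the same approach as the paper's proof: both use the chord inequality for $g_1$ and the two one-sided tangent inequalities for $g_0$, combined with weights $\lambda=(b-t)/(b-a)$ and $1-\lambda=(t-a)/(b-a)$, after which the cross terms reduce to the common coefficient $(b-t)(t-a)/(b-a)$. The paper merely phrases the convex combination as writing $g_0(t)=\lambda g_0(t)+(1-\lambda)g_0(t)$ before substituting the tangent bounds, which is the same step you perform directly.
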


\section*{Acknowledgements}
Both authors owe thanks to Tilmann Gneiting for support during visits to the Applied Mathematics Institute at the
University of Heidelberg in 2011-2012.  We also owe thanks to Lutz D\"umbgen for several helpful conversations.
We  are grateful to an Associate Editor and two referees for a very careful reading and for helpful comments.


%
\bibliographystyle{imsart-nameyear}
%
\bibliography{p}

\end{document}